\documentclass[11pt, a4paper]{amsart}


\usepackage{epsfig, amsfonts, amssymb, url, amsthm, subcaption, tikz, amsmath, calrsfs, bigdelim, multirow, xcolor, appendix, float,mathtools, bm}
\usepackage[utf8]{inputenc}
\usepackage[T1]{fontenc}
\usepackage{tgschola}
\usepackage{stmaryrd, dsfont}

\usepackage[top=3cm, bottom=3cm, left=2.5cm, right=2.5cm]{geometry}

\usepackage{hyperref}

\DeclareMathAlphabet{\pazocal}{OMS}{zplm}{m}{n}

\usetikzlibrary{matrix}

\usepackage{ulem}

\makeatletter
\newcommand\RSloop{\@ifnextchar\bgroup\RSloopa\RSloopb}
\makeatother
\newcommand\RSloopa[1]{\bgroup\RSloop#1\relax\egroup\RSloop}
\newcommand\RSloopb[1]%
  {\ifx\relax#1%
   \else
     \ifcsname RS:#1\endcsname
       \csname RS:#1\endcsname
     \else
       \GenericError{(RS)}{RS Error: operator #1 undefined}{}{}%
     \fi
   \expandafter\RSloop
   \fi
  }
\newcommand\X{0}
\newcommand\RS[1]%
  {\begin{tikzpicture}
     [every node/.style=
       {circle,draw,fill,minimum size=1.5pt,inner sep=0pt,outer sep=0pt},
      line cap=round
     ]
   \coordinate(\X) at (0,0);
   \RSloop{#1}\relax
   \end{tikzpicture}
  }
\newcommand\RSdef[1]{\expandafter\def\csname RS:#1\endcsname}
\newlength\RSu
\RSu=1ex
\RSdef{i}{\draw (\X) -- +(90:\RSu) node{};}
\RSdef{l}{\draw (\X) -- +(110:\RSu) node{};}
\RSdef{r}{\draw (\X) -- +(70:\RSu) node{};}
\RSdef{I}{\draw (\X) -- +(90:\RSu) coordinate(\X I);\edef\X{\X I}}
\RSdef{L}{\draw (\X) -- +(120:\RSu) coordinate(\X L);\edef\X{\X L}}
\RSdef{R}{\draw (\X) -- +(60:\RSu) coordinate(\X R);\edef\X{\X R}}

\begin{document}

\title{The KPZ Equation on the Real Line}

\newtheorem{theorem}{Theorem}[section]
\newtheorem{lemma}[theorem]{Lemma}
\newtheorem{proposition}[theorem]{Proposition}
\newtheorem{corollary}[theorem]{Corollary}
\newtheorem{definition}[theorem]{Definition}
\newtheorem{assumption}[theorem]{Assumption}

\theoremstyle{definition}
\newtheorem{remark}[theorem]{Remark}
\newtheorem{example}[theorem]{Example}
\newtheorem{exercise}[theorem]{Exercise}

\setlength{\parindent}{0pt}

\newcommand{\RR}{\mathbb{R}}
\newcommand{\NN}{\mathbb{N}}
\newcommand{\ZZ}{\mathbb{Z}}
\newcommand{\TD}{n\mathbb{T}^d}
\newcommand{\YY}{\mathbb{Y}}
\newcommand{\EE}{\mathbb{E}}
\newcommand{\QQ}{\mathbb{Q}}
\newcommand{\PP}{\mathbb{P}}

\newcommand{\LL}{\mathcal{L}}
\newcommand{\CC}{\mathcal{C}}
\newcommand{\WW}{\mathcal{W}}
\newcommand{\MM}{\mathcal{M}}
\newcommand{\TT}{\mathcal{T}}
\newcommand{\II}{\mathcal{I}}
\newcommand{\mY}{\mathcal{Y}}
\newcommand{\FF}{\mathcal{F}}
\newcommand{\mD}{\mathcal{D}}
\newcommand{\mX}{\mathcal{X}}
\newcommand{\mA}{\mathcal{A}}


\newcommand{\DD}{\mathfrak{D}}
\newcommand{\ODD}{\ola{\mathfrak{D}}}
\newcommand{\LLL}{\pazocal{L}}

\newcommand{\bb}[1]{\mathbb{#1}}
\newcommand{\mc}[1]{\mathcal{#1}}


\newcommand{\ppara}{\mathord{\prec\hspace{-6pt}\prec}}
\newcommand{\para}{\varolessthan}
\newcommand{\reso}{\varodot}


\newcommand{\ola}[1]{\overleftarrow{#1}}
\newcommand{\ol}[1]{\overline{#1}}


\newcommand{\cc}{\frac{1}{(2 \pi)^{\frac{d}{2}} }}
\newcommand{\hh}{\frac{1}{2}}
\newcommand{\ve}{\varepsilon}

\newcommand{\bigslant}[2]{{\raisebox{.1em}{$#1$}\left/\raisebox{-.1em}{$#2$}\right.}}
\newcommand{\lqt}{``}
\newcommand{\rqt}{'' }


\newcommand{\smooth}{\LLL C^{\alpha /2} (\RR;C^{\infty}_b(\RR))}
\newcommand{\rel}{\Lambda_{\lambda}}
\newcommand{\wQ}{\QQ_{x_0}}
\newcommand{\olh}{\overleftarrow{h}}
\newcommand{\oh}{\overline{h}}
\newcommand{\hb}{\hat{\beta}}
\newcommand{\bp}{\beta^\prime}
\newcommand{\myc}{\mY_{kpz}^{\zeta, b}}
\newcommand{\myi}{\mY_{kpz}^{\infty}}
\newcommand{\myp}{\mY_{kpz}^{poly}}
\newcommand{\myk}{\mY_{kpz}}
\newcommand{\weights}{\boldsymbol{\rho}(\omega)}
\newcommand{\nnorm}[1]{| \!  | \!  |  #1 | \! | \! |}
\newcommand{\SR}{\mathcal{S} \! \left( \RR \right) \!}
\newcommand{\SO}{\mathcal{S}_{\omega} \! \left( \RR \right) \!}
\newcommand{\SDO}{\mathcal{S'}_{ \! \! \omega} \! \left( \RR \right) \!}
\newcommand{\orat}[1]{\overrightarrow{{#1}_{\tau}}}


\newcommand{\BA}{\scalebox{0.8}{\RS{lr}}}
\newcommand{\BB}{\scalebox{0.8}{\RS{rLrl}}}
\newcommand{\BC}{\scalebox{0.8}{\RS{rLrLrl}}}
\newcommand{\BD}{\scalebox{0.8}{\RS{{Lrl}{Rrl}}}}
\newcommand{\BZ}{\scalebox{0.8}{\RS{r}}}


\newcommand{\norm}[1]{\left\lVert #1 \right\rVert}

\newcommand{\bv}[1]{\left\vert #1 \right\vert}
\newcommand{\bq}[1]{\left[ #1 \right]}
\newcommand{\bg}[1]{\left\{ #1 \right\} }
\newcommand{\ba}[1]{\langle #1 \rangle}

\newcommand{\red}[1]{\textcolor{red}{#1}}
\normalem

\author{Nicolas Perkowski}
\address{Max-Planck-Institut f\"ur Mathematik in den Naturwissenschaften Leipzig \& Humboldt-Universit\"at zu Berlin}
\email{perkowsk@math.hu-berlin.de}
\thanks{NP gratefully acknowledges financial support by the DFG via the Heisenberg program and via Research Unit FOR 2402.}

\author{Tommaso Cornelis Rosati}
\address{Humboldt-Universit\"at zu Berlin}
\email{rosatito@math.hu-berlin.de}
\thanks{TCR was funded by the IRTG 1740 and this paper was developed within the scope of the IRTG 1740 / TRP 2015/50122-0, funded by the DFG / FAPESP}

\date{August 1, 2018.}

\keywords{KPZ equation, singular SPDEs, paracontrolled distributions, comparison principle}

\begin{abstract}
We prove existence and uniqueness of distributional solutions to the KPZ equation globally in space and time, with techniques from paracontrolled analysis. Our main tool for extending the analysis on the torus to the full space is a comparison result that gives quantitative upper and lower bounds for the solution. We then extend our analysis to provide a path-by-path construction of the random directed polymer measure on the real line and we derive a variational characterisation of the solution to the KPZ equation.
\end{abstract}

\maketitle

\setcounter{tocdepth}{1}
\tableofcontents

\section{Introduction}

In this work we solve the Kardar-Parisi-Zhang (KPZ) equation on the real line, i.e. we construct a unique $h\colon \RR_{\ge 0} \times \RR \to \RR$ such that
\begin{equation}\label{eqn:kpz}
(\partial_t - \hh \Delta_x)h = \hh (\partial_xh)^2 + \xi, \ \ h(0) = \oh,
\end{equation}
where $\xi$ is a Gaussian space-time white noise, the generalized Gaussian process on $\RR_{\ge 0} \times \RR$ with singular covariance structure $\EE[\xi(t,x) \xi(s, y)] = \delta(t{-}s) \delta(x{-}y)$.

The KPZ equation was introduced in \cite{kpz_original} as a model for the growth of a one-dimensional interface that separates two two-dimensional phases of which one invades the other. The conjecture of \cite{kpz_original}, now called \emph{strong KPZ universality conjecture}, was that any $(1+1)$--dimensional (one time and one space dimension) interface growth model that is subject to random influences, surface tension, and lateral growth, shows the same large scale behavior under the now famous $1{-}2{-}3$ scaling, and that the KPZ equation provides a prototypical example of such a model. Since then it became apparent that there is a second, weaker universality in the class of $(1+1)$--dimensional interface growth models: If the lateral growth or the random influence is very weak, then according to the \emph{weak KPZ universality conjecture} the model is expected to be well approximated by the KPZ equation on large scales. See \cite{Quastel2011, Corwin2012, quastel_intro} for nice introductions to these universality questions.

The difficulty with the KPZ equation is however that its solution $h$ is not a differentiable function in $x$, and therefore it is not clear how to interpret the nonlinearity $(\partial_xh)^2$ on the right hand side of \eqref{eqn:kpz}. This problem can be avoided by applying the \emph{Cole-Hopf transform}: The process $w = e^h$ formally solves the stochastic heat equation
\begin{equation}\label{eqn:she}
(\partial_t - \hh \Delta_x)w = w\xi, \ \ w(0) = e^{\oh},
\end{equation}
which can be analyzed using It\^o integration \cite{Walsh1986}. This was already observed by Kardar, Parisi and Zhang \cite{kpz_original}, and the first mathematically rigorous formulation is due to Bertini, Cancrini and Jona-Lasinio~\cite{Bertini1994} who simply define $h = \log w$ for the solution $w$ to \eqref{eqn:she}, a process that is strictly positive by a strong comparison result of Mueller \cite{Mueller}. Results such as the scaling limit proven by Bertini and Giacomin \cite{bertini} suggested that $h$ is the physically relevant solution process, but nonetheless it remained unclear if and in what sense the Cole-Hopf solution actually solves the KPZ equation.

A rigorous proof of the existence of distributional solutions to the KPZ equation on the torus was a milestone in the theory, reached by Hairer via rough paths \cite{hai:solving_kpz} as well as through his theory of regularity structures \cite{hairer_reg_struct, frizRP}. Similar results have been obtained by Gubinelli, Imkeller and Perkowski via paracontrolled distributions \cite{singular_GIP, kpz}. These theories were the starting point for the new research field of singular SPDEs, with many developments in recent years that allow to study more singular or quasilinear equations, with boundary conditions or on manifolds, and to derive qualitative properties of the solutions. 

At the center of this new pathwise approach to SPDEs lies the idea of expanding the solution on small scales via the driving noise and higher order terms constructed from it. The non-linearity can then be controlled in terms of the ``enhanced noise'', called \emph{model} in regularity structures, i.e. the noise together with the higher order terms appearing in the expansion of the solution, and the solution to the equation becomes a continuous functional of the enhanced noise. However, in many situations (including for the KPZ equation) the higher order terms can only be constructed with the help of a suitable renormalisation, and this means that the solution we eventually find does not solve the original equation, but a renormalised version of it \cite{Bruned2017}.

%

But while we now have a good understanding in what sense the Cole-Hopf solution solves the KPZ equation and how to interpret its renormalisation, all this is  restricted to the equation on the torus or in a finite volume with boundary conditions \cite{hai_gerenc, Corwin2016, Goncalves2017}. Since one of the main interests in the KPZ equation comes from its large scale behavior it would be more natural to solve it on $\RR$, a space that can be arbitrarily rescaled. Using the probabilistic notion of \emph{energy solutions} \cite{Goncalves2014, Gubinelli2013} it is possible to give an intrinsic formulation of the KPZ equation on $\RR$, but this is essentially restricted to stationary initial conditions \cite{Gubinelli2018Energy}.

Here we extend the pathwise approach described above, implemented in the language of paracontrolled distributions, to develop a solution theory for the KPZ equation on $\RR$ for a fairly general set of initial conditions. The additional difficulty compared to the equation in bounded volume is that here we have to work in weighted function spaces, and the weights do not mix well with the nonlinearity: Roughly speaking, if the growth of $\partial_x h$ can be controlled with the weight $z$, then we need the stronger weight $z^2$ to control $(\partial_x h)^2$ and this prevents us from setting up a Picard iteration. That is the reason why most works on singular SPDEs deal with equations in finite volume. First steps to overcome this restriction were taken by Hairer and Labb\'e~\cite{pam, multiplicative} in their study of the linear rough heat equation and the linear parabolic Anderson model on the whole Euclidean space. For non-linear equations, a priori estimates are a natural and powerful tool and they were very successful in the study of the $\Phi^4_d$ equations in the work by Mourrat and Weber \cite{phi4, Mourrat2017Dynamic}, by Gubinelli and Hofmanova \cite{hofmanova_phi}, and by Barashkov and Gubinelli \cite{Barashkov2018}, all relying on the damping induced by the term $-\phi^3$. But such estimates depend strongly on the structure of the equation and this prevents the development of a general solution theory for singular SPDEs in infinite volume. In particular the KPZ equation is neither linear nor does it have a damping term, and therefore we need a different approach to control its solution in infinite volume.

A construction of pathwise solutions to the KPZ equation on the real line can probably be obtained starting from Hairer-Labb\'e's regularity structure based solution $w$ to the stochastic heat equation~\cite{multiplicative}. It should be possible to adapt the arguments from the proof of~\cite[Theorem~5.1]{Cannizzaro2017Malliavin} to deduce a strong maximum principle for the (regularity structure version of the) stochastic heat equation and thus the strict positivity of $w$. Then a chain rule in regularity structures should show, similarly as in~\cite[Section~4.3]{kpz}, that $h = \log w$ solves the KPZ equation in the regularity structure sense. The problem with that approach is that it gives no control at all for the growth of $h$ at infinity, and therefore there is no hope to get uniqueness -- after all there is non-uniqueness even for classical solutions to the heat equation if we allow for too much growth.
In this work we derive a priori $L^{\infty}$ estimates with linear growth for the solution to the KPZ equation on $\RR$, which give rise to an existence and uniqueness result and thus a complete theory. To derive these bounds we use the comparison principle and the link between KPZ equation and stochastic heat equation through the Cole-Hopf transform. There are several known comparison results for the stochastic heat equation, mostly based on probabilistic techniques and often relying on the specific probabilistic properties of the Gaussian noise. But although this leads to sharp bounds for the growth at infinity, see for example the work by Conus, Joseph and Khoshnevisan \cite{Conus}, there seem to be only qualitative results for the decay at infinity, a notable result being the strict positivity of the solution proven by Mueller \cite{Mueller} (and the regularity structure version of that result by Cannizzaro, Friz and Gassiat~\cite{Cannizzaro2017Malliavin} that we mentioned above). The analytic approach to SPDEs we follow allows us to use the full power of classical comparison results, providing us with quantitative and effective, if not sharp, upper and lower bounds, by restricting ourselves to a suitable class of strictly positive initial conditions. These estimates are very weak in terms of regularity but sufficient to avoid singularities when applying the Cole-Hopf transform, and this allows us to lift the a priori bounds to paracontrolled topologies and to prove uniqueness. Our solution is locally $1/2{-}\ve$ H\"older continuous in space and $1/4{-}\ve$ in time, and it has linear growth at infinity whereas its \lqt paracontrolled derivative\rqt may have sub-exponential growth.

As an application of our results, we give two alternative formulations of the KPZ equation by linking it with the random directed polymer measure and with a variational problem. The random directed polymer measure formally has the density
$$
\frac{d\QQ}{d\PP} = \frac{1}{Z}\exp\left( \int_0^T \xi(T-t, W_t) \ dt \right)
$$ 
with respect to the Wiener measure $\PP$ on $C([0, T])$, where $Z$ is a normalisation constant. On the torus this measure was constructed by Delarue and Diel \cite{singular}, see also \cite{cannizzaro}, who observed that $\QQ$ formally solves the SDE with distributional drift
$$
dX_t = \partial_x h(T-t, X_t) dt + dW_t,
$$
and then proceeded to construct a unique martingale solution $X$ by solving the Kolmogorov backward equation using rough path integrals. On the real line Alberts, Khanin and Quastel \cite{AlbertsQuastelPolymer} gave a different, probabilistic construction of $\QQ$, based on Kolmogorov's extension theorem, but they did not establish the link to the SDE beyond formal calculations. Here we combine the approaches of~\cite{singular} and~\cite{AlbertsQuastelPolymer}, which have existed independently so far, and give a rigorous explanation of the above SDE. We give a path-by-path construction of the random directed polymer measure that does not depend on the statistical properties of the white noise, but only on the ``model'' associated to it. We also show that the KPZ equation can be interpreted as the value function of the stochastic control problem
\[
	h(t,x) = \sup_v \EE_x\Big[ \oh(\gamma^v_t) - \int_0^t \xi(t-s, \gamma^v_s) \ ds - \frac12 \int_0^t v_s^2 \ ds \Big| \xi \Big],
\]
where under $\PP_x$
\[
	\gamma^v_t = x + \int_0^t v_s \ ds + W_t
\]
for a Brownian motion $W$ that is independent of $\xi$, a representation that was previously derived in \cite{kpz} for the KPZ equation on the torus.

%

\subsection{Structure}
In the first section we introduce techniques from paracontrolled calculus for SPDEs in a weighted setting, cf. \cite{singular_GIP, kpz, jorg}. Among them are the commutation and product estimates from Lemmata \ref{lem:parabolic_paraproduct_estimates} and \ref{lem:paraproduct_estimates}, as well as tailor-made Schauder estimates for the weighted setting, e.g. Lemma \ref{lem:interpolation_schauder}. The existence and uniqueness of solutions to the KPZ equation follows from a comparison result, Lemma \ref{lem:lower_estimate}. The lower estimate guarantees that the Cole-Hopf solution is the unique paracontrolled solution to the KPZ equation and that the latter depends continuously on the parameters of the equation (Theorem \ref{thm:existence_uniqueness_KPZ}). When considering uniqueness of solutions to PDEs on the whole space it is important to add some weight assumptions on the initial conditions. In this work we assume roughly linear growth of the initial condition (for the precise statement see Assumption \ref{assu:initial_condition} and Table \ref{table:kpz}). This is imposed upon us by the Cole-Hopf transform and the weighted Schauder estimates. These conditions suffice to start the equation in the invariant measure, the two sided Brownian motion \cite{imamura, borodin}.

Section \ref{sect:polymer_measure} addresses the random directed polymer measure $\QQ$. We prove sub-exponential moment estimates (Lemma \ref{lem:expntl_delta_momts}) and we show that the polymer measure is absolutely continuous with respect to a reference measure $\PP^U$ which we refer to as the partial Girsanov transform (cf. \cite[Section 7]{kpz}) and which is in turn singular with respect to the Wiener measure.
We conclude with two characterisations of the solution $h$ to the KPZ equation. The first, via the Feynman-Kac formula (Remark \ref{rem:characterisation_h_feyn_kac}), states that the solution $h$ is the free energy associated to the measure $\QQ$. The second is a variational representation \`a la Boué-Dupuis (Theorem \ref{thm:variational_rep}), cf. \cite{Boue_Dupuis, ustunel}.
The rest of this work is dedicated to technical, yet crucial, results. In particular (Section \ref{sect:abstract_solutions}) we prove a solution theorem for linear SPDEs, which applies to all linear equations studied in \cite{multiplicative, kpz, jorg}.

\begin{remark}
	Our approach uses the Cole-Hopf transform in several crucial steps. But we expect that the transform can be entirely avoided by making stronger use of the variational formulation of the KPZ equation, as soon as we can prove the following conjecture: Let, with the notation of Section~\ref{sec:prelim} below, $X \in C \CC^{-1/2-\varepsilon}_{p(\varepsilon)}$ for all $\varepsilon>0$, let $(\partial_t - \hh \Delta_x) Q = \partial_x X$, and let $Q \reso X \in C \CC^{-\varepsilon}_{p(\varepsilon)}$ for all $\varepsilon >0$. Then we conjecture that the paracontrolled solution $u$ to
	\[
		\partial_t u = \hh \Delta_x u + \partial_x (u X) + f, \ \ u(0) =0,
	\]
	grows sublinearly in the $L^\infty$ norm in space, provided that $f$ does. That is, for some $\delta < 1$
	\[
		\sup_{t \in [0,T]} \sup_{x \in \RR} \frac{|f(t,x)|}{1+|x|^\delta} < \infty \ \ \Rightarrow \ \ \sup_{t \in [0,T]} \sup_{x \in \RR} \frac{|u(t,x)|}{1+|x|^\delta} < \infty.
	\]
	While this conjecture seems very plausible, we are at the moment not able to prove or disprove it, and we leave it for future work.
\end{remark}

%

\section{Preliminaries}\label{sec:prelim}

\subsection{Fourier Transform}

We review basic knowledge and notations regarding the Fourier transform. We define the space of Schwartz functions $\mathcal{S}  \! \left( \RR \right)$ as the space of smooth and rapidly decaying functions. The dual space $\mathcal{S'} \! \left( \RR\right)$ is the space of tempered distributions. Let $\varphi \in \mathcal{S} \! \left( \RR \right) \! ,$ then we define for all $\xi \in \RR$: $$ \hat{\varphi} (\xi) = \FF \varphi  \, (\xi) = \int_{\RR} \! \varphi(x) e^{-i x \xi } dx$$ and for $\varphi \in \mathcal{S'} \! \left( \RR \right)$ we define the Fourier transform in the sense of distributions: $$\ba{ \hat{\varphi} , \psi} =  \ba{ \FF \varphi , \psi}  = \ba{\varphi , \FF \psi}, \ \forall \psi \in \mathcal{S} \! \left( \RR \right).$$
For $\varphi \in \mathcal{S}\! \left( \RR \right) \!$ the Fourier transform has the inverse $$\FF^{-1} \varphi \, (x) = \frac{1}{2 \pi}\int_{\RR} \varphi (\xi) e^{i x \xi} d\xi.$$ 
Since we will consider functions that have more than just polynomial growth at infinity, it is necessary that we go beyond the setting of tempered distributions and consider tempered ultra-distributions. This theory is presented in \cite{triebel} or \cite{Bjorck}. For a simple and hands-on introduction to all the tools we need we refer to \cite{jorg}.  

Consider the function  $$ \omega(x) = |x|^{\delta}, \ \delta \in (0,1)$$ with $\delta$ fixed once and for all. Using this weight, we define spaces of exponentially decaying Schwartz functions and their duals as follows.
\begin{definition}
   For $f \in \SR$ we define the seminorms
   \begin{align*}
       p_{\alpha, \lambda}(f) &  = \sup_{x \in \RR} e^{\lambda \omega(x)} |\partial^{\alpha} f(x)| \\
       \pi_{\alpha, \lambda}(f) &  = \sup_{x \in \RR} e^{\lambda \omega(x)} |\partial^{\alpha} \FF (f) (x)|
   \end{align*}
   for $\lambda > 0$ and $\alpha \in \NN^d$, and the associated locally convex space
   $$
   \SO = \bg{f \in \SR \big\vert  \ p_{\alpha, \lambda}(f) < \infty , \pi_{\alpha, \lambda}(f) < \infty \, \forall \, \lambda > 0, \alpha \in \NN_0^d},
   $$
   and we denote by $\SDO$ its dual, which we call the space of \emph{tempered ultra-distributions}. 
\end{definition}
 
We have the inclusions $$\SO \subsetneq \SR \subsetneq \mathcal{S'} \! \left(\RR \right) \subsetneq \SDO.$$ Finally, we  can define the Fourier transform on the space of tempered ultra-distributions just as before: For $f \in \SDO$ we set
\begin{align*}
    \langle \FF f, \varphi \rangle = \langle f, \FF \varphi \rangle , \ \ \langle \FF^{-1} f, \varphi \rangle & = \langle f, \FF^{-1} \varphi \rangle.
\end{align*}
We have introduced the Fourier Transform for exponentially decaying functions so that we can extend the Littlewood -- Paley theory also to weighted functions. We now fix the weights that we are allowed to use.

\begin{definition}
    We denote by $\boldsymbol{\rho}(\omega)$ the set of all measurable, strictly positive functions $z: \RR \to (0, \infty)$ such that for some $\lambda > 0$ and uniformly over all $x,y \in \RR$
    $$
        z(x)^{-1} \lesssim z(y)^{-1} e^{\lambda \omega (x-y) }.
    $$
    If this bound holds true we say that $z$ is $\omega-$moderate.
\end{definition}

The need for considering $\omega-$moderate weights can be explained by the following calculation: to estimate the convolution $\varphi * f$ we can compute
$$
    \bv{\frac{\varphi * f (x)}{z(x)}} \lesssim \langle \vert \varphi(x - \cdot) e^{\lambda \omega(x- \cdot)} \vert ,  \vert f(\cdot) / z(\cdot)\vert \rangle.
$$
Now we can intuitively bound the last term by some weighted norm of $f$ assuming that $\varphi$ is fixed and rapidly decaying.
\begin{definition}\label{def:weights}
In this work we shall consider the following two families of polynomial (resp. exponential) weights that lie in $\boldsymbol{\rho}(\omega)$,
\begin{align*}
    p(a)(x) & = (1 + |x|)^a, & \ a> 0, \\
    e(l+t) (x) & = \exp ((l + t) |x|^{\delta}), & \ l \in \RR, \ t \ge 0 , \ \delta \in (0, 1).
\end{align*}
\end{definition}
Note that we distinguish the parameters $t$ and $l$ because later we will consider time dependent weights.

\subsection{Littlewood--Paley Theory}

In this section we review the construction of weighted H\"older--Besov spaces. For a comprehensive introduction to Littlewood--Paley theory we refer to \cite{bah}. For a treatment of weighted spaces we also refer to \cite{jorg, Martin2018}. Following their constructions we fix a dyadic partition of unity generated by two smooth functions $\rho_{-1}$ and $\rho,$ that belong to $\SO$ and are supported in a ball around the origin $\mathcal{B}$ and and annulus around the origin $\mathcal{A}$, respectively. We then define $\rho_j(x) = \rho(2^{-j} x), j \ge 0$. Now we define the Littlewood--Paley blocks: for  $\varphi \in \SDO$ and $j \ge {-}1$ let $\Delta_j \varphi =  \FF^{-1} \left( \rho_j \widehat{\varphi} \right).$ We will use the following notation for paraproducts: $$S_i f = \sum\limits_{j \le i-1} \Delta_j f, \ \ f \para g =\sum\limits_{i} S_{i-1}\Delta_i g, \ \ f \reso g = \sum\limits_{|i - j | \le 1} \Delta_i f \Delta_j g. $$
\begin{definition}[Hölder--Besov spaces] For any $\alpha \in \RR$ and weight function $z \in \boldsymbol{\rho}(\omega)$ we define the space: $$ \CC^{\alpha}_z = \bg{ \left. \varphi \in \SDO \ \right| \  \norm{ \varphi }_{\CC^{\alpha}_z} = \norm{ 2^{\alpha j} \norm{ \Delta_j \varphi / z }_{L^{\infty}} }_{\ell^{\infty}} < \infty }.$$
We denote with $\mathcal{C}^{\alpha}$ for the space $\CC^{\alpha}_z$ with weight $z = 1$ and use the norm
	$$
	\norm{f}_{\infty , z} = \sup_{x \in \RR} \left\vert \frac{f(x)}{z(x)} \right\vert.
	$$
\end{definition}

The following result is of central importance in Littlewood--Paley theory. The classical proof can be found for example in \cite[Lemma 2.1]{bah}. Here we will just discuss the proof in the weighted case. The main difference is that the inequality does not hold uniformly over all scaling factors $\lambda$. Instead we have to assume that $\lambda$ is bounded away from zero.

\begin{proposition}[Bernstein inequality]\label{prop:bernstein_inequality}
	Let $\mathcal{B}$ be a ball about the origin and let $k \in \NN$ and $z \in \boldsymbol{\rho}(\omega)$ a weight function. Then for any $\lambda \ge \lambda_0 > 0$ and $f \in L^{\infty}_z$ we have:
	 If $\mathrm{supp}(\FF f) \subset \lambda \mathcal{B},$ then
		\begin{align*}
			\max_{| \mu | = k} \norm{\partial^{\mu} f}_{\infty ,z} \lesssim_{\lambda_0, k} \lambda^{k}\norm{f}_{\infty, z}.
		\end{align*}
\end{proposition}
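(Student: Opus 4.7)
The plan is to mirror the classical proof via a smooth Fourier multiplier, but carrying the weight through the convolution by exploiting the $\omega$-moderate property. First I would fix once and for all a smooth cutoff $\psi \in \SO$ that is supported in a dilated ball $2\mathcal{B}$ and equals $1$ on $\mathcal{B}$; such a $\psi$ exists by the standard construction in the ultradifferentiable setting (cf.\ the references to \cite{triebel, Bjorck, jorg}). Since $\mathrm{supp}(\FF f) \subset \lambda \mathcal{B}$, Fourier inversion gives $f = \check{\psi}_\lambda * f$, where $\check{\psi}_\lambda(x) = \lambda\, \check{\psi}(\lambda x)$ and $\check{\psi} = \FF^{-1}\psi$. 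Differentiating, we then have $\partial^\mu f = (\partial^\mu \check{\psi}_\lambda) * f$ with $\partial^\mu \check{\psi}_\lambda(x) = \lambda^{k+1} (\partial^\mu \check{\psi})(\lambda x)$.

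Next I would insert the weight. Because $z \in \boldsymbol{\rho}(\omega)$ there exists $\mu_0 > 0$ such that $z(x)^{-1} \lesssim z(y)^{-1} e^{\mu_0 \omega(x-y)}$ uniformly in $x,y$. Writing out the convolution and applying this bound pointwise yields
\begin{equation*}
\left| \frac{\partial^\mu f(x)}{z(x)} \right|
\lesssim \int_{\RR} |\partial^\mu \check{\psi}_\lambda(x-y)|\, e^{\mu_0 \omega(x-y)}\, \frac{|f(y)|}{z(y)}\, dy
\;\le\; \|f\|_{\infty,z} \int_{\RR} |\partial^\mu \check{\psi}_\lambda(y)|\, e^{\mu_0 \omega(y)}\, dy.
\end{equation*}
This reduces the proposition to showing that the integral on the right is bounded by a constant times $\lambda^k$, uniformly for $\lambda \ge \lambda_0$.

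For this last step I would change variables $u = \lambda y$, obtaining
\begin{equation*}
\int_{\RR} |\partial^\mu \check{\psi}_\lambda(y)|\, e^{\mu_0 \omega(y)}\, dy
= \lambda^k \int_{\RR} |(\partial^\mu \check{\psi})(u)|\, e^{\mu_0 |u/\lambda|^\delta}\, du
\le \lambda^k \int_{\RR} |(\partial^\mu \check{\psi})(u)|\, e^{(\mu_0/\lambda_0^\delta) |u|^\delta}\, du,
\end{equation*}
where I used $\lambda \ge \lambda_0$ together with $\delta \in (0,1)$. Since $\psi \in \SO$, its inverse Fourier transform $\check{\psi}$ lies in $\SO$ as well, so $\partial^\mu \check{\psi}$ decays faster than $e^{-\kappa \omega(u)}$ for every $\kappa > 0$; in particular the remaining integral is a finite constant depending only on $\mu_0$, $\lambda_0$ and $\psi$. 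Combining these three steps gives $\|\partial^\mu f\|_{\infty,z} \lesssim_{\lambda_0,k} \lambda^k \|f\|_{\infty,z}$, as required.

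The main obstacle is precisely the explicit dependence on $\lambda_0$: in the unweighted case the exponential factor is absent and one may take $\lambda$ arbitrarily small, but here the tail $e^{\mu_0 |u/\lambda|^\delta}$ blows up as $\lambda \to 0$, so the restriction $\lambda \ge \lambda_0$ is essential. Everything else is routine provided one has secured the existence of a cutoff $\psi$ living in $\SO$ (and not merely in $\SR$), which is the reason the Littlewood--Paley decomposition in the weighted setting is built from functions in $\SO$ in the first place.
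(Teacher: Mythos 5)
Your proposal is correct and follows essentially the same route as the paper: multiply on the Fourier side by a compactly supported cutoff $\psi \in \SO$ equal to $1$ on $\mathcal{B}$, write $\partial^\mu f$ as a convolution with the rescaled kernel, pull the weight through via the $\omega$-moderate bound, and after the change of variables use $\lambda \ge \lambda_0$ with $\delta \in (0,1)$ to dominate the exponential factor by $e^{(\mu_0/\lambda_0^\delta)|u|^\delta}$, which the sub-exponential decay of $\FF^{-1}\psi \in \SO$ absorbs. This matches the paper's argument step for step, including the identification of where the restriction $\lambda \ge \lambda_0$ is genuinely needed.
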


\begin{proof}
Choose a compactly supported function $\psi \in \SO$ with $\psi = 1$ on $\mathcal{B}$  and set $\psi_{\lambda} (\cdot) = \psi(\lambda^{-1} \cdot)$. Then
	\begin{align*}
		\partial^{\mu}f = \partial^{\mu} \FF^{-1}\left( \psi_{\lambda} \FF f \right) = (2 \pi)^{\frac{d}{2}}f * \partial^{\mu} \FF^{-1}\psi_{\lambda}.
	\end{align*}
	Now it is immediate to see that $\FF^{-1}\psi_{\lambda}(x) = \lambda^{d}\FF^{-1}\psi (\lambda x)$, and hence $$ \partial^{\mu} \FF^{-1}\psi_{\lambda} (x) = \lambda^{d + |\mu|} \partial^{\mu}\FF^{-1}\psi (\lambda x).$$ Moreover, since $z \in \boldsymbol{\rho}(\omega)$ there exists $\nu > 0$ such that 
	$$
	\frac{1}{z(x)} \lesssim \frac{e^{\nu \omega(x-y)}}{z(y)} \le \frac{e^{c \omega(\lambda(x-y))}}{z(y)},
	$$
	where in the last step we used that $\omega(x) = |x|^\delta$, $\lambda \ge \lambda_0$ and $c = \nu/\lambda_0^{\delta}$. So eventually we can estimate:
	\begin{align*}
		\norm{\partial^{\mu}f}_{\infty , z} & \lesssim_{\lambda_0}  \bigg\| \int_{\RR} \frac{e^{ c \omega(\lambda(x-y))}}{z(y)}\bv{f(y) \lambda^{d + |\mu|} \partial^{\mu}\FF^{-1}\psi (\lambda (x-y))} dy \bigg\|_{\infty} \\
		& \lesssim \lambda^{k}  \norm{f}_{\infty, z} \big\| e^{ c \omega(\lambda( \cdot ))}\partial^{\mu}\FF^{-1}\psi (\lambda (\cdot)) \lambda^{d}  \big\|_{L^1}\\
		& \lesssim_{\psi, \delta, \theta, z} \norm{f}_{\infty, z} \lambda^{k}
	\end{align*}
	where in the last step we changed variables and used the assumption that $\psi$ is in $\SO$ and the growth assumptions on $z$ to conclude that the second norm is finite uniformly over all $\lambda \ge \lambda_0$.
\end{proof}

A classical consequence of this result is the following.
\begin{corollary}\label{cor:classical_besov_characterisation}
    If $f \in \CC^{\alpha}_z$ then $\partial^{\mu}f \in \CC^{\alpha - |\mu|}_z$ with $$\norm{\partial^{\mu}f}_{\CC^{\alpha - |\mu|}_z} \lesssim \norm{f}_{\CC^{\alpha}_z}$$ uniformly over all distributions $f.$
\end{corollary}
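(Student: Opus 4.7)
The plan is to reduce the statement to a block-by-block application of the Bernstein inequality from Proposition \ref{prop:bernstein_inequality}. The first observation is that the Fourier multipliers $\Delta_j$ and $\partial^\mu$ commute (both are convolution operators with Schwartz kernels on $\SDO$), so $\Delta_j \partial^\mu f = \partial^\mu \Delta_j f$. Therefore, bounding $\|\partial^\mu f\|_{\CC^{\alpha-|\mu|}_z}$ amounts to estimating $\|\partial^\mu \Delta_j f\|_{\infty,z}$ for every $j \ge -1$, and then comparing the weighted $\ell^\infty_j$ norm.

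For $j \ge 0$, the Fourier support of $\Delta_j f$ is contained in the dyadic annulus $2^j \mathcal{A}$, which in turn sits inside a dilated ball $2^{j+c}\mathcal{B}$. Since $2^{j+c} \ge 2^c =: \lambda_0 > 0$, Proposition \ref{prop:bernstein_inequality} applies and yields
\[
\|\partial^\mu \Delta_j f\|_{\infty,z} \lesssim 2^{j|\mu|}\|\Delta_j f\|_{\infty,z}.
\]
Multiplying both sides by $2^{j(\alpha-|\mu|)}$ gives $2^{j(\alpha-|\mu|)}\|\Delta_j \partial^\mu f\|_{\infty,z} \lesssim 2^{j\alpha}\|\Delta_j f\|_{\infty,z} \le \|f\|_{\CC^\alpha_z}$. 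For the low-frequency block $j=-1$, the Fourier support is already in the fixed ball $\mathcal{B}$, so the Bernstein inequality (with $\lambda=\lambda_0=1$) gives $\|\partial^\mu \Delta_{-1} f\|_{\infty,z} \lesssim \|\Delta_{-1}f\|_{\infty,z} \lesssim \|f\|_{\CC^\alpha_z}$.

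Taking the supremum over $j \ge -1$ of $2^{j(\alpha-|\mu|)}\|\Delta_j \partial^\mu f\|_{\infty,z}$ then gives the claimed inequality $\|\partial^\mu f\|_{\CC^{\alpha-|\mu|}_z} \lesssim \|f\|_{\CC^\alpha_z}$. There is no real obstacle here: all the weighted analysis was absorbed into the proof of Proposition \ref{prop:bernstein_inequality}. The only minor point to watch is the distinction between the ball and the annulus for different values of $j$, handled by the two cases above, and the fact that Bernstein is only stated uniformly for $\lambda \ge \lambda_0$, which is automatic for dyadic scales $2^j$ with $j\ge 0$ and trivial for the single low-frequency block.
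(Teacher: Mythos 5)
Your proof is correct and is exactly the argument the paper has in mind: the corollary is stated as an immediate ("classical") consequence of the weighted Bernstein inequality, obtained block-by-block via $\Delta_j\partial^\mu f=\partial^\mu\Delta_j f$ and the dyadic scales $\lambda=2^j\ge\lambda_0$, just as you do. The handling of the low-frequency block $j=-1$ and of the annulus-inside-a-dilated-ball for $j\ge 0$ is the right way to make the uniformity-in-$\lambda$ caveat of Proposition \ref{prop:bernstein_inequality} harmless.
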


Moreover we can also deduce the characterisation of H\"older--Besov spaces.
\begin{corollary}[\cite{Martin2018}, Lemma~2.1.23]\label{cor:caracterisation_besov_holder}
	For any $\alpha \in (0, \infty) \setminus \mathbb{N}$ and $z \in \boldsymbol{\rho}(\omega)$ we find the equivalence between the following norms:
	$$
		\norm{ f }_{\CC^{\alpha}_z} \asymp \norm{ f}_{\infty, z} + \sum_{\substack{ k \in \mathbb{N}^d \\ |k| = \lfloor \alpha \rfloor } } \left( \| \partial^k f \|_{\infty, z} + \sup_{x} \sup_{|x - y| \le 1}  \frac{|\partial^k f(x) - \partial^k f(y)|}{z(x)|x - y|^{\alpha - \lfloor \alpha \rfloor}} \right)
	$$
	where with $|k|$ we denote the $\ell^1$ norm: $|k| = \sum_{i = 1}^d k^i$.
\end{corollary}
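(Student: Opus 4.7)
The result is the weighted analogue of the classical Besov/H\"older characterisation, and the proof follows the unweighted argument with the $\omega$-moderateness of $z$ accounting for the weight. The key preliminary observation is that if $z \in \boldsymbol{\rho}(\omega)$ then for $|x-y|\le 1$ one has $z(x)^{-1} \lesssim z(y)^{-1} e^{\lambda\omega(x-y)} \lesssim z(y)^{-1}$, so inside the H\"older seminorm the weight at $x$ and at $y$ are comparable. I would prove the two directions separately.

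For the bound $\text{RHS} \lesssim \|f\|_{\CC^\alpha_z}$, first estimate $\|f\|_{\infty,z} \le \sum_{j\ge -1}\|\Delta_j f\|_{\infty,z} \le \sum_j 2^{-\alpha j}\|f\|_{\CC^\alpha_z}$, which converges since $\alpha>0$. By Corollary \ref{cor:classical_besov_characterisation}, for $|k|=\lfloor\alpha\rfloor$ we have $\partial^k f \in \CC^{\alpha-\lfloor\alpha\rfloor}_z$ with the corresponding norm bound, so the pointwise term $\|\partial^k f\|_{\infty,z}$ is handled as above. For the H\"older seminorm, it suffices (after differentiating) to consider $\alpha \in (0,1)$, bound $\|f\|_{\CC^\alpha_z}$ from above, and prove the estimate for $f$ itself. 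Fix $x$ with $|x-y|\le 1$, pick $j_0$ with $2^{-j_0}\asymp |x-y|$, and split $f(x)-f(y) = \sum_{j\le j_0}(\Delta_j f(x)-\Delta_j f(y)) + \sum_{j>j_0}(\Delta_j f(x)-\Delta_j f(y))$. For $j\le j_0$, use the mean value theorem together with Bernstein (Proposition \ref{prop:bernstein_inequality}) and the comparability $z(x)\asymp z(\theta)$ for $\theta$ on the segment $[x,y]$ (since $|x-\theta|\le 1$) to bound each term by $|x-y|\cdot 2^j\cdot 2^{-\alpha j}\|f\|_{\CC^\alpha_z}z(x)$; for $j>j_0$, bound each term individually by $2^{-\alpha j}\|f\|_{\CC^\alpha_z}z(x)$. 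Summing both geometric series in $j$ around $j_0$ yields $|x-y|^\alpha z(x)\|f\|_{\CC^\alpha_z}$.

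For the bound $\|f\|_{\CC^\alpha_z} \lesssim \text{RHS}$, denote the RHS by $N(f)$ and estimate $\|\Delta_j f\|_{\infty,z}$. For $j=-1$, use $\Delta_{-1} f = \FF^{-1}\rho_{-1} * f$ together with $z(x)^{-1} \lesssim z(x-y)^{-1} e^{\lambda\omega(y)}$ and the rapid decay of $\FF^{-1}\rho_{-1} \in \SO$ to conclude $\|\Delta_{-1} f\|_{\infty,z} \lesssim \|f\|_{\infty,z}$. For $j\ge 0$, exploit cancellation: since $\rho_j$ vanishes in a neighborhood of the origin, $\int y^\beta \FF^{-1}\rho_j(y)\,dy = 0$ for every multi-index $\beta$, so subtracting the Taylor polynomial $T^{\lfloor\alpha\rfloor}_x f$ at $x$ gives
\[
\Delta_j f(x) = \int \FF^{-1}\rho_j(y)\bigl[f(x-y)-T^{\lfloor\alpha\rfloor}_x f(x-y)\bigr]\,dy.
\]
The integral form of the Taylor remainder expresses the bracket as an integral of $\partial^k f(x-ty)-\partial^k f(x)$ with $|k|=\lfloor\alpha\rfloor$, so whenever $|y|\le 1$ the H\"older seminorm bounds it by $|y|^\alpha N(f) z(x)$, while for $|y|>1$ we use $|\partial^k f(x-ty)|\lesssim z(x-ty) N(f) \lesssim z(x) e^{\lambda\omega(y)} N(f)$. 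Writing $\FF^{-1}\rho_j(y)=2^j\FF^{-1}\rho(2^j y)$ and using that $\FF^{-1}\rho \in \SO$ is rapidly decaying (so $\int |\FF^{-1}\rho_j(y)|\,|y|^\alpha\,dy \lesssim 2^{-\alpha j}$ and the remaining large-$|y|$ term produces any negative power of $2^j$), we obtain $\|\Delta_j f\|_{\infty,z}\lesssim 2^{-\alpha j} N(f)$, as required.

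The main technical difficulty, as in the unweighted case, is the cancellation argument with the Taylor polynomial for $j\ge 0$, but the weighted setting introduces one extra subtlety: the large-$|y|$ piece where the weight $z(x-y)$ may be exponentially larger than $z(x)$. This is absorbed by the fact that $\FF^{-1}\rho \in \SO$ decays faster than any $e^{\lambda\omega(\cdot)}$, so the $\omega$-moderate growth of $z$ is harmless. Apart from this point the argument is essentially the classical one and I expect no further obstacle.
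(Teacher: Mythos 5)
Your argument is correct: both directions are the standard Littlewood--Paley/H\"older characterisation, with the weight handled exactly where it should be (moderateness of $z$ on the unit scale for the frequency splitting at $2^{-j_0}\asymp|x-y|$, and the fact that $\FF^{-1}\rho\in\SO$ decays faster than any $e^{\lambda\omega}$ to absorb the large-$|y|$ tail in the Taylor-cancellation step). The paper itself gives no proof and simply cites \cite{Martin2018}, Lemma~2.1.23, and your write-up is essentially the argument that reference carries out, so there is nothing to compare beyond noting that you have supplied the proof the paper outsources.
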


We conclude this section with a first set of paraproduct estimates. For this reason we define the ``commutator'' $C(f,g,h) = (f \para g) \reso h - f (g \reso h).$

\begin{lemma}\label{lem:paraproduct_estimates} Consider $f \in \mathcal{C}^{\alpha}_{ z_1}$, $h \in \mathcal{C}^{\beta}_{ z_2}$ and $g \in \mathcal{C}^{\gamma}_{ z_3}$ and let us write $z = z_1 \cdot z_2,$ where $z_i \in \boldsymbol{\rho}(\omega).$ Then
\begin{align*}
	\norm{f \para g}_{\beta, z} & \lesssim \norm{f}_{\infty, z_1} \norm{g}_{\beta, z_2}, \\
	\norm{f \para g}_{\alpha + \beta, z} & \lesssim \norm{f}_{\alpha, z_1} \norm{g}_{\beta, z_2} & \text{ if } \alpha < 0, \\
	\norm{f \reso g}_{\alpha + \beta, z} & \lesssim  \norm{f}_{\alpha, z_1} \norm{g}_{\beta, z_2} & \text{ if } \alpha + \beta > 0  .
\end{align*}
We also have that for $z = z_1 \cdot z_2 \cdot z_3$:
\begin{align*}
    \| f \para (g \para h) {-} (fg)\para h \|_{\alpha + \beta, z }& \lesssim \| f \|_{\alpha, z_1} \| g \|_{\alpha, z_2} \|h \|_{\beta, z_3} & \text{ if } \alpha > 0, \beta \in \RR, \\ 
	\| C(f,g,h) \|_{\beta + \gamma, z}  & \lesssim \norm{f}_{\alpha, z_1}\norm{g}_{\beta, z_2}\norm{h}_{\gamma, z_3} & \text{ if } \alpha{+}\beta{+}\gamma > 0 \text{ and } \beta{+}\gamma \neq 0.
\end{align*}
\end{lemma}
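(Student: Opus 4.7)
The plan is to adapt the classical Bony paraproduct estimates to the weighted setting, reducing everything to two building blocks: the weighted Bernstein inequality (Proposition~\ref{prop:bernstein_inequality}) and the $\omega$-moderate splitting $z(x)^{-1}\lesssim z(y)^{-1}e^{\lambda\omega(x-y)}$. Both $S_{i-1}$ and $\Delta_j$ are convolutions with smooth, well-localised kernels at dyadic scales; combining the splitting with the fact that $\omega(x)=|x|^\delta$ with $\delta<1$ is subadditive and is compressed by rescaling, a change of variables shows that integrating $e^{\lambda\omega(x-y)}$ against such a rescaled kernel stays bounded uniformly in the scale. This yields at once the weighted counterparts $\|S_{i-1}f\|_{\infty,z_1}\lesssim\|f\|_{\infty,z_1}$ and $\|\Delta_i f\|_{\infty,z_1}\lesssim 2^{-i\alpha}\|f\|_{\CC^\alpha_{z_1}}$ of the two facts used repeatedly in what follows.

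From here the first three inequalities are spectral bookkeeping. For $f\para g=\sum_i S_{i-1}f\,\Delta_i g$, each summand is spectrally supported in an annulus of size $2^i$, so $\Delta_j(f\para g)$ only receives contributions from $|i-j|\le N$; splitting $(z_1z_2)^{-1}$ pointwise and invoking the two building blocks gives the first inequality via $\|S_{i-1}f\|_{\infty,z_1}\lesssim\|f\|_{\infty,z_1}$, and the second via the geometric sum $\|S_{i-1}f\|_{\infty,z_1}\lesssim\sum_{k\le i-2}2^{-k\alpha}\|f\|_{\CC^\alpha_{z_1}}\lesssim 2^{-i\alpha}\|f\|_{\CC^\alpha_{z_1}}$, which converges precisely because $\alpha<0$. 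For $f\reso g=\sum_{|i-k|\le 1}\Delta_i f\,\Delta_k g$, each block is spectrally supported in a ball of size $2^{\max(i,k)+1}$, so only $\max(i,k)\ge j-N$ contributes, and the resulting sum $\sum_{i\ge j-N-1}2^{-i(\alpha+\beta)}$ converges thanks to $\alpha+\beta>0$, reproducing the factor $2^{-j(\alpha+\beta)}$.

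For the two commutator-type estimates the new ingredient is a first-order Taylor expansion of $f$ (and, in the associativity case, also of $g$), which is legitimate because $\alpha>0$ and Corollary~\ref{cor:caracterisation_besov_holder} identifies $\CC^\alpha_{z_1}$ with a weighted Hölder space. For $f\para(g\para h)-(fg)\para h$, expanding $g\para h=\sum_i S_{i-1}g\,\Delta_i h$, pulling $f$ inside and regrouping presents the correction as a double paraproduct-type remainder involving pointwise differences such as $f(y)-S_{i-1}f(x)$ and $g(y)-S_{i-1}g(x)$; each such difference gains a factor $2^{-i\alpha}$ from the Taylor/Bernstein estimate, and combined with $\|\Delta_i h\|_{\infty,z_3}\lesssim 2^{-i\beta}$ this produces the claimed bound. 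For $C(f,g,h)$, I follow the standard rewriting in which, after expanding both $(f\para g)\reso h$ and $f(g\reso h)$ in Littlewood--Paley blocks, the difference collapses into a sum of terms of the form $(S_{i-1}f-f)\cdot\Delta_i g\,\Delta_k h$ with $|i-k|\le 1$; a pointwise Taylor expansion of $f$ against the $\Delta_i$ kernel extracts the gain $2^{-i\alpha}$, and the assumption $\alpha+\beta+\gamma>0$ together with $\beta+\gamma\neq 0$ secures the convergence of the ensuing dyadic sums while ruling out a logarithmic boundary case in the partial summation.

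The principal difficulty is not conceptual but a careful bookkeeping of weights across convolutions at varying dyadic scales: at every step one must push a factor $e^{\lambda\omega(x-y)}$ through a kernel of scale $2^j$ and obtain a bound uniform in $j$, and this is exactly where $\delta<1$ (and the ensuing subadditivity of $\omega$) enters. Once these manipulations are isolated in the two building-block estimates above, the rest of the proof parallels the unweighted arguments in~\cite{bah} and their weighted variants in~\cite{jorg, Martin2018}.
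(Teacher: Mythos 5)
Your sketch is correct and is essentially the argument behind this lemma: the paper gives no proof of its own but simply cites Lemmas~4.2 and~4.4 of \cite{jorg}, and those proofs consist of exactly the weighted Littlewood--Paley/Bony bookkeeping you describe, with the $\omega$-moderateness and subadditivity of $|x|^{\delta}$ yielding the uniform-in-scale kernel bounds for $S_{i-1}$ and $\Delta_j$, and the weighted H\"older characterisation supplying the $2^{-i\alpha}$ gains in the two commutator-type estimates. The only point worth flagging is that your first-order Taylor step in the bound for $C(f,g,h)$ implicitly uses $\alpha\in(0,1)$, which is consistent with how the estimate is applied throughout the paper.
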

\begin{proof}
	The first three estimates are shown in~\cite[Lemma~4.2]{jorg}. The estimate for the commutator $C$ is from \cite[Lemma~4.4]{jorg}.
\end{proof}

\subsection{Time Dependence}
Throughout this work we mostly use an arbitrary but finite time horizon $T> 0$ which will be fixed from now on. When we change the time horizon we will explicitly state it. We define $\LLL = \partial_t - \hh \Delta$ and the associated semigroup: $$P_t f (x)= \int\limits_{\RR}\frac{1}{(2 \pi t)^{\frac{d}{2}}} e^{-\frac{|x-y|^2}{2t}} f(y) dy.$$
In this section the aim is to encode the following information:
\begin{enumerate}
    \item Time dependent weights.
    \item Parabolic space-time regularity.
    \item Blow-up at time $t = 0.$
\end{enumerate}
Here as before we follow the notation of \cite{jorg}. For an arbitrary horizon $T_r \ge 0$ we denote by $$X = (X(s))_{s \in [0, T_r]}$$ an increasing sequence of Banach spaces. A typical example could be the sequence $X(s) = \CC^{\alpha}_{e(l + s) }.$ In fact we will use only two kinds of time dependent weights, which we will refer to through the following abuse of notation:
\begin{align*}
	e(l+t)p(a)& = (e(l + t)(\cdot) p(a)(\cdot) )_{t \in [0, T]}, \\
	e(l+t) &= (e(l + t)(\cdot))_{t \in [0, T]},
\end{align*}
where the $t$ on the left-hand side is only a formal way of representing time dependence. In applications it will always be clear whether $t$ is fixed or whether we are considering a time-dependent weight.
Now we define the following space of functions for given $\beta \ge 0$:
\begin{align*}
    \MM^{\beta} X ([T_{\ell}, T_r]) = \big\{ f & :([T_{\ell}, T_r]) \to \mathcal{S}'_{\omega} \text{ such that } \\
	& t\mapsto t^{\beta}f(t) \text{ is continuous from }  [T_{\ell}, T_r] \text{ to } X(T_r) \text{ and } \\
    & \norm{f}_{\mathcal{M}^{\beta}(X)} =  \sup_{ T_\ell < t \le T_r} \norm{t^{\beta}f(t)}_{X(t)} < \infty \big\}
\end{align*}
We also write $C X ([T_{\ell}, T_r])$ for the space $\MM^{0} X ([T_{\ell}, T_r])$. Similarly we can define H\"older continuity through the following norm for $\alpha \in (0,1)$:
$$
    \norm{f}_{C^{\alpha}X ([T_{\ell}, T_r])} = \sup_{T_\ell < t \le T_r} \norm{f(t)}_{X(t)} + \sup_{T_\ell < s,t \le T_r} \frac{\norm{f(t) - f(s)}_{X(t \vee s)}}{|t-s|^{\alpha}}.
$$
So eventually we find the parabolically scaled H\"older spaces with respect to a possibly time dependent $z$ and of regularity $\alpha \in (0,2)$ with a blow-up of order $\beta$ in zero and for parameters $0 \le T_\ell \le T_{r}$: 
\begin{align*}
 \LL^{\beta, \alpha}_{z}([T_{\ell}, T_r]) = \big\{ f& : [T_{\ell},T_r] \to \mathcal{S'}_{ \! \! \omega} \text{, such that} \\
 & \norm{f}_{\LL^{\beta,\alpha}_{z}} = \norm{t \mapsto t^{\beta}f(t)}_{C^{\frac{\alpha}{2}}L^{\infty}_z ([T_{\ell}, T_r])} + \norm{f}_{\MM^{\beta} \CC^{\alpha}_{z}([T_{\ell}, T_r])} <{+}\infty \big\}.
\end{align*}
In general we omit from writing the dependence on the time interval $[0, T]$. Finally, we will write $\LL^{\alpha}_z$ for $\LL^{0, \alpha}_z.$

With these definitions at hand we are ready to go on with our theory. First, we introduce parabolically scaled paraproducts. Let $\varphi: \RR \to \RR_{\ge 0}$ be a smooth function with compact support and total mass $1$ which is \textit{non-predictive}, that is: $$\mathrm{supp}(\varphi) \subset [0, + \infty).$$
Then for any continuous function $f: \RR_{\ge 0} \to X$ (here $X$ is any Banach space) and $i\ge 0$ we define the operator $$Q_i f (t) = \int\limits_{\RR} 2^{2i}\varphi(2^{2i}(t-s))f(s \vee 0) ds = \int\limits_{- \infty}^t 2^{2i}\varphi(2^{2i}(t-s))f(s \vee 0) ds.$$

\begin{remark}
	As in~\cite{kpz,jorg} we silently identify $Q_i f$ with $Q_i \mathds{1}_{t > 0} f$ if $f$ has a blow-up in zero, i.e. if $f \in \MM^{\beta} X$ for $\beta > 0$.
\end{remark}

We have suggestively called $\varphi$ non-predictive because thanks to the condition on its support, $Q_i f(t)$ depends only on $f\big\vert_{[0,t]}$.
Now we can introduce the parabolically scaled paraproduct $$f \ppara g = \sum_{i} (S_{i-1} Q_i f) \Delta_i g.$$ With this definition at hand we obtain a second set of paraproduct estimates. 

\begin{lemma}\label{lem:parabolic_paraproduct_estimates}
Consider $\alpha \in \RR, \gamma <0, \beta \ge 0$. Choose two, possibly time dependent, weights $z_i: \RR_{\ge 0 } \to \boldsymbol{\rho}(\omega),$ for $i = 1,2$ such that $z_i$ is pointwise increasing in time and write $z(t) = z_1(t)  z_2(t)$. Then
\begin{align*}
	t^{\beta}\| f \ppara g(t) \|_{\CC^{\alpha}_{z(t)}} & \lesssim \norm{f}_{\MM^{\beta}L^{\infty}_{z_1}([0,t])} \norm{g(t)}_{\CC^{\alpha}_{z_2(t)}}, \\
 	t^{\beta} \|f \ppara g(t) \|_{\CC^{\alpha {+} \gamma}_{z(t)}} &\lesssim \norm{f}_{ \MM^{\beta}\CC^{\gamma}_{z_1}([0,t])} \norm{g(t)}_{\CC^{\alpha}_{z_2(t)}}.
\end{align*}
Moreover, for $\alpha \in (0,2)$ we find the following estimate
\begin{align*}
	\norm{f \ppara g}_{\LL^{\beta,\alpha}_{z}} \lesssim \norm{f}_{\LL^{\beta, \delta}_{z_1}}(\norm{g}_{C\CC^{\alpha}_{z_2}} + \norm{ \LLL g }_{C\CC^{\alpha - 2}_{z_2}})
\end{align*}
for any $\delta > 0.$ Finally, we also have the following commutation results:
\begin{align*}
	& t^{\beta}\|(\LLL (f \ppara g) -  f \ppara (\LLL g) )(t)\|_{\mathcal{C}^{\alpha + \gamma - 2}_{z(t)}}  \lesssim \norm{f}_{\LL^{\beta,\gamma}_{z_1}([0,t])}\norm{g(t)}_{\mathcal{C}^{\alpha}_{z_2(t)}}, \\
	& t^{\beta}\|(f \ppara g - f \para g)(t)\|_{\CC^{\alpha + \gamma}_{z(t)}} \lesssim \norm{f}_{\LL^{\beta, \gamma}_{z_1}([0,t])} \norm{g(t)}_{\CC^{\alpha}_{z_2(t)}}.
\end{align*}
\end{lemma}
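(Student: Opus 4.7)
The overall plan is to reduce every assertion to careful control of the basic blocks $(S_{i-1} Q_i f)(t)\Delta_i g(t)$ that constitute $f \ppara g$. Since $S_{i-1} Q_i f$ has Fourier support in a ball of size $\sim 2^{i-1}$ while $\Delta_i g$ is spectrally localized at scale $2^i$, the product is supported in a corresponding annulus at scale $2^i$, and Bernstein's inequality (Proposition~\ref{prop:bernstein_inequality}) reduces each $\CC^\alpha_z$-norm to weighted $L^\infty$-bounds on finitely many such blocks. Throughout one exploits $z(t) = z_1(t)z_2(t)$ together with the $\omega$-moderateness of both weights to multiply local estimates.

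For the first two inequalities the workhorse is a bound on $\|Q_i f(t)\|_{\infty, z_1}$. The non-predictivity of $\varphi$ gives
\[
	\|Q_i f(t)\|_{\infty, z_1}
	\lesssim \int_0^t 2^{2i}\varphi(2^{2i}(t-s)) s^{-\beta}\,ds \cdot \|f\|_{\MM^\beta L^\infty_{z_1}([0,t])} \lesssim t^{-\beta} \|f\|_{\MM^\beta L^\infty_{z_1}([0,t])},
\]
where the last step uses the implicit restriction $\beta < 1$ and a case split between $2^{-2i}\lesssim t$ and $2^{-2i} \gtrsim t$. Combined with $\|\Delta_i g(t)\|_{\infty, z_2}\lesssim 2^{-\alpha i}\|g(t)\|_{\CC^\alpha_{z_2(t)}}$ this gives the first estimate. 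For the second, since $\gamma < 0$ the geometric series $\sum_{k < i} 2^{-\gamma k}$ converges at the top and yields $\|S_{i-1}Q_i f(t)\|_{\infty, z_1}\lesssim 2^{-\gamma i}t^{-\beta}\|f\|_{\MM^\beta \CC^\gamma_{z_1}([0,t])}$, hence the second inequality.

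For the commutator with $\LLL$, the direct identity
\[
	\LLL(f \ppara g) - f \ppara(\LLL g) = \sum_i \LLL(S_{i-1} Q_i f)\, \Delta_i g - \sum_i \nabla(S_{i-1}Q_i f)\cdot \nabla \Delta_i g
\]
reduces the problem to estimating time and space derivatives of $S_{i-1} Q_i f$. Spatial derivatives cost $2^i$ per order by Bernstein; the time derivative is the delicate part. Using $\int \varphi' = 0$ one rewrites $\partial_t Q_i f(t) = \int 2^{4i}\varphi'(2^{2i}(t-s))[f(s)-f(t)]\,ds$ (with a boundary term that vanishes when $\varphi(0) = 0$) and exploits the $C^{\gamma/2}$-time regularity of $f$ to gain a factor $2^{-\gamma i}$, obtaining $\|\partial_t S_{i-1} Q_i f(t)\|_{\infty, z_1}\lesssim 2^{(2-\gamma)i}t^{-\beta}\|f\|_{\LL^{\beta,\gamma}_{z_1}([0,t])}$. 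Each summand is then Fourier-localized at scale $2^i$ with $L^\infty_{z(t)}$-norm of order $2^{(2-\gamma-\alpha)i}t^{-\beta}\|f\|\|g\|$, which gives the $\CC^{\alpha+\gamma-2}_{z(t)}$-bound. The last estimate is proven in the same spirit: $f \ppara g - f \para g = \sum_i S_{i-1}(Q_i f - f)(t)\Delta_i g(t)$ and the normalization $\int \varphi = 1$ yield $\|Q_i f(t) - f(t)\|_{\infty, z_1}\lesssim 2^{-\gamma i}t^{-\beta}\|f\|_{\LL^{\beta,\gamma}_{z_1}}$, which together with the spectral localization at scale $2^i$ gives the claimed $\CC^{\alpha + \gamma}_{z(t)}$-bound. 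The remaining $\LL^{\beta,\alpha}_z$-estimate is obtained by combining the first inequality in $\MM^\beta \CC^\alpha_z$ with control of $\LLL(f \ppara g)$ through the decomposition $\LLL(f \ppara g) = [\LLL, f \ppara]g + f \ppara(\LLL g)$, the just-derived commutator estimate (applied with $\gamma = \delta$), and the first paraproduct bound applied to $\LLL g \in C\CC^{\alpha-2}_{z_2}$; H\"older continuity in time then follows by parabolic interpolation between $C\CC^\alpha_z$ and $C\CC^{\alpha-2}_z$.

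The principal technical obstacle is the simultaneous handling of the $t^{-\beta}$ singularity at zero and the low-frequency regime $2^{-2i}\gtrsim t$, where $Q_i$ averages $f$ over a time window comparable to the singularity; this forces $\beta < 1$ and some care with the dyadic bookkeeping at small $i$. A secondary subtlety is the boundary contribution in $\partial_t Q_i f$ arising in the commutator computation, which either requires the a priori choice $\varphi(0) = 0$ or must be absorbed using $\gamma < 2$.
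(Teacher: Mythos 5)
Your proposal is correct and takes essentially the same route as the paper: the paper's proof is only a citation to the weighted Lemmas 4.7--4.9 of \cite{jorg}, and those are proved exactly by the block-by-block analysis of $(S_{i-1}Q_i f)\,\Delta_i g$ that you describe, using the weighted Bernstein/annulus characterisation, the bound $\|Q_i f(t)\|_{\infty,z_1(t)} \lesssim t^{-\beta}\|f\|_{\MM^\beta L^\infty_{z_1}([0,t])}$ obtained from the increasing weight and the case split $2^{-2i}\lessgtr t$, and the gain $2^{-\gamma i}$ from the time mollification in the commutator terms. The restrictions you flag are indeed the intended ones: $\beta<1$ is implicit throughout the paper's use of the lemma, and in the two commutator estimates $\gamma$ is to be read as the (positive) parabolic regularity of $f$, since $\LL^{\beta,\gamma}_{z_1}$ is only defined for regularity in $(0,2)$.
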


\begin{proof}
	These estimates are shown in~\cite[Lemmas~4.7 - 4.9]{jorg}.
\end{proof}

Now we pass to a result regarding derivatives in $\LL^{\beta, \alpha}_z$ spaces.
\begin{lemma}\label{lem:dervtv_interp}
Consider a parameter $\alpha \in (0,1)$ and a weight $z : \RR_{\ge 0} \to \boldsymbol{\rho}(\omega)$ which is pointwise increasing in time. Then
$$
	\| \partial_x f \|_{\LL^{\beta, \alpha}_z} \lesssim \| f \|_{\LL^{\beta, \alpha {+} 1}_z }.
$$
\end{lemma}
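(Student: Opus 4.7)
The plan is to split $\|\partial_x f\|_{\LL^{\beta,\alpha}_z}$ into its two constituent pieces, the parabolically scaled spatial Besov norm $\|\cdot\|_{\MM^\beta \CC^\alpha_z}$ and the time H\"older norm $\|t \mapsto t^\beta \cdot\|_{C^{\alpha/2} L^\infty_z}$, and to handle them separately.

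The spatial piece is immediate: for every fixed $t \in (0,T]$, Corollary~\ref{cor:classical_besov_characterisation} applied with $\mu = (1)$ yields $\|\partial_x f(t)\|_{\CC^{\alpha}_{z(t)}} \lesssim \|f(t)\|_{\CC^{\alpha+1}_{z(t)}}$, so multiplying by $t^\beta$ and taking the supremum gives $\|\partial_x f\|_{\MM^\beta \CC^\alpha_z} \lesssim \|f\|_{\MM^\beta \CC^{\alpha+1}_z}$, which is bounded by $\|f\|_{\LL^{\beta,\alpha+1}_z}$. The supremum part of the time H\"older norm is also immediate from the Besov embedding $\CC^{\alpha}_{z(t)} \hookrightarrow L^\infty_{z(t)}$.

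The main work is therefore the H\"older-in-time seminorm. Set $g(t) = t^\beta f(t)$; since $\partial_x$ commutes with the scalar factor $t^\beta$, it is enough to estimate $\|\partial_x g(t) - \partial_x g(s)\|_{L^\infty_{z(t \vee s)}}$ by $|t-s|^{\alpha/2}\|f\|_{\LL^{\beta,\alpha+1}_z}$. The key tool is the interpolation inequality
\[
   \|\partial_x h\|_{L^\infty_{w}} \lesssim \|h\|_{L^\infty_{w}}^{\alpha/(\alpha+1)}\,\|h\|_{\CC^{\alpha+1}_{w}}^{1/(\alpha+1)},
\]
valid for any $w \in \weights$, which is obtained by decomposing $\partial_x h = \sum_j \Delta_j \partial_x h$, estimating $\|\Delta_j \partial_x h\|_{L^\infty_w} \lesssim 2^j \|\Delta_j h\|_{L^\infty_w}$ via Proposition~\ref{prop:bernstein_inequality} (the low-frequency block $j=-1$ being handled separately by convolution), bounding the low modes $j \le J$ by $\|h\|_{L^\infty_w}$ and the high modes $j > J$ by $2^{-j(\alpha+1)}\|h\|_{\CC^{\alpha+1}_w}$, and finally optimizing in the cut-off $J$.

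Apply this to $h = g(t) - g(s)$ with the weight $w = z(t\vee s)$. Since $z$ is pointwise increasing in time, $\|g(r)\|_{\CC^{\alpha+1}_{z(t \vee s)}} \le \|g(r)\|_{\CC^{\alpha+1}_{z(r)}}$ for $r \in \{s,t\}$, so the second factor is controlled by $\|f\|_{\MM^\beta \CC^{\alpha+1}_z}$. The first factor is by definition bounded by $|t-s|^{(\alpha+1)/2}\,\|g\|_{C^{(\alpha+1)/2}L^\infty_z}$. Combining the two exponents gives
\[
   \|\partial_x g(t) - \partial_x g(s)\|_{L^\infty_{z(t\vee s)}} \lesssim |t-s|^{\alpha/2}\,\|f\|_{\LL^{\beta,\alpha+1}_z},
\]
which completes the estimate. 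The only genuinely nontrivial step is the interpolation inequality above, but it is a routine Littlewood--Paley argument; beyond that the proof is just bookkeeping of the time-dependent weight via its monotonicity.
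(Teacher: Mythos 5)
Your proof is correct and is essentially the paper's argument in repackaged form: the paper performs the Littlewood--Paley split of $\partial_x[t^\beta f(t)-s^\beta f(s)]$ directly with the frequency cutoff $2^{-j_0}\simeq|t-s|^{1/2}$ (Bernstein, Proposition~\ref{prop:bernstein_inequality}, on low blocks; spatial regularity on high blocks), whereas you encode the same cutoff optimization once and for all in the weighted interpolation inequality $\|\partial_x h\|_{L^\infty_w}\lesssim\|h\|_{L^\infty_w}^{\alpha/(\alpha+1)}\|h\|_{\CC^{\alpha+1}_w}^{1/(\alpha+1)}$ and then apply it to the increment $h=g(t)-g(s)$, using the monotonicity of $z$ exactly as the paper does. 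Your handling of the weight and of the $j=-1$ block is sound, so no gap remains.
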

\begin{proof}
	Since $\| \partial_xf(t)\|_{\CC^{\alpha}_{z(t)}} \lesssim \|f(t) \|_{\CC^{\alpha + 1}_{z(t)}}$ we can easily control the spatial regularity. Let us concentrate on the time regularity. We estimate for $t \ge s$:
	$$
	\| \partial_xf(t)t^{\beta} - \partial_xf(s)s^{\beta} \|_{\infty, z(t)} \le \sum_{j = {-}1}^{\infty} \| \partial_x [\Delta_j f(t)t^{\beta} - \Delta_j f(s)s^{\beta} ] \|_{\infty, z(t)}
	$$
	Now let us fix a $j_0$ such that $2^{-j_0} \le |t-s|^{1/2} <2^{- j_0 +1}$. We will use different estimates on small scales and on large scales. Indeed, an application of Bernstein's inequality (Proposition \ref{prop:bernstein_inequality}) gives for the large scales
	\begin{align*}
	\sum_{j = -1}^{j_0} \|\partial_x [\Delta_j f(t)t^{\beta} & {-} \Delta_jf(s)s^{\beta}]\|_{\infty, z(t)} \\
	& \lesssim \sum_{j = -1}^{j_0} 2^j\| \Delta_j[f(t)t^{\beta} {-} f(s)s^{\beta}]\|_{\infty, z(t)} \lesssim 2^{j_0} |t-s|^{\frac{\alpha{+} 1}{2}} \|f\|_{\LL^{\beta,\alpha + 1}_z},
	\end{align*}
	while on small scales
	$$
	\sum_{j = j_0 +1}^{\infty} \| \Delta_j[ \partial_x f(t)t^{\beta} {-} \partial_x f(s)s^{\beta}]\|_{\infty, z(t)} \lesssim \sum_{j = j_0 +1}^{\infty} 2^{{-}j \alpha}\| f\|_{\LL^{\beta,\alpha}_z} \lesssim 2^{-j_0 \alpha} \| f \|_{\LL^{\beta, \alpha}_z}.
	$$
	Substituting $2^{-j_0} \simeq |t-s|^{1/2}$ delivers the required result.
\end{proof}

We conclude the preliminaries by stating some important estimates regarding the heat semigroup, commonly referred to as Schauder estimates. We write $V_{T_\ell}(f)(t) = \smallint_{T_\ell}^t P_{t{-}s}f_s ds.$

\begin{proposition}\label{prop:schauder_estimate} 
	Fix $\alpha \in (0,2)$ and $z \in \boldsymbol{\rho}(\omega)$.
	\begin{enumerate}
		\item For $\gamma \in \RR$ such that $\beta = (\alpha {+} \gamma)/2 \in [0,1)$ we find that:
		\begin{align*}
			\norm{P_{\cdot} f}_{\LL^{\beta, \alpha}_{z} } \lesssim \norm{f}_{\CC^{ - \gamma}_{z}}.
		\end{align*}
		\item If we fix also $a \ge 0$ such that $\alpha {+} 2 a/ \delta \in (0,2)$  and $\beta{+}a/\delta \in [0,1)$ we find that:
		\begin{align*}
			\norm{V_{T_\ell}(f)}_{\LL^{\beta, \alpha}_{e(l+t)}([T_{\ell}, T_r])} \lesssim_{T_h} \norm{f}_{\MM^{\beta}\CC^{\alpha {+} 2a/\delta {-} 2}_{e(l+t)p(a) } ([T_{\ell}, T_r])}
		\end{align*}
		uniformly over all $0 \le T_{\ell} < T_r \le T_h$.
	\end{enumerate}	 
\end{proposition}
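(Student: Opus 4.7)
The plan is to prove both parts by Littlewood--Paley decomposition, reducing everything to a single block-level bound for $P_\tau$ acting on weighted functions, complemented in part (2) by a pointwise weight-exchange estimate that converts the polynomial weight $p(a)$ into a factor $\tau^{-a/\delta}$.

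For part (1), the central ingredient is the block-level estimate
\[
\|P_t \Delta_j f\|_{\infty, z} \lesssim e^{-c t 2^{2j}} \|\Delta_j f\|_{\infty, z}, \qquad j \ge -1,
\]
obtained by writing $P_t \Delta_j = K_t^{(j)} *$ with a Gaussian-type kernel localised at scale $\max(2^{-j}, t^{1/2})$ and using $\omega$-moderateness of $z$: since $\omega(x) = |x|^\delta$ with $\delta < 1$, the transfer factor $e^{\lambda \omega(x-y)}$ is absorbed by the Gaussian tail. Combining with $\|\Delta_j f\|_{\infty, z} \lesssim 2^{j \gamma} \|f\|_{\CC^{-\gamma}_z}$ gives
\[
t^{\beta} 2^{j\alpha} \|\Delta_j P_t f\|_{\infty, z} \lesssim (t 2^{2j})^{\beta} e^{-c t 2^{2j}} \cdot 2^{j(\alpha + \gamma - 2\beta)} \|f\|_{\CC^{-\gamma}_z},
\]
which is uniformly bounded exactly when $\beta = (\alpha+\gamma)/2$. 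The temporal Hölder regularity then follows the scheme of Lemma \ref{lem:dervtv_interp}: split at $2^{-j_0} \sim |t-s|^{1/2}$, use $P_t - P_s = \frac{1}{2}\int_s^t \Delta P_\tau \, d\tau$ with Bernstein's inequality (Proposition \ref{prop:bernstein_inequality}) on low frequencies, and the triangle inequality from the spatial bound on high frequencies.

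For part (2), the new ingredient is the pointwise weight exchange
\[
\|P_\tau g\|_{\infty, e(l + s + \tau)} \lesssim \tau^{-a/\delta} \|g\|_{\infty, e(l + s) p(a)},
\]
which reduces to the elementary inequality $e^{-\tau |x|^\delta}(1 + |x|)^a \lesssim \tau^{-a/\delta}$ (equivalently $u^a \lesssim e^{u^\delta}$ with $u = \tau^{1/\delta}(1+|x|)$), while the residual convolution against the slowly varying weight $e(l+s)$ is controlled by $\omega$-moderateness as in part (1). Combining this exchange with the block-level estimate applied to $f_s$ in the source space and using $\|f_s\|_{\CC^{\alpha+2a/\delta-2}_{e(l+s)p(a)}} \le s^{-\beta}\|f\|_{\MM^\beta \CC^{\alpha+2a/\delta-2}_{e(l+t)p(a)}}$ yields
\[
t^\beta 2^{j\alpha} \|\Delta_j V_{T_\ell}(f)(t)\|_{\infty, e(l+t)} \lesssim t^\beta \int_{T_\ell}^t (t-s)^{-a/\delta} s^{-\beta}\, \kappa_j(t-s) \, ds \cdot \|f\|_{\MM^\beta \CC^{\alpha + 2a/\delta - 2}_{e(l+t) p(a)}},
\]
with $\kappa_j(\tau) = 2^{j(2 - 2a/\delta)} e^{-c \tau 2^{2j}}$. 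Splitting the time integral into $[T_\ell, t/2]$ and $[t/2, t]$ and substituting $v = t 2^{2j}$, respectively $u = \tau 2^{2j}$, one checks that the integral is bounded uniformly in $j$ and $t$ precisely under $\alpha + 2a/\delta \in (0,2)$ and $\beta + a/\delta < 1$. The temporal Hölder norm is handled by the same dyadic splitting as in part (1).

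The hardest part is the bookkeeping in part (2): gaining $p(a)$ costs spatial regularity $2a/\delta$ on the input (hence the condition $\alpha + 2a/\delta \in (0,2)$) and time integrability $a/\delta$ through the factor $(t-s)^{-a/\delta}$ (hence $\beta + a/\delta < 1$), and both thresholds must be matched while absorbing the $t$-dependent moderateness constant of the weight $e(l+t)$, which itself grows linearly in $t$. Once the weight-exchange estimate is in hand, the remainder of the argument parallels the weighted Schauder theory developed in \cite{jorg}.
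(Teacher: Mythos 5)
Your argument is correct and is essentially the proof the paper invokes by citation: the weighted Schauder estimates of \cite{kpz} and \cite{jorg} are established exactly by this Littlewood--Paley block decay $\|P_\tau \Delta_j f\|_{\infty,z}\lesssim e^{-c\tau 2^{2j}}\|\Delta_j f\|_{\infty,z}$ combined with the weight-trading inequality $e^{-\tau|x|^\delta}(1+|x|)^a\lesssim \tau^{-a/\delta}$, which is precisely the Hairer--Labb\'e mechanism the paper points to right after the proposition. The only ingredient the paper's own proof adds --- uniformity in $T_\ell$ via the shift $V_{T_\ell}(f)(t)=V(f_{\cdot+T_\ell})(t-T_\ell)$ and the $(t+\kappa)^\beta$ reformulation --- is subsumed in your computation, since you run the dyadic time-splitting directly on $[T_\ell,T_r]$ with the blow-up factor $t^\beta$.
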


\begin{proof}
These are the weighted analogues of the estimates of \cite{kpz} or \cite{singular_GIP} and can be found as well in \cite{jorg}. A slight difference is the dependence on the interval $[T_\ell, T_r].$ Note that
$V_{T_{\ell}}(f)(t) = V(f_{\cdot {+} T_{\ell}})(t{-}T_\ell).$ Thus the proof follows by proving:
$$
\| t\mapsto (t{+}\kappa)^{\beta} V(f)(t)\|_{\LL^{\alpha}_{e(l+t)}} \lesssim_{T_h} \|(t{+}\kappa)^{\beta}f_t\|_{C\CC^{\alpha{-}2{-}2a/\delta}_{e(l+t)p(a)}([T_{\ell}, T_{r}])}
$$
uniformly over $\kappa$ in $[0, T_h].$ This follows from the same calculations as in \cite[Lemma 6.6]{kpz}.
\end{proof}

In the previous result the role of $e(l+t)$ and $p(a)$ only comes into play with the parameter $ a / \delta.$ Although this seems a minor detail in the statement, it is actually the key point that allows us to solve linear singular SPDEs on the whole real line with exponential weights. This approach has been developed by Hairer and Labbé in \cite{pam, multiplicative} and it is also present in \cite{singular, Hairer2013}.

In the next result we show our last product estimate. Since our definition of the $\LL^{\alpha}$ spaces does not allow for negative $\alpha$ we state in the following theorem the classical result of Young integration with parabolic scaling.

\begin{lemma}[Young Integration]\label{lem:young_time_product}
	As before let $z_i, i = \,2$ be pointwise increasing, time-dependent weights. Conisder $ f \in \LL^{\beta,\alpha}_{z_1}$ and $g \in \LL^{\gamma}_{z_2}$ with $\beta \in [0, 1)$ and $\alpha, \gamma \in (0,2)$. If $\alpha {+} \gamma {-} 2 > 0$, we have
	$$ f \cdot \partial_t g \in \LLL \LL^{\beta, \alpha + \gamma}_{z_1 \cdot z_2}$$
	and the following two estimates hold true:
	\begin{align*}
		\| V(f \cdot \partial_t g) \|_{\LL^{\beta, \alpha + \gamma{-}\ve}_{z_1 \cdot z_2}([0, T_h])} & \lesssim_{T_h} \| f \|_{\LL^{\beta, \alpha}_{z_1}([0, T_h])} \| g \|_{\LL^{\gamma}_{z_2}([0, T_h])}, \\
	\|V(f\cdot \partial_t g)\|_{\LL^{\beta, \alpha{+}\gamma {-} 2a/\delta {-}\ve}_{e(l+t)}([T_{\ell}, T_r])} &\lesssim_{T_h} \|f\|_{\LL^{\beta, \alpha}_{e(l{+}t)}([T_{\ell}, T_r])} \|g\|_{\LL^{\gamma}_{p(a)}([T_{\ell}, T_r])}
	\end{align*}
	for any $\ve > 0$ and $0 \le T_\ell \le T_r \le T_h$.
\end{lemma}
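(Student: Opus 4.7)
The overall strategy is to interpret $V(f\cdot\partial_t g)$ as a heat-weighted Young integral in time. The assumption $\alpha+\gamma>2$ means the parabolic time-Hölder exponents $\alpha/2$ of $f$ and $\gamma/2$ of $g$ satisfy $\alpha/2+\gamma/2>1$, which is exactly the classical Young regime; the spatial Hölder regularities add as usual, which is what produces the gain $\alpha+\gamma-\ve$. Concretely, I would define
\[
 V(f\,\partial_t g)(t) := \lim_{|\pi|\to 0}\sum_{t_k\in\pi\cap[0,t]} P_{t-t_k}\,f_{t_k}\,\bigl(g_{t_{k+1}}-g_{t_k}\bigr),
\]
where $\pi$ ranges over finite partitions of $[0,t]$. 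For smooth data one recovers the classical integral by an integration by parts against $\partial_s g$.

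The convergence of the Riemann sums, together with the quantitative increment bound, I would obtain from a sewing argument. The germ $A(s,t) := P_{t-s} f_s\,(g_t-g_s)$ satisfies, uniformly in $0<s<t\le T_h$,
\[
\|\delta A(s,r,t)\|_{\infty,z_1(t)z_2(t)}
 = \|P_{t-s}(f_s-f_r)\,(g_t-g_r)\|_{\infty,z_1(t)z_2(t)}
 \lesssim s^{-\beta}\,|t-s|^{(\alpha+\gamma)/2}\,\|f\|_{\LL^{\beta,\alpha}_{z_1}}\|g\|_{\LL^\gamma_{z_2}},
\]
using that $P_{t-s}$ is a contraction on weighted $L^\infty$ in the class $\weights$ (Proposition \ref{prop:bernstein_inequality}). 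Since $(\alpha+\gamma)/2>1$, the sewing lemma gives convergence of the Riemann sums and the representation
\[
 u(t)-P_{t-s}u(s)\;=\;P_{t-s}f_s\,(g_t-g_s)\;+\;R(s,t),\qquad
 \|R(s,t)\|_{\infty,z_1(t)z_2(t)}\lesssim s^{-\beta}|t-s|^{(\alpha+\gamma)/2}\|f\|\|g\|.
\]
This encodes the desired parabolic time regularity of $u=V(f\,\partial_t g)$ at rate $(\alpha+\gamma)/2$, and the corresponding spatial regularity $\alpha+\gamma$ at each fixed $t$.

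To convert the heat-kernel increment into an $\LL^{\beta,\alpha+\gamma-\ve}_{z_1 z_2}$ estimate, I would run the standard localisation at dyadic spatial scales $2^{-j}$: the Besov block $\Delta_j u(t)$ is estimated by splitting the integral at the time scale $s$ with $|t-s|\sim 2^{-2j}$, using the Young bound above on $[t-2^{-2j},t]$ and the heat-kernel bound $\|\Delta_j P_{t-s} \varphi\|_{\infty,z}\lesssim 2^{-j\alpha}\|\varphi\|_{\CC^\alpha_z}$ on $[0,t-2^{-2j}]$. Summing the dyadic series forces the loss $\ve>0$ but yields the first inequality. The second inequality is proved in the same way, with Proposition \ref{prop:schauder_estimate}(ii) replacing (i): at each dyadic scale one trades the polynomial weight $p(a)$ carried by $g$ for the exponential weight $e(l+t)$, paying $2a/\delta$ spatial derivatives. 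The weighted-Bernstein computation of Proposition \ref{prop:bernstein_inequality} ensures the mixed product $P_{t-s}f_s\cdot(g_t-g_s)$ remains controlled in $L^\infty_{e(l+t)}$ uniformly on $[T_\ell,T_r]$.

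The main obstacle is the sewing step: in the scalar case it is textbook, but here the kernels $P_{t-r}$ sit inside the Riemann sums, and one has to control how they interact with the weighted spatial Hölder norms and the time-blow-up $t^\beta$ near the origin. The role of the hypothesis $\beta\in[0,1)$ is precisely to guarantee integrability of $s^{-\beta}$ against $|t-s|^{(\alpha+\gamma)/2-1}$ when closing the sewing estimate, in the same spirit as the proof of the Schauder estimate in \cite[Lemma~6.6]{kpz} and Lemma \ref{lem:dervtv_interp}. The $\ve$-loss in the final exponent is exactly what is needed to buy a summable $2^{-\ve j}$ factor in the dyadic decomposition and to absorb the sewing constant, and it cannot be avoided by this strategy.
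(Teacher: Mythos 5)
There is a genuine gap, and it sits exactly at the step you call the ``main obstacle'': the sewing estimate for the heat-kernel--weighted germ. For $A(s,t)=P_{t-s}\big(f_s(g_t-g_s)\big)$ the coboundary is
\[
A(s,t)-A(s,r)-A(r,t)=P_{t-s}\big(f_s(g_t-g_s)\big)-P_{r-s}\big(f_s(g_r-g_s)\big)-P_{t-r}\big(f_r(g_t-g_r)\big),
\]
which does \emph{not} reduce to $P_{t-s}\big((f_s-f_r)(g_t-g_r)\big)$: extra terms involving differences of semigroups at different times survive, and they are not small for free. Moreover the Riemann sums you want to sew, $\sum_k P_{t-t_k}\big(f_{t_k}(g_{t_{k+1}}-g_{t_k})\big)$, are anchored at the final time $t$, so they are not of the form $\sum_k A(t_k,t_{k+1})$ for a $t$-independent two-parameter germ and the additive sewing lemma does not apply as stated; one needs the ``twisted'' sewing along the semigroup (this is the Gubinelli--Tindel framework the paper invokes), whose coboundary $A(s,t)-A(r,t)-P_{t-r}A(s,r)$ produces, besides the Young term $(f_s-f_r)(g_t-g_r)$, a commutator term of the type $(P_{r-s}-\mathrm{Id})\big(f_s(g_t-g_r)\big)$. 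Controlling that commutator is the real work: it is done by trading spatial regularity for a power of $r-s$ (Lemma \ref{lem:small_technical_schauder_estimate} and Proposition \ref{prop:schauder_estimate}), and it is precisely where the $\ve$-loss and, in the weighted estimate, the $2a/\delta$-loss from exchanging $p(a)$ for $e(l+t)$ enter. Your claimed bound $\|\delta A(s,r,t)\|\lesssim s^{-\beta}|t-s|^{(\alpha+\gamma)/2}$ would, if true, give the full regularity with no loss at all, which already signals that the semigroup terms have been dropped; attributing the $\ve$ purely to a dyadic spatial decomposition is therefore not consistent with where the loss actually comes from. (Also, boundedness of $P_{t-s}$ on weighted $L^\infty$ is a Schauder-type statement, not Bernstein's inequality, and the blow-up $t^{-\beta}$ forces the decomposition $f_{u,v}=u^{-\beta}\bar f_{u,v}+(u^{-\beta}-v^{-\beta})\bar f(v)$ with $\bar f(t)=t^\beta f(t)$, which your germ estimate silently skips.)

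For comparison, the paper avoids the one-shot sewing altogether: it first constructs the plain Young integral $X_t=\int_0^t f\,dh$ by dyadic approximation with explicit bookkeeping of the $t^{-\beta}$ singularity (Lemma \ref{lem:young_integration_with_time_blowup}), and then shows that the heat-weighted Riemann sums $\sum_k P(t-t^n_k)X_{t^n_{k+1},t^n_k}$ converge in the weighted parabolic spaces, using the Schauder estimates on each block, the smoothing bound for $\mathrm{Id}-P_\tau$, and the weight-exchange inequality at cost $a/\delta$ (Lemma \ref{lem:young_heat_kernel_convolution}). If you want to keep your one-step formulation, you would have to redo the argument in the twisted-sewing framework and estimate the semigroup commutator terms explicitly; as written, the central estimate fails and the proof does not close.
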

\begin{proof}
The proof of this result is the content of Lemma \ref{lem:young_heat_kernel_convolution} and the preceding results.
\end{proof}

The next result shows how to interpolate between different $\LL^{\beta, \alpha}_{z}$ spaces.

\begin{lemma}\label{lem:interpolation_schauder}
	Fix some parameters $\alpha \in (0,2),\beta \in [0,1), \ \ve \in [0, \alpha) \cap [0, 2\beta]$ as well as a time-dependent point-wise increasing weight $z$ and $0 \le T_\ell \le T_r$. Then
	\begin{align*}
		\| f\|_{\LL^{\beta-\ve/2, \zeta}_z ([T_{\ell}, T_r])} \lesssim \| f\|_{\LL^{\beta, \alpha}_z ([T_{\ell}, T_r])}.
	\end{align*}
	for any $\zeta < \alpha{-}\ve.$
Finally, for $\alpha \in (0,2), \beta \in [0,1)$ and $\ve \in [0, \alpha)$
$$
\|f \|_{\LL^{\beta, \alpha {-} \ve}_{z}([T_\ell, T_r]) }  \lesssim \| T_\ell^{\beta}  f(T_\ell) \|_{\CC^{\alpha - \ve}_{ z(T_\ell)}} + (T_r{-}T_\ell)^{\ve / 2}\norm{f}_{\LL^{\beta, \alpha}_{z } ([T_\ell, T_r])  }
$$
\end{lemma}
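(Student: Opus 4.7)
My plan is to prove both estimates through a Littlewood--Paley decomposition that exploits the two regularity ingredients of the $\LL^{\beta,\alpha}_z$ norm: the spatial $\CC^\alpha_{z(t)}$ bound with blow-up $\beta$, and the $\alpha/2$ time-Hölder regularity of $t\mapsto t^\beta f(t)$ in $L^\infty_{z(t)}$. I would address the second estimate first, as its spatial part is the more technical piece, and then the first estimate follows by a cleaner real-interpolation argument in space.

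For the $\MM^\beta\CC^{\alpha-\ve}_z$ part of the second estimate, I would bound each Littlewood--Paley block by the minimum of two competing estimates. The first is the direct Bernstein bound $\|\Delta_j f(t)\|_{L^\infty_{z(t)}}\lesssim 2^{-j\alpha}t^{-\beta}\|f\|_{\MM^\beta\CC^\alpha_z}$, effective on small scales. The second is a time-Hölder bound pivoted at $T_\ell$: writing $\Delta_j(t^\beta f(t))=\Delta_j[t^\beta f(t)-T_\ell^\beta f(T_\ell)]+\Delta_j(T_\ell^\beta f(T_\ell))$, I bound the two terms by $(t-T_\ell)^{\alpha/2}\|f\|_{\LL^{\beta,\alpha}_z}$ and $2^{-j(\alpha-\ve)}T_\ell^\beta\|f(T_\ell)\|_{\CC^{\alpha-\ve}_{z(T_\ell)}}$ respectively, using the pointwise-in-time monotonicity of $z$ to replace $z(t)$ by $z(T_\ell)$ in the latter. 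Multiplying both bounds by $2^{j(\alpha-\ve)}$ and balancing them at the dyadic threshold $2^{j^\star}\simeq(t-T_\ell)^{-1/2}$ extracts the gain factor $(t-T_\ell)^{\ve/2}$ on the bulk contribution, while the initial correction is absorbed into $\|T_\ell^\beta f(T_\ell)\|_{\CC^{\alpha-\ve}_{z(T_\ell)}}$. The time-Hölder seminorm of order $(\alpha-\ve)/2$ is immediate: factoring $|t-s|^{\alpha/2}=|t-s|^{(\alpha-\ve)/2}|t-s|^{\ve/2}$ and bounding $|t-s|^{\ve/2}\le(T_r-T_\ell)^{\ve/2}$ gives exactly the right modulus.

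For the first estimate, I would combine the standard Besov interpolation $\|f(t)\|_{\CC^\zeta_{z(t)}}\lesssim \|f(t)\|_{L^\infty_{z(t)}}^{1-\zeta/\alpha}\|f(t)\|_{\CC^\alpha_{z(t)}}^{\zeta/\alpha}$ with the improved $L^\infty_z$ bound $\|t^\beta f(t)\|_{L^\infty_{z(t)}}\lesssim\|T_\ell^\beta f(T_\ell)\|_{L^\infty_{z(T_\ell)}}+(t-T_\ell)^{\alpha/2}\|f\|_{\LL^{\beta,\alpha}_z}$ obtained from time-Hölder regularity at the pivot $T_\ell$. In the critical case $T_\ell=0$ the continuity of $t\mapsto t^\beta f(t)$ at zero forces $T_\ell^\beta f(T_\ell)=0$; the constraint $\ve\le 2\beta$ ensures the weaker blow-up rate $\beta-\ve/2\ge 0$ is admissible. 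Combining with the trivial $\|f(t)\|_{\CC^\alpha_{z(t)}}\le t^{-\beta}\|f\|_{\MM^\beta\CC^\alpha_z}$, the interpolation produces
\[
t^{\beta-\ve/2}\|f(t)\|_{\CC^\zeta_{z(t)}}\lesssim t^{(\alpha-\zeta-\ve)/2}\|f\|_{\LL^{\beta,\alpha}_z},
\]
uniformly bounded on $[T_\ell,T_r]$ precisely when $\zeta<\alpha-\ve$. The corresponding time-Hölder seminorm is derived from the algebraic decomposition $t^{\beta-\ve/2}f(t)-s^{\beta-\ve/2}f(s)=t^{-\ve/2}(t^\beta f(t)-s^\beta f(s))+(t^{-\ve/2}-s^{-\ve/2})s^\beta f(s)$, using $(\alpha-\ve)/2$-Hölder regularity of $t^{-\ve/2}$ together with the bounds above.

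The main obstacle I anticipate is the careful bookkeeping of how $z(t)$, the pivot time $T_\ell$, and the time suprema interact when taking differences and Littlewood--Paley blocks --- in particular, ensuring that the constant in the second estimate depends only on the length $T_r-T_\ell$ through the $(T_r-T_\ell)^{\ve/2}$ factor, rather than on $T_\ell$ or $T_r$ separately, so that the estimate can be iterated in a contraction argument on short intervals.
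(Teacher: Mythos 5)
Your norm estimates are essentially correct and follow the same route as the paper's (very terse) proof and the references it defers to: the second estimate is exactly the two-scale Littlewood--Paley balancing at the parabolic threshold, and the first is the same interpolation of the time-H\"older bound in $L^\infty_z$ against the spatial bound with blow-up.

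Two points are missing, and the first is the one the paper singles out as the actual content of the first statement. First, membership in $\LL^{\beta-\ve/2,\zeta}_z$ requires more than finiteness of the norm: by the definition of $\MM^{\beta-\ve/2}\CC^{\zeta}_z$ the map $t\mapsto t^{\beta-\ve/2}f(t)$ must be \emph{continuous} with values in $\CC^{\zeta}_{z(T_r)}$, which is a priori unclear. One gets it by interpolating the H\"older continuity in $L^\infty_z$ against the uniform bound in $\CC^{\zeta'}_z$ for some $\zeta<\zeta'<\alpha-\ve$, and it is precisely this step that forces the strict inequality $\zeta<\alpha-\ve$; your computation alone would even allow $\zeta=\alpha-\ve$, so your sketch never explains where strictness enters. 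Second, your displayed bound $t^{\beta-\ve/2}\|f(t)\|_{\CC^{\zeta}_{z(t)}}\lesssim t^{(\alpha-\zeta-\ve)/2}\|f\|_{\LL^{\beta,\alpha}_z}$ relies on the improved bound $\|t^{\beta}f(t)\|_{L^\infty}\lesssim (t-T_\ell)^{\alpha/2}\|f\|_{\LL^{\beta,\alpha}_z}$, i.e.\ it silently assumes the pivot contribution $T_\ell^{\beta}f(T_\ell)$ vanishes. That is fine at $T_\ell=0$ (where $s^{\beta}f(s)\to 0$ by the continuity built into $\MM^{\beta}$ when $\beta>0$, while $\beta=0$ forces $\ve=0$ and there is nothing to prove), which is the only case the paper treats; but for $T_\ell>0$ with $f(T_\ell)\neq 0$ the pivot term survives, the exponent of $t$ in your display is wrong, and only a $T_\ell$-dependent constant remains (the example $f(t)=t^{-\beta}g$ with $g$ smooth shows no $T_\ell$-uniform constant is possible). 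So state explicitly that the first estimate is proved with pivot value zero (in particular at $T_\ell=0$), and keep track of which weight, $z(t)$ or $z(T_r)$, the limit $s^{\beta}f(s)\to 0$ is taken in, since the continuity in the definition is only into the $z(T_r)$-weighted space.
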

\begin{proof}
This result is analogous to \cite[Lemma 3.10]{jorg}. We only discuss the first statement. Note that here we allow also for $\ve = 2\beta$. This is possible because, using the same arguments as in the proof of \cite[Lemma 6.8]{kpz} we obtain the following bound (uniformly over $\zeta \le \alpha{-}\ve$):
$$
\sup_{t \in [0, T]}\| f(t) \|_{\CC^{\alpha{-}2\beta}_{z(t)}} + \| f \|_{C^{\alpha/2{-}\ve} L^{\infty}_z} \lesssim \| f\|_{\LL^{\beta, \alpha}_z}.
$$
However, it is a priori not clear that $f: [0,T] \to \CC^{\zeta}_{z(T)}$ is a continuous function. Since we have H\"older continuity in $L^{\infty}_{z(T)}$ of $f$ and a uniform bound in $\CC^{\alpha{-}2\beta}_z$ we can conclude by interpolation, at the price of an arbitrarily small loss of regularity, which explains the strict inequality $\zeta < \alpha{-}\ve$.
\end{proof}

With these results we end our brief introduction to the theory of paracontrolled analysis and Schauder estimates.

\section{The Paracontrolled KPZ Equation}

Here we briefly review the notion of paracontrolled solutions to the KPZ equation first introduced in \cite{kpz} and \cite{singular_GIP}. For counting regularity we will use the index $\alpha$. We will use the index $a$ for counting the polynomial growth of the noise at infinity and we recall that $\delta \in (0,1)$ is used in our definition of ultra-distributions. We will work under the following standing assumptions on the parameters.
\begin{assumption}
$$
\frac{2}{5} < \alpha < \frac{1}{2}, \qquad 0 \le a/\delta < \frac{5\alpha{-}2}{6}
$$
\end{assumption}

In general $\alpha \in (1/3, 1/2)$ will be sufficient. We will need some tighter control only in Section \ref{sect:polymer_measure}. We can compile the following rule-of-thumb table.

\begin{table}[H]
\begin{center}
\begin{tabular}{ccc}
	$\alpha$ & $\delta$ & $a$ \\
	\hline
	$1/2-$ & $1-$ & $0+$
\end{tabular}
\\[0.3em]
\caption{Rule-of-thumb for the Parameters}
\end{center}
\end{table}
Let us introduce the extended data for the KPZ equation. We collect in the next table all the terms involved. Here and throughout this work we will use the notation $X^{(\cdot)} = \partial_xY^{(\cdot)}.$

\begin{table}[H]
\begin{tabular}{rlllcl}
\ & \ & Regularity & \multicolumn{3}{l}{Definition}  \\[0.1em]
\hline \\
\ldelim\{{7.7}{75pt}[$\YY(\theta, Y_0, c^{\RS{lr}}, c^{\scalebox{0.8}{\RS{{Lrl}{Rrl}}}})$]\ & $Y$ & $\LL^{\alpha}_{p(1+a)}$ & $\LLL Y $&$  =  $&$ \theta$\\[0.1em]
\ & $Y^{\scalebox{0.8}{\RS{lr}}}$&$\LL^{2\alpha}_{p(a)}$ &$\LLL Y^{\scalebox{0.8}{\RS{lr}}}$ &$ = $&$ \hh (\partial_xY)^2 - c^{\RS{lr}}$ \\[0.1em]
\ & $Y^{\scalebox{0.8}{\RS{rLrl}}}$&$\LL^{\alpha +1}_{p(a)}$ &$\LLL Y^{\scalebox{0.8}{\RS{rLrl}}} $ &$ = $&$ \partial_xY \partial_xY^{\scalebox{0.8}{\RS{lr}}}$ \\[0.1em]
\ & $Y^{\scalebox{0.8}{\RS{rLrLrl}}}$ & $\LL^{2\alpha +1}_{p(a)}$ &$\LLL Y^{\scalebox{0.8}{\RS{rLrLrl}}} $ &$ = $&$ \partial_xY^{\scalebox{0.8}{\RS{rLrl}}} \reso \partial_xY + c^{\scalebox{0.8}{\RS{{Lrl}{Rrl}}}}$ \\[0.1em]
\ & $Y^{\scalebox{0.8}{\RS{{Lrl}{Rrl}}}}$ & $\LL^{2\alpha + 1}_{p(a)}$ &$\LLL Y^{\scalebox{0.8}{\RS{{Lrl}{Rrl}}}} $ &$ = $&$ \hh (\partial_xY^{\scalebox{0.8}{\RS{lr}}})^2 - c^{\scalebox{0.8}{\RS{{Lrl}{Rrl}}}}$ \\[0.1em]
\ & $\partial_xY^{\scalebox{0.8}{\RS{r}}} \reso \partial_xY$ & $C \mathcal{C}^{2\alpha - 1}_{p(a)}$ & $ $ &$  $&$ $ \\[0.1em]
\ & $X$ & $ C \CC^{\alpha-1}_{p(a)}$ & $X$ & $=$ & $\partial_xY$ \\[0.1em]
\ & $Y^{\scalebox{0.8}{\RS{r}}}$ & $\LL^{\alpha + 1}_{p(a)}$ & $\LLL Y^{\scalebox{0.8}{\RS{r}}} $ &$ = $&$\partial_xY $ \\[0.1em]
\hline
\end{tabular}
\caption{Extended Data of the KPZ Equation}\label{table:kpz}
\end{table}

Here we assume that $\theta \in \LLL C^{\alpha /2} (\RR;C^{\infty}_b(\RR))$ is a (spatially) smooth noise and $C^{\infty}_b(\RR)$ is the space of bounded and infinitely differentiable functions with all derivatives bounded. The reason for assuming only distributional regularity of $\theta$ in the time variable is that we do not want to exclude spatial mollifications of the space-time white noise, which are convenient from a probabilistic point of view because they preserve the Markovian structure of the equation. We solve the equations for the elements in $\YY(\theta)$ by taking all initial conditions equal to zero, except $Y(0) = Y_0$ is assumed to be non-trivial. We are interested in starting the KPZ equation at its invariant measure, and for that purpose it is convenient to let $Y_0$ be of the form $$Y_0(x) = B(x) + Cx,$$ where $B$ is a two sided Brownian motion and $C \in \RR$ (cf. \cite[Section 1.4]{quastel_intro}). Note that we have added $X = \partial_x Y$ to the table because we assume that it has a better behaviour at infinity than $Y$. Indeed, while $Y$ may have superlinear growth at infinity, its derivative $X$ is started in the invariant measure for the rough Burgers equation, which has the growth of white noise on $\RR$, i.e. it grows less than any polynomial.

We now rigorously define the spaces of functions we will work with. For a finite collection $I$ of Banach spaces $Y_i$ we call \textit{product norm} on $\bigtimes_{i \in I} Y_i$ the norm: $\|\cdot \|_{\bigtimes Y_i} = \max_{i \in I} \| \pi_i (\cdot) \|_{Y_i}$ with $\pi_i$ being the projection on the $i-$th coordinate.

\begin{definition}\label{def:ykpz_space}
We shall call $\myi$ be the image of the map $\YY(\theta, Y_0, c^{\BA}, c^{\BD})$ in the space
$$ \LL^{\alpha}_{p(1+a)} \times \LL^{2 \alpha}_{p(a)} \times \LL^{\alpha +1}_{p(a)} \times \LL^{2\alpha +1}_{p(a)} \times \LL^{2\alpha +1}_{p(a)} \times C\mathcal{C}^{2\alpha-1}_{p(a)} \times C\CC^{\alpha - 1}_{p(a)}$$
as we let $(\theta, Y_0, c^{\BA}, c^{\BD})$ vary in $\LLL C^{\alpha /2} (\RR;C^{\infty}_b(\RR)) \times C^\infty_b(\RR) \times \RR \times \RR.$ We define $\mY_{kpz}$ as the closure of the image of $\YY(\theta, Y_0, c^{\BA}, c^{\BD})$ in the above space endowed with the product norm, which we will refer hereafter as $\| \cdot \|_{\mY_{kz}}$. To any $\YY \in \myk$ we associate a distribution $\xi = \LLL Y.$ 
\end{definition}

These tools are sufficient to define paracontrolled solutions to the KPZ equation.
\begin{definition}\label{def:solutions_to_kpz}
We say that $h$ is a paracontrolled solution to the KPZ equation (\ref{eqn:kpz}) with initial condition $\oh \in C(\RR, \RR)$ and with external data $\YY \in \mY_{kpz}$ if there exists an $\kappa \in \RR$ and $\beta^\sharp \in (0, 1), \ \bp \in (0, \frac{\alpha{+}1}{2} )$ such that $h$ is of the form:
$$h =  Y + Y^{\scalebox{0.8}{\RS{lr}}} + Y^{\scalebox{0.8}{\RS{rLrl}}} + h^P, $$ where $h^P$ is paracontrolled by $Y^{\scalebox{0.8}{\RS{r}}}$ in the sense that
$$
    \LL^{\bp, \alpha{+}1}_{e(\kappa)} \ni h^P = h' \ppara Y^{\scalebox{0.8}{\RS{r}}} + h^{\sharp},
$$
with $h'$ in $\LL^{\bp, \alpha}_{e(\kappa)}$ and $h^{\sharp}$ in $\LL^{\beta^\sharp, 2\alpha +1}_{e(\kappa)}$, and if the following conditions are satisfied:
\begin{equation}\label{eqn:h^P}
\begin{aligned}
    \LLL h^P = & \ \LLL(Y^{\BC} \! \!+ Y^{\BD} \!) + (X X^{\BB} {-} X \reso X^{\BB}) + X^{\BA}X^{\BB} + \hh (X^{\BB})^2 \\
    & \ +  (X + X^{\BA} + X^{\BB})\partial_xh^P + \hh (\partial_xh^P)^2, \\
     h^P(0) = & \ \oh - Y(0),
\end{aligned}
\end{equation}
and
\begin{equation}\label{eqn:h'}
    h' = X^{\BB} + \partial_xh^P.
\end{equation}
\end{definition}

\begin{remark}
Note that all the terms involved in the last equation are well defined. In particular, the product $X \cdot \partial_x h^P$ is well defined by applying the commutation results for paraproducts that we introduced in the preliminaries. The two crucial ingredients for this purpose are the paracontrolled nature of $h^P$ and the fact that the resonant product $\partial_xY^{\scalebox{0.8}{\RS{r}}} \reso \partial_xY$ is given \textit{a priori} in $\mY_{kpz}$. 

For smooth noises $\theta$ the definition amounts to $h$ satisfying the equation
$$
    \LLL h = \hh (\partial_xh)^2 - c^{\BA} + \theta.
$$
In this sense a paracontrolled solution $h$ to the KPZ equation with white noise forcing solves
\begin{align*}
	\LLL h = \hh (\partial_xh)^{\diamond 2} + \xi,
\end{align*}
where $(\partial_xh)^{\diamond 2} =  `` (\partial_xh)^{2} - \infty ",$ with $\infty$ being the limit $ \lim\limits_{n \to \infty } c^{\BA}_{n}$ as some smooth noise $\theta^{n}$ converges to $\xi$. In particular, $(\partial_xh)^{\diamond 2}$ is a continuous functional on the space of paracontrolled distributions.
\end{remark}
A similar argument holds for the RHE \eqref{eqn:she}. It is possible to define paracontrolled solutions to a renormalised version of the equation:
\begin{equation}\label{eqn:rhe_renormalised}
\begin{aligned}
    \LLL w  = w \diamond  \xi,  \ \  w(0)  = \mathfrak{w}_0
\end{aligned}
\end{equation}
with $w \diamond \xi =$\lqt$w (\xi - \infty )$\rqt with $\infty = \lim c^{\BA}.$

\begin{definition}\label{def:rhe_solutions}
We say that $w$ is a paracontrolled solution to the RHE equation (\ref{eqn:rhe_renormalised}) with initial condition $\mathfrak{w_0}$ of the form $\mathfrak{w}_0 = w_0 e^{Y(0) + Y^{\BA}(0) + Y^{\BB}(0)}$ with $w^0 \in \CC^{\beta}_{e(l)}$, for some $\beta \in (0, 2\alpha{+}1]$, and external data $\YY \in \mY_{kpz}$ if there exists a $\kappa \in \RR$ such that $w$ is of the form:
\[
w = w^P e^{Y {+} Y^{\BA} {+} Y^{\BB}}, \ \ \LL^{\bp, \alpha{+}1}_{e(\kappa)} \ni w^P = w' \ppara Y^{\BZ} + Y^{\sharp} 
\]
 for $w' = X^{\BB}w^P + \partial_x w^P \in \LL^{\bp, \alpha{-}\ve}_{e(\kappa)}$ and $w^{\sharp} \in \LL^{\hb, 2\alpha{+}1{-}\ve}_{e(\kappa)}$ with $\ve \in (6a/\delta, 3\alpha{-}1)$, where we define 
 \begin{equation}\label{eqn:defn_beta_prime_and_hat}
 \hat{\beta} = \frac{2\alpha{+}1{-}\beta}{2},  \qquad \bp = \frac{\alpha{+}1{-}\beta}{2} \vee 0,
 \end{equation}
 and such that $w^P$ solves the equation
\begin{equation}\label{eqn:w^p}
\begin{aligned}
    \LLL w^P = & \ \big[(X X^{\BB} {-} X \reso X^{\BB}) + \LLL(Y^{\BC} {+} Y^{\BD})  + X^{\BA} X^{\BB} + \hh(X^{\BB})^2\big]w^P \\
    & \ + [X {+} X^{\BA} {+} X^{\BB}]\partial_x w^P \\
    w^P(0) =& \  w_0.
\end{aligned}
\end{equation}
\end{definition}

The existence of global in space solutions to the RHE is already established in \cite{multiplicative}. In Section \ref{sect:abstract_solutions} we review their approach and prove an existence result for the paracontrolled setting (Proposition \ref{prop:existence_rhe}).

Now we briefly discuss how white noise can be lifted to extended data for the KPZ equation.

\begin{theorem}[Renormalisation]\label{thm:renormalisation}
Let $\xi$ be a white noise on $[0,T]\times\RR$, let $B$ be an independent two-sided Brownian motion on $\RR$, and let $C \in \RR$. Then for any $\alpha < 1/2$ and $a> 0$ (see Table \ref{table:kpz}), $(\xi, B + Cx)$ is almost surely associated to a $\mathbb{Y}(\xi, B + Cx) \in \mathcal{Y}_{kpz}$: There exists a sequence $(\xi^{n}, Y^n_0, c^{\BA}_{n}, c^{\BD}_{n})$ in $\smooth \times C^\infty_b \times \RR \times \RR$ such that almost surely $(\xi^{n}, Y^n_0) \to (\xi, B + Cx)$ in the sense of distributions and such that
\[
\YY^{n} = \YY(\xi^{n}, Y^n_0, c^{\BA}_{n}, c^{\BD}_{n}) \to \YY(\xi, B + Cx),
\]
where the convergence is in $L^p(\Omega; \mathcal{Y}_{kpz})$, for all $p \in [1, {+}\infty)$. Moreover, while $\xi^{n}$ and $Y^n_0$ are of course random processes, the constants $c^{\BA}_{n}, c^{\BD}_{n}$ can be chosen deterministic. Finally also the following asymmetric product converges:
	\[
	\partial_x Y^{\BZ, n} \reso \partial_x Y \to \partial_x Y^{\BZ} \reso \partial_x Y \text{ in } L^p(\Omega; C\CC^{2\alpha{-}1}_{p(a)}).
	\]
\end{theorem}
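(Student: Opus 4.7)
The plan is to construct the approximating sequence by standard spatial mollification: let $\chi^n$ be a smooth, compactly supported mollifier, set $\xi^n = \xi *_x \chi^n$ and $Y_0^n = Y_0 *_x \chi^n$. Decompose $Y^n = P_{\cdot}Y_0^n + Z^n$, where $Z^n$ is the stationary Duhamel integral solving $\LLL Z^n = \xi^n$ with zero initial datum. The homogeneous part $P_{\cdot}Y_0^n$ is deterministic given $Y_0$ and converges to $P_{\cdot}Y_0$ in $\LL^{\alpha}_{p(1+a)}$ by Proposition~\ref{prop:schauder_estimate}, using that a two-sided Brownian motion $B$ almost surely belongs to $\CC^{\alpha}_{p(1+a)}$ for $\alpha < 1/2$ and $a > 0$ (Brownian modulus of continuity combined with the polynomial weight). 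The higher-order objects $Y^{\BA,n}, Y^{\BB,n}, Y^{\BC,n}, Y^{\BD,n}$, the asymmetric resonant product, and $Y^{\BZ,n}$ are then defined by their respective equations with zero initial condition; the renormalization constants are chosen as $c^{\BA}_n = \hh\EE[(\partial_x Z^n(t,x))^2]$ and $c^{\BD}_n = \hh\EE[(\partial_x Y^{\BA,n}(t,x))^2]$, both independent of $(t,x)$ by the space-time stationarity of $\xi$, and both divergent as $n \to \infty$ in the expected way.

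Each stochastic object decomposes as a finite sum of terms living in Wiener chaoses of order at most four, and the argument follows the recipe of \cite{kpz, singular_GIP} adapted to the weighted setting of \cite{jorg}. For each such object $\Xi^n$, I would first compute the second moment $\EE[|\Delta_j \Xi^n(t,x)|^2]$ and an analogous bound for time increments via an explicit Fourier manipulation of the covariance kernel of $\xi^n$. For the purely stochastic parts these moments are independent of $x$ by stationarity; for the mixed terms involving $Y_0^n$ they grow polynomially in $x$, compatibly with the weight $p(a)$ or $p(1+a)$. The scaling in $j$ and in $|t-s|$ matches the regularity listed in Table~\ref{table:kpz}. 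Nelson's hypercontractivity then lifts these $L^2(\Omega)$ bounds to $L^p(\Omega)$ bounds for every $p < \infty$, and a weighted Kolmogorov continuity criterion in the parabolic H\"older-Besov spaces $\LL^{\beta, \alpha}_{p(a)}$ of Section~\ref{sec:prelim} delivers $L^p(\Omega;\myk)$ control, where the freedom to take $a > 0$ arbitrarily small is precisely what is needed to integrate the Kolmogorov bound over all $x \in \RR$, as in \cite{jorg, multiplicative}. Applying the same computation to the differences $\Xi^n - \Xi^m$ and then sending $m,n \to \infty$ yields the announced convergence.

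The main obstacle will be the asymmetric resonant product $\partial_x Y^{\BZ,n} \reso \partial_x Y^n$. Both factors sit in the first Wiener chaos generated by $\xi^n$ (up to smoother terms involving $Y_0^n$), so their pointwise product naturally splits into a convergent second-chaos piece and a deterministic contraction. Unlike the diagonal resonant product $\partial_x Z^n \reso \partial_x Z^n$, whose deterministic part is the logarithmically divergent $c^{\BA}_n$, the extra heat-kernel integration defining $Y^{\BZ,n}$ introduces exactly enough ultraviolet smoothing to make the contraction convergent in $C\CC^{2\alpha-1}_{p(a)}$ without any subtraction, since the combined regularity exponent $\alpha + (\alpha - 1) = 2\alpha - 1$ is just what the $L^2$-covariance computation of the contracted kernel produces after this additional convolution. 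Verifying this requires a careful Fourier computation of the pairing kernel, parallel to the analogous calculation in \cite[Section 9]{kpz} but carried out in the polynomial-weighted topologies and paying attention to the interaction between the weight $p(a)$ and the Kolmogorov argument in the unbounded $x$-variable. Once established, this term fits into the general scheme above and produces the separately stated $L^p(\Omega;C\CC^{2\alpha-1}_{p(a)})$ convergence, completing the proof.
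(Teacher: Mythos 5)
Your overall scheme (mollification, chaos decomposition, second-moment estimates, hypercontractivity, weighted Kolmogorov/Besov embedding with $a>1/p$) is the standard route and differs from the paper mainly in packaging: the paper periodizes $\xi$ and $B+Cx$ on $[-n,n]$, smooths with a Fourier multiplier, imports the chaos computations from Section~9 of \cite{kpz} with $L^p(\mathbb{T})$ replaced by $L^p(\RR,p(a))$, handles $Cx$ by harmonicity of $x\mapsto Cx$ under the heat flow, and isolates the asymmetric product in a separate appendix lemma. However, your treatment of that asymmetric product --- which is precisely the new ingredient of this theorem --- rests on an incorrect mechanism. You claim the deterministic (zeroth-chaos) contraction of $\partial_x Y^{\BZ,n}\reso\partial_x Y$ converges ``without any subtraction'' because the extra heat-kernel integration in $Y^{\BZ}$ provides enough ultraviolet smoothing. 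Regularity counting already warns against this: $\alpha+(\alpha-1)=2\alpha-1<0$, so the resonant product is not classically defined, and the explicit kernel computation confirms it. In Fourier variables the contracted kernel behaves like $k^3\int_0^t(t-s)e^{-(t-s)k^2}\,ds\sim |k|^{-1}$, so estimating it in absolute value over frequencies up to the cutoff gives a logarithmically divergent bound, exactly as for $c^{\BA}_n$. What saves the term is not smoothing but an exact cancellation: the kernel is odd under $k\mapsto -k$ (the symbol $H_r(k)$ carries one factor $ik$ per spatial derivative, so the contraction carries an odd power of $k$, while the Fourier cutoff is even), hence the zeroth-chaos contribution vanishes identically; this antisymmetry is the content of the paper's Lemma~\ref{lem:renormalisation_for_measure}, and without identifying it your argument for this term does not close.

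A second, more structural gap concerns admissibility of your approximations. Both the statement of the theorem and Definition~\ref{def:ykpz_space} require the approximating data to lie in $\smooth\times C^\infty_b(\RR)$, since $\myk$ is defined as the closure of the image of such data. Spatial mollification of space-time white noise and of $B+Cx$ with a compactly supported mollifier produces fields that are smooth in $x$ but almost surely \emph{unbounded} in $x$ (a stationary Gaussian field grows like $\sqrt{\log|x|}$, and $(B+Cx)*\chi^n$ grows superlinearly in the sup sense), so your sequence $(\xi^n,Y_0^n)$ is not in the admissible class and does not by itself establish membership of $\YY(\xi,B+Cx)$ in $\myk$. This is exactly why the paper periodizes before smoothing. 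One could try to repair your construction by an additional spatial cutoff, but then space stationarity is lost and your argument that $c^{\BA}_n=\tfrac12\EE[(\partial_x Z^n(t,x))^2]$ is independent of $x$ (and may be chosen deterministic and $t$-independent; note also that $Z^n$ started at time $0$ is not time-stationary, so some care is needed even there) no longer applies verbatim; the periodization device is what lets the paper keep the renormalisation constants deterministic while staying inside the admissible class.
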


\begin{proof}
	Let $\tilde{\xi}^n(\psi) = \xi|_{[-n,n]}(\sum_{k \in \mathbb{Z}} \psi(\cdot,2 k n + \cdot))$ and 
	\[\tilde Y^n_0 (\psi) = (B+Cx)|_{[-n,n]}\bigg(\sum_{k \in \mathbb{Z}} \psi(\cdot,2 k n + \cdot)\bigg)\]
	be the (spatial) $2n$-periodization of $\xi$ and $B + Cx$, respectively. Let $\varphi \in C^\infty_c(\RR)$ be even and such that $\varphi(0) = 1$ and define
	\[
		\xi^n = \varphi(n^{-1} \partial_x) \tilde \xi^n = \FF^{-1} ( \varphi(n^{-1} \cdot) \FF \tilde \xi^n),\qquad Y^n_0 = \varphi(n^{-1} \partial_x) \tilde Y^n_0 = \FF^{-1} ( \varphi(n^{-1} \cdot) \FF \tilde Y^n_0)
	\]
	as the spatial regularization of $\tilde \xi^n$ respectively $\tilde Y^n_0$ through the Fourier multiplier $\varphi(n^{-1}\cdot)$. It is not hard to show that $(\xi^n, Y^n_0) \in  \LLL C^{\alpha/2}(\RR; C^\infty_b(\RR)) \times C^\infty_b(\RR)$. In Section~9 of~\cite{kpz} the construction of $\mathbb Y(\xi, B)$ is performed in the periodic case, and slightly adapting the arguments of that paper we also obtain the convergence in our setting (for $C=0$): it suffices to change the definition of $E= \mathbb Z \setminus \{0\}$ to $E = \RR$ and to replace $L^p(\mathbb T)$ by $L^p(\RR, p(a))$ for $a > 1/p$ in the computations following equation (84). Since $p$ can be arbitrarily large, $a>0$ can be as small as we need.
	
	It remains to treat the case $C\neq 0$. But adding $C x$ to $Y_0$ only results in changing $Y(t) \to Y(t) + C x$ (here we used that spatial convolution with the heat kernel leaves $Cx$ invariant, because it is a harmonic function). Also, the approximations to $Cx$ are  smooth uniformly in $n$, and therefore adding these additional terms does not change the regularities or divergences. The result regarding the asymmetric resonant product follows along the same lines: for clarity it is postponed to Lemma \ref{lem:renormalisation_for_measure}.
\end{proof}

\renewcommand{\TD}{n\mathbb{T}}

In view of this result let us fix any $\YY \in \mY_{kpz}$ and prove that the KPZ equation driven by $\YY$ has a solution. We also fix a sequence $(\theta^n, Y^n_0, c^{\BA}_n, c^{\BD}_n) \in  \smooth \times C^\infty_b \times \RR \times \RR$ such that $\YY^n := \YY(\theta^n, Y^n_0, c^{\BA}_n, c^{\BD}_n) \to \YY$ in $\mY_{kpz}$.
\begin{assumption}\label{assu:initial_condition}
	We assume that the initial condition $\oh$ for the KPZ equation is of the form:
	$$
	    \oh - Y(0) \in \CC^{\beta}_{p(\delta)} 
	$$
	for some $\beta \in (0, 2\alpha{+}1]$ and that there exists a sequence $\oh^n, Y^n(0)$ in $C^{\infty}_b(\RR)$ such that:
	$$
	\oh^n  - Y^n(0) \to \oh - Y(0) \text{ in } \CC^{\beta}_{p(\delta)}.
	$$
\end{assumption}

\begin{remark}
In particular, we can choose any initial condition $\bar{h}$ in the space $\CC^{\beta}_{p(\delta)}.$ Indeed, in Theorem \ref{thm:renormalisation} we can set $C=0$ in the initial condition for $Y$, and then $Y(0) \in \CC^\beta_{p(\delta)}$. It is canonical to assume a bit of H\"older regularity for the initial condition of the KPZ equation, this is already needed for the equation on the torus~\cite{hai:solving_kpz, kpz}. The constraint on the growth is not so natural, in the case of smooth $\xi$ we would expect that at least subquadratic growth is sufficient. But with our methods sublinear growth is the best we can hope for, because we need $e^{\bar h}$ to be a tempered ultra-distribution.
\end{remark}

For the smooth data $\YY^n$ and initial condition $\oh^n$ we can solve the KPZ equation.

\begin{proposition}\label{prop:exist_smooth_KPZ}
For $\YY^n$ and $\oh^n$ as above there exists a unique paracontrolled solution $h^n$ to the KPZ equation as in Definition \ref{def:solutions_to_kpz}, with $h, \ h'^{, n}$ and $h^{\sharp, n}$ in $C^{\infty}_b(\RR)$ as well as in $\LL^{\alpha{+}1}, \ \LL^{\alpha}, \ \LL^{2\alpha{+}1}$, respectively.
\end{proposition}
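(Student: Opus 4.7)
The strategy is to invert the Cole--Hopf transform: I construct $w^n$ first as a solution to the linear RHE and then set $h^n := \log w^n$. Because $\oh^n - Y^n(0) \in C^\infty_b(\RR)$ by Assumption~\ref{assu:initial_condition} and the approximation scheme, the candidate initial datum $w_0^n := \exp(\oh^n - Y^n(0))$ lies in $C^\infty_b(\RR) \subset \CC^\beta_{e(l)}$ for every $l \in \RR$ and every admissible $\beta \in (0, 2\alpha+1]$. Moreover, since $\YY^n = \YY(\theta^n, Y^n_0, c^{\BA}_n, c^{\BD}_n)$ is built from smooth data, every entry of $\YY^n$ is a classical smooth function, bounded in space together with its spatial derivatives uniformly on $[0,T]$. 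Proposition~\ref{prop:existence_rhe} therefore produces a paracontrolled solution $w^n$ to \eqref{eqn:rhe_renormalised} with $\mathfrak{w}_0 = e^{\oh^n}$, and in this smooth regime all paracontrolled ingredients reduce to their classical pointwise counterparts, so that $w^n$ is smooth and solves $\LLL w^n = w^n(\theta^n - c^{\BA}_n)$ in the classical sense.

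\textbf{Inversion and verification of the paracontrolled structure.} The classical parabolic maximum principle applied to the linear equation above, with smooth coefficient $\theta^n - c^{\BA}_n$ and strictly positive datum $w_0^n$, yields $w^n > 0$ on $[0,T] \times \RR$ with uniform positive lower bound on compacts. Thus $h^n := \log w^n$ is smooth and, by the chain rule, satisfies $\LLL h^n = \tfrac12(\partial_x h^n)^2 + \theta^n - c^{\BA}_n$ with $h^n(0) = \oh^n$. Setting
\begin{equation*}
	h^{P,n} := h^n - Y^n - Y^{\BA,n} - Y^{\BB,n}, \qquad h'^{\,,n} := X^{\BB,n} + \partial_x h^{P,n}, \qquad h^{\sharp,n} := h^{P,n} - h'^{\,,n} \ppara Y^{\BZ,n},
\end{equation*}
and subtracting from the KPZ equation for $h^n$ the equations of Table~\ref{table:kpz} satisfied by $Y^n + Y^{\BA,n} + Y^{\BB,n}$ reproduces precisely \eqref{eqn:h^P}; equation \eqref{eqn:h'} is definitional. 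The required $\LL^{\alpha+1}$, $\LL^\alpha$ and $\LL^{2\alpha+1}$ bounds on $h^{P,n}$, $h'^{\,,n}$ and $h^{\sharp,n}$ are then a routine verification for smooth objects, combining the paraproduct estimates of Lemmata~\ref{lem:paraproduct_estimates}--\ref{lem:parabolic_paraproduct_estimates} with the Schauder bounds of Proposition~\ref{prop:schauder_estimate}.

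\textbf{Uniqueness and main obstacle.} If $\tilde h^n$ were another paracontrolled solution with the same data, then $\tilde w^n := e^{\tilde h^n}$ would again be smooth and strictly positive and would satisfy the same linear RHE as $w^n$; uniqueness for \eqref{eqn:rhe_renormalised} in the smooth regime (either via Proposition~\ref{prop:existence_rhe} or a direct Gronwall estimate on $w^n - \tilde w^n$ in a weighted $L^\infty$-norm) then forces $\tilde w^n = w^n$, and hence $\tilde h^n = h^n$. The only conceptually delicate point is to ensure that the paracontrolled object furnished by Proposition~\ref{prop:existence_rhe}, when fed with smooth data, is indeed a classical solution of the linear heat equation with potential, so that the chain rule passing between $w^n$ and $h^n$ is unambiguously justified; once this identification is made, everything else amounts to bookkeeping within the weighted parabolic Besov scales.
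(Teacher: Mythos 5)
Your route is genuinely different from the paper's: you construct $w^n$ by solving the linear RHE and then set $h^n=\log w^n$, whereas the paper solves the smooth-data KPZ equation directly by a Schauder/fixed-point argument (as in Section~4 of \cite{kpz}), and uses the Cole--Hopf idea only \emph{partially}, through $v^n=e^{h^n-Y^n}$, to exclude finite-time blow-up. The reason for subtracting only $Y^n$ there is decisive: the resulting linear equation has coefficients $\tfrac12(\partial_x Y^n)^2-c^{\BA}_n$ and $\partial_x Y^n$ which are \emph{continuous in time} and smooth in space, so classical linear theory applies to it.

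This is exactly where your argument has a genuine gap. You claim that $w^n$ "solves $\LLL w^n=w^n(\theta^n-c^{\BA}_n)$ in the classical sense" with a "smooth coefficient" and then invoke the classical parabolic maximum principle to get strict positivity. But $\theta^n\in\smooth$ is smooth in space and only \emph{distributional in time} (this class is chosen precisely to include spatial mollifications of space-time white noise), so the zeroth-order coefficient of your linear equation is not a continuous function, the product $w^n\theta^n$ is not a classical pointwise product (a Young-type pairing in time fails, since the temporal regularities add up to roughly $\alpha+\tfrac12<1$), and the classical maximum principle cannot be applied as stated. This is not a cosmetic issue: obtaining a quantitative lower bound is exactly the delicate content of Lemma~\ref{lem:lower_estimate}, where comparison is applied at the level of $h^{P,n}$, respectively of equation \eqref{eqn:w^p}, whose coefficients (including $\LLL(Y^{\BC,n}+Y^{\BD,n})$) \emph{are} continuous in time for smooth data. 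Your proof can be repaired along those lines: prove $w^{P,n}>0$ by comparison for \eqref{eqn:w^p} with the strictly positive datum $w^n_0=e^{\oh^n-Y^n(0)}$, and conclude $w^n=w^{P,n}e^{Y^n+Y^{\BA,n}+Y^{\BB,n}}>0$; with that fix, the inversion $h^n=\log w^n$ and your uniqueness-by-exponentiation (legitimate in the bounded class of the proposition, where $\|h^{P,n}\|_{\infty,p(\delta)}<\infty$ is automatic, and the chain-rule argument of Theorem~\ref{thm:existence_uniqueness_KPZ} applies) go through. Be aware, though, that checking that the RHE solution with smooth data is spatially smooth and that $\log w^n$ carries the stated unweighted paracontrolled bounds in $\LL^{\alpha+1}$, $\LL^{\alpha}$, $\LL^{2\alpha+1}$ is more than pure bookkeeping, whereas the paper's direct fixed-point construction yields this structure by construction.
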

\begin{proof}
This is a classical application of the Schauder estimates, cf. \cite[Section 4]{kpz}. Global existence in time follows from a partial Cole-Hopf transform, since $v^n = e^{h^n-Y^n}$ solves the linear equation
	\[
		\LLL v^n =  v^n \left(\frac12(\partial_x Y^n)^2 {-} c^{\BA}_n\right) + \partial_x v^n \partial_x Y^n
	\]
	with continuous-in-time and smooth-in-space data.
\end{proof}

\subsection{Existence}

First, we will prove an priori estimate for the smooth solutions $h^n(t,x)$ to the KPZ equation, similar to the one from \cite[Corollary 7.4]{kpz}. Let us consider a constant $M>0$ such that
$$
\sup_n  \| \YY^n\|_{\mY_{kpz}} {+}  \sup_n \| \oh^n - Y^n(0)\|_{\CC^{\beta}_{p(\delta)}}  \le M.
$$

\begin{lemma}\label{lem:lower_estimate}
We have uniformly over $n \in \NN$, $t \in [0,T]$ and $x \in \RR$
\begin{equation}\label{eqn:lower_a_priori_estimate_kpz}
    h^{P,n} (t,x) = [h^n {-} Y^n {-} Y^{\BA \!, n} {-} Y^{\BB,n}] (t,x)  \gtrsim_{M} -(1 {+} |x|)^{\delta}.
\end{equation}
\end{lemma}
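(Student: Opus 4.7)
The plan is to apply the classical parabolic comparison principle to the smooth approximations $h^{P,n}$, using a Cole-Hopf-type exponentiation to linearise the equation. Since each $h^{P,n}$ is smooth by Proposition~\ref{prop:exist_smooth_KPZ}, the question reduces to finding a suitable subsolution whose constants depend only on the uniform model bound $M$.

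First, set $v^n := \exp(h^{P,n})$. A direct computation from \eqref{eqn:h^P} via the chain rule shows that $v^n$ satisfies the linear PDE
\begin{equation*}
\LLL v^n = \Phi^n \, v^n + \Psi^n \, \partial_x v^n,
\end{equation*}
where, after substituting $\LLL Y^{\BC,n}$ and $\LLL Y^{\BD,n}$ from Table~\ref{table:kpz}, the renormalisation constants $c^{\BA}_n$ and $c^{\BD}_n$ drop out and one obtains
\begin{equation*}
\Phi^n = \hh(X^{\BA,n} + X^{\BB,n})^2 + X^n X^{\BB,n}, \qquad \Psi^n = X^n + X^{\BA,n} + X^{\BB,n}.
\end{equation*}
By Assumption~\ref{assu:initial_condition} and the embedding $\CC^{\beta}_{p(\delta)} \hookrightarrow L^\infty_{p(\delta)}$, the initial datum $v^n(0,x) = \exp(\oh^n(x) - Y^n(0,x))$ is bounded below pointwise by $\exp(-C_M (1+|x|)^\delta)$ uniformly in $n$.

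I would then seek a subsolution of the form $\underline{v}(t,x) := \exp(-A_M t - B_M \phi(x))$, with $\phi(x) := (1+|x|^2)^{\delta/2}$ a smooth proxy for $(1+|x|)^\delta$, and pick constants $A_M, B_M$ depending only on $M$ so that $\underline{v}(0) \le v^n(0)$ and
\begin{equation*}
\LLL \underline{v} - \Phi^n \underline{v} - \Psi^n \partial_x \underline{v} \le 0
\end{equation*}
hold pointwise for every $n$. The classical parabolic maximum principle, applicable because $v^n$ is smooth with at most polynomial growth at infinity (from the partial Cole-Hopf $v^n = e^{h^n - Y^n}$ used in Proposition~\ref{prop:exist_smooth_KPZ}), then yields $\underline{v} \le v^n$ on $[0,T] \times \RR$. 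Taking logarithms and noting $B_M \phi(x) \lesssim_M (1+|x|)^\delta$ gives the claimed bound.

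The main obstacle will be verifying the subsolution inequality uniformly in $n$: although the paracontrolled norms of $\Phi^n$ and $\Psi^n$ are controlled by $M$, their pointwise $L^\infty$ norms blow up as $n \to \infty$, since singular products like $\hh(X^{\BA,n})^2$ absorb the divergent quantity $c^{\BD}_n$. A direct Young-type split of the drift produces the combination $-\Phi^n + \hh(\Psi^n)^2 = \hh(X^n)^2 + X^n X^{\BA,n}$, which is still pointwise unbounded. To bypass this, one would refine the subsolution to $\underline{v}^n(t,x) := \exp(-A_M t - B_M \phi(x) + R^n(t,x))$, where $R^n$ is an explicit function of the enhanced data $Y^n, Y^{\BA,n}, Y^{\BB,n}$ chosen so that its contribution to the subsolution inequality exactly cancels the divergent singular products, via the same cancellations that make $\Phi^n$ and $\Psi^n$ into paracontrolled objects. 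Since $R^n$ itself has polynomial growth $p(a)$ with constant controlled by $M$ and $a < \delta$, the lower bound retains the form $h^{P,n} \gtrsim_M -(1+|x|)^\delta$.
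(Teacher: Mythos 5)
Your reduction to the linear equation is fine: with $v^n=e^{h^{P,n}}$ the chain rule indeed gives $\LLL v^n=\Phi^n v^n+\Psi^n\partial_x v^n$ with $\Phi^n=\hh(X^{\BA,n}+X^{\BB,n})^2+X^nX^{\BB,n}$ (only $c^{\BD}_n$ cancels there; $c^{\BA}_n$ never enters), and this is close in spirit to what the paper does. But the step you yourself flag is a genuine gap, and your proposed repair does not close it. Because $\|\Phi^n\|_{L^\infty}\to\infty$, no barrier $\underline v=\exp(-A_Mt-B_M\phi)$ with $n$-independent constants can satisfy the subsolution inequality, and the corrector $R^n$ you invoke is not constructed: to have its contribution cancel the divergent part of $\Phi^n$ (and the cross terms with $\Psi^n\partial_xR^n$ and $\hh(\partial_xR^n)^2$), $R^n$ must essentially solve the same singular linear transport--heat equation, i.e.\ you are back to needing the paracontrolled solution theory rather than an ``explicit function of the enhanced data''. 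Worse, since $R^n$ sits in the exponent, your conclusion requires a pointwise bound $|R^n|\lesssim_M (1+|x|)^\delta$; the weighted theory for such equations only yields $e(\kappa)$-type bounds, i.e.\ $|R^n|\lesssim e^{\kappa|x|^\delta}$, and a sublinear (or polynomial $p(a)$) bound for paracontrolled solutions of the linear equation is precisely the conjecture stated in the paper's introductory Remark, which the authors say they cannot prove. So the final claim ``$R^n$ has polynomial growth $p(a)$ controlled by $M$'' is unsubstantiated and is in fact the whole difficulty.

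There is also a structural reason your route is harder than necessary: you aim at a \emph{lower} bound for the positive solution $v^n=e^{h^{P,n}}$ of a singular linear equation, and uniform lower bounds for such solutions are the delicate direction (cf.\ the strict-positivity results of Mueller and Cannizzaro--Friz--Gassiat cited in the paper). The paper's proof avoids this: first it uses comparison to drop the nonnegative term $\hh(\partial_xh^{P,n})^2$, obtaining $h^{P,n}\ge -v^n$ where $v^n$ solves the equation with the sign-reversed inhomogeneity and the same transport term; then it sets $\tilde u^n=e^{v^n}$, drops the nonpositive term $-\hh(\partial_x\tilde u^n)^2/\tilde u^n$ by a second comparison, and is left with an \emph{upper} bound by the solution $u^n$ of (up to sign) the linear paracontrolled equation \eqref{eqn:w^p}. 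Uniform upper bounds $u^n\le Ce^{\kappa|x|^\delta}$ then come directly from Proposition \ref{prop:existence_rhe} together with Lemma \ref{lem:interpolation_schauder}, and taking logarithms gives the $-(1+|x|)^\delta$ bound. That appeal to the uniform weighted paracontrolled estimates for the linear equation is the ingredient missing from your argument; without it (or a proof of the sublinear-growth conjecture) the proposal does not yield the lemma.
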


\begin{proof}
	Recall that the function $h^{P,n}$ solves
\begin{equation*}
\begin{aligned}
    \LLL h^{P,n} = & \ \LLL(Y^{\BC,n} \! \!+ Y^{\BD,n} \!) + (X^n X^{\BB, n} {-} X^n \reso X^{\BB,n}) + X^{\BA,n}X^{\BB,n} + \hh (X^{\BB,n})^2 \\
    & \ +  (X^n + X^{\BA,n} + X^{\BB,n})\partial_xh^{P,n} + \hh (\partial_xh^{P,n})^2, \\
     h^{P,n}(0) = & \ \oh^n - Y^n(0).
\end{aligned}
\end{equation*}
    By comparison, e.g. \cite[Lemma 2.3]{lieb}, we see that $h^{P,n} \ge - v^n$ where the latter solves the following equation:
    \begin{equation*}
        \begin{aligned}
            \LLL v^n = & \ -[\LLL(Y^{\BC, n} \! {+} Y^{\BD, n} \!) + (X^n X^{\BB, n} {-} X^n \reso X^{\BB, n}) + X^{\BA, n}X^{\BB, n} + \hh (X^{\BB, n})^2] \\
    & \ +  (X^n {+} X^{\BA, n} {+} X^{\BB, n})\partial_xv^n, \\
    v^n(0) = & \ -[\oh^n - Y^n(0)].
        \end{aligned}
    \end{equation*}
    Now we find an upper bound for $v^n$. We consider the transformation $\tilde{u}^n = \exp (v^n)$ which solves the equation
    \begin{equation*}
        \begin{aligned}
            \LLL \tilde{u}^n = & \ -[\LLL(Y^{\BC, n} \! {+} Y^{\BD, n} \!) + (X^n X^{\BB, n} {-} X^n \reso X^{\BB, n}) + X^{\BA, n}X^{\BB, n} + \hh (X^{\BB, n})^2]\tilde{u}^n  \\
            & \ {+}  (X^n {+} X^{\BA, n} {+} X^{\BB, n})\partial_x\tilde{u}^n - \hh (\partial_x\tilde{u}^n)^2/\tilde{u}^n,
        \end{aligned}
    \end{equation*}
   with $\tilde{u}^n(0) = \exp( -[\oh^n {-} Y^n(0)])$. Again, by comparison it follows that $\tilde{u}^n \le u^n$ with the latter solving
    \begin{equation*}
        \begin{aligned}
            \LLL u^n = & \ -[\LLL(Y^{\BC, n} \! {+} Y^{\BD, n} \!) + (X^n X^{\BB, n} {-} X^n \reso X^{\BB, n}) + X^{\BA, n}X^{\BB, n} + \hh (X^{\BB, n})^2]u^n \\
    & \  +  (X^n {+} X^{\BA, n} {+} X^{\BB, n})\partial_x u^n,
        \end{aligned}
    \end{equation*}
with initial condition $u^n(0) =  \exp( -[\oh^n {-} Y^n(0)])$. Up to a sign this equation is just (\ref{eqn:w^p}). Proposition \ref{prop:existence_rhe} and Assumption \ref{assu:initial_condition} then imply that this equation admits a unique paracontrolled solution $u^n$ such that for a sufficiently large $\kappa$
    $$
    	\sup_{n}\| u^n \|_{\LL^{\beta^\prime, \alpha+1}_{e(\kappa)}} < {+} \infty.
    $$
Hence by Lemma \ref{lem:interpolation_schauder} $u^n$ is uniformly bounded in $C\CC^\zeta_{e(\kappa)}$ for some $\zeta >0$. Indeed, since $\beta>0$ it follows that $\bp < \frac{\alpha{+}1}{2}$. We can conclude by the monotonicity of the logarithm.
\end{proof}

\begin{remark}
	A lower bound for $h^n$ can be formally derived using the Feynman-Kac formula and Jensen's inequality as follows:
	\begin{align*}
		h^{n}(t,x) & = \log \EE\left[\exp\left(\int_0^t \theta^n(t-s, x+W_s) d s\right) \right] \\
		& \ge - \log \EE\left[\exp\left( - \int_0^t \theta^n(t-s, x+W_s) d s\right) \right],
	\end{align*}
	where $W$ is a Brownian motion and we recall that $\theta^n$ is deterministic. We can then use the Feynman-Kac formula once more to derive an upper bound for the expectation on the right hand side, which leads to a lower bound for $h^n$. Since in general $\theta^n$ is only a distribution in time and also we are interested in bounds for $h^{P,n}$ and not for $h^n$, we argue through the comparison principle instead.
\end{remark}

\begin{remark}
	The previous proposition provides us with a pathwise quantitative lower bound for the solution $w$ to the rough heat equation \eqref{eqn:rhe_renormalised}. Lower bounds for the \emph{stochastic} heat equation are classical by now. Mueller \cite{Mueller} proved that the solutions are strictly positive even when started in a nonnegative, nonzero initial condition (see also \cite[Theorem~5.1]{Cannizzaro2017Malliavin} for a pathwise version of this result), while in \cite{Conus} there are tight estimates regarding an upper bound for the solution. These results already relied on comparison principles, but only with respect to the initial condition. While the lower bound of~\cite{Cannizzaro2017Malliavin} is also pathwise, it is, along with the other quoted lower bounds bounds, only qualitative and gives no quantitative control. The price we pay for our result is that we restrict ourselves to strictly positive initial conditions, which satisfy Assumption \ref{assu:initial_condition}.
\end{remark}

Now we show that the sequence $h^n$ converges to some $h$. In the following lemma we collect the results regarding the rough heat equation.

\begin{lemma}\label{lem:convergence_of_RHE}
Let us consider $w^{P,n} = e^{h^n {-} Y^n {-} Y^{\BA \!, n} {-} Y^{\BB \!, n}}$. There exists a $\kappa \ge 0$ such that:
\[
    w^{P,n} \to w^P = we^{{-} Y{-} Y^{\BA} {-} Y^{\BB}} \text{ in } \LL^{\bp, \alpha + 1}_{ e(\kappa)},
\]
where $w$ solves the rough heat equation \eqref{eqn:rhe_renormalised} on the entire space with initial condition $w_0 = e^{\oh}$.
\end{lemma}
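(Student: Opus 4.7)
The plan is to identify $w^{P,n}$ as the paracontrolled solution of the linear equation \eqref{eqn:w^p} driven by $(\YY^n, w^{P,n}(0))$ and to conclude by invoking the continuity of the solution map provided by Proposition~\ref{prop:existence_rhe}.

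\emph{Step 1 (Cole--Hopf at the smooth level).} Since $\theta^n, Y^n_0$ and $\oh^n$ are smooth, Proposition~\ref{prop:exist_smooth_KPZ} yields that $h^n$ and all the elements of $\YY^n$ are smooth, hence so is $h^{P,n} = h^n - Y^n - Y^{\BA,n} - Y^{\BB,n}$. The elementary identity
\[
\LLL e^{h^{P,n}} = e^{h^{P,n}}\bigl(\LLL h^{P,n} - \tfrac12(\partial_x h^{P,n})^2\bigr)
\]
combined with equation \eqref{eqn:h^P} shows that the quadratic term $\tfrac12(\partial_x h^{P,n})^2$ cancels exactly, and $w^{P,n} := e^{h^{P,n}}$ is a classical solution of the linear equation \eqref{eqn:w^p} with initial condition $w^{P,n}(0) = e^{\oh^n - Y^n(0)}$ (using $Y^{\BA,n}(0) = Y^{\BB,n}(0) = 0$).

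\emph{Step 2 (convergence of the initial conditions).} By Assumption~\ref{assu:initial_condition} we have $\oh^n - Y^n(0) \to \oh - Y(0)$ in $\CC^\beta_{p(\delta)}$ with $\beta \in (0,2\alpha+1]$. In particular this sequence is uniformly bounded in $L^\infty_{p(\delta)}$, so its pointwise growth is dominated by $C(1+|x|)^\delta$ uniformly in $n$. Since $\delta<1$, for any $l>0$ the exponential weight $e(l)$ dominates $e^{C(1+|\cdot|)^\delta}$, and combining the Lipschitz continuity of $\exp$ on bounded sets with the characterisation of Besov--H\"older norms in Corollary~\ref{cor:caracterisation_besov_holder} gives
\[
e^{\oh^n - Y^n(0)} \longrightarrow e^{\oh - Y(0)} \quad \text{in } \CC^\beta_{e(l)}
\]
for all $l$ sufficiently large.

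\emph{Step 3 (continuity of the RHE solution map and identification).} Combined with $\YY^n \to \YY$ in $\mY_{kpz}$, Proposition~\ref{prop:existence_rhe} yields a unique paracontrolled solution $\tilde w^{P,n}$ of \eqref{eqn:w^p} driven by $(\YY^n, e^{\oh^n - Y^n(0)})$, together with the convergence
\[
\tilde w^{P,n} \longrightarrow w^P \quad \text{in } \LL^{\bp,\alpha+1}_{e(\kappa)}
\]
for some $\kappa \ge 0$, where $w^P$ is the paracontrolled solution driven by $(\YY, e^{\oh - Y(0)})$. For smooth data both $\tilde w^{P,n}$ and the function $w^{P,n}$ from Step~1 are classical solutions of the same linear parabolic equation with the same smooth initial condition, so standard uniqueness for linear PDEs forces $\tilde w^{P,n} = w^{P,n}$. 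Setting $w := w^P e^{Y + Y^{\BA} + Y^{\BB}}$ then produces a paracontrolled solution of the RHE \eqref{eqn:rhe_renormalised} with initial condition $\mathfrak w_0 = e^{\oh - Y(0)} e^{Y(0)} = e^{\oh}$ in the sense of Definition~\ref{def:rhe_solutions}.

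\emph{Main obstacle.} The Cole--Hopf cancellation is purely algebraic; the non-trivial point is Step~2, where one has to transfer convergence from a polynomial weight $p(\delta)$ to an exponential weight $e(l)$. This is precisely where the restriction $\delta<1$ matters, ensuring that $e^{C(1+|\cdot|)^\delta}$ is still $\omega$-moderate and lies in the weight class $\boldsymbol{\rho}(\omega)$ tolerated by the paracontrolled solution theory for \eqref{eqn:w^p}. Once this is established, the continuity of the solution map from Proposition~\ref{prop:existence_rhe} does all the remaining work.
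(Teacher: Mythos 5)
Your proof is correct and follows essentially the same route as the paper: convergence of the initial data $e^{\oh^n-Y^n(0)}$ in an exponentially weighted space (your Step 2 is in substance the paper's Lemma \ref{lem:exp}, which the paper cites directly) followed by the continuity of the paracontrolled solution map for Equation \eqref{eqn:w^p} from Proposition \ref{prop:existence_rhe}. The only difference is that you spell out the Cole--Hopf identification of $w^{P,n}$ as the solution of \eqref{eqn:w^p} at the smooth level, which the paper leaves implicit.
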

\begin{proof}
The initial condition $w^{n}(0) = e^{\oh^n}$ is of the form $w^n_0 e^{Y^n(0)}$ with $w^n_0$ converging to $w_0$ in $\CC^{\beta}_{e(l)},$ for some $l \in \RR$. Indeed this follows from Assumption \ref{assu:initial_condition} and Lemma \ref{lem:exp}, since we know that
\[
	\lim_{n} \| (\oh^n {-} Y^n(0)) - (\oh {-} Y(0))\|_{\CC^{\beta}_{p(\delta)}} =0.
\]
Thus, the first result is a consequence of Proposition \ref{prop:existence_rhe}.
\end{proof}

This lemma and the previous lower bound allow us to deduce the convergence of $h^n$ by exploiting the \textit{continuity} of the Cole-Hopf transform. \textit{A priori} it is not clear why taking the logarithm is a continuous operation, since it has a singularity in zero.

\begin{proposition}\label{prop:convergence_of_hn}
There exists $\kappa \ge 0$ such that: \begin{equation}\label{partial_convergence}
h^{P,n} = h^n {-} Y^n {-} Y^{\BA \!,n} {-} Y^{\BB \! {,} n} \longrightarrow h {-} Y {-} Y^{\BA} {-} Y^{\BB} \stackrel{\text{def}}{=} h^P \text{ in } \LL^{\bp, \alpha +1}_{e(\kappa)},
\end{equation}

Moreover $h = \log(w)$, where $w$ is the solution to the rough heat equation with initial condition $e^{\oh}$ and $h^P = \log(w^P)$. In addition, we have a sub-linear bound for $h^P$:
\begin{equation}\label{eqn:lower_polynomial_bound_h}
\sup_{t \in [0, T]} \| h^P(t,\cdot) \|_{L^{\infty}_{p(\delta)}} < {+} \infty.
\end{equation}
\end{proposition}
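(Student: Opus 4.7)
The key algebraic identity is $h^{P,n}(t,x) = \log w^{P,n}(t,x)$, which is immediate from $w^{P,n} = \exp(h^n {-} Y^n {-} Y^{\BA,n} {-} Y^{\BB,n}) = \exp(h^{P,n})$. Similarly, once convergence is established, $h^P = \log w^P$ and $h = h^P + Y + Y^{\BA} + Y^{\BB} = \log(w^P e^{Y+Y^{\BA}+Y^{\BB}}) = \log w$. Since Lemma~\ref{lem:convergence_of_RHE} already provides $w^{P,n} \to w^P$ in $\LL^{\bp,\alpha+1}_{e(\kappa)}$, the whole proposition reduces to the continuity of the logarithm at the paracontrolled level on positive elements, whose indispensable input is a uniform pointwise lower bound on $w^{P,n}$.

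This lower bound is exactly what Lemma~\ref{lem:lower_estimate} delivers: exponentiating yields, uniformly in $n$ and $t \in [0,T]$,
\[
	w^{P,n}(t,x) \;\gtrsim_M\; \exp\!\bigl({-}C(1+|x|)^\delta\bigr),
\]
and by pointwise passage to the limit the same bound holds for $w^P$. Combined with the uniform pointwise upper bound $w^{P,n}(t,x) \lesssim e^{\kappa'(1+|x|)^\delta}$ (a consequence of the uniform norm bound for $w^{P,n}$ in $\LL^{\bp,\alpha+1}_{e(\kappa)}$ away from $t=0$, complemented at the initial time by the regularity assumption on $w_0$ through Assumption~\ref{assu:initial_condition}), the sequence $w^{P,n}$ is trapped pointwise inside a strictly positive $x$-dependent interval on which $\log$ is smooth with derivatives of size at most $e^{C(1+|x|)^\delta}$.

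To transfer the $\LL^{\bp,\alpha+1}_{e(\kappa)}$ convergence of $w^{P,n}$ into convergence of $h^{P,n} = \log w^{P,n}$ in $\LL^{\bp,\alpha+1}_{e(\kappa')}$ with a possibly enlarged weight, I would invoke a paracontrolled chain rule: the ansatz $w^{P,n} = w'^{,n}\ppara Y^{\BZ} + w^{\sharp,n}$ induces for $\log w^{P,n}$ the paracontrolled expansion with derivative $w'^{,n}/w^{P,n} = X^{\BB,n} + \partial_x w^{P,n}/w^{P,n}$, which is precisely the $h'^{,n}$ appearing in \eqref{eqn:h'}, and a sharp remainder estimated via a second-order Taylor expansion of $\log$ whose coefficients are controlled by the two-sided pointwise bounds above. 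The sub-linear bound \eqref{eqn:lower_polynomial_bound_h} is then immediate by taking logarithms of the two-sided pointwise bounds on $w^P$, since both $\pm\log w^P(t,x)$ are dominated by $(1+|x|)^\delta$ uniformly in $t$.

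The main obstacle is precisely this paracontrolled log continuity: one has to track how the reciprocal weight $e^{C(1+|x|)^\delta}$ coming from $1/w^{P,n}$ propagates through the paraproducts, the resonant products and the Taylor remainder controlling the sharp part, and this bookkeeping is what forces (and quantifies) the enlargement from $\kappa$ to $\kappa'$, while still keeping $\kappa'$ finite thanks to the sub-exponential $(1+|x|)^\delta$ growth of the controlling weight.
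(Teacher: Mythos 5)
Your reduction is the same as the paper's: exponentiate Lemma~\ref{lem:lower_estimate} to get the uniform lower bound $w^{P,n}\ge c\,e^{-C|x|^\delta}$, take the convergence $w^{P,n}\to w^P$ from Lemma~\ref{lem:convergence_of_RHE}, and conclude by continuity of the logarithm; the sub-linear bound then falls out of the two-sided pointwise bounds exactly as you say. The gap is that you never establish the one step that carries the whole proposition, namely the continuity of $\log$ in the relevant topology — you sketch a ``paracontrolled chain rule'' and explicitly leave the weight bookkeeping as ``the main obstacle''. In the paper this step is Lemma~\ref{lem:log}: a local Lipschitz estimate for $\log$ on the plain weighted H\"older spaces $\LL^{\gamma,\alpha}_{e(\hat l)}$, valid on the set of functions pinched between $c\,e^{-r|x|^\delta}$ and $C\,e^{r|x|^\delta}$. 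That elementary lemma suffices here because the statement only claims convergence in $\LL^{\bp,\alpha+1}_{e(\kappa)}$, which is an ordinary weighted space of regularity $\alpha+1\in(1,2)$ — no paracontrolled expansion of $\log w^{P,n}$ is asserted or needed at this stage. The paracontrolled data ($h'$, $h^\sharp$) are recovered afterwards by different arguments (Lemma~\ref{lem:convergence_of_derivative} differentiates $Y^{\BB,n}+h^{P,n}$, and Lemma~\ref{lem:convergence_of_sharp} uses the equation for $h^{\sharp,n}$ and Proposition~\ref{prop:existence_sharp}), not by Taylor-expanding the logarithm of the paracontrolled ansatz $w^{P,n}=w'^{,n}\ppara Y^{\BZ}+w^{\sharp,n}$. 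So your proposed route is not wrong in principle, but it attacks a harder problem than the one posed and leaves its crux unproven, whereas the intended proof is a direct application of an already-available composition lemma.

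Two smaller points. First, your uniform pointwise upper bound on $w^{P,n}$ near $t=0$ does not follow from the $\LL^{\bp,\alpha+1}_{e(\kappa)}$ norm alone, which only yields $w^{P,n}(t,x)\lesssim t^{-\bp}e^{\kappa|x|^\delta}$; you need the interpolation Lemma~\ref{lem:interpolation_schauder}, which trades the blow-up $\bp$ for regularity and gives a bound uniform in $t$ in $\CC^{\alpha+1-2\bp}_{e(\kappa)}$ (note $\alpha+1-2\bp\ge\beta>0$), together with Assumption~\ref{assu:initial_condition} at $t=0$ — you gesture at this but it should be made explicit, since the same two-sided bound is what puts $w^{P,n}$ into the domain of Lemma~\ref{lem:log}. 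Second, if you do insist on a paracontrolled chain rule for $\exp/\log$, that is essentially the content of the separate lemma preceding Theorem~\ref{thm:existence_uniqueness_KPZ} in the paper (used there for uniqueness), so it is not the economical tool for this convergence statement.
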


\begin{proof}
First, we use the results from Lemma \ref{lem:lower_estimate}, so that we can find a $C > 0$ and an $r \le 0$ such that 
\[
    w^{P, n}(t,x) = \exp( h^n {-} Y^n {-} Y^{\BA \!, n} {-} Y^{\BB \!, n} )(t,x) \ge C e(r|x|^\delta)
\]
In view of this and Lemma \ref{lem:convergence_of_RHE}, we can apply Lemma \ref{lem:log}, which guarantees that up to choosing a larger $\kappa$ 
\[
    \log (w^{P,n}) \longrightarrow \log (w^P) = h^P \text{ in } \LL^{\bp, \alpha {+}1}_{e(\kappa)}.
\]

Finally, the lower bound in $L^{\infty}_{p(\delta)}$ for $h^P$ follows from Lemma \ref{lem:lower_estimate}. The upper bound follows from the monotonicity of the logarithm and the Cole-Hopf transform.
\end{proof}

We have found a function $h$ which is a candidate for being a solution to the KPZ equation on the whole real line. We have shown that $h$ is of the form $$h = Y {+} Y^{\BA} {+} Y^{\BB} {+} h^P$$ with $h^P \in \LL^{\bp, \alpha + 1}_{e(\kappa)}.$ Now we want to prove that $h^P$ is of the form 
\[
h^P = h' \ppara Y^{\scalebox{0.8}{\RS{r}}} + h^{\sharp}
\]
with $h' \in \LL^{\bp, \alpha}_{e(\kappa)}$ and $h^{\sharp} \in \LL^{\beta^\sharp, 2\alpha + 1}_{e(\kappa)}$ for some $\beta^\sharp \in (0,1)$. We observe that $h^n$ is already paracontrolled, since we have started with a paracontrolled solution. This allows us to control the derivative term.
 
\begin{lemma}[Convergence of the Derivative Term]\label{lem:convergence_of_derivative}
There exists a $\kappa \ge 0$ such that
\[
h'^{, n} \longrightarrow h' \stackrel{def}{=} X^{\BB} + \partial_x h^P \text{ in } \LL^{\bp, \alpha}_{e(\kappa)}.
\]
\end{lemma}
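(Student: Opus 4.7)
Since each $h^n$ is a smooth (paracontrolled) solution in the sense of Definition \ref{def:solutions_to_kpz}, the relation \eqref{eqn:h'} gives
\[
	h'^{,n} = X^{\BB,n} + \partial_x h^{P,n}.
\]
The target derivative is $h' = X^{\BB} + \partial_x h^P$, so the plan is to treat the two summands separately and then combine them.

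For the first summand, convergence $\YY^n \to \YY$ in $\myk$ yields in particular $Y^{\BB,n} \to Y^{\BB}$ in $\LL^{\alpha+1}_{p(a)}$. Applying Lemma \ref{lem:dervtv_interp} with weight $z = p(a)$ (which is time-independent, hence trivially pointwise increasing) we obtain
\[
	X^{\BB,n} = \partial_x Y^{\BB,n} \longrightarrow \partial_x Y^{\BB} = X^{\BB} \quad \text{in } \LL^{\alpha}_{p(a)}.
\]
For the second summand, Proposition \ref{prop:convergence_of_hn} supplies some $\kappa \ge 0$ such that $h^{P,n} \to h^P$ in $\LL^{\bp,\alpha+1}_{e(\kappa)}$. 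The weight $e(\kappa)$ is constant in time, so Lemma \ref{lem:dervtv_interp} applies and yields
\[
	\partial_x h^{P,n} \longrightarrow \partial_x h^P \quad \text{in } \LL^{\bp,\alpha}_{e(\kappa)}.
\]

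It remains to put the two pieces into a common space. Since $\omega(x) = |x|^{\delta}$ with $\delta \in (0,1)$, the polynomial weight satisfies $p(a)(x) = (1+|x|)^a \lesssim e(\kappa')(x)$ for every $\kappa' > 0$, so after (possibly) enlarging $\kappa$ we have continuous embeddings $\LL^{\alpha}_{p(a)} \hookrightarrow \LL^{\bp,\alpha}_{e(\kappa)}$ and $\LL^{\bp,\alpha}_{e(\kappa)}\hookrightarrow \LL^{\bp,\alpha}_{e(\kappa)}$. Adding the two convergent sequences gives
\[
	h'^{,n} = X^{\BB,n} + \partial_x h^{P,n} \longrightarrow X^{\BB} + \partial_x h^P = h' \quad \text{in } \LL^{\bp,\alpha}_{e(\kappa)},
\]
as claimed. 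There is no real obstacle here: the only point to check is compatibility of the weights, which is immediate from the dominance of exponential over polynomial weights.
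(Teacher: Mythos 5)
Your argument is correct and essentially the same as the paper's: both use relation \eqref{eqn:h'}, the convergence of $h^{P,n}$ from Proposition \ref{prop:convergence_of_hn} together with $Y^{\BB,n}\to Y^{\BB}$ from $\YY^n\to\YY$, and Lemma \ref{lem:dervtv_interp} to pass to the spatial derivative. The only cosmetic difference is that the paper applies the derivative estimate once to the sum $Y^{\BB,n}+h^{P,n}$, whereas you differentiate the two summands separately and then embed the polynomial-weight space into the exponential-weight one, which amounts to the same thing.
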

\begin{proof}
Equation (\ref{eqn:h'}) from Definition \ref{def:solutions_to_kpz} and the fact that $h^n$ is a solution to the KPZ equation tell us that $$h'^{, n} =\partial_x (Y^{\scalebox{0.8}{\RS{rLrl}},n} + h^{P,n}).$$
Now both therms on the right-hand side of this equation converge in $\LL^{\bp, \alpha}_{e(\kappa)}$ for an appropriate $\kappa \ge 0$. Indeed, it follows from Proposition \ref{prop:convergence_of_hn} that $h^{P,n} {+} Y^{\BB \!, n}$ converges in $\LL^{\bp, \alpha + 1}_{e(\kappa)}$. By Lemma \ref{lem:dervtv_interp} this is enough to obtain convergence in $\LL^{\bp, \alpha}_{e(\kappa)}$ of the spatial derivative.
\end{proof}

Now we consider the rest term $h^{\sharp}.$ Here we use a different argument.

\begin{lemma}[Convergence of the Rest Term]\label{lem:convergence_of_sharp}
 There exists a $\kappa \ge 0$ such that the sequence $h^{\sharp, n}$ converges to a function $h^{\sharp}$ in $\LL^{\beta^\sharp, 2 \alpha +1}_{e(\kappa)}$, for $\beta^\sharp = \bp \vee (1 {-}\beta)$. Moreover $h^{\sharp}$ satisfies:
 $$
 	h^{\sharp} = h^P - h' \ppara Y^{\scalebox{0.8}{\RS{r}}},
 $$
as well as the equation:
\begin{equation}\label{eqn:h^sharp}
\begin{aligned}
	\LLL h^{\sharp} = Z(\YY, h^P, h') + X \reso \partial_x h^{\sharp}, \ \ h^{\sharp}(0) = \oh - Y(0),
\end{aligned}
\end{equation}
where we define $Z$ as
\begin{align*}
	Z(\YY, h^P, h')  = & \ \LLL (Y^{\BC}{+}Y^{\BD}) {+} X \para X^{\BB} {+} X^{\BA}X^{\BB} {+} \hh(X^{\BB})^2  {+} \hh (\partial_x h^P)^2 \\
	& + \ (X^{\BA} {+} X^{\BB})\partial_x h^P {+} X \para \partial_x h^P {+} X \reso \partial_x(h' \ppara Y^{\scalebox{0.8}{\RS{r}}})  \\
	& +  \bq{ h' \para \LLL(Y^{\scalebox{0.8}{\RS{r}}}){-} \LLL(h' \ppara Y^{\scalebox{0.8}{\RS{r}}})}.
\end{align*}
\end{lemma}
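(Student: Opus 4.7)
The plan is to derive the equation in the statement as an algebraic identity from the equation \eqref{eqn:h^P} satisfied by $h^{P,n}$, and then to pass to the limit using the Schauder estimate together with the paraproduct estimates from Lemmata \ref{lem:paraproduct_estimates} and \ref{lem:parabolic_paraproduct_estimates}. I set $h^{\sharp, n} := h^{P,n} - h'^{,n} \ppara Y^{\BZ, n}$, so that because $Y^{\BZ, n}(0) = 0$ we automatically have $h^{\sharp, n}(0) = \oh^n - Y^n(0)$. To produce the equation I compute $\LLL h^{\sharp, n} = \LLL h^{P, n} - \LLL(h'^{,n} \ppara Y^{\BZ, n})$: in the first summand each product of distributions appearing in \eqref{eqn:h^P} is expanded via Bony's decomposition $fg = f \para g + g \para f + f \reso g$, so that the resonant piece of $X^n X^{\BB,n}$ cancels the $-X^n \reso X^{\BB,n}$ already present, and the resonant piece of $X^n \partial_x h^{P,n}$ is split via the paracontrolled ansatz as $X^n \reso \partial_x (h'^{,n} \ppara Y^{\BZ,n}) + X^n \reso \partial_x h^{\sharp, n}$. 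In the second summand I insert the tautology $\LLL(h'^{,n} \ppara Y^{\BZ,n}) = h'^{,n} \para \LLL Y^{\BZ,n} - [h'^{,n} \para \LLL Y^{\BZ,n} - \LLL(h'^{,n} \ppara Y^{\BZ,n})]$ and use $\LLL Y^{\BZ,n} = X^n$ together with the identity $h'^{,n} = X^{\BB,n} + \partial_x h^{P,n}$ from \eqref{eqn:h'} to cancel the $X^{\BB,n} \para X^n$ and $\partial_x h^{P,n} \para X^n$ contributions produced in the previous step. Collecting the surviving terms yields exactly $Z(\YY^n, h^{P,n}, h'^{,n}) + X^n \reso \partial_x h^{\sharp, n}$, so \eqref{eqn:h^sharp} holds at the approximation level.

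Next I would verify that each term in $Z$ is a jointly continuous function of $(\YY, h^P, h')$ with values in $\MM^{\beta^\sharp} \CC^{2\alpha - 1}_{e(\kappa')}$ for a slightly enlarged weight parameter $\kappa'$: the paraproducts $X \para X^{\BB}$, $X \para \partial_x h^P$, $(X^{\BA} + X^{\BB}) \partial_x h^P$ and the squares $(X^{\BB})^2$, $(\partial_x h^P)^2$ are all controlled by Lemma \ref{lem:paraproduct_estimates}; the commutator $[h' \para \LLL Y^{\BZ} - \LLL(h' \ppara Y^{\BZ})]$ is estimated via the parabolic commutator of Lemma \ref{lem:parabolic_paraproduct_estimates}, where it is essential that we use the parabolic paraproduct $\ppara$ since $\LLL Y^{\BZ} = X$ is only distributional in time; and the genuinely paracontrolled term $X \reso \partial_x(h' \ppara Y^{\BZ})$ is made sense of by the commutator identity $(h' \ppara \partial_x Y^{\BZ}) \reso X = h' \cdot (\partial_x Y^{\BZ} \reso X) + C(h', \partial_x Y^{\BZ}, X)$ (bounded by Lemma \ref{lem:paraproduct_estimates}) together with the a priori datum $\partial_x Y^{\BZ} \reso \partial_x Y \in C \CC^{2\alpha - 1}_{p(a)}$ supplied as part of $\YY \in \mY_{kpz}$. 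Since Theorem \ref{thm:renormalisation}, Proposition \ref{prop:convergence_of_hn} and Lemma \ref{lem:convergence_of_derivative} give convergence of $(\YY^n, h^{P,n}, h'^{,n})$ to $(\YY, h^P, h')$ in the corresponding norms, continuity then yields $Z(\YY^n, h^{P,n}, h'^{,n}) \to Z(\YY, h^P, h')$ in $\MM^{\beta^\sharp} \CC^{2\alpha - 1 - \ve}_{e(\kappa')}$ for any sufficiently small $\ve > 0$.

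For the final convergence in $\LL^{\beta^\sharp, 2\alpha + 1}_{e(\kappa)}$, I would apply the Schauder estimate (Proposition \ref{prop:schauder_estimate}) to the linear equation satisfied by the Cauchy difference $h^{\sharp, n} - h^{\sharp, m}$: the source $Z^n - Z^m$ tends to zero in $\MM^{\beta^\sharp} \CC^{2\alpha - 1 - \ve}_{e(\kappa')}$ by the previous paragraph, and the initial condition difference $(\oh^n - Y^n(0)) - (\oh^m - Y^m(0))$ tends to zero in $\CC^\beta_{p(\delta)}$ by Assumption \ref{assu:initial_condition}. The main obstacle is the residual resonant term $X^n \reso \partial_x h^{\sharp, n} - X^m \reso \partial_x h^{\sharp, m}$ on the right-hand side, which couples the equation to its own solution; however, thanks to $\alpha > 1/3$ this lies in $\CC^{3\alpha - 1}$, strictly better than the $\CC^{2\alpha - 1}$ that Schauder demands, so by the interpolation Lemma \ref{lem:interpolation_schauder} it can be absorbed perturbatively on a short time interval, and a standard bootstrap covers $[0, T]$. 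The delicate bookkeeping consists in checking that every product in $Z$ has blow-up at most $\beta^\sharp = \bp \vee (1 - \beta)$ at $t = 0$, which is what ultimately consumes the tighter constraint $\alpha > 2/5$ from the standing assumption; passing to the limit in the defining identity $h^{\sharp, n} = h^{P, n} - h'^{,n} \ppara Y^{\BZ, n}$ then yields the stated representation $h^\sharp = h^P - h' \ppara Y^{\BZ}$.
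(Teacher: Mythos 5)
Your argument is correct in substance, but it takes a more self-contained route than the paper. The paper's proof of this lemma is essentially a two-step reduction: it observes that, by Definition \ref{def:solutions_to_kpz}, each $h^{\sharp,n}$ already solves \eqref{eqn:h^sharp} with data $(\YY^n, h^{P,n}, h'^{,n})$, and then it invokes the locally Lipschitz dependence on parameters of the Sharp equation, Proposition \ref{prop:existence_sharp} (itself an instance of the abstract fixed-point result, Theorem \ref{thm:solution_abstract}, after the checks of Section \ref{sect:abstract_solutions}), together with the convergences from Proposition \ref{prop:convergence_of_hn}, Lemma \ref{lem:convergence_of_derivative}, Assumption \ref{assu:initial_condition} and $\YY^n \to \YY$. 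You instead (i) re-derive the equation for $h^{\sharp,n}$ by explicit Bony decomposition — your cancellations (resonant part of $X^nX^{\BB,n}$ against $-X^n\reso X^{\BB,n}$, and $X^{\BB,n}\para X^n + \partial_x h^{P,n}\para X^n = h'^{,n}\para \LLL Y^{\BZ,n}$ via \eqref{eqn:h'}) are exactly right — and (ii) re-prove the stability by hand: continuity of $Z$ plus a Cauchy-difference argument with Schauder estimates, short-time absorption of the resonant coupling and a bootstrap. Step (ii) effectively inlines the content of Proposition \ref{prop:existence_sharp}/Theorem \ref{thm:solution_abstract}; the paper's route is shorter here and additionally delivers uniqueness and the quantitative Lipschitz bound that is reused in Theorem \ref{thm:existence_uniqueness_KPZ}, while your route is self-contained but duplicates the Section \ref{sect:abstract_solutions} machinery and, as sketched, still has to carry the weight bookkeeping (doubling of exponential weights in $(\partial_x h^P)^2$, the $p(a)$-losses traded for $2a/\delta$ of regularity in the weighted Schauder estimate) that the abstract theorem packages for you — this is where the ``larger $\kappa$'' comes from.

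Two smaller points. First, the blow-up bookkeeping in $Z$ does not consume $\alpha > 2/5$: what it uses is $\beta>0$ (so that $\beta^\sharp = \bp\vee(1-\beta) < 1$) together with an interpolation in the spatial regularity of $\partial_x h^P$ (Lemma \ref{lem:interpolation_schauder}) to trade regularity, which is affordable since only $\CC^{2\alpha-1}$ is needed, for a smaller time singularity; the constraint $a/\delta < (5\alpha-2)/6$ is what makes the windows $\ve \in (6a/\delta+1-2\alpha, 3\alpha-1)$ nonempty and is flagged by the paper as needed mainly for Section \ref{sect:polymer_measure}. Second, your treatment of $X\reso\partial_x(h'\ppara Y^{\BZ})$ is slightly abbreviated: besides $C(h',\partial_x Y^{\BZ}, X)$ and the a priori datum $\partial_x Y^{\BZ}\reso X$, you also need the term $\partial_x h'\ppara Y^{\BZ}$ coming from the Leibniz rule, the $\ppara$-versus-$\para$ commutator from Lemma \ref{lem:parabolic_paraproduct_estimates}, and a paraproduct $h'\para(\partial_x Y^{\BZ}\reso X)$ rather than a plain product, exactly as in Lemma \ref{lem:product_estimate}; these are routine but should be stated.
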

\begin{proof}
Since $h^n$ is a paracontrolled solution to the KPZ equation we know from Definition~\ref{def:solutions_to_kpz} that $h^{\sharp, n}$ satisfies the equation 
\begin{align*}
	\LLL h^{\sharp, n} & = Z(\YY^n, h^{P,n}, h'^{,n}) + X^n \reso h^{\sharp, n}, \ \ h^{\sharp, n}(0) = \oh^n - Y^n(0).
\end{align*}
Now, $\oh^n - Y^n(0)$ converges to $\oh - Y(0)$ in $\mathcal{C}^{\beta}_{p(\delta)}$ and $\YY^n$ converges to $\YY$ in $\mY_{kpz}$. Moreover from the previous results we know that $h^{P,n}$ converges to $h^P$ in $\LL^{\bp, \alpha{+}1}_{e(\kappa)}$ as well as that  $h'^{,n}$ converges to $h'^{,n}$ in $\LL^{\bp, \alpha}_{e(\kappa)}$. At this point, since $\beta >0$ (upon choosing a $\zeta^\prime$ small enough), we can conclude by the continuous dependence on the parameters from Proposition \ref{prop:existence_sharp}: 
$$h^{\sharp, n} \to h^{\sharp} \text{ in } \LL^{\beta^{\sharp}, 2 \alpha + 1}_{e(\kappa)},$$
up to taking a larger $\kappa,$ where $h^{\sharp}$ is the solution to equation (\ref{eqn:h^sharp}). This proves the result.
\end{proof}

The two lemmata above suffice to show that $h$ is a paracontrolled solution to the KPZ equation. We collect all the information about $h$ in the following theorem.

\begin{proposition}\label{prop:paracontrolled_solution_KPZ}
For any $\YY \in \mY_{kpz}$ and initial condition $\oh$ satisfying Assumption \ref{assu:initial_condition} the function $h$ constructed in Proposition \ref{prop:convergence_of_hn} is a paracontrolled solution to the KPZ equation as in Definition \ref{def:solutions_to_kpz}.	
\end{proposition}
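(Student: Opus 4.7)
The plan is to collect the convergence results of Proposition \ref{prop:convergence_of_hn} and Lemmata \ref{lem:convergence_of_derivative}--\ref{lem:convergence_of_sharp}, which together already furnish every ingredient required by Definition \ref{def:solutions_to_kpz}. Indeed, those statements directly yield the paracontrolled decomposition
$$h^P = h' \ppara Y^{\BZ} + h^{\sharp}, \qquad h' = X^{\BB} + \partial_x h^P,$$
with $h^P \in \LL^{\bp, \alpha+1}_{e(\kappa)}$, $h' \in \LL^{\bp, \alpha}_{e(\kappa)}$ and $h^{\sharp} \in \LL^{\beta^{\sharp}, 2\alpha+1}_{e(\kappa)}$ for a large enough $\kappa$. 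The identity \eqref{eqn:h'} is precisely the definition of $h'$ in Lemma \ref{lem:convergence_of_derivative}, so only the PDE \eqref{eqn:h^P} for $h^P$ remains to be verified.

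For this I would pass to the limit in the PDE \eqref{eqn:h^P} satisfied by each smooth approximant $h^{P,n}$ (Proposition \ref{prop:exist_smooth_KPZ}). The contributions $\LLL(Y^{\BC,n}+Y^{\BD,n})$, $X^{\BA,n}X^{\BB,n}$, $\hh (X^{\BB,n})^2$, $(X^{\BA,n}+X^{\BB,n})\partial_x h^{P,n}$ and $\hh(\partial_x h^{P,n})^2$ converge in appropriate weighted parabolic H\"older spaces by combining the convergences $\YY^n \to \YY$ in $\myk$ and $h^{P,n} \to h^P$ in $\LL^{\bp, \alpha+1}_{e(\kappa)}$ with the product estimates of Lemmata \ref{lem:paraproduct_estimates} and \ref{lem:young_time_product}. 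The symmetric commutator $X^n X^{\BB,n} - X^n \reso X^{\BB,n} = X^n \para X^{\BB,n} + X^{\BB,n} \para X^n$ is directly handled by paraproduct estimates, and the initial datum $h^{P,n}(0) = \oh^n - Y^n(0)$ converges by Assumption \ref{assu:initial_condition}.

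The only term demanding genuine paracontrolled analysis is $X^n \partial_x h^{P,n}$, since the resonant piece $X^n \reso \partial_x h^{P,n}$ carries negative regularity budget $2\alpha - 1$ and is \emph{a priori} ill-defined. The standard manoeuvre is to exploit the decomposition $h^{P,n} = h'^{,n} \ppara Y^{\BZ,n} + h^{\sharp,n}$ together with the commutator estimate for $C(f,g,h) = (f \para g) \reso h - f (g \reso h)$ from Lemma \ref{lem:paraproduct_estimates}, which reduces the dangerous piece to a product involving the enhanced datum $\partial_x Y^{\BZ,n} \reso \partial_x Y^{n}$, whose convergence in $C\CC^{2\alpha-1}_{p(a)}$ is guaranteed both by the definition of $\myk$ and by Theorem \ref{thm:renormalisation}. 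This is the main (and only non-routine) obstacle; once it is overcome, every term passes to the limit and \eqref{eqn:h^P} follows, concluding the proof.
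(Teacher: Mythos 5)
Your proposal is correct, but it verifies the PDE by a more laborious route than the paper. The paper's own proof is two lines: the paracontrolled structure of $h$ is exactly what Proposition \ref{prop:convergence_of_hn} and Lemmata \ref{lem:convergence_of_derivative}, \ref{lem:convergence_of_sharp} deliver, and Equation \eqref{eqn:h^P} then holds \emph{because} $h^{\sharp}$ solves Equation \eqref{eqn:h^sharp} --- a fact that Lemma \ref{lem:convergence_of_sharp} has already established by invoking the stability of the abstract ``sharp'' equation (Proposition \ref{prop:existence_sharp}), where the ill-posed product is resolved once and for all. In other words, the hard limit has already been taken at the level of the $h^{\sharp}$ equation, and \eqref{eqn:h^P} is just the algebraic reassembly of \eqref{eqn:h^sharp} after adding back $\LLL(h'\ppara Y^{\BZ})$. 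You instead pass to the limit term by term in \eqref{eqn:h^P} itself, re-running the paracontrolled analysis of $X^n\reso\partial_x h^{P,n}$ via the commutator $C$ and the modified-paraproduct commutators; this works (it is essentially the computation of Lemma \ref{lem:product_estimate}) but duplicates what Lemma \ref{lem:convergence_of_sharp} already encodes, and it obliges you to track the doubled weights and time blow-ups of terms such as $\hh(\partial_x h^{P,n})^2$ and $(X^{\BA,n}{+}X^{\BB,n})\partial_x h^{P,n}$, which the paper avoids. Two small corrections: the resonant datum you need here is the symmetric one $\partial_x Y^{\BZ,n}\reso\partial_x Y^{n}$, whose convergence is part of $\YY^n\to\YY$ in $\myk$; the asymmetric product of Theorem \ref{thm:renormalisation} is only required later for the polymer measure, so citing it here is off target. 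Also, once you quote Lemma \ref{lem:convergence_of_sharp} for the structure you get the limit equation \eqref{eqn:h^sharp} for free, which makes your term-by-term limiting redundant.
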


\begin{proof}
    That $h$ has the correct structure follows from Proposition \ref{prop:convergence_of_hn}, Lemma \ref{lem:convergence_of_derivative} and Lemma \ref{lem:convergence_of_sharp}. In addition, $h^P$ solves Equation \eqref{eqn:h^P}, since $h^{\sharp}$ solves Equation \eqref{eqn:h^sharp}. 
\end{proof}

\subsection{Uniqueness}
It is  a rule-of-thumb that in order to obtain uniqueness for PDEs on the entire space some growth assumptions are needed in order to avoid solutions that do not have physical meaning. We will work under the assumption of sublinear growth in $L^{\infty}.$ This is mainly due to the fact that we work within the framework of the Cole-Hopf transform. First, we show that the exponential map preserves the paracontrolled structure of a solution.

\begin{lemma}
	Let $\hb, \bp$ be defined as in Definition \ref{def:rhe_solutions} for some $\beta >0$. Consider a function $h^P \in \LL^{\bp, \alpha +1}_{e(l)}$ that is paracontrolled, in the sense that $h^P = h' \ppara Y^{\scalebox{0.8}{\RS{r}}} + h^{\sharp}$ with $h' \in \LL^{\bp, \alpha}_{e(l)}$ and $h^{\sharp} \in \LL^{\beta^\sharp, 2\alpha + 1}_{e(l)}$, with $\beta^\sharp = \bp \vee (1 {-} \beta)$ and suppose that $h^\sharp$ solves Equation \eqref{eqn:h^sharp} for some initial condition. Suppose moreover that $\| h^P \|_{\infty, p(\delta) } < {+}\infty.$ Then the exponential $w^P = \exp (h^P)$ satisfies:
    $$
    w^P = w' \ppara Y^{\scalebox{0.8}{\RS{r}}} + w^{\sharp}
    $$
    with $ w' = w^P h' \in \LL^{\bar{\beta}^\prime, \alpha}_{e(\kappa)} $ and $w^{\sharp} \in \LL^{\bar{\beta}^{\sharp}, 2\alpha +1}_{e(\kappa)} $ for an appropriate $\kappa \ge 0$ and some $\bar{\beta}^{\prime}, \bar{\beta}^{\sharp}\in (0,1)$.
\end{lemma}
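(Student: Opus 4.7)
The strategy is to paralinearize the exponential and combine it with the paracontrolled structure of $h^P$ to transfer the paracontrol to $w^P$. The natural guess, dictated by the chain rule $\partial_x w^P = w^P\,\partial_x h^P$, is that $w^P$ should be paracontrolled by the same reference function $Y^{\scalebox{0.8}{\RS{r}}}$, with derivative $w' = w^P h'$. The proof then reduces to checking that the remainder $w^\sharp := w^P - (w^P h') \ppara Y^{\scalebox{0.8}{\RS{r}}}$ sits in the claimed smoother space.

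\textbf{Step 1 (bounds on $w^P$).} The assumption $\|h^P\|_{\infty, p(\delta)} < \infty$ gives $\|e^{h^P}\|_{\infty, e(\mu)} < \infty$ as soon as $\mu \ge \|h^P\|_{\infty, p(\delta)}$. Combining this with $h^P \in \LL^{\bp,\alpha+1}_{e(l)}$ and the chain rule for composition with smooth functions in the weighted Besov setting (as in the already-used Lemma \ref{lem:exp}) yields $w^P \in \LL^{\bp,\alpha+1}_{e(\kappa_0)}$ for $\kappa_0$ sufficiently large. The parabolic time regularity is obtained from the identity $\LLL w^P = w^P\,\LLL h^P - \tfrac12 w^P (\partial_x h^P)^2$, where $\LLL h^P$ is read off from equation \eqref{eqn:h^sharp} via the paracontrolled decomposition $h^P = h' \ppara Y^{\scalebox{0.8}{\RS{r}}} + h^\sharp$.

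\textbf{Step 2 (derivative term).} Define $w' := w^P h'$. Since $w^P$ is uniformly bounded in $L^\infty_{e(\mu)}$ and $h' \in \LL^{\bp,\alpha}_{e(l)}$, the product estimates underlying Lemmas \ref{lem:paraproduct_estimates} and \ref{lem:parabolic_paraproduct_estimates} directly give $w' \in \LL^{\bar\beta',\alpha}_{e(\kappa_1)}$ with $\bar\beta' = \bp \in (0,1)$ and $\kappa_1 = \mu + l$. The key point is that the $L^\infty$-boundedness of $w^P$ prevents the blow-up from doubling.

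\textbf{Step 3 (remainder term).} Decompose
\begin{align*}
w^\sharp \;=\; \underbrace{\big(e^{h^P} - e^{h^P}\para h^P\big)}_{(I)} \;+\; \underbrace{e^{h^P}\para h^\sharp}_{(II)} \;+\; \underbrace{\big(e^{h^P}\para(h'\ppara Y^{\scalebox{0.8}{\RS{r}}}) - (e^{h^P} h')\para Y^{\scalebox{0.8}{\RS{r}}}\big)}_{(III)} \;+\; \underbrace{\big((e^{h^P} h')\para Y^{\scalebox{0.8}{\RS{r}}} - (e^{h^P} h')\ppara Y^{\scalebox{0.8}{\RS{r}}}\big)}_{(IV)}.
\end{align*}
Term $(I)$ is handled by spatial Bony paralinearization for the smooth map $\exp$: since $h^P \in \CC^{\alpha+1}_{e(\kappa_0)}$ and $\|h^P\|_\infty$ is bounded, it lies in $\CC^{2(\alpha+1)} \subset \CC^{2\alpha+1}$ with blow-up $\bp$. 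Term $(II)$ is controlled by the paraproduct estimate of Lemma \ref{lem:paraproduct_estimates} using $h^\sharp \in \LL^{\beta^\sharp,2\alpha+1}_{e(l)}$ and the $L^\infty$ bound on $e^{h^P}$. Term $(III)$ is the associativity commutator $f\para(g\para h)-(fg)\para h$ of Lemma \ref{lem:paraproduct_estimates}, giving gain $\alpha + (\alpha+1) = 2\alpha+1$. Finally, term $(IV)$ is precisely the $\para$–$\ppara$ difference controlled by the last estimate of Lemma \ref{lem:parabolic_paraproduct_estimates}. Collecting the four summands in the appropriate $\LL^{\bar\beta^\sharp, 2\alpha+1}_{e(\kappa_2)}$ space produces the desired bound with $\bar\beta^\sharp := \bp \vee \beta^\sharp \in (0,1)$.

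\textbf{Main obstacle.} The delicate point is keeping all blow-up exponents strictly below one while juggling four different time/space estimates, especially in term $(I)$ where the spatial paralinearization could in principle induce a factor involving $\|h^P\|^2_{\CC^{\alpha+1}}$, hence blow-up $2\bp$. This is averted precisely by the assumption $\|h^P\|_{\infty, p(\delta)} < \infty$: the paralinearization of $\exp$ can be written so that only one factor of $h^P$ is measured in $\CC^{\alpha+1}$ while the other appears through the bounded quantity $e^{h^P}$, so the blow-up remains $\bp$. A second bookkeeping point is that each product multiplies the exponential weights, but since the bound $\|h^P\|_{\infty, p(\delta)}<\infty$ gives a weight of the type $e(\mu)$ for $w^P$, a finite common $\kappa$ suffices to absorb all exponential weights.
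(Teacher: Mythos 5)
Your algebraic decomposition of $w^\sharp = w^P - (w^Ph')\ppara Y^{\BZ}$ into the four terms $(I)$–$(IV)$ is correct, but the argument has a genuine gap: membership in $\LL^{\bar{\beta}^\sharp,2\alpha+1}_{e(\kappa)}$ is a parabolic space--time statement — by definition it requires $t\mapsto t^{\bar{\beta}^\sharp}w^\sharp(t)$ to be $C^{(2\alpha+1)/2}$ in time with values in $L^\infty_{e(\kappa)}$ — and none of your four terms can be placed in that space by the purely spatial estimates you invoke. For instance $(II)=e^{h^P}\para h^\sharp$ inherits the time modulus of $e^{h^P}$, which is at best of order $(\alpha+1)/2<(2\alpha+1)/2$, and the same limitation affects $(I)$, $(III)$ and $(IV)$. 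Term $(I)$ moreover relies on a paralinearization theorem for $\exp$ in weighted spaces with time blow-up which is not among the paper's tools (Lemma \ref{lem:exp} only shows that $\exp$ preserves regularity $\alpha\in(0,2)$; it gives no $2\sigma$-gain), so it would have to be proved separately. The tell is that your proof never uses the hypothesis that $h^\sharp$ solves Equation \eqref{eqn:h^sharp}: that hypothesis is exactly the mechanism of the paper's proof. There one computes $\LLL\big(w^P - w'\ppara Y^{\BZ}\big)$ by the chain rule, substitutes $\LLL h^\sharp = Z(\YY,h^P,h') + X\reso\partial_x h^\sharp$ together with the commutator estimates of Lemmata \ref{lem:paraproduct_estimates} and \ref{lem:parabolic_paraproduct_estimates}, and shows that the result lies in $\MM^{\bar{\beta}^\sharp}\CC^{2\alpha-1}_{e(\kappa)} + \LLL\big(\LL^{(\bp-1/2)\vee 0,\,2\alpha+1}_{e(\kappa)}\big)$, the time-derivative contribution $w^P\,\LLL(Y^{\BC}+Y^{\BD})$ being handled by the Young estimate of Lemma \ref{lem:young_time_product}; the Schauder estimates (Proposition \ref{prop:schauder_estimate}) then deliver simultaneously the spatial gain to $2\alpha+1$ and the missing time regularity of $w^\sharp$. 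Without this detour through $\LLL w^\sharp$ (or a space--time paralinearization lemma you would first have to establish), your static decomposition only controls the spatial norm $\MM^{\bar{\beta}^\sharp}\CC^{2\alpha+1}_{e(\kappa)}$ at each fixed time, which is strictly weaker than the claim.

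A secondary, fixable point: the assertion $\bar{\beta}'=\bp$ for $w'=w^Ph'$ is optimistic. The $C^{\alpha/2}$-in-time component of the $\LL^{\bar{\beta}',\alpha}$ norm of a product of two functions each with blow-up $\bp$ costs more than $\bp$; the paper takes $\bar{\beta}'>(\bp-1/2)\vee 0+\bp$, obtained by trading spatial regularity of $w^P$ against its blow-up via Lemma \ref{lem:interpolation_schauder}. Since the statement only asks for some $\bar{\beta}'\in(0,1)$ this is bookkeeping, unlike the point above.
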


\begin{proof}
	It follows from the growth assumptions on $h^P$ as well as from Lemma \ref{lem:exp_2} that $w^P$ lies in $\LL^{\bp, \alpha{+}1}_{e(\kappa)}$ and $w^P h'$ lies in $\LL^{\bar{\beta}^\prime, \alpha}_{e(\kappa)}$ for some $\kappa$ large enough and $\bar{\beta}^\prime > (\bp{-}1/2)\vee 0 {+} \bp$.
    We still need to show that $$\LLL (w^P {-} w' \ppara Y^{\scalebox{0.8}{\RS{r}}}) \in \MM^{\bar{\beta}^\sharp} \CC^{2\alpha -1}_{e(\kappa)} + \LLL (\LL^{\bar{\beta}^\sharp, 2\alpha +1}_{e(\kappa)}).$$ Indeed 
    \begin{align*}
        \LLL & (w^P {-} w' \ppara Y^{\scalebox{0.8}{\RS{r}}}) = w^P \LLL (h'\ppara Y^{\scalebox{0.8}{\RS{r}}} {+} h^{\sharp}) - \hh w^P (\partial_x h^P)^2 -  w' \para X + \MM^{\bar{\beta}^\prime}\CC^{2\alpha -1}_{e(\kappa)} \\
        & = w^P \bigg( h' \para X {-} \big[ h' \para \LLL(Y^{\BZ}){-} \LLL(h' \ppara Y^{\BZ}) \big] \\
        & \ \ \ \ {+}  \left[ Z(\YY, h^P, h^\prime) {-}\hh (\partial_x h^P)^2 {+} X\reso \partial_x h^\sharp \right] \bigg) -  w' \para X + \MM^{\bar{\beta}^\prime}\CC^{2\alpha -1}_{e(\kappa)} \\
        & = w^P \left[ Z(\YY, h^P, h^\prime) {-}\hh (\partial_x h^P)^2{-} \big[ h' \para \LLL(Y^{\BZ}){-} \LLL(h' \ppara Y^{\BZ}) \big] {+} X \reso \partial_x h^\sharp \right] {+} \MM^{\bar{\beta}^\prime}\CC^{2\alpha -1}_{e(\kappa)}
    \end{align*}
    where we have applied the paraproduct estimates from Lemmata \ref{lem:parabolic_paraproduct_estimates} and \ref{lem:paraproduct_estimates}. We now consider one term at a time. Let us start with the product $w^P (X\reso \partial_xh^\sharp)$. We work with $\beta$ small enough, so that we always have a non-trivial blow-up. The opposite case is simpler. Since $X\reso \partial_xh^\sharp$ has regularity $3\alpha{-}1$. Applying Lemma \ref{lem:interpolation_schauder} we thus find:
    \begin{align*}
    	w^P (X\reso \partial_xh^\sharp) \in \MM^{\beta^{\sharp}} \CC^{\ve}_{e(\kappa)}
    \end{align*}
     for $\ve \in (0, \beta)$.
     
    Next we treat the term $w^P \big[ Z(\YY, h^P, h^\prime) {-}\hh (\partial_x h^P)^2 {-} [ h' \para \LLL(Y^{\BZ}){-} \LLL(h' \ppara Y^{\BZ}) ]\big]$. We find:
\begin{align*}
	w^P \big[ Z(\YY, & h^P, h^\prime) {-}\hh (\partial_x h^P)^2 {-} [ h' \para \LLL(Y^{\BZ}){-} \LLL(h' \ppara Y^{\BZ}) ] \big] = w^P \LLL(Y^{\BC}{+}Y^{\BD}) \\
	& {+} w^P\big[ X \para X^{\BB} {+} X^{\BA}X^{\BB} {+} \hh(X^{\BB})^2 {+} (X^{\BA} {+} X^{\BB})\partial_x h^P {+} X \para \partial_x h^P {+} X \reso \partial_x(h' \ppara Y^{\scalebox{0.8}{\RS{r}}})  \big].
\end{align*}	    
    Here applying Lemma \ref{lem:young_time_product} we have that the term on the first row lies in $\LLL (\LL^{(\bp{-}1/2)\vee 0, 2\alpha {+}1}_{e(\kappa)})$. The terms on the second row on the other hand lie in $\MM^{\bar{\beta}^\prime}\CC^{2\alpha{-}1}$. Hence, we can conclude that for some $\bar{\beta}^\sharp\in (0,1)$ the statement of the Lemma is true.
\end{proof}

\begin{theorem}\label{thm:existence_uniqueness_KPZ}
	For any initial condition $\bar{h}$ satisfying Assumption \ref{assu:initial_condition}, there exists a unique paracontrolled solution to the KPZ equation in the sense of Definition \ref{def:solutions_to_kpz} for $\bp$ as in Definition \ref{def:rhe_solutions} and $\beta^\sharp$ as in Lemma \ref{lem:convergence_of_sharp}, under the condition that
	$$
	\| h^P\|_{\infty, p(\delta)} < {+} \infty.
	$$
	In addition, for $\YY_i \in \mY_{kpz}$ and $\oh_i$ satisfying Assumption \ref{assu:initial_condition} for $i = 1, 2$, with
	$$
	\| \YY_i\|_{\mY_{kpz}}, \ \ \sup_{n} \| \oh^n_i{-} Y^n_i(0)\|_{\CC^{\beta}_{p(\delta)}} \le M
	$$
we can estimate for some $\kappa = \kappa(M)$ large enough:
\begin{align*}
\| h_1^P{-} h_2^P\|_{\LL^{\bp, \alpha{+}1}_{e(\kappa)}} &+ \|h'_1{-} h'_2 \|_{\LL^{\bp, \alpha}_{e(\kappa)}} + \|h^{\sharp}_1 {-} h^{\sharp}_2\|_{\LL^{\beta^\sharp, 2\alpha{+}1}_{e(\kappa)}} \\
& \lesssim_M \| \YY_1{-} \YY_{2}\|_{\mY_{kpz}} + \| \oh_1{-} Y_1(0) {-}(\oh_2{-}Y_2 (0)) \|_{\CC^{\beta}_{p(\delta)}}.
\end{align*}
	Finally, the function $w = \exp (h) $ is the solution to the RHE in the sense of Definition \ref{def:rhe_solutions}.
\end{theorem}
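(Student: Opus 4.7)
Existence of a paracontrolled solution $h$ satisfying $\|h^P\|_{\infty, p(\delta)} < \infty$ is already in hand: Proposition~\ref{prop:paracontrolled_solution_KPZ} constructs $h$, and \eqref{eqn:lower_polynomial_bound_h} supplies the required growth bound. What remains is uniqueness, the quantitative stability estimate, and the identification of $w = \exp(h)$ as a paracontrolled RHE solution. The plan is to reduce all three to the corresponding statements for the RHE via the Cole--Hopf transform, using the lemma immediately preceding this theorem to lift the paracontrolled structure from $h^P$ to $w^P = \exp(h^P)$ whenever $h^P$ has sublinear growth in space.

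For uniqueness, suppose $h_1, h_2$ are two paracontrolled solutions with common data $(\YY, \oh)$ satisfying $\|h_i^P\|_{\infty, p(\delta)} < \infty$. I will carry out a direct chain-rule computation: from the pointwise identity $\LLL w^P_i = w^P_i \LLL h^P_i - \hh w^P_i (\partial_x h^P_i)^2$ and the KPZ equation \eqref{eqn:h^P} for $h^P_i$, the quadratic term $\hh(\partial_x h^P_i)^2$ cancels, and the first-order term $(X + X^{\BA} + X^{\BB})\partial_x h^P_i$ rewrites, after multiplying by $w^P_i$, as $(X + X^{\BA} + X^{\BB}) \partial_x w^P_i$; this shows that $w^P_i$ satisfies equation \eqref{eqn:w^p}. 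Combined with the decomposition furnished by the preceding lemma, this exhibits $w_i = w^P_i e^{Y + Y^{\BA} + Y^{\BB}} = \exp(h_i)$ as a paracontrolled solution of the RHE with initial datum $e^{\oh}$. Uniqueness for the RHE (Proposition~\ref{prop:existence_rhe}) then forces $w_1 = w_2$, and strict positivity gives $h_1 = h_2$. This simultaneously establishes the final assertion of the theorem.

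For the stability estimate, I will work with the smooth approximating sequences $(\YY^n_i, \oh^n_i)$, $i = 1,2$, with joint bound $M$. Lemma~\ref{lem:lower_estimate} supplies a lower bound $h^{P,n}_i(t,x) \gtrsim_M -(1+|x|)^\delta$ uniform in $n$ and $i$, which yields $w^{P,n}_i \gtrsim_M \exp(-c(M)|x|^\delta)$. The continuous-dependence part of Proposition~\ref{prop:existence_rhe} then provides an estimate for $\|w^{P,n}_1 - w^{P,n}_2\|_{\LL^{\bp, \alpha+1}_{e(\kappa)}}$ in terms of $\|\YY^n_1 - \YY^n_2\|_{\myk}$ and the difference of initial data, and the Lipschitz continuity of the logarithm away from zero (Lemma~\ref{lem:log}) transports this into a bound for $\|h^{P,n}_1 - h^{P,n}_2\|_{\LL^{\bp, \alpha+1}_{e(\kappa)}}$. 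The bound for $h'_1 - h'_2$ then falls out of the identity \eqref{eqn:h'} together with Lemma~\ref{lem:dervtv_interp}, while the bound for $h^\sharp_1 - h^\sharp_2$ is obtained by writing equation \eqref{eqn:h^sharp} for the difference and invoking the stability statement of Proposition~\ref{prop:existence_sharp} applied to the continuous functional $Z(\YY, h^P, h')$. Passing to the limit $n \to \infty$ yields the estimate of the theorem.

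The delicate point is the \emph{quantitative} nature of the lower bound from Lemma~\ref{lem:lower_estimate}: for the Lipschitz estimate of Lemma~\ref{lem:log} to deliver a difference bound rather than mere qualitative convergence, the constant $c(M)$ must depend on the data only through $M$. Inspecting the proof of Lemma~\ref{lem:lower_estimate} this is indeed the case, since each comparison step for the auxiliary processes $v^n$, $\tilde u^n$, $u^n$ uses the data only via upper bounds on $\|\YY^n\|_{\myk}$ and $\|\oh^n - Y^n(0)\|_{\CC^\beta_{p(\delta)}}$, both controlled by $M$.
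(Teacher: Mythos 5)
Your proposal is correct and takes essentially the same route as the paper: existence from the already constructed solution, uniqueness by lifting the paracontrolled structure through the exponential (the lemma preceding the theorem), checking via the chain rule that $w^P$ solves \eqref{eqn:w^p} so that uniqueness for the RHE applies, and the stability estimate via the Lipschitz continuity of the exponential/logarithm combined with Propositions \ref{prop:existence_rhe} and \ref{prop:existence_sharp}. Your explicit remark that the lower bound of Lemma \ref{lem:lower_estimate} depends on the data only through $M$, which is what makes Lemma \ref{lem:log} quantitative, is a point the paper uses implicitly, so no gap there.
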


\begin{proof}
	The existence of solution satisfying the required bound follows from Proposition \ref{prop:convergence_of_hn}. Let us prove uniqueness. Hence suppose that $h$ is a solution to the KPZ with $\| h^P\|_{\infty, p(\delta)} < {+} \infty.$ From the previous result we deduce that indeed $w^P = \exp (h^P)$ is paracontrolled. We need to show that it solves the rough heat equation~\eqref{eqn:w^p}: since this equation has a unique paracontrolled solution our result will follow.
	First, we apply the chain rule to see that $$\partial_t w^P = w^P \cdot \partial_t h^P, \ \ \partial_x^2 w^P = w^P \cdot (\partial_x^2 h^P + (\partial_x h^P)^2)$$ with all products classically well-defined. Thus $w^P$ solves:
\begin{align*}
	(\partial_t - \hh \partial_x^2)w^P & = w^P \big[ (X + X^{\BA} + X^{\BB}) \diamond \partial_x h^P \\
	& + \LLL(Y^{\BC} \! \!+ Y^{\BD} \!) + (X X^{\BB} - X \reso X^{\BB}) + X^{\BA}X^{\BB} + \hh (X^{\BB})^2 \big]
\end{align*}
where we have marked the product that needs the paracontrolled structure of $h^P$ to be well defined with the diamond symbol. In view of the fact that $\partial_x w^P = w^P \partial_x h^P$ and by considering smooth approximations we see that:
$$
	w^P(X \diamond \partial_x h^P) = X \diamond \partial_x w^P
$$
and thus $w^P$ solves Equation \eqref{eqn:w^p}. Hence the local Lipschitz dependence on the parameters follows from same property of the exponential and logarithmic map from Lemmata \ref{lem:exp}, \ref{lem:log} as well as of the solution to the RHE (Proposition \ref{prop:existence_rhe}) and the solution to the Sharp equation (Proposition \ref{prop:existence_sharp}).
\end{proof}

\section{Polymer Measure}\label{sect:polymer_measure}
In this section we build the random directed polymer measure associated to white noise and study its link to the solution $h$ of the KPZ equation. From this point onwards we will use arrows to denote time inversion with respect to the time horizon $T$, i.e. we write $\ola{f}$ for the function $\ola{f}(t) = f(T{-}t)$.

\subsection{An Informal Calculation}

Let us consider a formal solution to the SDE

\begin{equation}\label{eqn:SDE_with_Dh}
\begin{aligned}
d\gamma_t = \partial_x \ola{h}(t,\gamma_t)dt + dW_t,  \ \ \gamma_0 = x_0,
\end{aligned}
\end{equation}
where $W$ is a Brownian motion started in $x_0 \in \RR$. There are two issues with this SDE. The first is that $\partial_xh$ is only a distribution. The approaches developed by Delarue and Diel \cite{singular} as well as Cannizzaro and Chouk \cite{cannizzaro} have tackled this aspect successfully. The second issue is that in our setting $\partial_xh$ is of exponential growth, so a priori we would expect that the solution $\gamma$ could explode in finite time. What gives us hope is that the exponential growth of $\partial_x h$ is mostly due to our approach through the Cole-Hopf transform.

We exploit the random directed polymer measure associated to white noise to build a weak solution to the SDE \eqref{eqn:SDE_with_Dh}, avoiding the use of $h,$ and replacing it instead with elements of $\YY$ up to a rest term $Y^R$ (cf. \cite[Section 7]{kpz}). In the remainder of this preamble we present a formal calculation that explains the approach.

Consider the Wiener measure $\mathbb{P}_{x_0}$ started in $x_0$ on the space $C([0,T]; \RR)$. Denote with $\gamma$ the coordinate process on $C([0,T]; \RR)$. We define the measure $\mathbb{Q}_{x_0}$ given by the Radon-Nikodym derivative:
\begin{equation}\label{eqn:naive_kpz_polymer_measure}
\frac{d\mathbb{Q}_{x_0}}{d\mathbb{P}_{x_0}} = \exp\left( \int_0^T \partial_x\olh(s,\gamma_s)d\gamma_s - \hh \int_0^T|\partial_x\olh|^2(s,\gamma_s)ds \right).
\end{equation}
where $h$ is a solution to KPZ for a (spatially) smooth noise $\theta \in \smooth$, with extended data $\YY(\theta)$ and with initial condition $\oh.$ By Girsanov's theorem under this measure the coordinate process is a weak solution to the SDE \eqref{eqn:SDE_with_Dh}. We can formally apply the Itô formula to $\ola{h}$:
\begin{align*}
	\olh(t,\gamma_t) - \olh(0,x_0) = -\int_0^t \! (\hh|\partial_x\olh|^2 + (\ola{\theta} {-} c^{\BA}))(s,\gamma_s)ds + \int_0^t \! \partial_x\olh(s, \gamma_s) d\gamma_s.
\end{align*}

Now define the random directed polymer measure $\mathbb{\widetilde{Q}}_{x_0}$ by:
\begin{equation}\label{eqn:naive_polymer_measure}
\frac{d\mathbb{\widetilde{Q}}_{x_0}}{d \mathbb{P}_{x_0}} = C\exp\left( \int_0^T (\ola{\theta}(s, \gamma_s) {-} c^{\BA}) ds \right),
\end{equation}

where $C>0$ is a normalizing constant. Note that unless $\oh = 0$ the polymer measure \textit{is not} exactly the measure that solves the SDE (\ref{eqn:SDE_with_Dh}). Indeed:
$$
    \frac{d\mathbb{Q}_{x_0}}{d\mathbb{P}_{x_0}} = \exp\left( \olh(T,\gamma_T) - \olh(0,x_0) + \int_0^T (\ola{\theta}(s, \gamma_s) {-} c^{\BA}) ds \right).
$$
Since $(\partial_t + \hh \Delta )(\ola{Y} + \ola{Y}^{\BA})= -(\ola{\theta} {-} c^{\BA}) - \hh |\partial_x\ola{Y}|^2$ and writing $U = \ola{Y} + \ola{Y}^{\BA}$ we can apply Itô's formula to $U(t,\gamma_t)$ and write:
\begin{align*}
	\int_0^t (\ola{\theta}(s, \gamma_s) {-} c^{\BA}) ds  = \int_0^t \partial_xU(s,\gamma_s)d\gamma_s - \int_0^t\hh |\partial_xU|^2(s, \gamma_s)ds + \text{ Rest},
\end{align*}
where the rest is
\begin{align*}
	 \text{ Rest} = \int_0^t( \hh |\ola{X}^{\BA}|^2 {+} \ola{X}\ola{X}^{\BA} )(s, \gamma_s) ds + U \big\vert^{(0,x_0)}_{(t, \gamma_t)}.
\end{align*}
Finally, defining $Y^R$ as the solution to:
 \begin{equation}\label{eqn:Y^R}
 (\partial_t - \hh \Delta)Y^R = \hh|X^{\BA}|^2 + X X^{\BA} + (X {+} X^{\BA}) \partial_x Y^R, \ \ Y^R(0) = 0,
 \end{equation}
the rest term can be rewritten as
\begin{align*}
	\text{ Rest } = \int_0^t\ola{X}^R(s, \gamma_s) (d\gamma_s - \partial_xUds ) + \bq{U + \ola{Y}^R}^{(0,x_0)}_{(t, \gamma_t)}. 
\end{align*}

In this way we can do the change of measure in two steps: 
\begin{enumerate}
	\item We build the singular measure $\mathbb{P}^{U}_{x_0}$ with:
$$ \frac{d\mathbb{P}^{U}_{x_0}}{d\mathbb{P}_{x_0}} = \exp \left(\int_0^t \partial_xU(s,\gamma_s)d\gamma_s - \hh \int_0^t|\partial_xU|^2(s, \gamma_s)ds \right)$$
	\item Under $\mathbb{P}^U$ the process $W_t = \gamma_t - \int_0^t\partial_xU(s,\gamma_s)ds$ is a B. M. started in $x_0$, so that we can obtain the measures $\mathbb{Q}_{x_0}$ and $\widetilde{\QQ}_{x_0}$ as absolutely continuous perturbations by setting
\begin{equation}\label{eqn:definition_radon_nikodym_derivative_with_h}
 \frac{d\mathbb{Q}_{x_0}}{d\mathbb{P}^U_{x_0}} = \exp\left(\int_0^T\ola{X}^R(s, \gamma_s) dW_s + \bq{\olh - U - \ola{Y}^R}^{(T, \gamma_T)}_{(0,x_0)}\right)
\end{equation}
\begin{equation}\label{eqn:definition_radon_nikodym_polymer_measure_NO_h}
	\frac{d \widetilde{\mathbb{Q}}_{x_0}}{d \mathbb{P}^U_{x_0}} = \frac{\exp \left(\int_0^T\ola{X}^R(s, \gamma_s) dW_s + \bq{U + \ola{Y}^R}^{(0,x_0)}_{(T, \gamma_T)}\right) }{\mathbb{E}_{\mathbb{P}^U_{x_0}} \bq{\exp \left( \int_0^T\ola{X}^R(s, \gamma_s) dW_s + \bq{U + \ola{Y}^R}^{(0,x_0)}_{(T, \gamma_T)} \right)} }.
\end{equation}
\end{enumerate}

This approach goes back to~\cite[Section 7]{kpz} for the equation on the torus, but due to the weighted spaces in which we have to work it becomes much more complicated in our setting and actually we cannot directly make sense of (\ref{eqn:definition_radon_nikodym_derivative_with_h}, \ref{eqn:definition_radon_nikodym_polymer_measure_NO_h}). In the next paragraphs we shall show how to rigorously carry out the analysis and how to construct the measures $\mathbb{P}^U_{x_0}, \wQ$ and $ \widetilde{\QQ}_{x_0}$ via the partial Girsanov transform that we just illustrated.

\subsection{A Paracontrolled Approach}\label{subsectn:paracontrolled_SDE}

In order to construct the measure $\PP^U_{x_0}$ we prove the existence of martingale solutions to the associated SDE:
\begin{equation}\label{eqn:SDE_para}
\begin{aligned}
	d\gamma_t = \partial_xU(t, \gamma_t)dt + dW_t, \ \ \gamma_0 = x_0.
\end{aligned}
\end{equation}

The essential tool for solving the martingale problem is to solve the backward Kolmogorv equation
\begin{equation}\label{eqn:kolmogorov_para}
	(\partial_t + \hh \Delta + \partial_xU\partial_x)\varphi_{\tau}  = f , \ \ \varphi_{\tau}(\tau) = \varphi^0, \ \ t \in [0,\tau],
\end{equation}
for $\tau \in [0,T]$ and a sufficiently large class of forcings $f$ and terminal conditions $\varphi^0$. 

\begin{remark}
This approach to SDEs with singular drift was established in the work by Delarue and Diel \cite{singular} who used rough path integrals (inspired by \cite{hai:solving_kpz}) to solve the Kolmogorov equation. In contrast to our setting, the assumptions on the weight in \cite{singular} do not allow linear growth for $Y.$ This is only a technical issue, but overcoming it would result in several lengthy calculations. Thus we prefer to follow Cannizzaro and Chouk \cite{cannizzaro} who formulated the approach of Delarue and Diel in the paracontrolled framework and thereby also extended it to higher dimensions. This suits our setting better and allows the reader to have a complete overview and a better understanding of the techniques at work.
\end{remark}

The first results concern the existence of solutions to the Kolmogorov equation.

\begin{proposition}\label{prop:existence_kolmogorov_eqn}
Fix any $l \in \RR, \YY \in \mY_{kpz}, \tau \in [0, T]$ as well as an initial condition $\varphi^0 \in \CC^{2\alpha + 1}_{e(l)}$ and a forcing $f \in C\CC^{2\alpha -1}_{e(l)}([0, T]).$ In this setting Equation (\ref{eqn:kolmogorov_para}) has a unique paracontrolled solution $\varphi_{\tau}$.
Moreover for any $M > 0$, if we denote by $\varphi_{\tau}^1$ and $\varphi_{\tau}^2$ the respective solutions to the equation for two different external data $\YY_1$ and $\YY_2,$ initial condition $\varphi^0_1$ and $\varphi^0_2$ and forcings $f_1$ and $f_2$ such that 
$$\| \YY_i \|_{\mY_{kpz}}, \ \|\varphi^0_i\|_{\CC^{2\alpha{+}1}_{e(l)}}, \ \| f_i\|_{C\CC^{2\alpha{-}1}_{e(l)}([0, T])} \le M,
$$
we find that for some $\kappa = \kappa(l,T)$ and any $\ve \in (6a/\delta{+}1{-}2\alpha, 3\alpha{-}1):$
\begin{align*}
	\sup_{ \tau \in [0, T]} \|\varphi_{\tau}^1 {-} \varphi_{\tau}^2\|_{\LL^{\alpha{+}1{-}\ve}_{e(\kappa)} } \lesssim_M & \| \varphi^0_1 {-} \varphi^0_2 \|_{\CC^{2\alpha{+}1}_{e(l)}} \\
	& + \| f_1 {-} f_2 \|_{C\CC^{2\alpha{-}1}_{e(l)}([0, T])} + \|\YY_1 {-} \YY_2\|_{\mY_{kpz}}.
\end{align*}
\end{proposition}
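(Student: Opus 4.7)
The plan is to time-reverse the backward Kolmogorov problem and reduce it to a forward linear paracontrolled parabolic equation handled by the abstract linear SPDE theorem of Section~\ref{sect:abstract_solutions}. Setting $\psi(t,x) = \varphi_\tau(\tau{-}t, x)$ on $[0,\tau]$ and $\tilde Y(t,x) := Y(t{+}T{-}\tau,x)$ (and similarly for the other components of $\YY$), the backward equation becomes the forward problem
\[
    \LLL \psi = (\tilde X + \tilde X^{\BA})\,\partial_x \psi - f(\tau{-}\cdot,\cdot), \qquad \psi(0) = \varphi^0,
\]
since $\partial_x U = X + X^{\BA}$. Because $\tilde X^{\BA}$ has positive regularity, only $\tilde X \cdot \partial_x\psi$ is singular, its classical interpretation failing since $(\alpha{-}1) + \alpha = 2\alpha{-}1 < 0$. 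I would therefore adopt the paracontrolled ansatz
\[
    \psi = \psi' \ppara \tilde Y^{\BZ} + \psi^{\sharp}, \qquad \psi' = \partial_x\psi,
\]
which is natural because $\LLL Y^{\BZ} = X$, so that on small scales $\psi$ resembles $\partial_x\psi \cdot Y^{\BZ}$.

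Decomposing $\tilde X \cdot \partial_x\psi$ via Bony's decomposition, the troublesome resonant term $\tilde X \reso \partial_x(\psi' \ppara \tilde Y^{\BZ})$ is rewritten, through the commutator lemmata~\ref{lem:paraproduct_estimates} and~\ref{lem:parabolic_paraproduct_estimates}, as $\psi' \cdot (\tilde X \reso \tilde X^{\BZ})$ up to regular corrections, with the resonant product $X \reso X^{\BZ}$ given a priori as part of the data in $\mY_{kpz}$. This produces a closed linear equation for the remainder,
\[
    \LLL \psi^{\sharp} = G(\YY,\psi,\psi') + \tilde X \reso \partial_x\psi^{\sharp} - f(\tau{-}\cdot,\cdot),
\]
with bounded bilinear $G$ structurally parallel to~\eqref{eqn:h^sharp}, and with initial condition determined from $\varphi^0$ via the ansatz. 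This is exactly the class of linear paracontrolled equations covered by the abstract solution theorem of Section~\ref{sect:abstract_solutions}, which provides a unique $\psi^{\sharp} \in \LL^{0,2\alpha+1-\ve}_{e(\kappa)}$ for $\kappa = \kappa(l,T)$ large enough and any $\ve \in (6a/\delta{+}1{-}2\alpha,\, 3\alpha{-}1)$, together with local Lipschitz dependence on $(\YY,\varphi^0,f)$. Reconstructing $\psi$ through the ansatz, applying Lemma~\ref{lem:interpolation_schauder} to absorb the $\ve$-loss, and undoing the time reversal gives the required $\varphi_\tau \in \LL^{\alpha+1-\ve}_{e(\kappa)}$ with the stated continuity estimate; uniqueness follows by linearity, applying the abstract uniqueness to the difference of two solutions.

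The main obstacle is the uniformity over $\tau \in [0,T]$, since $\tau$ controls both the length of the working interval and the time shift of the driving data. Two structural features rescue us. First, the weights $e(\kappa{+}t)$ are pointwise monotone in $t$, so shifted data satisfy $\|\tilde \YY\|_{\mY_{kpz}([0,\tau])} \lesssim \|\YY\|_{\mY_{kpz}([0,T])}$ with no deterioration of the weight. Second, the Schauder estimates of Proposition~\ref{prop:schauder_estimate} and the Young estimates of Lemma~\ref{lem:young_time_product} carry implicit constants depending only on the global horizon $T_h = T$ and not on the length of the subinterval. Together these deliver $\tau$-uniform bounds. A minor additional subtlety is that $\varphi^0 \in \CC^{2\alpha+1}_{e(l)}$ has exactly the regularity of $\psi^{\sharp}$, so no temporal blow-up ($\beta = 0$) is needed in the remainder space; one only has to verify that the paracontrolled decomposition of the initial condition through $\tilde Y^{\BZ}(0,\cdot)$ is consistent, which is a direct application of Lemma~\ref{lem:paraproduct_estimates}.
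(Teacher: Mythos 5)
Your overall strategy (time reversal, paracontrolled ansatz along $\tilde Y^{\BZ}$, reduction to the abstract linear Theorem~\ref{thm:solution_abstract}) is the same as the paper's, but there is a genuine gap in the step where you claim uniformity over $\tau$: the assertion that the time-shifted data satisfy $\|\tilde \YY\|_{\mY_{kpz}([0,\tau])}\lesssim \|\YY\|_{\mY_{kpz}([0,T])}$ ``with no deterioration'' is not justified, and in fact the shifted data do \emph{not} lie in $\mY_{kpz}$ in general. The point is that the trees $Y^{\BZ}, Y^{\BC}, Y^{\BD}$ in $\mY_{kpz}$ are defined with zero initial condition at time $0$; after shifting the time origin to $T{-}\tau$ one must either keep the inhomogeneous initial data (and then $\tilde Y^{\BZ}$ is not the object entering the paracontrolled ansatz) or re-solve with zero initial condition at the new origin, i.e.\ replace $Y^{\BZ}(T{-}\tau{+}t)$ by $Y^{\BZ}(T{-}\tau{+}t)-P_t Y^{\BZ}(T{-}\tau)$. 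In the latter case the resonant product $\partial_x \tilde Y^{\BZ}(t)\reso \partial_x \tilde Y(t)$ contains the term $P_t\partial_x Y^{\BZ}(T{-}\tau)\reso \partial_x Y(T{-}\tau{+}t)$, which is \emph{not} part of the given enhancement and cannot be controlled by $\|\YY\|_{\mY_{kpz}}$ alone; it can only be estimated deterministically by paying a regularity $2\zeta$ with $\zeta>1/2{-}\alpha$ (so that $(2\zeta{+}\alpha)+(\alpha{-}1)>0$), which produces a blow-up $t^{-\zeta}$ at the shifted origin and doubles the polynomial weight to $p(2a)$. This is exactly why the paper introduces the singular data space $\mY^{\zeta,b}_{kpz}$ of Definition~\ref{def:singular-Y} with $b=2a$ and proves Proposition~\ref{prop:rescale_tranlate_external_data}, which gives $\sup_{\tau}\|\YY_{T-\tau}\|_{\mY^{\zeta,2a}_{kpz}}\lesssim\|\YY\|_{\mY_{kpz}}$; the abstract theorem is then applied with this $\zeta$, and the admissible range $\ve\in(6a/\delta{+}2\zeta,3\alpha{-}1)$ becomes, in the limit $\zeta\downarrow 1/2{-}\alpha$, precisely the range $\ve\in(6a/\delta{+}1{-}2\alpha,3\alpha{-}1)$ in the statement. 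Your argument, with $\zeta=0$ implicitly, neither yields the $\tau$-uniform bound nor explains why the $\ve$-range is shifted by $1{-}2\alpha$ rather than being $(6a/\delta,3\alpha{-}1)$.

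A secondary, smaller imprecision: once the shifted data are placed in $\mY^{\zeta,2a}_{kpz}$, there is no need to re-derive the remainder equation by hand; the paper simply feeds Theorem~\ref{thm:solution_abstract} with $F(\YY_{T-\tau})(u)=\partial_x u$ and $R(\YY_{T-\tau},f)(u)=-\ola{f}+X^{\BA}_{T-\tau}\partial_x u+X_{T-\tau}\para\partial_x u$, and verifies Assumption~\ref{assu:parameters_eqn}; your explicit commutator manipulation is what the proof of that theorem already does internally, so it is redundant but not wrong. The essential missing ingredient is the treatment of the shifted enhancement just described.
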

\begin{proof}
This result is a consequence of Theorem \ref{thm:solution_abstract}. By time reversal it suffices to solve the equation
$$
(\partial_t -\hh \Delta - \partial_x(X_{T{-}\tau} {+} X^{\BA}_{T{-}\tau})\partial_x)\ola{\varphi}_{\tau}  = \ola{f} , \ \ \ola{\varphi}_{0}(\tau) = \varphi^0, \ \ t \in [0,\tau],
$$
where we write $\ola{g}(t) = g(\tau{-}t)$ and the terms $X_{t{-}\tau}, X^{\BA}_{T{-}\tau}$ belong to the data $\YY_{T{-}\tau}$ as constructed in Proposition \ref{prop:rescale_tranlate_external_data}. We thus know that $\YY_{T{-}\tau} \in \myc$ (see Definition~\ref{def:singular-Y}) for $b = 2a$ (recall that $a$ is the polynomial growth coefficient of our data) and some $\zeta > 1/2{-}\alpha.$ Then we apply Theorem~\ref{thm:solution_abstract} with the coefficients chosen as follows: $F(\YY_{T{-}\tau})(u) = \partial_x u $ and 
\[
R(\YY_{T{-}\tau}, f)(u) = {-}\ola{f} + X^{\BA}_{T{-}\tau} \partial_x u + X_{T{-}\tau} \para \partial_x u,
\]
where the parameter $f$ lives in the space $\mX = C\CC^{2\alpha{-}1}([0,T]).$ An application of the Schauder estimates and the estimates for paraproducts shows that $R$ and $F$ satisfy the requirements of Assumption \ref{assu:parameters_eqn}. Thus we find a solution $\ola{\varphi}_{\tau} \in  \LL^{\alpha{+}1{-}\ve}_{e(\kappa)}$ for any $\ve \in (6a/\delta{+}2\zeta, 3\alpha{-}1)$, where both the parameter $\kappa$ and the estimates on the norm of the solution can be chosen uniformly over $\tau$ as a consequence of the estimates from Theorem \ref{thm:solution_abstract}, since (cf. Proposition \ref{prop:rescale_tranlate_external_data}):
$$
\sup_{t \in [0, T]} \| \YY_{T{-}\tau} \|_{\myc} < {+}\infty.
$$
\end{proof}

In the following we will show how to use the existence of solutions to the PDE to find unique martingale solutions to the martingale problem \eqref{eqn:SDE_para}. For technical reasons in order to construct the polymer measure we will need a slightly more complicated version of the space $\mY_{kpz},$ in which we add as a requirement the convergence of an asymmetric product. This convergence is guaranteed in the case of space-time white noise by the result of Theorem \ref{thm:renormalisation}.

\begin{definition}
For $\YY^n, \YY \in \myk$ we say that $\YY^n \to \YY$ in $\myp$ if the convergence holds in $\myk$ and in addition the following asymmetric resonant product converges: 
$$
\partial_x Y^{\BZ, n} \reso \partial_x Y \to \partial_x Y^{\BZ} \reso \partial_x Y \text{ in } C\CC^{2\alpha{-}1}_{p(a)}.
$$
Similarly, we say that $\YY$ belongs to $\myp$ if there exists a sequence $\YY^n \in \myi$ such that $\YY^n \to \YY$ in $\myp.$
\end{definition}

\begin{proposition}\label{prop:exist_SDE_para}
	For any $x_0 \in \mathbb{R}$ and $\YY \in \myp$ there exists a unique probability measure $\mathbb{P}^U_{x_0}$  on $C([0,T]; \RR)$ such that under this measure the coordinate process $(\gamma_t)_{t \in [0, T]}$ satisfies:
	\begin{enumerate}
		\item $\mathbb{P}^U_{x_0}(\gamma_0 = x_0) = 1,$
		\item for any $\tau \le T, l \in \RR$ and  for any $f$ in $C L^{\infty}_{e(l)}([0, \tau])$ and $\varphi^{0}$ in $\CC^{2\alpha{+}1}_{e(l)}$ the paracontrolled solution $\varphi(t,x)$ to  Equation (\ref{eqn:kolmogorov_para}) on the interval $[0,\tau]$ satisfies that $$\varphi(t, \gamma_t) - \int_0^tf(s, \gamma_s)ds, \qquad t \in [0, \tau]$$ is a square integrable martingale under $\mathbb{P}^U_{x_0},$ with respect to the canonical filtration.
		\item $\gamma$ is a.s. $\zeta-$H\"older continuous for any $\zeta < 1/2.$
	\end{enumerate}
\end{proposition}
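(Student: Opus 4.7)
The plan is to adapt the Stroock--Varadhan martingale problem approach to the paracontrolled setting, in the spirit of Delarue--Diel \cite{singular} and Cannizzaro--Chouk \cite{cannizzaro}, using the PDE estimates from Proposition \ref{prop:existence_kolmogorov_eqn} as the analytic backbone. First, fix a sequence $\YY^n \to \YY$ in $\myp$ consisting of smooth data (which exists by definition of $\myp$). For smooth drift $\partial_x U^n = \partial_x \ola{Y}^n + \partial_x \ola{Y}^{\BA, n}$, classical SDE theory yields a unique strong solution $\gamma^n$ to \eqref{eqn:SDE_para} started at $x_0$, whose law on $C([0,T];\RR)$ we denote by $\mathbb{P}^{U,n}_{x_0}$.

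Next, I would establish tightness of $(\mathbb{P}^{U,n}_{x_0})_n$ in $C([0,T];\RR)$, together with uniform Hölder bounds. For tightness of the one-time marginals, pick a weighted test function $\varphi^0$ of the form $\cosh(\mu |x|^\delta)$ (truncated so as to lie in $\CC^{2\alpha+1}_{e(l)}$): solving the backward Kolmogorov equation \eqref{eqn:kolmogorov_para} with $f=0$ and this $\varphi^0$ yields a paracontrolled $\varphi_\tau^n$ whose norm is controlled uniformly in $n$ by Proposition \ref{prop:existence_kolmogorov_eqn}, and the martingale property of $\varphi_\tau^n(t,\gamma^n_t)$ converts this into uniform exponential tail bounds for $\gamma^n_t$. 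For Hölder regularity of paths (and Kolmogorov's tightness criterion), I would combine Burkholder--Davis--Gundy applied to the Brownian part $W$ with a control of $\int_s^t \partial_x U^n(r, \gamma^n_r)\,dr$ obtained by choosing $\varphi^0(x)=x$ localized with a weight. The uniform bound $\mathbb{E}[|\gamma^n_t - \gamma^n_s|^p] \lesssim |t-s|^{p/2}$, for $p$ large, then yields tightness in $C^\zeta([0,T];\RR)$ for every $\zeta < 1/2$, giving both tightness and part (3) in the limit.

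Then I would identify the limit and verify (2). Extract a subsequential limit $\mathbb{P}^U_{x_0}$. For fixed $\tau \leq T$, $l \in \RR$, $\varphi^0 \in \CC^{2\alpha+1}_{e(l)}$ and $f \in C L^\infty_{e(l)}([0,\tau])$, the smooth solutions $\varphi^n_\tau$ converge to $\varphi_\tau$ in $\LL^{\alpha+1-\varepsilon}_{e(\kappa)}$ by the continuous dependence in Proposition \ref{prop:existence_kolmogorov_eqn}, hence uniformly on compacts after multiplying by the weight. Combined with the uniform tail bounds on $\gamma$, this justifies passage to the limit in the $\mathbb{P}^{U,n}_{x_0}$-martingale $\varphi_\tau^n(t,\gamma_t) - \int_0^t f(s,\gamma_s)\,ds$, showing that the analogous process is a square-integrable $\mathbb{P}^U_{x_0}$-martingale; square-integrability follows from the uniform weighted bound on $\varphi_\tau$ and the exponential tail estimates for $\gamma_t$.

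For uniqueness, the standard martingale-problem argument applies: setting $f=0$, any martingale solution $\mathbb{P}$ satisfies $\mathbb{E}^{\mathbb{P}}[\varphi^0(\gamma_\tau)] = \varphi_\tau(0, x_0)$, a quantity determined entirely by the PDE, so the one-dimensional marginals of $\gamma_\tau$ are uniquely determined as $\varphi^0$ ranges over the measure-separating class $\bigcup_l \CC^{2\alpha+1}_{e(l)}$. Iterating this using the Markov structure built into \eqref{eqn:kolmogorov_para} (via conditioning on $\mathcal{F}_s$ and solving the Kolmogorov equation on $[s,\tau]$ from state $\gamma_s$) uniquely determines the finite-dimensional distributions. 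The main obstacle I anticipate is Step 2: the drift $\partial_x U$ has exponential growth in $x$ (inherited from the linear growth allowed for $Y$), so the weights in Proposition \ref{prop:existence_kolmogorov_eqn} must be handled delicately to turn the weighted PDE estimates into genuine tail and Hölder bounds on $\gamma^n$ that are uniform in $n$ — in particular, matching the exponential weight $e(l)$ of the test functions with the exponential moments of $\gamma^n_t$ one can hope to produce.
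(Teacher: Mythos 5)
Your proposal is correct and follows essentially the same route as the paper, which splits the argument into Lemma \ref{lem:expntl_delta_momts} (exponential moments via the Kolmogorov equation with sub-exponential terminal data and the martingale property) and Lemma \ref{lem:weak_convergence_partial_SDE} (increments rewritten through the Kolmogorov solution with terminal condition $\varphi^0(x)=x$, Burkholder--Davis--Gundy and Kolmogorov's criterion for tightness and H\"older paths, passage to the limit in the martingale property via the continuous dependence in Proposition \ref{prop:existence_kolmogorov_eqn}, and uniqueness by the standard duality argument as in \cite{singular}). The only inessential discrepancy is your remark that $\partial_x U$ grows exponentially: it in fact has only polynomial growth $p(a)$, the exponential weights entering solely through the Schauder estimates for $\varphi$.
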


We split the proof of this proposition in two lemmata, which are interesting in themselves. In the first one we derive a priori estimates for the exponential moments of a solution to the SDE.
\begin{lemma}\label{lem:expntl_delta_momts}
    Consider any $M, l\ge0$. Fix any $x_0 \in \RR$ and $\YY \in \myi$ which has norm $\| \YY \|_{\myk} \le M$. There exists a constant $C= C(M, l,T)>0$ such that the  strong solution $\gamma_t$ to the SDE (\ref{eqn:SDE_para}) with the uniformly Lipschitz drift $\partial_x U$ satisfies:
    $$
    \sup_{0 \le t \le T} \mathbb{E}_{x_0}\bq{ e^{l |\gamma_t|^{\delta}}} \le Ce^{C|x_0|^{\delta}}.
    $$
\end{lemma}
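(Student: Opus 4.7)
The plan is to apply It\^o's formula along $\gamma$ to a solution of the backward Kolmogorov equation \eqref{eqn:kolmogorov_para} chosen with a carefully mollified terminal condition, and then exploit the weighted $L^\infty$ estimate provided by Proposition~\ref{prop:existence_kolmogorov_eqn}.

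First, fix $\tau\in[0,T]$ and take the smooth positive terminal profile $\varphi^0(x)=\exp(l(1+|x|^2)^{\delta/2})$. Using that $(1+|x|^2)^{\delta/2}\le C_\delta(1+|x|^\delta)$ and that all derivatives of $x\mapsto(1+|x|^2)^{\delta/2}$ are globally bounded, one checks immediately that $\varphi^0\in \CC^{2\alpha+1}_{e(\tilde l)}$ for some $\tilde l$ depending only on $l$ and $\delta$ and that $e^{l|x|^\delta}\le\varphi^0(x)\le Ce^{\tilde l|x|^\delta}$. Proposition~\ref{prop:existence_kolmogorov_eqn} applied with forcing $f=0$ then produces, for some $\kappa=\kappa(l,T)$ and uniformly in $\tau\in[0,T]$, a paracontrolled solution
\[
\varphi_\tau\in\LL^{\alpha+1-\ve}_{e(\kappa)}\quad\text{of}\quad(\partial_t+\tfrac{1}{2}\Delta+\partial_xU\,\partial_x)\varphi_\tau=0,\quad\varphi_\tau(\tau)=\varphi^0,
\]
with the estimate $\sup_{\tau\in[0,T]}\|\varphi_\tau\|_{\LL^{\alpha+1-\ve}_{e(\kappa)}}\lesssim_M\|\varphi^0\|_{\CC^{2\alpha+1}_{e(\tilde l)}}\le C(l,\delta)$. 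Because $\YY\in\myi$ is smooth, $\partial_xU$ is a classical Lipschitz function, so standard parabolic theory together with uniqueness of paracontrolled solutions imply that $\varphi_\tau$ is in fact a smooth classical solution, strictly positive by the weak maximum principle.

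With $\varphi_\tau$ smooth and classical, It\^o's formula applied to $\varphi_\tau(t,\gamma_t)$ on $[0,\tau]$ yields
\[
\varphi_\tau(t,\gamma_t)-\varphi_\tau(0,x_0)=\int_0^t\partial_x\varphi_\tau(s,\gamma_s)\,dW_s,
\]
so that $t\mapsto\varphi_\tau(t,\gamma_t)$ is a non-negative continuous local martingale. Localising along the hitting times $\sigma_n=\inf\{t\le\tau:|\gamma_t|\ge n\}$, which tend almost surely to $+\infty$ because the Lipschitz drift prevents explosion, optional stopping gives $\EE_{x_0}[\varphi_\tau(\tau\wedge\sigma_n,\gamma_{\tau\wedge\sigma_n})]=\varphi_\tau(0,x_0)$. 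Passing to the limit $n\to\infty$ and using Fatou's lemma (which is justified by the non-negativity of $\varphi_\tau$) yields $\EE_{x_0}[\varphi^0(\gamma_\tau)]\le\varphi_\tau(0,x_0)$.

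Finally, the weighted $L^\infty$ bound $\varphi_\tau(0,x_0)\le\|\varphi_\tau(0)\|_{L^\infty_{e(\kappa)}}e^{\kappa|x_0|^\delta}\le C(M,l,T)\,e^{\kappa|x_0|^\delta}$ combined with $e^{l|\cdot|^\delta}\le\varphi^0$ and the supremum over $\tau\in[0,T]$ completes the argument after setting $C:=\max\{\kappa,C(M,l,T)\}$. The main delicate step is locating a smooth terminal profile $\varphi^0$ that simultaneously dominates $e^{l|x|^\delta}$ and lies in $\CC^{2\alpha+1}_{e(\tilde l)}$ for a weight $\tilde l$ comparable to $l$, so that Proposition~\ref{prop:existence_kolmogorov_eqn} delivers an exponential bound with a weight that can be absorbed into the final constant; once this is in place, the rest is a routine combination of It\^o's formula, optional stopping, and Fatou's lemma.
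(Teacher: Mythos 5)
Your proposal is correct and follows essentially the same route as the paper: solve the backward Kolmogorov equation \eqref{eqn:kolmogorov_para} with $f=0$ and a smooth terminal condition comparable to $e^{l|x|^{\delta}}$, invoke the uniform weighted bound from Proposition~\ref{prop:existence_kolmogorov_eqn}, and conclude by It\^o's formula and the martingale property evaluated at $t=0$. Your substitution of a globally smooth profile $\exp(l(1+|x|^2)^{\delta/2})$ and the localisation/Fatou step (yielding the supermartingale inequality rather than asserting a true martingale) are only minor, slightly more careful variants of the paper's argument.
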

\begin{proof}
Fix a terminal condition $\varphi^0$ such that $\varphi^{0}(x) = e^{l |x|^{\delta}}$ for $|x| > 1$ and is smooth and bounded for $|x|\le 1.$ For any $\tau \in [0,T]$ and $\YY \in \myk$ with $\| \YY \|_{\myk} \le M$ it follows from Proposition \ref{prop:existence_kolmogorov_eqn} that there exists a constant $C=C(M, l, T)$ such that the solution $\varphi_{\tau}$  to Equation (\ref{eqn:kolmogorov_para}) with forcing $f = 0$ and terminal condition $\varphi^0$ satisfies $ |\varphi_{\tau}(t,x)| \le Ce^{C|x|^{\delta}}.$ From the It\^o formula and the fact that we chose a bounded noise we know that $\varphi_{\tau}(t, \gamma_t)$ is a true martingale. Hence:
$$
\sup_{\tau \in [0,T]} \mathbb{E}_{x_0}\bq{e^{l|\gamma_{\tau}|^{\delta}} } \simeq \sup_{\tau \in [0, T]} \mathbb{E}_{x_0} \bq{\varphi_{\tau}(\tau, \gamma_{\tau})} = \sup_{\tau \in [0,T]} \varphi_{\tau}(0, \gamma_0) \le Ce^{C|x_0|^{\delta}}.
$$
\end{proof}

With this result at hand we can prove tightness and convergence for the laws of the solutions associated to the SDE.

\begin{lemma}\label{lem:weak_convergence_partial_SDE}
	Consider a sequence $\YY^n$ in $\myi$ such that  $\YY^n \to \YY$ in $\myp$. Let $W$ be a Brownian motion and $\gamma^n$ the strong solutions to the SDE (\ref{eqn:SDE_para}) driven by the smooth and bounded drift $\partial_xU^n.$
Then there exists a measure $\PP^U_{x_0}$ on $C([0, T]; \RR^2)$ such that, denoting with $(\gamma, \ol{W})$ the canonical process on this space:
$$
	(\gamma^{n}, W) \Rightarrow (\gamma, \ol{W})
$$
in the sense of weak convergence of measures. The process $\gamma$ is the unique martingale solution to the martingale problem of Proposition \ref{prop:exist_SDE_para} and it is $\zeta-$H\"older continuous for any $\zeta < 1/2$.
In addition, for any process $H^{n}$ adapted to the filtration $(\FF^n_t) = \sigma(\gamma^n_s | s \le t)$ if the three processes $(\gamma^{n}, H^{n}, W)$ jointly converge to $(\gamma, H, \ol{W}),$ then also:
$$
(\gamma^{n}, H^{n}, W, \smallint \! H^{n} d W) \Rightarrow (\gamma, H, \ol{W}, \smallint \! H d\ol{W})
$$
in the sense of weak convergence on $C([0, T]; \RR^4).$

\end{lemma}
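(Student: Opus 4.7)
The plan is to establish the convergence in three stages: first prove tightness of $(\gamma^n, W)$ in $C([0,T]; \RR^2)$, then identify any subsequential limit as a solution to the martingale problem of Proposition \ref{prop:exist_SDE_para} via the paracontrolled Kolmogorov equation, and finally deduce the joint convergence of stochastic integrals from a standard stability result for It\^o integrals under weak convergence.

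For tightness, the $W$-marginal is automatic. For $\gamma^n$ I would establish uniform-in-$n$ moment increment bounds of the form $\EE |\gamma^n_t - \gamma^n_s|^p \lesssim |t-s|^{p/2 - \varepsilon}$ by decomposing $\gamma^n_t - \gamma^n_s = W_t - W_s + \int_s^t \partial_x U^n(r, \gamma^n_r) dr$. The stochastic contribution is controlled by Burkholder-Davis-Gundy. For the drift contribution, the idea is to apply It\^o's formula to the paracontrolled solution $\varphi^n_\tau$ of the Kolmogorov equation (Proposition \ref{prop:existence_kolmogorov_eqn}) with a suitable choice of forcing, so that the drift integral is recast as a martingale plus boundary terms involving $\varphi^n_\tau(\cdot,\gamma^n_{\cdot})$; the resulting expression is then controlled by the exponential moment estimate of Lemma \ref{lem:expntl_delta_momts} together with the uniform growth bounds inherited from Proposition \ref{prop:existence_kolmogorov_eqn}. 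Kolmogorov's continuity criterion then yields both tightness in $C([0,T]; \RR^2)$ and, in the limit, almost sure $\zeta$-H\"older continuity of $\gamma$ for every $\zeta < 1/2$.

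Given a subsequential limit $(\gamma, \ol{W})$, one identifies $\ol{W}$ as a Brownian motion since it is the weak limit of the fixed process $W$, and $\gamma_0 = x_0$ by continuity. To verify the martingale problem, fix $\tau \in [0,T]$, $\varphi^0 \in \CC^{2\alpha+1}_{e(l)}$ and $f \in C L^{\infty}_{e(l)}$, and let $\varphi^n_\tau, \varphi_\tau$ be the Kolmogorov solutions driven by $\YY^n$ and $\YY$. For the smooth approximations, It\^o's formula directly ensures that $M^n_t = \varphi^n_\tau(t, \gamma^n_t) - \int_0^t f(s, \gamma^n_s) ds$ is a square integrable martingale. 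The quantitative continuity statement in Proposition \ref{prop:existence_kolmogorov_eqn} gives $\varphi^n_\tau \to \varphi_\tau$ in $\LL^{\alpha+1-\varepsilon}_{e(\kappa)}$ and hence locally uniformly with exponential growth control. Combined with the joint weak convergence $(\gamma^n, W) \Rightarrow (\gamma, \ol{W})$ and the uniform exponential integrability supplied by Lemma \ref{lem:expntl_delta_momts}, this lets us pass $M^n$ to the limit and conclude that $M_t = \varphi_\tau(t, \gamma_t) - \int_0^t f(s, \gamma_s) ds$ is a martingale under the law of $\gamma$. Uniqueness of the martingale problem, and hence convergence of the full sequence rather than just a subsequence, follows by testing with a separating family of $\varphi^0$ and iterating the identity over partitions of $[0,T]$ to pin down the finite-dimensional marginals.

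For the final statement, once $(\gamma^n, H^n, W) \Rightarrow (\gamma, H, \ol{W})$ jointly, the convergence $(\gamma^n, H^n, W, \int H^n dW) \Rightarrow (\gamma, H, \ol{W}, \int H d\ol{W})$ follows from a classical stability theorem for It\^o integrals under weak convergence; the standard UCV/UT hypothesis on the integrators is trivially satisfied here because the integrator is the \emph{same} Brownian motion $W$ for every $n$, and each $H^n$ is adapted to the filtration $\FF^n_t$, with respect to which $W$ is also adapted (since $\gamma^n$ is a strong solution). The main obstacle lies in the tightness step: the exponential spatial growth together with the negative spatial regularity of $\partial_x U$ precludes any naive pointwise bound on the drift, and the argument genuinely requires encoding the drift integral through the paracontrolled Kolmogorov PDE so as to convert singular pointwise information into integrable martingale increments.
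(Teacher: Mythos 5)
Your Steps for tightness and for identifying subsequential limits as martingale solutions follow the paper's argument closely (increment representation through the Kolmogorov solution with terminal datum the identity, BDG, the exponential moments of Lemma \ref{lem:expntl_delta_momts}, then passage to the limit in $M^n_t=\varphi^n(t,\gamma^n_t)-\int_0^t f(s,\gamma^n_s)ds$ using the uniform square integrability), and your use of the Kurtz--Protter UCV/UT stability theorem for the integrals is also the paper's Step 4. However, there is a genuine gap at the point where you pass from uniqueness of the martingale problem to the full-sequence \emph{joint} convergence $(\gamma^n,W)\Rightarrow(\gamma,\ol{W})$. Testing with a separating family of terminal conditions and partitioning $[0,T]$ only pins down the finite-dimensional marginals of $\gamma$; it says nothing about the joint law of the pair $(\gamma,\ol{W})$, and it is this joint law that must be unique for all subsequential limits of $(\gamma^n,W)$ to agree (and for the stochastic-integral statement, whose integrator in the limit is $\ol{W}$, to be meaningful along the whole sequence). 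In the smooth case one writes $\ol{W}_t=\gamma_t-x_0-\int_0^t\partial_xU(s,\gamma_s)\,ds$ and concludes that $\ol{W}$ is a measurable functional of $\gamma$, but here $\partial_xU$ has spatial regularity $\alpha-1$, which is strictly worse than the class $C\CC^{2\alpha-1}_{e(l)}$ of admissible forcings in Proposition \ref{prop:existence_kolmogorov_eqn}, so this functional is not defined naively (this also means that, in your tightness step, a ``suitable forcing'' equal to the drift itself is not uniformly controlled; the admissible trick is zero forcing and terminal condition $x$).

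The paper's proof devotes an entire step to exactly this issue: it solves auxiliary equations with right-hand side $X^n+X^{\BA,n}$, subtracts the roughest part $Y^{\BZ,n}$, and controls the remaining resonant term using the convergence of the \emph{asymmetric} product $\partial_xY^{\BZ,n}\reso\partial_xY$ --- which is precisely why the hypothesis of the lemma is convergence in $\myp$ rather than merely in $\myk$ --- and then invokes Kurtz's theorem to represent $\ol{W}$ as an almost sure limit of measurable functionals $F_n(\gamma)$, thereby fixing the joint law. Your proposal never uses the $\myp$ hypothesis at all, which is a strong sign that this step is missing; without an argument of this type the claimed joint weak convergence of $(\gamma^n,W)$, and consequently the identification of the limiting stochastic integrals against $\ol{W}$, is not justified.
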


\begin{proof}
We articulate the proof of this lemma as follows. First, we show tightness for the law of $\gamma^n$. Then we show that the weak limits of $\gamma^n$ are the unique martingale solution to the given martingale problem. From this we deduce the first weak convergence result. Finally, we address the issue with the stochastic integral. Let us denote with $(\mathcal{F}^n_t)_{t \in [0, T]}$ the filtration generated by $\gamma^n$ and note that $W$ is an $\mathcal{F}^n-$Brownian motion. Also, let $(\mathcal{F}_t)_{t \in [0, T]}$ be the canonical filtration, i.e. the one generated by the coordinate process $(\gamma_t)_{t \in [0, T]}$.

\textit{Step 1.} For any $\tau$ and $h$  such that $0\le \tau \le \tau {+} h \le T$ we consider the solution $\varphi^n_{\tau{+}h}$ to the Kolmogorov PDE (\ref{eqn:kolmogorov_para}) with $f = 0$ and $\varphi^n_{\tau+h}(\tau{+}h, x) = x$ driven by the external data $\YY^n$. Then we see that
\begin{align*}
	\gamma^n_{\tau{+}h} {-} \gamma^n_{\tau} = & \varphi^{n}_{\tau+h}(\tau {+}h, \gamma^n_{\tau{+}h}) {-} \varphi^{n}_{\tau+h}(\tau, \gamma^n_{\tau}) + \varphi_{\tau+h}^{n}(\tau, \gamma^n_{\tau}) {-} \varphi_{\tau+h}^{n}(\tau {+}h, \gamma^n_{\tau}) \\
	= & \int_{\tau}^{\tau{+}h }\partial_x\varphi^{n}_{\tau+h}(s, \gamma^n_s) dW_s + \varphi^{n}_{\tau+h}(\tau, \gamma^n_{\tau}) {-} \varphi^{n}_{\tau+h}(\tau {+}h, \gamma^n_{\tau}).
\end{align*}
Hence, we get from the Burkholder-Davis-Gundy inequality for any $p\ge 1$
\begin{align*}
	\EE_{x_0} \big[ |\gamma^n_{\tau+h}-\gamma^n_{\tau}|^{2p} \big] \lesssim & \EE \bigg[ \bigg\vert \int_\tau^{\tau{+}h} |\partial_x\varphi^n_{\tau{+}h}(s, \gamma^n_{s})|^2 ds \bigg\vert^{p} \bigg] + \EE_{x_0} \big[|\varphi^n_{\tau{+}h}(\tau, \gamma^n_\tau) {-}\varphi^n_{\tau{+}h}(\tau{+}h, \gamma^n_\tau))|^{2p}\big].
\end{align*}
Now we can apply our uniform bound
$$
\sup_n \sup_{0\le \tau \le \tau+h \le T}\|\varphi^{n}_{\tau+h}\|_{\LL^{2\alpha{+}1{-}\ve}_{e(\kappa)}} < {+}\infty
$$ for some $\kappa$ large enough together with the result of the previous lemma to see that the second term is uniformly bounded by $h^{p(2\alpha{+}1{-}\ve)}$, whereas the first term can be estimated via:
\[
	\EE \bigg[ \bigg\vert \int_\tau^{\tau{+}h} |\partial_x\varphi^n_{\tau{+}h}(s, \gamma^n_{s})|^2 ds \bigg\vert^{p} \bigg] \lesssim  h^{p{-}1} \EE \bigg[  \int_\tau^{\tau{+}h} |\partial_x\varphi^n_{\tau{+}h}(s, \gamma^n_{s})|^{2p} ds \bigg] \lesssim h^{p}.
\]
Hence, we eventually find:
$$
	\EE_{x_0} \big[ |\gamma^n_{\tau+h}-\gamma^n_{\tau}|^{2p} \big] \lesssim h^{p} {+} h^{p(2\alpha{+}1{-}\ve)} \lesssim h^{p},
$$
and thus an application of Kolmogorov's continuity criterion gives for all $\zeta < 1/2$ and all $p > 1/(2\zeta)$:
$$
\sup_{n} \EE_{x_0} \bigg[ \ \sup_{0 \le s \le t} \frac{|\gamma^n_t {-} \gamma^n_s|^{2p}}{|t-s|^{2p (\zeta {-}1/(2p))}} \ \bigg] < {+} \infty.
$$
This is enough to ensure the tightness of the sequence $B^n$ and also of the couple $(B^n, W)$, as well as the H\"older continuity of the limiting process.

\textit{Step 2.} Next we prove that all weak limit points of $\gamma^n$ are solutions to the martingale problem of Proposition \ref{prop:exist_SDE_para}. Uniqueness of such solutions can then be proven as in \cite[Proof of Theorem 8]{singular}. Fix $f$ and $\varphi^0$ as required. Then the solutions $\varphi^n$ to Equation \eqref{eqn:kolmogorov_para} with smooth noise $\theta^n$ converge to the solution $\varphi$ in $\LL^{\alpha{+}1{-}\ve}_{e(\kappa)}$ for $\ve \in (6a/\delta{+}1{-}2\alpha, 3\alpha{-}1)$. For fixed $n$ we have that
$$
M^n_t = \varphi^n(t, \gamma^n_t) - \int_0^t f(s, \gamma^n_s) ds = \int_0^t \partial_x \varphi^n(s, \gamma^n_s) dW_s
$$
is a martingale with respect to the filtration $(\mathcal{F}^n_t)_{t \in [0, T]}$ (since $W$ is an $\mathcal{F}^n-$Brownian motion) and satisfies, due to our exponential bound from Lemma~\ref{lem:expntl_delta_momts}:
$$
\sup_{n} \EE \bigg[ \sup_{0 \le t \le T_0} |M^n_t|^2 \bigg] <{+} \infty.
$$
Hence the sequence is uniformly integrable and together with Lemma \ref{lem:expntl_delta_momts} and the Skorohod embedding theorem this guarantees that up to taking a subsequence
$$
M^n_t \to M_t = \varphi(t, \gamma_t) - \int_0^t f(s, \gamma_s) ds = \int_0^t \partial_x \varphi(s, \gamma_s) d\ol{W}_s
$$
almost surely and in $L^1$ and it follows the latter is a martingale with respect to the canonical filtration $(\mathcal{F}_{t})_{t \in [0, T]}$.

\textit{Step 3.} By tightness we can show that along a subsequence $(\gamma^{n_k}, W) \Rightarrow (\gamma, \ol{W}).$ If we can prove that the joint law of $(\gamma, \ol{W})$ is uniquely defined, the joint weak convergence follows. If the drift were a smooth function we could observe that $\ol{W}_t = \gamma_t - \int_0^t \partial_x U(s, \gamma_s) \ ds$, with the right hand-side being a measurable function of the process $\gamma$. In the rough setting one has to be more careful, since it is not clear how the last term is defined. We will show that for a sequence of measurable functions $F_n$ it is possible to write $(\gamma_t, \ol{W}_t) = \lim_n (\gamma_t, F_n(\gamma))$. Indeed for $n \in \NN$ one can solve the equation
$$
(\partial_t {+}\hh \Delta {+} (X {+} X^{\BA}) \partial_x) \varphi^n = X^n{+} X^{\BA, n}, \ \ \varphi^n(T) = 0.
$$
We can subtract the term of lowest regularity to find $\varphi^n = Y^{\BZ, n} + \psi^n$ with $\psi^n$ solving:
$$
(\partial_t {+}\hh \Delta {+} (X {+} X^{\BA}) \partial_x) \psi^n = X^{\BA, n}  {+} (X^{\BA} {+}X) X^{\BZ, n}, \ \ \psi^n(T) = 0.
$$
Since $\YY^n \to \YY$ in $\myp$ the resonant product $X \reso X^{\BZ, n}$ converges to $X \reso X^{\BZ}$ in $C\CC^{2\alpha{-}1}_{p(a)}$. Thus along the same lines of Theorem \ref{thm:solution_abstract} it is possible to find a paracontrolled solution to the previous equation with the structure:
$$
\psi^n = (\partial_x \psi^{n} + X^{\BZ, n}) \ppara Y^{\BZ} + \psi^{\sharp, n}
$$
with $\psi^n \in \LL^{\alpha{+}1{-}\ve}_{e(l+t)}$, $\psi^{\sharp, n} \in \LL^{2\alpha{+}1{-}\ve}_{e(l+t)}$ for any $l > 0$ and $\ve \in (6a/\delta{+}1{-}2\alpha, 3\alpha{-}1)$. Moreover, because the resonant product converges to the right limit, as $n \to \infty$ the above solutions $\psi^n$ converge in $\LL^{\alpha{+}1{-}\ve}_{e(l{+}t)}$ to the solution $\psi$ of
$$
(\partial_t {+}\hh \Delta {+} (X {+} X^{\BA}) \partial_x) \psi = X^{\BA}  {+} (X^{\BA} {+}X) X^{\BZ}, \ \ \psi(0) = 0.
$$
Similarly the solutions $\varphi^{n,n}$ to the equation 
$$
(\partial_t {+}\hh \Delta {+} (X^n {+} X^{\BA, n}) \partial_x) \varphi^{n,n} = X^n{+} X^{\BA, n}, \ \ \varphi^{n,n}(0) = 0.
$$
exhibits the same structure $\varphi^{n,n} = Y^{\BZ, n} + \psi^{n,n}$ and by the Lipschitz dependence on the parameters in Theorem \ref{thm:solution_abstract} we have that $\psi^{n,n}$ converges to $\psi$ in $\LL^{\alpha{+}1{-}\ve}_{e(l+t)}$. Along the subsequence $(n_k)_k$ along which $(\gamma^{n_k}, W)$ converges to $(\gamma, \ol{W})$ we can now apply \cite[Theorem 2.2]{Kurtz} to see that
\begin{align*}
\widetilde{A}_t^{n_k} &= \int_0^{t}( \ola{X}^{n_k} {+} \ola{X}^{\BA, {n_k}})(s, \gamma^{n_k}_s) ds \\ 
& = \ola{\varphi}^{{n_k},{n_k}}(t, \gamma^{n_k}_t) {-} \ola{\varphi}^{{n_k},{n_k}}(0, x_0) {-} \int_0^t \partial_x \varphi^{{n_k},{n_k}}(\gamma^{n_k}_s) dW_s
\end{align*}
converges to
$$
A_t : = \ola{\varphi}(t, \gamma_t) {-} \ola{\varphi}(0, x_0) - \int_0^t \partial_x \varphi(\gamma_s) d\ol{W}_s.
$$
By using $\varphi^n$ instead of $\varphi^{{n_k},{n_k}}$ we find that:
$$
A_t = \lim_n A^n_t := \lim_n \int_0^t (\ola{X}^n{+}\ola{X}^{\BA, n})(s, \gamma_s) ds
$$
Now since 
$$(\gamma, \ol{W}) = (\gamma, \gamma{-}x_0{-} A) = \lim_n (\gamma, \gamma {-}x_0 {-}A^n) = \lim_n (\gamma, F_n(\gamma))$$
we find the required uniqueness of the law.

\textit{Step 4.} Finally, as we already noted, the convergence of the stochastic integrals along a subsequence is a consequence of \cite[Theorem 2.2]{Kurtz}
\end{proof}

\subsection{Polymer Measure}
Our next aim is to construct the ``full'' polymer measure $\mathbb{Q}_{x_0}$. In principle we would like to apply the fomulas~\eqref{eqn:definition_radon_nikodym_derivative_with_h} or~\eqref{eqn:definition_radon_nikodym_polymer_measure_NO_h} for the explicit Radon-Nikodym derivative with respect to $\bb{P}_{x_0}^U$. Unfortunately we do not have sufficient control of the growth of $X^R$, so we need to argue differently. Even for \lqt smooth\rqt noises $\theta$ in $\LLL C^{\alpha /2} (\RR;C^{\infty}_b(\RR))$ Equation \eqref{eqn:naive_polymer_measure} does not make sense, since we lack smoothness in time. Thus we follow the calculations at the beginning of this section and define the continuum polymer measure in the following way.
\begin{definition}\label{def:smooth_polymer_measure}
For $\YY$ in $\myi$ and consider the solution $e^{h}$ to the RHE for an initial condition $e^{\oh} = e^{Y(0)}\cdot w_0,$ with $w_0\ge 0, w_0 \in \CC^{\beta}_{e(l)}$, for some $l \in \RR, \beta >0$, we define:
$$
\frac{d \wQ(\YY)}{d \PP_{x_0}} = \exp\bigg( \int_0^T (\hh | \ola{X}|^2 {-} c^{\BA})(s, \gamma_s) ds + \big[ \ola{Y}{-} \olh \big]^{(0, x_0)}_{(T, \gamma_T)} \bigg) \cdot \frac{d \PP^{\ola{X}}_{x_0}}{d \PP_{x_0}},
$$
where $\PP^{\ola{X}}_{x_0}$ is the measure under which the coordinate process $\gamma$ solves $d \gamma = \ola{X}(\gamma) {+} dW$ for a Brownian motion $W$ started in $x_0$ and where $\PP_{x_0}$ is the Wiener measure.
\end{definition}

Although the above notation suggests that we use the solution $h$ to the KPZ equation associated to a smooth noise, this is really just notation (which we chose because it fits the Gibbs measure formalism). Actually the construction of the continuum random polymer measure does not depend on the existence of the solution $h$: we only need to understand the solution $w = e^h$ to the RHE with some strictly positive initial condition.

The construction of the polymer measure we review in the following is already known from a work by Alberts, Khanin and Quastel \cite{AlbertsQuastelPolymer}. We implement their strategy in our pathwise setting and we link it with the approach of Delarue and Diel \cite{singular}. The idea is to show convergence of the finite dimensional distributions by controlling the density of the transition function with respect to the Lebesgue measure. Eventually a tightness result guarantees that the limiting measure is supported in the space of continuous functions.

\begin{lemma}\label{lem:convergence_polymer_measure}
Fix $x_0 \in \RR, \YY \in \myk$ and $\oh$ such that $w_0 = e^{\oh{-}Y(0)} \in \CC^{\beta}_{e(l)}$ for some $l \in \RR, \beta>0$. Then we can define a measure $\wQ = \wQ(\YY)$ on $\RR^{[0,T]}$ through its finite dimensional distributions as follows. For any $n \in \NN$ and $0= t_0 \le t_1 < \ldots < t_n \le T$ we define:
\begin{align}\label{eqn:finite_dimensional_distributions}
\wQ(\gamma_{t_1} = dx_1, \ldots, \gamma_{t_n} = dx_n)= \prod_{i = 1}^{n}  \ol{Z}(t_{i{-}1}, x_{i-1}; t_i, x_i) dx_i,
\end{align}
where $y \mapsto \ol{Z}(s,x;t,y)$ is the probability density 
$$\ol{Z}(s,x;t,y)= e^{{-}\olh(s, x )}Z(s,x;t,y)e^{\olh(t,y)},$$
where $\ola{f}(r) = f(T{-}r)$ and $Z(s, x;t,y )$ solves:
\begin{align}\label{eqn:density_pde_ploymer_measure}
(\partial_s + \hh \Delta_x )Z + \ola{\xi} \diamond Z & = 0, \qquad s \in [0,t], \qquad Z(t,x; t, y) = \delta(x{-}y).
\end{align}

If $\YY \in \myi$ this measure coincides with the one from Definition \ref{def:smooth_polymer_measure}.
Moreover, if $\YY^m \to \YY$ in $\mY_{kpz}$ and $w_0^m \to w_0$ in $\CC^{\beta}_{e(l)}$ the measures converge weakly, i.e.:
$$
\wQ(\YY^m) \Rightarrow \wQ(\YY).
$$
Finally, for any $M, l \ge 0$ there exists a constant $C=C(M,l,T)>0$ such that:
\begin{equation}\label{eqn:exponential_moments_polymer_measure}
\sup_{0 \le t\le T} \EE_{\wQ} \Big[ e^{l|\gamma_t|^{\delta}}\Big] \le C e^{{-}\olh(0, x_0)}e^{C|x_0|^{\delta}}
\end{equation}
uniformly over $\YY, w_0$ such that $\| \YY \|_{\mY_{kpz}},  \| w_0\|_{\CC^{2\alpha{+}1}_{e(l)}} \le M$.
\end{lemma}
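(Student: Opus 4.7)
The construction of $\wQ$ proceeds in four steps. First I would verify that the finite-dimensional distributions \eqref{eqn:finite_dimensional_distributions} satisfy Kolmogorov's consistency conditions, so that they define a measure on $\RR^{[0,T]}$. Positivity of $\ol Z$ comes from the strict positivity of $w = e^h$ established in Proposition~\ref{prop:convergence_of_hn}. The normalization $\int \ol Z(s,x;t,y) dy = 1$ reduces, after using $\ol w = e^{\olh}$, to the identity $\ol w(s,x) = \int Z(s,x;t,y) \ol w(t,y) dy$, which itself follows from $Z$ being the Green's function of the backward RHE \eqref{eqn:density_pde_ploymer_measure} satisfied by $\ol w$. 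The Chapman-Kolmogorov relation reduces, after cancellation of the $e^{\pm \olh(s,y)}$ factors, to the semigroup property $\int Z(r,x;s,y) Z(s,y;t,z) dy = Z(r,x;t,z)$, which follows from the uniqueness of paracontrolled solutions to \eqref{eqn:density_pde_ploymer_measure}.

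For the identification with Definition~\ref{def:smooth_polymer_measure} when $\YY \in \myi$, I would write $d\PP^{\ola X}_{x_0}/d\PP_{x_0}$ via Girsanov and apply It\^o's formula to $\ola Y(s,\gamma_s)$, using $(\partial_s + \hh \Delta)\ola Y = -\ola\theta$, to rewrite the density as
\[
\frac{d\wQ}{d\PP_{x_0}}(\gamma) = \exp\!\Big(\olh(T,\gamma_T) - \olh(0,x_0) + \int_0^T (\ola\theta - c^{\BA})(s,\gamma_s) \ ds\Big).
\]
The classical Feynman-Kac formula for the backward RHE with smooth noise then yields, for bounded $F$, $\EE_{\wQ}[F(\gamma_T)] = \int F(y) \ol Z(0,x_0;T,y) dy$, and iterated conditioning using the Markov property of Brownian motion under $\PP_{x_0}$ extends the identification to all finite-dimensional marginals.

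For the weak convergence, given $\YY^m \to \YY$ in $\myk$ and $w_0^m \to w_0$ in $\CC^\beta_{e(l)}$, Lemma~\ref{lem:convergence_of_RHE} and Proposition~\ref{prop:convergence_of_hn} together with Lemmata~\ref{lem:exp}--\ref{lem:log} yield continuity of $(w^m, h^m) \to (w, h)$ in the relevant paracontrolled spaces, and hence of $\ol Z_m \to \ol Z$ in a suitable sense; this implies convergence of finite-dimensional distributions. To upgrade this to weak convergence on $C([0,T];\RR)$, I would prove tightness via Kolmogorov's continuity criterion by bounding $\EE_{\wQ^m}[|\gamma_t-\gamma_s|^{2p}]$ uniformly in $m$, writing the moment as a double integral against $\ol Z_m$ and applying the argument of the final step below to the weight $|y{-}x|^{2p}$.

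The exponential moment estimate rests on the one-time marginal:
\[
\EE_{\wQ}[e^{l|\gamma_t|^\delta}] = e^{-\olh(0,x_0)} \int Z(0,x_0;t,y) e^{\olh(t,y)+l|y|^\delta} dy = e^{-\olh(0,x_0)} \tilde v(T, x_0),
\]
where $\tilde v$ is the RHE solution on $[T{-}t, T]$ with initial condition $e^{\olh(t,\cdot) + l|\cdot|^\delta}$. Via Cole-Hopf, $\tilde v = \exp(\tilde h)$ where $\tilde h$ solves KPZ driven by the same $\YY$ with an initial condition whose logarithm still satisfies Assumption~\ref{assu:initial_condition}. Crucially, $h$ and $\tilde h$ share the rough components $Y + Y^{\BA} + Y^{\BB}$, so their difference reduces to a difference of paracontrolled remainders, both with sub-linear ($\delta$-)growth in $L^\infty_{p(\delta)}$ uniformly over $\|\YY\|_{\myk} \le M$ by Theorem~\ref{thm:existence_uniqueness_KPZ}. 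This gives $\tilde v(T, x_0) \le C e^{C|x_0|^\delta}$ and hence \eqref{eqn:exponential_moments_polymer_measure}. The main obstacle throughout is the rough nature of $Z$: the semigroup property and the passage to the limit must be handled via smooth approximation together with the continuity of the paracontrolled RHE solution map (Proposition~\ref{prop:existence_rhe}), and tightness must leverage precisely the exponential moment bound it is supposed to enable.
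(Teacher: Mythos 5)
Your skeleton (consistency of the kernels, identification with Definition \ref{def:smooth_polymer_measure} for smooth data, convergence of finite-dimensional distributions via continuity of the RHE, moment bound via the backward equation) matches the paper's, and your first step is essentially the paper's argument. But the identification step has a genuine gap: you pass through the density $\exp\big(\olh(T,\gamma_T)-\olh(0,x_0)+\int_0^T(\ola\theta-c^{\BA})(s,\gamma_s)\,ds\big)$, obtained by applying It\^o's formula to $\ola{Y}(s,\gamma_s)$. For $\YY\in\myi$ the noise $\theta$ lies in $\smooth$, i.e.\ it is smooth only in space and merely distributional in time; the paper emphasizes precisely this point when it says that even for such ``smooth'' noises the expression \eqref{eqn:naive_polymer_measure} does not make sense. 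Consequently $\int_0^T\ola\theta(s,\gamma_s)\,ds$ is undefined, $\ola{Y}$ is only $C^{\alpha/2}$ in time so It\^o's formula is not applicable, and the ``classical Feynman--Kac formula for the backward RHE with smooth noise'' you invoke is not classical here, since the potential $\ola\xi$ is not a function of time. The paper's proof never introduces $\theta$: starting from Definition \ref{def:smooth_polymer_measure} it writes $\EE_{\wQ}[g(\gamma_t)]=u(0,x_0)$ with a conditional expectation under $\PP^{\ola{X}}_{x_0}$ involving only the well-defined potential $\hh|\ola{X}|^2-c^{\BA}$, applies the genuinely classical Feynman--Kac formula for the SDE with drift $\ola{X}$, and then identifies $e^{\ola{Y}}w$ with $Z(\cdot;t,*)e^{\olh(t,*)}g(*)$ by a direct computation.

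The convergence step is also underdeveloped at its crucial point: ``hence $\ol{Z}_m\to\ol{Z}$ in a suitable sense'' is exactly what needs proof, and no convergence of the kernels $Z^m$ themselves is available (they are only known as fundamental solutions from the Hairer--Labb\'e theory). What is available is convergence of solutions of the time-reversed RHE for terminal data of paracontrolled form (Corollary \ref{cor:backwards_RHE}), and the paper's proof converts the $n$-fold integral against the kernels into an $n$-fold iteration of that statement, peeling off the factors $e^{\ola{Y}+\ola{Y}^{\BA}+\ola{Y}^{\BB}}$ at each time $t_i$ and propagating the limits $R_n, R_{n-1},\dots$ backwards; this iteration is the real content of the convergence proof and is absent from your sketch. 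Two smaller remarks: tightness on $C([0,T])$ is not part of this lemma (it is Lemma \ref{lem:tightness_polymer}, proved by a parabolic rescaling argument under the extra restriction $a\le a_{crit}$, and is considerably more delicate than the Kolmogorov-criterion outline you give); and for \eqref{eqn:exponential_moments_polymer_measure} your detour through a second KPZ solution $\tilde h$ would require verifying Assumption \ref{assu:initial_condition} for the time-shifted data uniformly in $t$, whereas the paper obtains the bound directly by testing $\ol{Z}(0,x_0;t,*)$ against a smoothed version of $e^{l|x|^{\delta}}$ and invoking the uniform estimate of Corollary \ref{cor:backwards_RHE} together with $\beta>0$.
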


\begin{remark}
The existence of solutions to equation \eqref{eqn:density_pde_ploymer_measure} requires the study of the equation with initial condition in H\"older-Besov spaces with integrability index $p \in [1, \infty)$ in order to tame the singularity of the Dirac delta. Although it would be straightforward to generalize Theorem \ref{thm:solution_abstract} in this direction  (cf. \cite{kpz}), we refrain from doing so in this work and fix $p = \infty$. Indeed, we are mainly interested in the KPZ Equation, where the nonlinearity does not allow initial conditions with integrability $p\neq \infty$. The existence of solutions to \eqref{eqn:density_pde_ploymer_measure} is one of the main results in \cite{pam} and we do not need to repeat its proof. Hairer and Labbé show the existence of functions $Z(t,x;s,y)$ which lie locally in space and time in  $\LL^{1/2-}_p$. The most important feature - and the only one we will use - of these functions is that they are fundamental solutions to the RHE. For simplicity, let us indicate with $*$ the contraction along a variable, so that $f(*_1, \ldots, *_n)h(*_1, \ldots, *_n) = \int dy_1 \cdots dy_n f(y_1, \ldots, y_n) h(y_1, \ldots, y_n).$ With this notation, the function  $\varphi(s,x) = Z(s,x; t,*)g(*)$ solves 
\begin{equation}\label{eqn:RHE_backwards_for_polymer}
(\partial_s + \hh \Delta_x )\varphi + \ola{\xi} \diamond \varphi = 0, \qquad  s \in [0,t], \qquad \varphi(t,x) = g(x).
\end{equation}
This equation can be solved also in our setting, as long as $g \in \CC^{\beta}_{e(l)}$ for $\beta>2\alpha{-}1$, following Corollary \ref{cor:backwards_RHE} and the preceding discussion. 
\end{remark}

\begin{proof}[Proof of Lemma \ref{lem:convergence_polymer_measure}]
The property highlighted in the previous remark suffices to show that $\ol{Z}(s,x;t,y)$ is a probability distribution for any $s,t,x,y$ and satisfies the Chapman-Kolmogorv equations. Indeed for the first property it suffices to observe that since $e^{\olh}$ solves Equation \eqref{eqn:RHE_backwards_for_polymer} with terminal condition $e^{\oh}$ and thus
$$e^{\olh(s,x)} = Z(s,x; t,*)e^{\olh(t, *)}.$$
The Chapman-Kolmogorov equations are satisfied, since for $0 < s < r < t \le T$:
$$
\ol{Z}(s,x; r,*) \ol{Z}(r, *; t, y) = e^{-\olh(s,x)}Z(s, x; r,*)Z(r,*; t, y)e^{\olh(t,y)} = \ol{Z}(s,x; t,y).
$$
In particular, the $\ol{Z}$ form a consistent family of probability distributions and hence the measure $\wQ$ is well-defined.
Let us consider the case of spatially smooth noise. We show that the one-dimensional distributions described by $\ol{Z}$ coincide with those of the measure $\wQ$ as from Definition \ref{def:smooth_polymer_measure}: the general case follows similarly. Fix a Lipschitz function $g$ and $0<t\le T$, then $\EE_{\wQ}[g(\gamma_t)] = u(0, x_0)$ with
\begin{align*}
  u(s,x) = e^{(\ola{Y}{-}\olh)(s,x)} \EE_{\wQ} \bigg[ \exp \bigg( \int_s^t (\hh|\ola{X}|^2{-}c^{\BA})(r, \gamma_r) \bigg) \cdot g(\gamma_t)e^{{-}(\ola{Y}{-}\olh)(t, \gamma_t)} \big\vert X_s = x \bigg].
\end{align*}
Then the Feynman-Kac formula guarantees that $u = e^{(\ola{Y}{-}\olh)} w$ with $w$ solving
\begin{align*}
(\partial_s {+}\Delta_x {+} \ola{X}\partial_x)w = -\big( \hh |\ola{X}|^2{-}c^{\BA} \big), \qquad s \in [0, t], \qquad w(t, x) = g(x)e^{{-}(\ola{Y}{-}\olh)(t,x)}
\end{align*}
so that a simple calculation shows that $e^{\ola{Y}(s,x)}w(s,x) = Z(s,x; t,*)e^{\olh(t,*)}g(*)$. Hence the claim follows.
Let us pass to proving the convergence of the finite-dimensional distributions. We check the convergence of $$\EE_{\wQ^m}[g_1(\gamma_{t_1})\cdots g_n(\gamma_{t_n})]$$ for all globally bounded and Lipschitz functions $g$, with $\wQ^m = \wQ(\YY^m)$ and $0 < t_1< \ldots <t_n \le T$. Then
\begin{align*}
\EE_{\wQ^m}[ & g_1(\gamma_{t_1})   \cdots  g_n(\gamma_{t_n})] = \\
 & = \ol{Z}^m(0, x_0; t_{1}, *_1)g_1(*_1) \prod_{i = 1}^{n{-}1} \ol{Z}^m(t_{i}, *_{i}; t_{i{+}1}, *_{i{+}1})g_i(*_{i{+}1})  \\
& = e^{{-}\olh(0, x_0)} Z^m(0, x_0; t_{1}, *_1)g_1(*_1) \bigg[ \prod_{i = 1}^{n{-}1} Z^m(t_{i}, *_{i}; t_{i{+}1}, *_{i{+}1})g_i(*_{i{+}1}) \bigg] e^{\olh^m(t_n, *_n)}.
\end{align*}
The last term in the product $Z^m(s, x; t_{n}, *_{n})e^{\olh^m(t_n, *_n)}g_{n}(*_{n})$ solves in $(s,x)$ Equation \eqref{eqn:RHE_backwards_for_polymer} on $[0, t_n]$ with terminal condition 
$$e^{\olh^m(t_n, x)}g_{n}(x) = e^{(\ola{Y}^m{+}\ola{Y}^{\BA, m} {+} \ola{Y}^{\BB,m})(t_{n}, x)} g_{n} (x) \ola{w}^{P,m}(t_n, x)
$$
where we used the structure of a solution $e^h$ to the RHE. Since $g_n \ola{w}^{P,m}$ converges to $g_n \ola{w}^P$ in $\CC^{\alpha{+}1{-}\ve}_{e(\kappa)}$ for any $\ve \in (6a/\delta{+}1{-}2\alpha, 3\alpha{-}1)$ (see Proposition \ref{prop:existence_rhe}), Corollary \ref{cor:backwards_RHE} guarantees that $Z^m(s, x; t_{n}, *_{n})e^{\olh^m(t_n, *_n)}g_{n}(*_{n})$ converges to $Z(s, x; t_{n}, *_{n})e^{\olh(t_n, *_n)}g_{n}(*_{n})$, the latter solving Equation \eqref{eqn:RHE_backwards_for_polymer} on $[0, t_n]$ with terminal condition $e^{\olh(t_n, x)}g_{n}(x).$ Note that the convergence holds in a space with an explosion at time $s = t_n.$ Since we are interested in the value of the solution only at the time $t_{n-1} < t_n$ this does not play a role. In particular
\begin{equation*}
R^m_n(x) = e^{-(\ola{Y}^m{+}\ola{Y}^{\BA, m} {+} \ola{Y}^{\BB,m})(t_{n-1}, x)} Z^m(t_{n-1}, x; t_{n}, *_{n})e^{\olh(t_n, *_n)}g_{n}(*_{n})
\end{equation*}
converges to some $R_n$ in $\CC^{\alpha{+}1{-}\ve}_{e(\kappa)}$, up to taking a possibly larger $\kappa$.
Now we pass to the second-to-last term. Again
\begin{align*}
Z^m(s, x; t_{n{-}1}, *_{n{-}1})g_{n{-}1}(*_{n{-}1})e^{(\ola{Y}^m{+}\ola{Y}^{\BA, m} {+} \ola{Y}^{\BB,m})(t_{n-1}, *_{n-1})} R^m_n(*_{n-1})
\end{align*}
solves in $(s,x)$ Equation \eqref{eqn:RHE_backwards_for_polymer}. Since $g_{n{-}1}(x)R^m_n(x)$ converges $\CC^{\alpha{+}1-\ve}_{e(\kappa)}$ this solution converges once more via Corollary \ref{cor:backwards_RHE} to 
$$Z(s, x; t_{n{-}1}, *_{n{-}1})g_{n{-}1}(*_{n{-}1})e^{(\ola{Y}{+}\ola{Y}^{\BA} {+} \ola{Y}^{\BB})(t_{n-1}, *_{n-1})} R_n(*_{n-1}).$$
Iterating this procedure $n$ times we deduce the convergence of the finite-dimensional distributions.

For the exponential bound let us choose a smooth function $\varphi$ such that $\varphi(x) =  \exp(l|x|^{\delta})$ for $|x| > 1$ and $\varphi$ is smooth and bounded for $|x|\le 1$. Then:
\begin{align*}
\EE_{\wQ} \Big[ e^{l|\gamma_t|^{\delta}}\Big] \simeq \EE_{\wQ}\Big[\varphi(\gamma_t) \Big] = \ol{Z}(0,x_0; t,*_1) \varphi(*_1) \le C(M,l,T) e^{{-}\olh(0, x_0)} e^{C(M, l,T) |x_0|^{\delta}},
\end{align*}
where in the last step as before we used the bounds from Corollary \ref{cor:backwards_RHE} and the fact that $\beta>0$.
\end{proof}

Now we show that the polymer measure is supported on the space of continuous functions.

\begin{lemma}\label{lem:tightness_polymer}
There exists a value $a_{crit} > 0$ such that for $\YY^n \in \myi, \YY \in \myk$ such that $\YY^n \to \YY$ in $\myk$ for some $a \le a_{crit}$ ($a$ being the growth parameter in $\myk$ from Definition \ref{def:ykpz_space}) and for a sequence of initial conditions
$$
 e^{\oh^n {-}Y^n(0)} = w_0^n \to w_0 = e^{\oh{-}Y(0)} \text{ in } \CC^{2\alpha{+}1}_{e(l)},
$$
the sequence of measures $\wQ^n$ is tight in $C([0,T])$. Moreover any accumulation point has paths which are almost surely $\zeta-$H\"older continuous, for any $\zeta < 1/2.$
\end{lemma}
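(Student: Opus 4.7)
The strategy is Kolmogorov's continuity criterion: we aim to show that for some $p > 2$ and $\eta > 0$
\[
\sup_{n} \EE_{\wQ^n}\big[|\gamma_t - \gamma_s|^{2p}\big] \lesssim |t-s|^{1+\eta}
\]
uniformly over $s, t \in [0,T]$. Tightness in $C([0,T])$, together with the finite-dimensional convergence already established in Lemma~\ref{lem:convergence_polymer_measure}, will then force the limit to coincide with $\wQ(\YY)$ and be supported on $\zeta$-H\"older paths for every $\zeta < 1/2$ via the usual H\"older continuity improvement built into Kolmogorov's criterion. The main technical content is therefore transferring the increment bounds from $\PP^{\ola{X}^n}_{x_0}$ (available via Lemma~\ref{lem:weak_convergence_partial_SDE}) to $\wQ^n$.

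The key tool is the explicit Radon-Nikodym density $D_n = d\wQ^n / d\PP^{\ola{X}^n}_{x_0}$, which, combining Definition~\ref{def:smooth_polymer_measure} with the It\^o expansion from the preamble of this section, takes the form
\[
D_n = \exp\bigg( \int_0^T \ola{X}^{R,n}(s,\gamma_s)\, dW_s + B^n(\gamma_T) \bigg),
\]
where $W$ is the driving Brownian motion under $\PP^{\ola{X}^n}_{x_0}$, the process $Y^{R,n}$ is the smooth analogue of the solution to equation \eqref{eqn:Y^R}, and the boundary term $B^n(y)$ aggregates evaluations of $\ola{Y}^n, \ola{Y}^{\BA,n}, \ola{Y}^{R,n}$ at $(0,x_0)$ and $(T,y)$ together with $\olh^n(T,y) = \oh^n(y)$. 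Assumption~\ref{assu:initial_condition} and the uniform polynomial-growth bounds for the data in $\myi$ yield $|B^n(y)| \lesssim 1 + |y|^{c\,a}$ for an explicit constant $c$. By H\"older's inequality with conjugate exponents $q, q' \in (1,\infty)$,
\[
\EE_{\wQ^n}\big[|\gamma_t-\gamma_s|^{2p}\big] \le \|D_n\|_{L^q(\PP^{\ola{X}^n}_{x_0})}\, \EE_{\PP^{\ola{X}^n}_{x_0}}\big[|\gamma_t-\gamma_s|^{2pq'}\big]^{1/q'},
\]
and the second factor is bounded by $|t-s|^{pq'}$ uniformly in $n$ thanks to Step~1 of the proof of Lemma~\ref{lem:weak_convergence_partial_SDE}. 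Choosing $p > 2/q'$ supplies the required $|t-s|^{1+\eta}$.

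The heart of the matter, and the expected main obstacle, is the uniform exponential-moment bound $\sup_n \|D_n\|_{L^q(\PP^{\ola{X}^n}_{x_0})} < \infty$. The continuous martingale $M_t^n = \int_0^t \ola{X}^{R,n}(s,\gamma_s)\, dW_s$ has quadratic variation $\langle M^n \rangle_T = \int_0^T |\ola{X}^{R,n}|^2(s,\gamma_s)\,ds$, so writing $e^{qM^n_T} = E_T(2qM^n)^{1/2} e^{q^2\langle M^n\rangle_T}$ and applying Cauchy-Schwarz reduces the estimate to controlling
\[
\EE_{\PP^{\ola{X}^n}_{x_0}}\Big[\exp\Big( q^2 \int_0^T |\ola{X}^{R,n}|^2(s,\gamma_s)\,ds + q\,|B^n(\gamma_T)| \Big)\Big]
\]
(the Novikov-type condition for $E(2qM^n)$ being a true martingale follows by localization and the same polynomial bound). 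Since $|X^{R,n}|^2(s,\cdot)$ and $|B^n(\cdot)|$ grow at most like $(1+|\cdot|)^{c\,a}$ with a constant $c$ depending on $\alpha, \delta$ and on the regularity of $Y^R$ read off from the equation~\eqref{eqn:Y^R} and the table for $\YY$, a Jensen inequality in time, combined with the uniform bound $\sup_n \sup_t \EE_{\PP^{\ola{X}^n}_{x_0}}[e^{l|\gamma_t|^\delta}] \lesssim e^{l'|x_0|^\delta}$ from Lemma~\ref{lem:expntl_delta_momts}, yields the required estimate provided $c\,a < \delta$. This defines the threshold $a_{\mathrm{crit}}(\delta)$. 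Tightness follows, and Prokhorov combined with Lemma~\ref{lem:convergence_polymer_measure} identifies every accumulation point with $\wQ(\YY)$, while Kolmogorov's H\"older-criterion bound gives $\zeta$-H\"older continuity for all $\zeta < 1/2$.
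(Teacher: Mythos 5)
There is a genuine gap, and it sits exactly where you anticipated the ``main obstacle'': the claimed uniform $L^q$ bound on the density $D_n$. Your argument rests on the assertion that $|X^{R,n}|^2(s,\cdot)$ and the boundary term $B^n(\cdot)$ grow at most polynomially, like $(1+|\cdot|)^{c\,a}$, so that Jensen plus the sub-exponential moment bound of Lemma~\ref{lem:expntl_delta_momts} closes the estimate once $c\,a<\delta$. But $Y^{R,n}=Y^{\BB,n}+Y^{P,n}$ is only controlled in exponentially weighted spaces: $Y^{P,n}\in\LL^{\alpha+1}_{e(\kappa)}$, so $X^{R,n}=\partial_x Y^{R,n}$ (and likewise the boundary term, which contains $\ola{Y}^{R,n}$ and $h^{P,n}$-type contributions) is only bounded by $C e^{\kappa|x|^{\delta}}$, uniformly in $n$, not by a polynomial. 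Consequently $\exp\big(q^2\int_0^T|\ola{X}^{R,n}|^2(s,\gamma_s)\,ds + q|B^n(\gamma_T)|\big)$ would require doubly exponential moments of $|\gamma_t|^{\delta}$, which Lemma~\ref{lem:expntl_delta_momts} does not provide; the Novikov-type step and the H\"older transfer both collapse. This is precisely the obstruction the paper flags at the start of the polymer-measure construction (``we do not have sufficient control of the growth of $X^R$, so we need to argue differently''): the identity \eqref{eqn:definition_radon_nikodym_derivative_with_h} is only recovered a posteriori in Proposition~\ref{prop:polymer_measure_decomposition}, with mere $L^1(\PP^U_{x_0})$ integrability obtained by Fatou, never with $n$-uniform $L^q$ bounds. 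A secondary inconsistency: you take the density relative to $\PP^{\ola{X}^n}_{x_0}$ but quote the increment bound of Lemma~\ref{lem:weak_convergence_partial_SDE}, which concerns the drift $\partial_x U^n = X^n+X^{\BA,n}$; and relative to $\PP^{\ola{X}^n}_{x_0}$ the density also carries the term $\int_0^T(\hh|\ola{X}^n|^2-c^{\BA}_n)\,ds$, which diverges with $n$.

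The paper's proof avoids densities altogether and obtains the moment bound $\EE_{\wQ}[|\gamma_t-\gamma_s|^{2q}]\lesssim|t-s|^q$ analytically: the expectation is written through the transition kernels $\ol{Z}$, the inner integral $|x_1-\ast|^{2q}Z(s,x_1;t,\ast)e^{\olh(t,\ast)}$ is recognized as a solution of the backward RHE, one shifts by $x_1$ and parabolically rescales by $\lambda=\sqrt{|t-s|}$ (producing the factor $\lambda^{2q}=|t-s|^q$), and then applies comparison together with Corollary~\ref{cor:backwards_RHE} and Proposition~\ref{prop:rescale_tranlate_external_data}. There $a_{crit}$ arises from a different mechanism than in your proposal: the solution bound grows like $\exp\big(CT(1+\|\YY^{x_1}_{T-t,\lambda}\|_{\mY^{\varrho,b}_{kpz}})^{q_1}\big)$ and the shifted data norm grows like $1+|x_1|^a$, so one needs $a\,q_1\le\delta$ to integrate the resulting $e^{C|x_1|^{a q_1}}$ against the kernels, which only control moments of order $e^{l|x|^{\delta}}$. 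If you want to salvage a probabilistic route, you would first need polynomial (or at least $o(|x|^{\delta/2})$) a priori bounds on $X^{R}$, which the present function-space setup does not give.
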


\begin{proof}
We want to use the Kolmogorov criterion. For this reason we fix $q > 1$ and $s \le t$ with $|t{-}s| \le 1$ and will prove that
$$
\EE_{\wQ(\YY)} \big[ |\gamma_t {-}\gamma_s|^{2q} \big] \lesssim_M |t{-}s|^q
$$
uniformly over all $\YY \in \myi$ and $w_0 = e^{\oh{-}Y(0)} \in \CC^{\beta}_{e(l)}$ such that $\|\YY\|_{\myk}, \|w_0\|_{\CC^{\beta}_{e(l)}} \le M$. Given such an estimate the tightness of the sequence as well as the H\"older continuity of the limit points follow by an application of Kolomogorov's continuity criterion. To find this estimate fix $\YY \in \myk$ and $w_0  = e^{\oh{-}Y(0)} \in \CC^{\beta}_{e(l)}$ and let us rewrite the expectation through the densities:
\begin{align*}
\ol{Z}(0, x_0; s, *_1)|*_1 {-}*_2|^{2q}\ol{Z}(s,*_1; t, *_2)e^{\olh(t, *_2)}
\end{align*}
where $e^h$ is the solution to the RHE with initial condition $e^{\oh}$ and external data $\YY$.
Let us proceed one integration variable at a time: we consider $x_1$ fixed and estimate
\begin{equation}\label{eqn:technical:moment_estimate}
|x_1 {-} *_2|^{2q} Z(s,x_1; t, *_2)e^{\olh(t, *_2)}.
\end{equation}
First, we shift $x_1$ to zero. For this purpose we introduce the notation $g^{x_1}(x) = g(x{+}x_1)$ and for $\YY = Y(\theta, Y(0), c^{\BA}, c^{\BD})$ in $\myi$ we write $\YY^{x_1} = \YY(\theta^{x_1}, Y(0)^{x_1}, c^{\BA}, c^{\BD})$, where the latter is obtained by shifting all the extended data by $x_1$. Hence we find the identity:
$$
Z(\YY)(s,x{+}x_1; t, y) = Z(\YY^{x_1})(s,x; t,y{-}x_1),
$$
then we rewrite the term under consideration as 
$$\varphi^{x_1}(r, x) = Z(\YY^{x_1})(r,x; t, *_2)|*_2|^{2q}e^{\olh^{x_1}(t, *_2)},$$
and we aim at estimating $\varphi^{x_1}(s, 0)$ uniformly over $x_1.$ Note that $\varphi^{x_1}$ solves Equation \eqref{eqn:RHE_backwards_for_polymer}:
$$
(\partial_r {+} \hh \Delta_x) \varphi^{x_1} = - \ola{\theta}^{x_1} \diamond \varphi^{x_1}, \qquad r \in [0, t], \qquad \varphi^{x_1}(t,x) = |x|^{2q}e^{\olh^{x_1}(t, x)}
$$
in the sense of Corollary \ref{cor:backwards_RHE}. Now we exploit the parabolic scaling of the equation. Let us define $\lambda = \sqrt{|t{-}s|}$ and write $g^{x_1}_{\lambda}(r,x) = \varphi^{x_1}(s{+} \lambda^2r, \lambda x)$ for $r \in [0, 1]$, so that
$$
(\partial_r {+} \hh \Delta_x) g^{x_1}_{\lambda} = - \lambda^2 \ola{\theta}^{x_1}_{s, \lambda} \diamond g^{x_1}_{\lambda}, \qquad r \in [0, 1], \qquad g_{\lambda}^{x_1}(1,x) = \lambda^{2q}|x|^{2q}e^{\olh^{x_1}(t, \lambda x)}
$$
where formally the term $\ola{\theta}^{x_1}_{s, \lambda}$ should be understood as $\ola{\theta}^{x_1}_{s, \lambda}(r,x) = \theta(T{-}s{-}\lambda^2r, x_1{+}\lambda x).$ Rigorously this means that $g^{x_1}_{\lambda}$ solves Equation \eqref{eqn:RHE_backwards_for_polymer} on $[0, 1]$ in the sense of Corollary \ref{cor:backwards_RHE} associated to the extended data $\YY^{x_1}_{T{-}t, \lambda}$(see Definition \ref{def:singular-Y} and Proposition \ref{prop:rescale_tranlate_external_data}). Since $e^h$ solve the RHE it is of the form 
$$
e^{\olh^{x_1}(t, x)} = e^{(Y^{x_1} {+}Y^{\BA, x_1}{+}Y^{\BB, x_1})(T{-}t, x)}w^{P, x_1}(T{-}t,x)
$$
 with $w^P \in \LL^{\bp, \alpha{+}1{-}\ve}_{e(\kappa)}$ for some $\kappa$ sufficiently large, so that with Lemma \ref{lem:interpolation_schauder}:
 $$\sup_{t}\|w^{P, x_1}(t, \cdot)\|_{L^{\infty}_{e(\kappa)}} \le C e^{C|x_1|^\delta}$$
 for some $C(M)> 0.$ Hence by comparison $g^{x_1}_{\lambda} \le \psi^{x_1}_{\lambda},$ the latter being the solution to
\begin{align*}
(\partial_r {+} \hh \Delta_x) \psi^{x_1}_{\lambda} &= - \lambda^2 \ola{\theta}^{x_1}_{s, \lambda} \diamond \psi^{x_1}_{\lambda}, \qquad r \in [0, 1], \\
 \psi_{\lambda}^{x_1}(1,x) &= \lambda^{2q}|x|^{2q}Ce^{\kappa|x|^{\delta} + C |x_1|^{\delta}} e^{(Y^{x_1} {+}Y^{\BA, x_1}{+}Y^{\BB, x_1})(T{-}t, \lambda x)}.
\end{align*}
Following the results from Corollary \ref{cor:backwards_RHE} the solution $\psi^{x_1}_{\lambda}$ is of the form:
\[
\psi^{x_1}_{\lambda}(0,0) = \lambda^{2q} e^{C|x_1|^{\delta}}e^{(Y^{x_1} {+}Y^{\BA, x_1}{+}Y^{\BB, x_1})(T{-}s, 0)}\psi^{P, x_1}_{\lambda}(0,0),
\]
where we can estimate the norm of the last term:
\[
\| \psi^{P, x_1}_{\lambda}\|_{\LL^{\alpha{+}1{-}\ve}_{e(l)}} \lesssim e^{C T \big( 1 {+} \| \YY^{x_1}_{T{-}t, \lambda} \|_{\mY_{kpz}^{\varrho, b}} \big)^{q_1}}
\]
for $\varrho \in (1/2{-}\alpha, \alpha]$ and $b = 2a$ and some $C(M)>0.$ Now it follows from the definition of $\YY^{x_1}$, Definition \ref{def:singular-Y} (in particular from the fact that the norm $\| \cdot \|_{\mY_{kpz}^{\varrho, b}}$ does not depend on $Y$, but only on $X$ - otherwise we would obtain linear growth in $x_1$) and Proposition \ref{prop:rescale_tranlate_external_data} that:
$$
\sup_{x_1} \sup_{t \in [0, T)} \sup_{\lambda \in (0, 1]} \frac{1}{1{+}|x_1|^a}\| \YY^{x_1}_{T{-}t, \lambda} \|_{\mY_{kpz}^{\varrho, b}} \lesssim_M 1.
$$
Now choose $a_{crit}$ so that $a_{crit} q_1 = \delta:$ for $a\le a_{crit}$ we can conclude that up to choosing a larger $C(M)>0$:
\begin{align*}
\EE_{\wQ^n} \big[ |\gamma_t {-} \gamma_s|^{2q} \big] & \lesssim_M \lambda^{2q} \ol{Z}(0, x_0; s, *_1)e^{(Y {+}Y^{\BA}{+}Y^{\BB})(T{-}s, *_1)} e^{C|*_1|^{\delta}} \\
& \lesssim_M |t{-}s|^{q} e^{-\olh(0, x_0)}e^{(Y {+}Y^{\BA}{+}Y^{\BB})(T{-}s, x_0)}e^{\widetilde{C}|x_0|^{\delta}} \lesssim_{M, x_0} |t{-}s|^{q}.
\end{align*}
This concludes the proof.
\end{proof}

We collect the two previous results in the following proposition.

\begin{proposition}\label{prop:existence_polymer_measure}
For any $x_0, l \in \RR$ and $\YY$ which lies in $\mY_{kpz}$ for $a\le a_{crit}$ (see Lemma \ref{lem:tightness_polymer}) and $e^{\oh {-}Y(0)} \in \CC^{\beta}_{e(l)}$ there exists a measure $\wQ$ on $C([0,T]; \RR)$ such that for $\YY^n \in \myi$ converging $\YY^n \to \YY$ in $\myk$ and initial conditions $e^{\oh^n{-}Y^n(0)} \to e^{\oh{-}Y(0)}$ in $\CC^{\beta}_{e(l)}$, the polymer measures converge weakly:
$$
\wQ(\YY^n) \Rightarrow \wQ(\YY) \text{  in  } C([0,T]).
$$
In addition, under $\wQ$ the sample paths are a.s. $\zeta-$H\"older continuous for any $\zeta < 1/2.$
\end{proposition}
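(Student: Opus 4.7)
The plan is to combine the two preceding lemmata. Lemma \ref{lem:convergence_polymer_measure} already provides a candidate measure $\wQ(\YY)$ on $\RR^{[0,T]}$ (constructed from the consistent family of densities $\ol{Z}(s,x;t,y)$) together with convergence of all finite-dimensional distributions when $\YY^n \to \YY$ in $\myk$ and $e^{\oh^n - Y^n(0)} \to e^{\oh - Y(0)}$ in $\CC^{\beta}_{e(l)}$. Lemma \ref{lem:tightness_polymer} provides, under the additional constraint $a \le a_{\text{crit}}$, the uniform Kolmogorov-type moment bound
\[
    \sup_n \EE_{\wQ(\YY^n)} \bigl[ |\gamma_t - \gamma_s|^{2q} \bigr] \lesssim_M |t - s|^q
\]
for any $q > 1$, which yields tightness of $\{\wQ(\YY^n)\}$ on $C([0,T])$ via the Kolmogorov continuity criterion.

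My first step is to upgrade the convergence of finite-dimensional distributions to weak convergence on $C([0,T])$. By tightness of $\{\wQ(\YY^n)\}$ in $\mathcal{P}(C([0,T]))$, every subsequence admits a further subsequence converging weakly to some $\mu$ on $C([0,T])$. Since the coordinate evaluation maps $\gamma \mapsto (\gamma_{t_1}, \dots, \gamma_{t_k})$ are continuous on $C([0,T])$, the finite-dimensional marginals of $\mu$ must agree with the limits already identified in Lemma \ref{lem:convergence_polymer_measure}, i.e.\ with the finite-dimensional distributions of $\wQ(\YY)$ built from the densities $\ol{Z}$. This determines $\mu$ uniquely and therefore identifies the limit across all subsequences, so the full sequence converges and $\wQ(\YY)$ is (canonically identified with) a Borel probability measure on $C([0,T])$.

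Next I would transfer the Hölder regularity from the approximants to the limit. The moment bound in Lemma \ref{lem:tightness_polymer} holds uniformly in $n$, so applying Kolmogorov's continuity criterion to the weak limit (or, equivalently, passing the Besov-Hölder moment bound through the weak convergence via Fatou after testing against $\sup_{s \ne t}|t-s|^{-\zeta}|\gamma_t-\gamma_s|$ cut off at finite values) gives, for every $\zeta < 1/2$ and $q > 1/(2\zeta)$,
\[
    \EE_{\wQ(\YY)} \bigg[ \sup_{0 \le s < t \le T} \frac{|\gamma_t - \gamma_s|^{2q}}{|t-s|^{2q(\zeta - 1/(2q))}} \bigg] < \infty,
\]
so that $\wQ$-almost every path is $\zeta$-Hölder continuous.

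The only non-routine point is the matching between the two descriptions of the limit — the abstract one on $\RR^{[0,T]}$ via Kolmogorov extension from the $\ol{Z}$ kernels, and the concrete one as a weak limit in $C([0,T])$. This is essentially a tightness-plus-uniqueness argument and is handled by the continuity of finite-dimensional projections, as sketched above; no new analytic input is needed beyond the two preceding lemmata.
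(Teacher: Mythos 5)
Your argument is correct and is exactly what the paper intends: the paper gives no separate proof, stating only that the proposition ``collects'' Lemma \ref{lem:convergence_polymer_measure} (finite-dimensional convergence and the candidate measure) and Lemma \ref{lem:tightness_polymer} (uniform moment bounds, tightness, H\"older continuity of limit points), which is precisely the tightness-plus-identification-of-marginals argument you spell out.
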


Now we show that the measure $\wQ$ we just built has a density with respect to the singular Girsanov transform $\PP^U_{x_0}$, i.e. while we are not able to construct the measure using Equation \eqref{eqn:definition_radon_nikodym_derivative_with_h} from our formal discussion above, the equation holds a posteriori. This equation is useful because it describes the singular part of the polymer measure in terms of the solution $Y^R$ to the linear equation  \eqref{eqn:Y^R}, and therefore it is not necessary to understand the solution to the KPZ equation or the RHE in order to study the polymer measure.

Recall that $Y^R$ was defined as the solution to
\begin{equation*}
   (\partial_t - \hh \Delta)Y^R = \hh|X^{\BA}|^2 + X X^{\BA} + (X {+} X^{\BA}) \partial_x Y^R, \ \ Y^R(0) = 0,
\end{equation*}
Indeed we can find a paracontrolled solution $Y^R$ of the form:
$$
Y^R = Y^{\BB} {+} Y^P, \ \ Y^P = Y' \ppara Y^{\scalebox{0.8}{\RS{r}}} {+} Y^{\sharp}, \ \ Y' = X^{\BB} {+} \partial_x Y^P.
$$
The equation for $Y^P$ can be solved with calculations similar to the ones leading to Proposition \ref{prop:existence_rhe}. Eventually we find a solution $Y^P \in \LL^{\alpha{+}1}_{e(\kappa)}$ for $\kappa$ large enough.
\begin{proposition}\label{prop:polymer_measure_decomposition}
For any $\YY$ which lies in $\myp$ for $a < a_{crit}$ (see Lemma \ref{lem:tightness_polymer}), the measure $\wQ$ has a density with respect to the measure $\PP^U_{x_0}$ which is given by:
$$
\frac{d \wQ}{d \PP^U_{x_0}} = \exp\Big( \int_0^T \ola{X}^R(s, \gamma_s) d\ol{W}_s  + \big[ U {+}\ola{Y}^R {-}\olh \big]^{(0, x_0)}_{(T, \gamma_T)} \Big),
$$
where $\ol{W}$ is the Brownian motion started in $x_0$ from Lemma \ref{lem:weak_convergence_partial_SDE}.
\end{proposition}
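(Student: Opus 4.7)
The strategy is to verify the identity for smooth approximations $\YY^n \in \myi$ and then pass to the limit. For smooth data, combining the explicit density from Definition~\ref{def:smooth_polymer_measure} with the Girsanov factor $d\PP^{U,n}_{x_0}/d\PP_{x_0} = \exp\bigl(\int_0^T \partial_x U^n\, d\gamma_s - \hh\int_0^T|\partial_x U^n|^2\, ds\bigr)$ and applying It\^o's formula to $U^n(t,\gamma_t)$ and $\ola{Y}^{R,n}(t,\gamma_t)$ rewrites
\[
\frac{d\wQ^n}{d\PP^{U,n}_{x_0}} = \exp\Big(\int_0^T \ola{X}^{R,n}(s,\gamma_s)\, d\ol{W}^n_s + [U^n + \ola{Y}^{R,n} - \olh^n]^{(0,x_0)}_{(T,\gamma_T)}\Big) =: D^n,
\]
where $\ol{W}^n$ is the Brownian motion under $\PP^{U,n}_{x_0}$. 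This is precisely the informal derivation preceding Definition~\ref{def:smooth_polymer_measure}, with the drift cancellations coming from the PDE~\eqref{eqn:Y^R} for $Y^{R,n}$.

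To pass to the limit, test both sides against a bounded continuous $F\colon C([0,T];\RR)\to\RR$. The identity $\EE_{\wQ^n}[F(\gamma)] = \EE_{\PP^{U,n}_{x_0}}[F(\gamma)\, D^n]$ has left-hand side converging to $\EE_{\wQ}[F]$ by Proposition~\ref{prop:existence_polymer_measure}. For the right-hand side, Lemma~\ref{lem:weak_convergence_partial_SDE} combined with the convergence of $Y^{R,n}$ in $\LL^{\bp,\alpha+1}_{e(\kappa)}$ (proved along the lines of Proposition~\ref{prop:existence_rhe}) and the convergences of $U^n$ and $\olh^n$ established in Proposition~\ref{prop:convergence_of_hn} and Lemma~\ref{lem:convergence_of_RHE} yield the joint weak convergence
\[
\Big(\gamma^n,\,\ol{W}^n,\,\int_0^\cdot \ola{X}^{R,n}(s,\gamma^n_s)\, d\ol{W}^n_s,\,D^n\Big) \Rightarrow \Big(\gamma,\,\ol{W},\,\int_0^\cdot \ola{X}^R(s,\gamma_s)\, d\ol{W}_s,\,D\Big),
\]
with $D$ the density claimed in the statement. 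In particular $D^n \Rightarrow D$ in distribution under $\PP^{U,n}_{x_0}$.

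To upgrade this to convergence of $\EE_{\PP^{U,n}_{x_0}}[F(\gamma)D^n]$, I use uniform integrability: since $\EE_{\PP^{U,n}_{x_0}}[D^n] = 1$ for every $n$, Scheff\'e's lemma reduces the task to the bound $\sup_n \EE_{\PP^{U,n}_{x_0}}[(D^n)^{1+\ve}] < \infty$ for some $\ve > 0$. Factoring $(D^n)^{1+\ve}$ as a stochastic exponential of mean one times a residual and applying Cauchy--Schwarz reduces the claim to a uniform exponential moment bound on
\[
\exp\Big(2(1+\ve)\,[U^n + \ola{Y}^{R,n} - \olh^n]^{(0,x_0)}_{(T,\gamma_T)} + (1+\ve)^2 \int_0^T |\ola{X}^{R,n}(s,\gamma_s)|^2\, ds\Big)
\]
under $\PP^{U,n}_{x_0}$.

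The main obstacle is precisely this last estimate. The fields $U^n,\,Y^{R,n},\,\olh^n,\,X^{R,n}$ are controlled only in exponentially weighted spaces with weight $e(\kappa)(x) = \exp(\kappa|x|^\delta)$, and their norms are bounded uniformly in $n$ by the convergence results above; hence the integrand above is pointwise dominated by $\exp\bigl(C(1 + |\gamma_T|^\delta) + C \int_0^T (1 + |\gamma_s|^{2\delta})\, ds\bigr)$ for some $C$ independent of $n$. Since $\delta < 1$, this expression is integrable against the sub-exponential moment estimate of Lemma~\ref{lem:expntl_delta_momts} for any fixed $\kappa$, giving the required uniform-in-$n$ bound. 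Combined with the weak convergence step above this identifies $D$ as $d\wQ/d\PP^U_{x_0}$, completing the proof.
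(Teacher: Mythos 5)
Your smooth-level identity and the weak-convergence part of the limit passage coincide with the paper's argument (the paper likewise invokes Proposition \ref{prop:existence_polymer_measure} for the left-hand side and Lemma \ref{lem:weak_convergence_partial_SDE} for the right-hand side). The genuine gap is the uniform integrability step. You reduce everything to $\sup_n \EE_{\PP^{U,n}_{x_0}}\big[(D^n)^{1+\ve}\big]<\infty$ and claim the relevant exponent is dominated by $\exp\big(C(1+|\gamma_T|^\delta)+C\int_0^T(1+|\gamma_s|^{2\delta})\,ds\big)$. This substitutes polynomial growth of order $|x|^\delta$ for what the estimates actually give: $X^{R,n}$, $Y^{R,n}$ and $\olh^n$ (through $Y^P$ and $h^P{-}Y^P$) are controlled only in the exponentially weighted spaces $\LL^{\cdot}_{e(\kappa)}$, i.e.\ pointwise bounds of the form $|\ola{X}^{R,n}(s,x)|\lesssim e^{\kappa|x|^\delta}$. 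Hence the quantity you must integrate contains $\exp\big(C\int_0^T e^{2\kappa|\gamma_s|^\delta}ds + Ce^{\kappa|\gamma_T|^\delta}\big)$, a double exponential in $|\gamma|^\delta$, which is not integrable against the sub-exponential moments of Lemma \ref{lem:expntl_delta_momts} (that lemma only yields $\sup_t\EE[e^{l|\gamma_t|^\delta}]<\infty$) nor against Gaussian-type tails. This is exactly the obstruction the paper points out before the proposition (``we do not have sufficient control of the growth of $X^R$''), so your reduction cannot be closed with the available estimates, and the moment bound itself is not known to hold. A further, more minor, issue: Scheff\'e-type reasoning is invoked across the changing reference measures $\PP^{U,n}_{x_0}$, which live on different (in general mutually singular) laws, so it does not apply as stated.

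The paper avoids any moment bound on $D^n$ altogether: it tests the smooth-level identity against $f(\gamma)\,\eta_M(\gamma)$, where $\eta_M$ is a continuous cut-off in the sup-norm of the path, so that the boundary bracket is bounded on the support of $\eta_M$ and the limit in $n$ can be taken from the weak convergence results; then, taking $f\equiv 1$ and letting $M\to\infty$, Fatou's lemma yields that the candidate density lies in $L^1(\PP^U_{x_0})$, and dominated convergence in $M$ gives the identity for all bounded continuous $f$. To repair your proof you should replace the uniform-integrability argument by such a truncation-in-path-space argument (or find some other mechanism that derives the integrability of the limiting density from the identity itself rather than from growth bounds on $X^R$).
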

\begin{proof}
Under the above hypothesis the existence of the measure $\wQ$ is guaranteed by Proposition \ref{prop:existence_polymer_measure}, while the existence of the measure $\PP^U_{x_0}$ follows from Proposition \ref{prop:exist_SDE_para}.
From the computation at the beginning of this section, which lead us to Equation \eqref{eqn:definition_radon_nikodym_derivative_with_h}, the above decomposition holds true at a smooth level. So for any $\YY \in \mY_{kpz}$ let us choose  $\YY^n \in \myi$ such that $\YY^n \to \YY$ in $\myk$. For any $M \in \NN$ let us fix a continuous cut-off functional $\eta_M$ on $C([0,T])$ such that $\eta_M(\gamma) = 1$ if $\|\gamma\|_{\infty} \le M,$ and $\eta_M(\gamma) = 0$ if $\| \gamma \|_{\infty} \ge M+1.$ For any continuous and bounded functional $f$ on $C([0,T])$ we have
\begin{align*}
& \EE_{\wQ^{n}} \big[ f(\gamma) \eta_M(\gamma)\big] \\ 
& = \EE_{\PP^{U^{n}}_{x_0}} \Bigg[ f(\gamma) \eta_{M}(\gamma) e^{\big( \int_0^T \ola{X}^{R, n}(s, \gamma_s) dW_s  + \big[ U^{n}{+}\ola{Y}^{R, n} {-} \olh^{n} \big]^{(0, x_0)}_{(T, \gamma_T)} \big)}\Bigg].
\end{align*}
Now the left-hand side converges by Proposition \ref{prop:existence_polymer_measure}, while the right-hand side converges by Lemma \ref{lem:weak_convergence_partial_SDE}. So we find that:
\begin{align*}
& \EE_{\wQ} \big[ f(\gamma) \eta_M(\gamma)\big] \\
& = \EE_{\PP^{U}_{x_0}} \Bigg[ f(\gamma) \eta_{M}(\gamma) e^{\big( \int_0^T \ola{X}^R(s, \gamma_s) d\ol{W}_s  + \big[ U{+}\ola{Y}^R {-} \olh \big]^{(0, x_0)}_{(T, \gamma_T)} \big)}\Bigg].
\end{align*}
Taking $f \equiv 1$ and sending $M \to \infty$, we obtain from Fatou's lemma that
$$
\exp \bigg( \int_0^T \ola{X}^R(s, \gamma_s) d\ol{W}_s  + \big[ U{+}\ola{Y}^R {-} \olh \big]^{(0, x_0)}_{(T, \gamma_T)} \bigg) \in L^1(\mathbb{P}^U_{x_0}).
$$
Thus we can pass to the limit over $M \to \infty$ and deduce the result by dominated convergence.
\end{proof}

\begin{remark}\label{rem:characterisation_h_feyn_kac}
We have discussed the construction of the measure $\wQ$. This in particular allows us to build the measure $\widetilde{\mathbb{Q}}_{x_0}$ from Equation \eqref{eqn:definition_radon_nikodym_polymer_measure_NO_h} by choosing $\oh = 0.$ In addition, using the fact that the Radon-Nikodym derivative integrates to $1$, we find a representation for the solution $h$ to the KPZ equation as follows:
\begin{align*}
 		[h {-} Y {-} Y^{\BA} {-} Y^R](T,x_0) = \log \mathbb{E}_{\mathbb{P}^{U}_{x_0}}\bq{e^{\int_0^T \ola{X}^R(s, \gamma_s) d\ol{W}_s + [\oh - Y(0)](\gamma_T)}}
\end{align*} 
\end{remark}

\subsection{Variational representation}

Here we show that we can solve the martingale problem \eqref{eqn:SDE_with_Dh} associated to the KPZ equation, and that the solution solves a stochastic control problem. The first step is to define martingale solutions in the paracontrolled setting. One main difference with respect to the definition of \cite{singular, cannizzaro} is that we do not directly solve the PDE associated to the martingale problem. This is because we cannot control the growth of the drift $\partial_xh$ at infinity sufficiently well. Instead, we solve the PDE to remove the singular part $\partial_x U$ of the drift, and then we add the regular part $\nu$ of the drift (which later will be a control) back by hand.

Following \cite[Section 7]{kpz} we will denote by $\mathfrak{pm}$ the set of progressively measurable processes on $[0,T]\times C([0,T]; \RR).$ By this we mean that $\nu \in \mathfrak{pm}$ if for any $0 \le t \le T$ the restriction of $\nu$ to times smaller than $t,$ 
$$ \nu\big\vert_{[0,t]  \times C([0,T]  ; \RR)} \text{ is } \mathcal{B}([0,t]) \otimes \mathcal{F}_t-\text{measurable},$$
where $\mathcal{F} = (\mathcal{F}_t)_{0 \le t \le T}$ is the canonical filtration on $C([0,T]; \RR).$

\begin{definition}\label{def:matingale_problem}
For an element  $\nu \in \mathfrak{pm}$ we say that a measure $\mathbb{P}$ on the filtered measurable space $(C([0,T]; \RR), (\mathcal{F}_t) )$ is a martingale solution to the SDE
\begin{equation}\label{eqn:SDE_for_control_problem}
\begin{aligned}
	d\gamma_t = (\partial_x U + \nu_t )(t, \gamma_t)dt + dW_t, \ \ \gamma_0  = x_0,
\end{aligned}
\end{equation}
if the following two conditions are satisfied for the coordinate process $(\gamma_t)$:
\begin{enumerate}
	\item $\mathbb{P}(\gamma_0 = x_0 ) = 1$.
	\item For any $l \in \RR, \tau \in [0, T]$ and for any $f$ in $C L^{\infty}_{e(l)}([0, \tau]), \ \varphi^0$ in $\CC^{2 \alpha {+} 1}_{e(l)},$ let $\varphi \in \LL^{\alpha {+}1{-}\ve}_{e(\kappa)}$ (see Equation \eqref{eqn:kolmogorov_para} and Proposition \ref{prop:existence_kolmogorov_eqn}) to the equation:
	\begin{equation}\label{eqn:kolm_control_defn}
	\begin{aligned}
		(\partial_t {+} \hh \Delta {+} \partial_x U\partial_x) \varphi  = f, \ \ \varphi(\tau) = \varphi^0.
	\end{aligned}
	\end{equation}
	Then the process $\varphi(t, \gamma_t) - \int_0^t [f(s, \gamma_s) {+} \partial_x\varphi(s,\gamma_s) \nu(s, \gamma)]  ds$ is a square integrable martingale on $[0,\tau]$ with respect to the filtration $(\mathcal{F}_t)$.
\end{enumerate}
\end{definition}

This allows to show that the polymer measure solves the SDE \eqref{eqn:SDE_with_Dh}.

\begin{proposition}\label{prop:martingale_solution}
Consider $\YY \in \myk$ and let $h$ be the solution to the KPZ equation from Theorem \ref{thm:existence_uniqueness_KPZ}. Under the assumptions of Proposition \ref{prop:existence_polymer_measure}, the therein constructed measure $\wQ(\YY)$ is a martingale solution to the SDE
$$
d\gamma_t = \partial_x \olh(s, \gamma_s) ds + dW_s, \ \ \gamma(0) = x_0,
$$
in the sense of the above definition, with control $\nu = \partial_x ( \olh {-}U)$.
\end{proposition}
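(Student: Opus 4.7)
The plan is to reduce the two conditions of Definition \ref{def:matingale_problem} to the smooth level, where It\^o's formula is directly applicable, and then to pass to the limit using the weak convergence $\wQ^n \Rightarrow \wQ$ from Proposition \ref{prop:existence_polymer_measure} together with the sub-exponential moment bound \eqref{eqn:exponential_moments_polymer_measure}. The condition $\wQ(\gamma_0 = x_0) = 1$ is immediate from \eqref{eqn:finite_dimensional_distributions}, so I focus on the martingale property.

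Fix a sequence $\YY^n \in \myi$ converging to $\YY$ in $\myk$ together with a compatible sequence $\oh^n \to \oh$ as in Assumption \ref{assu:initial_condition}. Given $f, \varphi^0, \tau$ as in Definition \ref{def:matingale_problem}, Proposition \ref{prop:existence_kolmogorov_eqn} yields approximating solutions $\varphi^n$ to the backward Kolmogorov equation \eqref{eqn:kolm_control_defn} driven by $\YY^n$, with $\varphi^n \to \varphi$ in $\LL^{\alpha+1-\ve}_{e(\kappa)}$ and (by Lemma \ref{lem:dervtv_interp}) $\partial_x\varphi^n \to \partial_x\varphi$ in $\LL^{\alpha-\ve}_{e(\kappa)}$. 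At the smooth level $\olh^n \in C^\infty_b$ by Proposition \ref{prop:exist_smooth_KPZ}, so the density \eqref{eqn:naive_kpz_polymer_measure} combined with the classical Girsanov theorem shows that under $\wQ^n$ the process $W^n_t = \gamma_t - x_0 - \int_0^t \partial_x\olh^n(s,\gamma_s)\,ds$ is a Brownian motion. Splitting the drift as $\partial_x\olh^n = \partial_x U^n + \nu^n$ with $\nu^n := \partial_x(\olh^n - U^n) = \ola{X}^{\BB,n} + \partial_x\ola{h}^{P,n}$, It\^o's formula applied to $\varphi^n(t,\gamma_t)$ together with the Kolmogorov PDE $(\partial_t + \hh\Delta + \partial_xU^n\partial_x)\varphi^n = f$ yields
\begin{equation*}
    M^n_t := \varphi^n(t,\gamma_t) - \int_0^t \bigl[f(s,\gamma_s) + \partial_x\varphi^n(s,\gamma_s)\,\nu^n(s,\gamma_s)\bigr]\,ds = \varphi^n(0,x_0) + \int_0^t \partial_x\varphi^n(s,\gamma_s)\,dW^n_s,
\end{equation*}
which is a square integrable $\wQ^n$-martingale with respect to the canonical filtration.

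Proposition \ref{prop:convergence_of_hn} and Lemma \ref{lem:dervtv_interp} combined with $\YY^n \to \YY$ in $\myk$ give $\nu^n \to \nu = \ola{X}^{\BB} + \partial_x\ola{h}^P$ in $\LL^{\bp,\alpha}_{e(\kappa)}$ (upon enlarging $\kappa$). Since $\alpha > 0$, both $\partial_x\varphi$ and $\nu$ are spatially H\"older continuous with weighted $e^{C|x|^\delta}$ growth, so $F_t(\gamma) := \varphi(t,\gamma_t) - \int_0^t[f(s,\gamma_s) + \partial_x\varphi(s,\gamma_s)\nu(s,\gamma_s)]\,ds$ is a continuous functional on $C([0,T])$ satisfying the uniform bound $|F_t(\gamma)| + |F^n_t(\gamma)| \lesssim 1 + \sup_{s\le t} e^{C|\gamma_s|^\delta}$, with $F^n_t \to F_t$ pointwise and uniformly on uniform-norm balls of $C([0,T])$.

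To pass the smooth-level identity $\mathbb{E}_{\wQ^n}[(M^n_t - M^n_r)\Phi] = 0$ (valid for all $0 \le r \le t \le \tau$ and bounded continuous $\FF_r$-measurable $\Phi$) to the limit, the main obstacle is uniform integrability of $M^n_t$ under $\wQ^n$: the values $M^n_t$ grow like $e^{C|\gamma_t|^\delta}$, which is borderline with respect to \eqref{eqn:exponential_moments_polymer_measure}. Here the flexibility of Lemma \ref{lem:convergence_polymer_measure} is essential: since \eqref{eqn:exponential_moments_polymer_measure} holds for every $l \ge 0$, we may pick $l$ strictly larger than twice the constant $C$ in the growth of $M^n_t$, so that $\sup_n \mathbb{E}_{\wQ^n}[|M^n_t|^2] < \infty$ and uniform integrability follows. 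Combined with $F^n_t \to F_t$ uniformly on compacts of $C([0,T])$ and the weak convergence $\wQ^n \Rightarrow \wQ$, this yields $\mathbb{E}_{\wQ}[(M_t - M_r)\Phi] = 0$; square integrability of $M_t$ under $\wQ$ follows by lower-semicontinuity of the $L^2$ norm.
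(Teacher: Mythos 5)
Your proposal is correct and follows essentially the same route as the paper: approximate by smooth data, use Girsanov/It\^o at the smooth level to see that $M^n$ is a $\wQ^n$-martingale, obtain uniform square-integrability from the sub-exponential moment bound \eqref{eqn:exponential_moments_polymer_measure}, and pass to the limit via the convergence $\varphi^n \to \varphi$ from Proposition \ref{prop:existence_kolmogorov_eqn} and the weak convergence $\wQ^n \Rightarrow \wQ$ from Proposition \ref{prop:existence_polymer_measure}. You merely spell out the Girsanov identification of $\wQ^n$ and the uniform-integrability/testing-against-$\Phi$ step that the paper leaves implicit.
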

\begin{proof}
	Consider $\tau, \varphi^0, f$ as in Definition~\ref{def:matingale_problem}. We need to prove that 
	$$
	M_t = \varphi(t,\gamma_t) - \int_0^t [f(s, \gamma_s) {+} \partial_x \varphi(s,\gamma_s) \partial_x(\olh {-} U)(s, \gamma_s)]ds,\qquad t \in [0,\tau],
	$$
	is a martingale with respect to the measure $\mathbb{Q}_{x_0}$. In fact, consider smooth data $\YY^n \in \myi$ such that $\YY^n \to \YY$ in $\myk.$ Then we can find solutions $\varphi^n$ to the PDE (\ref{eqn:kolm_control_defn}) with $\partial_x U$ replaced by $\partial_x U^n$. Since Proposition \ref{prop:existence_kolmogorov_eqn} guarantees that these solutions satisfy
	\[
	\varphi^n \to \varphi \text{ in } \LL^{\alpha {+} 1{-}\ve}_{e(\kappa)}
	\]
for a suitably chosen $\kappa \ge 0$ and $\ve \in (6a/\delta{+}1{-}2\alpha, 3\alpha{-}1)$, and since the uniform sub-exponential bound \eqref{eqn:exponential_moments_polymer_measure} holds true, the process
	\[
	M^n_t = \varphi^n(t,\gamma_t) - \int_0^t f(s, \gamma_s) {+} \partial_x\varphi^n(s,\gamma_s) \partial_x(\olh^n {-} U^n)(s, \gamma_s)ds, \qquad t \in [0,\tau],
	\]
	is a $\mathbb{Q}^n_{x_0}-$martingale such that for a suitable $C \ge 0$
	\[
	    \sup_n \EE_{\wQ^n} \Big[ \sup_{0\le t \le \tau} \big\vert M_t^n \big\vert^2 \Big] \lesssim \sup_n \sup_{0 \le t \le \tau} \EE_{\wQ^n} \Big[ e^{C|\gamma_t|^{\delta}} \Big] < {+}\infty,
	\]
	where $\QQ^n_{x_0}$  is the polymer measure associated to $\YY^n$, as in Proposition \ref{prop:existence_polymer_measure}. The same proposition guarantees that $\wQ^n \Rightarrow \wQ$. Hence the martingale property is preserved in the limit.
	
\end{proof}

We conclude this section on the polymer measure with a variational characterisation of the solution to the KPZ equation.

\begin{theorem}\label{thm:variational_rep}
Consider an extended data $\YY$ and an initial condition $\oh$ which satisfies Assumption \ref{assu:initial_condition}. Let $h$ be the paracontrolled solution to the KPZ equation from Theorem \ref{thm:existence_uniqueness_KPZ}. The following representation holds true:
\begin{equation*}
\begin{aligned}
	[h - &Y - Y^{\BA \! } - Y^{R}] (T,x_0) =  \\
	& = \! \! \sup_{\substack{\nu \in \mathfrak{pm} \\ \gamma \in \mathfrak{M}(\nu,x_0)}} \! \! \! \EE \!  \left[ \overline{h}(\gamma_T) - Y(0, \gamma_T) + \hh \int_0^T \Big(|\ola{X}^{R}|^2 - |\nu {-} \ola{X}^{R} |^2(s, \gamma_s) \Big) ds \right],
\end{aligned}
\end{equation*}
where the optimal control $\nu$ is 
\[
    \nu(s, \gamma) = \partial_x(\olh {-} U)(s,\gamma_s).
\]
\end{theorem}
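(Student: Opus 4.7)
The plan is to apply the Boué--Dupuis variational formula to the Feynman--Kac representation
\[
    [h - Y - Y^{\BA} - Y^R](T, x_0) = \log \EE_{\PP^U_{x_0}}\Big[\exp\Big(\int_0^T \ola{X}^R(s, \gamma_s)\, d\ol{W}_s + [\oh - Y(0)](\gamma_T)\Big)\Big]
\]
obtained in Remark \ref{rem:characterisation_h_feyn_kac}, where under $\PP^U_{x_0}$ the coordinate process satisfies $d\gamma_t = \partial_x U(t, \gamma_t) dt + d\ol W_t$ with $\ol W$ the Brownian motion supplied by Lemma \ref{lem:weak_convergence_partial_SDE}.

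Viewing the expression inside the exponential as a functional of $\ol W$, for each progressively measurable control $\nu$ I would apply the Girsanov shift $\ol W \mapsto \ol W + \int_0^\cdot \nu_s \, ds$. Under this shift, $\gamma$ becomes a solution of the controlled SDE \eqref{eqn:SDE_for_control_problem} with drift $\partial_x U + \nu$, and the stochastic integral transforms into $\int_0^T \ola X^R(s, \gamma_s)\, d\ol W_s + \int_0^T \ola X^R(s, \gamma_s)\, \nu(s, \gamma)\, ds$. The Boué--Dupuis formula then yields
\[
    [h - Y - Y^{\BA} - Y^R](T,x_0) = \sup_{\nu} \EE\Big[\int_0^T \ola X^R(s, \gamma_s) \nu(s, \gamma) ds + [\oh - Y(0)](\gamma_T) - \hh \int_0^T \nu_s^2\, ds\Big],
\]
where the residual $\ol W$-stochastic integral has vanishing mean thanks to the exponential moment bound of Lemma \ref{lem:expntl_delta_momts}, which upgrades it from a local to a true martingale. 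The pointwise completion of the square
\[
    \ola X^R \nu - \hh \nu^2 = \hh |\ola X^R|^2 - \hh |\nu - \ola X^R|^2
\]
then casts the right-hand side exactly in the form stated in the theorem.

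To identify the optimizer I would verify that $\nu^*(s, \gamma) = \partial_x(\olh - U)(s, \gamma_s)$ attains the supremum. Proposition \ref{prop:martingale_solution} shows that the law of $\gamma$ under this control is exactly the polymer measure $\wQ$, and Proposition \ref{prop:polymer_measure_decomposition} supplies the explicit density
\[
    \frac{d\wQ}{d\PP^U_{x_0}} = \exp\Big(\int_0^T \ola X^R(s, \gamma_s)\, d\ol W_s + [U + \ola Y^R - \olh]^{(0, x_0)}_{(T, \gamma_T)}\Big).
\]
Taking logarithms and using that the density integrates to $1$ against $\PP^U_{x_0}$ precisely recovers the Feynman--Kac identity, and substituting $\nu^*$ into the variational functional and computing under $\wQ$ with this density, the quadratic penalty $\hh |\nu^* - \ola X^R|^2$ is matched by the Itô correction arising from the Girsanov change of drift, certifying optimality.

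The main obstacle will be the low spatial regularity of $\ola X^R \in C\CC^{\alpha - 1}_{p(a)}$, which forces the stochastic integral $\int_0^T \ola X^R(s, \gamma_s)\, d\ol W_s$ and the pairing $\ola X^R(s, \gamma_s) \nu(s, \gamma)$ in the variational functional to be interpreted through the paracontrolled decomposition $Y^R = Y^{\BB} + Y^P$, where $Y^P \in \LL^{\alpha+1}_{e(\kappa)}$, together with Young-type integration as in Lemma \ref{lem:young_time_product}. My strategy would be to first establish the identity for smooth approximations $\theta^n \to \xi$, for which $\partial_x U^n$ is bounded and smooth and the classical Boué--Dupuis formula applies directly, and then pass to the limit via the joint weak convergence of drifts, solutions, and stochastic integrals from Lemma \ref{lem:weak_convergence_partial_SDE}, combined with the uniform exponential moment bounds of Lemma \ref{lem:expntl_delta_momts}. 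A secondary issue is the sub-linear growth of the terminal cost $\oh - Y(0)$, which requires a preliminary restriction to a class of bounded controls $\nu$ with bounded terminal costs before a monotone convergence argument restores the supremum over all $\nu \in \mathfrak{pm}$.
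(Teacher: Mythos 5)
Your route (Bou\'e--Dupuis applied to the Feynman--Kac identity of Remark \ref{rem:characterisation_h_feyn_kac}) is genuinely different from the paper's, but as written it has a real gap at its first step. The Bou\'e--Dupuis formula computes $\log\EE[e^{F(W)}]$ for a functional $F$ of the \emph{driving Brownian motion}, so to apply it you need the quantity inside the exponential, hence the process $\gamma$ itself, to be a measurable functional of $\ol{W}$. The paper never establishes strong (pathwise) solvability of the singular SDE $d\gamma_t=\partial_x U(t,\gamma_t)dt+d\ol{W}_t$: Proposition \ref{prop:exist_SDE_para} and Lemma \ref{lem:weak_convergence_partial_SDE} only provide a martingale solution with uniqueness in law, and in fact the measurability that is proved there goes the other way ($\ol{W}$ is recovered as a limit of measurable functions of $\gamma$). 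Your fallback --- apply the classical formula for smooth $\theta^n$ and pass to the limit --- runs into a second unaddressed problem: you must interchange $n\to\infty$ with a supremum over all controls that are merely square integrable, while the integrand $\ola{X}^{R,n}(s,\gamma^{\nu}_s)$ has sub-exponential spatial growth and the controlled trajectories are not confined; Lemma \ref{lem:expntl_delta_momts} gives exponential moments only for the \emph{uncontrolled} dynamics with the fixed Lipschitz drift $\partial_x U^n$, so it neither makes the residual stochastic integral a true centred martingale uniformly over admissible $\nu$, nor gives the uniform-in-$n$ bounds needed to pass the limit inside the sup. Finally, even granting the formula, the supremum it produces is over Brownian-adapted shifts, whereas the statement is a supremum over $\nu\in\mathfrak{pm}$ and \emph{martingale} solutions $\gamma\in\mathfrak{M}(\nu,x_0)$ of the controlled singular SDE in the sense of Definition \ref{def:matingale_problem}; identifying these two classes in the singular regime is itself a nontrivial step you do not address, and the closing optimality argument (``the quadratic penalty is matched by the It\^o correction'') is asserted rather than proved.

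For contrast, the paper avoids all of this by a direct verification argument: it sets $h^R=h-Y-Y^{\BA}-Y^R$, observes that the time-reversed $\ola{h}^R$ solves a Hamilton--Jacobi--Bellman-type equation with linear part $(\partial_t+\hh\Delta+\partial_x U\partial_x)$, and uses $\ola{h}^R$ as a test function in the martingale problem of Definition \ref{def:matingale_problem} for an arbitrary admissible pair $(\nu,\gamma)$. Completing the square yields, for every such pair, the value $\olh^R(0,x_0)$ as the cost functional plus the nonnegative term $\EE[\tfrac12\int_0^T|\tilde\nu|^2ds]$ with $\tilde\nu=\nu-\partial_x(\olh-U)$, so the inequality ``$\ge$'' holds for all controls and equality forces $\tilde\nu=0$; Proposition \ref{prop:martingale_solution} then exhibits the polymer measure as a martingale solution realizing this optimal control. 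This needs neither a Girsanov shift, nor strong solvability, nor any interchange of limits and suprema, which is precisely why the paper takes that route. If you want to salvage the Bou\'e--Dupuis strategy, you would first have to prove a strong solution/measurable-selection statement for the singular SDE (or set up the variational formula directly at the level of weak solutions) and supply uniform exponential integrability for controlled trajectories; as it stands the proposal does not close these steps.
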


\begin{proof}
We follow step by step the original proof of \cite[Theorem 7.3]{kpz}. First, let us define
\[
    h^R = h {-} Y {-} Y^{\BA} {-} Y^R = h^P - Y^{P},
\]
which is paracontrolled in the sense that $h^R = \partial_x h^R \ppara Y^{\scalebox{0.8}{\RS{r}}} + h^{R, \sharp}$  with $h^R \in \LL^{\alpha{+}1}_{e(\kappa)}$ and $h^{R, \sharp} \in \LL^{2\alpha+1}_{e(\kappa)}$ for an appropriate $\kappa \ge 0.$ In addition $h^R$ is a paracontrolled solution to the equation:
\begin{align*}
	\LLL h^R & =  \hh |X^R|^2 + (X + X^{\BA} + X^R)\partial_x h^R + \hh(\partial_x h^R)^2,  \\
	h^R(T) & =   \oh - Y(0),
\end{align*}
which by reversing time we can translate into
\begin{align*}
	(\partial_t {+} \hh \Delta {+} \partial_xU \partial_x)\ola{h}^R = \ {-}\hh |\ola{X}^R|^2 {-} \ola{X}^R \partial_x\ola{h}^R {-} \hh (\partial_x\ola{h}^R)^2, \ \  \ola{h}^R(T) = \oh {-} Y(0).
\end{align*}
This means that if we take a martingale solution $\gamma$ to the problem
\[
	d\gamma_t = (\partial_x U + \nu)dt + dW_t,
\]
we can use $h^R$ as a test function, according to Definition \ref{def:matingale_problem}. From this point onwards we can follow exactly the proof of \cite{kpz} to get to the conclusion that for any $\nu \in \mathfrak{pm}$ and $\gamma \in \mathfrak{M}(\nu, x_0)$ 
\begin{align*}
	\olh^R(0,x_0) & =  \EE \bq{ \oh(\gamma_T) - Y(0,\gamma_T) + \hh \int_0^T \Big( |\ola{X}^R|^2 - |\nu {-}  \ola{X}^R|^2(s, \gamma_s) \Big)ds } \\
	& \quad + \EE \bq{\int_0^T \frac{1}{2} |\tilde{\nu}(s, \gamma)|^{2}ds } \\
	& \quad \ge \sup_{\substack{\nu \in \mathfrak{pm} \\ \gamma \in \mathfrak{M}(\nu,x_0)}} \! \! \! \EE \bq{ \oh(\gamma_T) - Y(0,\gamma_T) + \hh \int_0^T \Big( |\ola{X}^R|^2 - |\nu {-}  \ola{X}^R|^2(s, \gamma_s) \Big)ds }
\end{align*}
with $\tilde{\nu} = \nu {-} \ola{X}^R {-} \partial_x \ola{h}^R = \nu {-} \partial_x(\ola{h} {-} \ola{Y} {-} \ola{Y}^{\BA})$, where in the last line we took the supremum on both sides in the line above and then forgot the term with $\tilde{\nu}$. For fixed $\nu$ equality 
\[
\olh^R(0,x_0) =  \EE \bq{ \oh(\gamma_T) - Y(0,\gamma_T) + \hh \int_0^T \Big( |\ola{X}^R|^2 - |\nu {-}  \ola{X}^R|^2(s, \gamma_s) \Big)ds }
\]
holds only if $\tilde{\nu} = 0.$ Thus the supremum is achieved in the polymer measure and equals $\olh^R(0,x_0)$.
\end{proof}

\section{Linear Paracontrolled Equations in Weighted Spaces}\label{sect:abstract_solutions}

\subsection{A Solution Theorem}

We consider an abstract paracontrolled equation of the form:
\begin{equation}\label{eqn:abstract}
	\LLL u = R(\YY, \nu)(u) +  [F(\YY)(u)]\para X +  X \reso \partial_x u , \ \ u(0) = u_0,
\end{equation}
for some functionals $R$ and $F$ which we will specify later, and where $\YY \in \mc{Y}_{kpz}$ and $\nu$ is simply an additional parameter living a Banach space $\mathcal{X},$ which we add to treat certain applications. At an intuitive level $R$ represents a smooth rest term, $\para \ $ is the irregular part of a product and $\reso$ is the ill-posed part of a product, the latter term being the one which requires a paracontrolled structure from the solution.
 
Actually it will be necessary to consider slightly more general $\YY$, allowing for an additional singularity: see Definition~\ref{def:singular-Y} for the definition of $\mY_{kpz}^{\zeta, b}$.

Now we introduce the Banach space of paracontrolled distributions that will contain the solution to Equation (\ref{eqn:abstract}). Consider $u_0 \in \CC^{\beta}_{e(l)}$ for some $l \in \RR$ and $\beta \in (2\alpha{-}1, 2\alpha{+}1]$ (recall the regularity parameter $\alpha$ from Table \ref{table:kpz} and the preceding discussion) as well as a parameter $\ve > 0$ and 
\begin{equation}\label{eqn:def_beta_hat_prime_solution_theorem}
\hb = \frac{2\alpha{+}1{-}\beta}{2}, \qquad \beta^\prime = \frac{\alpha{+}1{-}\beta}{2}\vee 0.
\end{equation}
Forthermore, fix a time horizon $T_h \ge 0$ and $\YY \in \mY^{\zeta, b}_{kpz}([0, T_h])$ for some values $\zeta, b \ge 0$ which we will specify later. The parameter $\ve$ represents a small gap between the regularity of the solution we prove and the expected maximal regularity and appears essentially to deal with the global spatial well-posedness. The parameter $\hb$ quantifies the time blow-up at $t = 0$ in the space $\CC^{2\alpha{+}1}.$ The parameter $\bp$ quantifies the blow-up in the space $\CC^{\alpha{+}1}$ (cf. Lemma \ref{lem:interpolation_schauder}).

Then we introduce a subset
\[
\mathcal{D}(\YY) \subset \LL^{\bp, \alpha{+}1{-}\ve}_{e(l+t)}([0, T_h]) \times \LL^{\bp, \alpha {-} \ve}_{e(l+t)p(a)}([0, T_h]) \times \LL^{\hb, 2\alpha{+}1{-}\ve}_{e(l+t)} \ni (u, u', u^{\sharp})([0, T_h]),
\]
which is defined by the property:
\[
	(u, u', u^{\sharp}) \in \mathcal{D}(\YY) \quad \Leftrightarrow \quad u = u' \ppara Y^{\scalebox{0.8}{\RS{r}}} + u^{\sharp}, \qquad u^{\sharp}(0) = u_0.
\]
Moreover, we endow $\mathcal{D}(\YY)$ with the product topology and a product norm. With some abuse of notation we write 
\[
\nnorm{u} = \|(u, u', u^{\sharp}) \|_{\mathcal{D}(\YY)}.
\]
Since we want to compare also paracontrolled distributions which are controlled by different enhanced data we introduce the quantity
\[
\nnorm{u_1: u_2} = \max \{ \|u'_{1} - u'_{2}\|_{\LL^{\bp, \alpha{-}\ve}_{e(l+t)p(a)}} , \ \| u^{\sharp}_1 - u^{\sharp}_2\|_{\LL^{\hb, 2\alpha{+}1{-}\ve}_{e(l+t)}} \}
\]
for $u_i \in \mathcal{D}(\YY_i)$. Since the proof of the solution theorem relies on a contraction argument on a small time interval we also introduce the notation $\mD^0_{S}(\YY)$ for $S \le T_h$ for the space $\mD(\YY)$ where we replaced the time horizon $T_h$ with $S.$ We also use the convention that $\mD^{0}_0(\YY) = \CC^{\beta}_{e(l)}$. This is motivated by the fact that a point in $\mD^0_S$ will later be an initial condition for the solution on $[S, S{+}\tau]$ for some small $\tau$. Then if we fix $0 \le T_{\ell} < T_r \le T_h$ and an initial condition $u_{T_{\ell}} \in \mD^0_{T_{\ell}}(\YY)$, we consider the space $\mD^{T_{\ell}}_{T_r}(\YY, u_{T_{\ell}})$ of all functions $u$ in $\mD^0_{T_r}$ such that 
\[
u\vert_{[0, T_{\ell}]} = u_{T_{\ell}}, \ \ u'\vert_{[0,T_{\ell}]} = u_{T_{\ell}}', \ \ u^{\sharp}\vert_{[0, T_{\ell}]} = u_{T_{\ell}}^{\sharp}.
\]
We endow this space with the product norm on:
\[
\LL^{\bp, \alpha{+}1{-}\ve}_{e(l+t)}([T_{\ell}, T_r]) \times \LL^{\bp, \alpha {-} \ve}_{e(l+t)p(a)} ([T_{\ell}, T_r]) \times \LL^{\hb, 2\alpha{+}1{-}\ve}_{e(l+t)} ([T_{\ell}, T_r]).
\]
If we do not fix any initial condition, we write $\mD^{T_{\ell}}_{T_r}(\YY).$ Furthermore we remind the notation $V_s$ for the integration operator:
\[
V_s(f)(t) = \int_s^t P_{t{-}h}f_h dh,
\]
where $(P_t)$ is the semigroup generated by $\tfrac12 \Delta$. Now we state the assumption on the coefficients of the equation. As a rule-of-thumb they must be Lipschitz dependent on $u$ and locally Lipschitz dependent on $\YY \in \mY_{kpz}^{\zeta, b}$. Recall that $\mc X$ is an arbitrary Banach space, and we write $\|\cdot\|_{\mc X}$ for its norm.

\begin{assumption}[On the parameters of the equation]\label{assu:parameters_eqn}
	Let $l \in \RR$, let $\varepsilon \in [0, 3\alpha{-}1)$, let $a>0$ and $\zeta \ge 0$ be such that $\ve{-}6a/\delta{-}2\zeta >0$, and let $b \le 2a$  and $\beta \in (2\alpha{-}1, 2\alpha{+}1]$ as well as $\hb, \bp$ as in Equation \eqref{eqn:def_beta_hat_prime_solution_theorem} and consider $0 \le T_{\ell} < T_r \le T_h$. Given $M > 0$ we consider $\YY_i$ in $\mY^{\zeta,b}_{kpz}([0, T_h])$ as well as $\nu_i$ in $\mathcal{X}$ and $u_{0}^i$ in $\CC^{\beta}_{e(l)}$ for $i = 1,2$ such that: 
	\[ \|\nu_i\|_{\mathcal{X}}, \ \| \YY_i\|_{\mY^{\zeta, b}_{kpz}}, \ \| u_{0}^i\|_{\CC^{\beta}_{e(l)}}  \le M \]
	and we require that there exists a $p\ge 1$ such that the following holds true:
	\begin{enumerate}
		\item There exists a $\gamma > 0$ such that for any $\YY, \nu$ and $T_\ell, T_r$ satisfying $0 \le T_{\ell}< T_r \le T_h$ we have that: 
		\[
			V_{T_{\ell}} \circ R(\YY, \nu): \LL^{\bp, \alpha{+}1{-}\ve}_{e(l+t)}([T_{\ell}, T_r]) \to \LL^{\bp, 2\alpha {+} 1 -\ve}_{e(l+t)}([T_{\ell}, T_r])
		\]
		is a Lipschitz function which satisfies:
		\begin{align*}
		\| V_{T_{\ell}}&(R(\YY, \ \nu)(u))\|_{\LL^{\bp, 2 \alpha {+} 1{-}\ve}_{e(l+t)}([T_{\ell}, T_r]) } \\
		&\lesssim (1{+}\| \YY \|_{\myc} {+} \| \nu \|_{\mathcal{X}})^p(T_r{-}T_{\ell})^{\gamma}(1{+}\| u\|_{\LL^{\bp, \alpha{+}1{-}\ve}_{e(l+t)}([T_{\ell}, T_r])}), \\
		\| V_{T_{\ell}}&(R(\YY, \ \nu)(u_1)) - V_{T_{\ell}}(R(\YY, \nu)(u_2)) \|_{\LL^{\bp, 2 \alpha {+} 1{-}\ve}_{e(l+t)}([T_{\ell}, T_r]) } \\
		  & \lesssim (1{+}\| \YY \|_{\myc} {+} \| \nu \|_{\mathcal{X}})^p (T_r{-}T_{\ell})^{\gamma}\| u_1 - u_2\|_{\LL^{\bp, \alpha{+}1{-}\ve}_{e(l+t)}([T_{\ell}, T_r])}, \\
		\| V_{T_{\ell}}& (R(\YY_1, \ \nu_1)(u_1)) {-} V_{T_{\ell}}(R(\YY_2, \nu_2)(u_2)) \|_{\LL^{\bp, 2 \alpha {+} 1{-}\ve}_{e(l+t)} ([T_{\ell}, T_r])} \\
		&  \lesssim_{M, \nnorm{ u_i }_{\mD^{T_{\ell}}_{T_r}} } \|\YY_1 {-} \YY_2\|_{\myc} + \|\nu_1 {-}\nu_2\|_{\mathcal{X}} +(T_r{-}T_{\ell})^{\gamma}\| u_1 - u_2\|_{\LL^{\bp, \alpha{+}1{-}\ve}_{e(l+t)}([T_{\ell}, T_r])}.
		\end{align*}
		\item The map $F(\YY): \LL^{\bp, \alpha{+}1{-}\ve}_{e(l+t)}([T_{\ell}, T_r]) \to \LL^{\bp, \alpha{-}\ve}_{e(l+t)p(a)}([T_{\ell}, T_r])$ is Lipschitz continuous (for fixed $\YY$) and satisfies:
		\begin{align*}
		\| F(\YY)(u) & \|_{\LL^{\bp, \alpha{-}\ve}_{e(l+t)p(a)}([T_{\ell}, T_r]) }  \lesssim (1{+}\|\YY \|_{\myc})^p(1+\|u \|_{\LL^{\bp, \alpha{+}1{-}\ve}_{e(l+t)}([T_{\ell}, T_r])}) \\
		\| F(\YY)(u_1) &{-} F(\YY)(u_2) \|_{\LL^{\bp, \alpha{-}\ve}_{e(l+t)p(a)}([T_{\ell}, T_r]) } \\
		& \lesssim (1{+}\|\YY \|_{\myc})^p \|u_1 {-} u_2 \|_{\LL^{\bp, \alpha{+}1{-}\ve}_{e(l+t)}([T_{\ell}, T_r])}, \\
		\| F(\YY_1)(u_1) & {-} F(\YY_2)(u_2) \|_{\LL^{\bp, \alpha{-}\ve}_{e(l+t)p(a)}([T_{\ell}, T_r]) }  \\
		& \lesssim_{M, \nnorm{ u_i }_{\mD^{T_{\ell}}_{T_r}} } \| \YY_1 {-} \YY_2 \|_{\myc} + \|u_1 {-} u_2 \|_{\LL^{\bp, \alpha{+}1{-}\ve}_{e(l+t)}([T_{\ell}, T_r])}.
		\end{align*}
	\end{enumerate}
\end{assumption}

\begin{remark}
These assumptions provide a minimal working environment that is sufficient for our needs, and they could of course be generalized. In every point, the first two inequalities are necessary for the fixed-point argument, and the last one to obtain the locally Lipschitz continuous dependence of the fixed point on the parameters.
\end{remark}

The central idea in the paracontrolled approach is that the ill-posed resonant product is well defined for paracontrolled distributions. This is the content of the next result.

\begin{lemma}[Paracontrolled Product]\label{lem:product_estimate}
Under the previous assumptions, for $u$ in $\mathcal{D}^{0}_{T_r}(\YY)$, and $\YY \in \myc$ the following product estimate holds:
\begin{align*}
	\|X \reso \partial_xu \|_{\MM^{\hb{+}\zeta}\CC^{2\alpha{-}1}_{e(l+t)p(3a)}([T_{\ell}, T_r])} & \lesssim (1 {+} \| \YY \|_{\myc})^2 \nnorm{u}_{\mD^{0}_{T_r}}.
\end{align*}
if we consider two different enhanced data $\YY_i \in \myc$ as well as initial conditions $u^i \in \mathcal{D}^{T_{\ell}}_{T_r}$ we can also bound
\begin{align*}
	\|X_1 \reso \partial_xu_1 & - X_2 \reso \partial_x u_2 \|_{\MM^{\hb{+}\zeta}\CC^{2\alpha{-}1}_{e(l+t)p(3a)}([T_{\ell}, T_r])} \lesssim_{M, \nnorm{ u_i }_{\mD^{0}_{T_r}} } \| \YY_1 - \YY_2 \|_{\myc} + \nnorm{u_1 : u_2}_{\mD^{0}_{T_r}}.
\end{align*}
\end{lemma}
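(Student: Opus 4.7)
The plan is to exploit the paracontrolled structure $u = u' \ppara Y^{\BZ} + u^{\sharp}$ to decompose the resonant product into pieces that are either directly post-critical, or can be reduced to the \emph{a priori} given resonant product $X^{\BZ} \reso X = \partial_x Y^{\BZ} \reso \partial_x Y$ which is part of the data $\myc$. Applying the product rule for the parabolic paraproduct,
\[
\partial_x u = (\partial_x u') \ppara Y^{\BZ} + u' \ppara X^{\BZ} + \partial_x u^{\sharp},
\]
and $X \reso \partial_x u$ splits into three contributions.

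The two terms $X \reso \partial_x u^{\sharp}$ and $X \reso [(\partial_x u') \ppara Y^{\BZ}]$ require no genuine cancellation: $\partial_x u^{\sharp} \in \LL^{\hb, 2\alpha - \ve}_{e(l+t)}$ by Lemma \ref{lem:dervtv_interp}, and by Lemma \ref{lem:parabolic_paraproduct_estimates} (using the negative-regularity estimate, since $\partial_x u'$ has regularity $\alpha - 1 - \ve < 0$) one gets $(\partial_x u') \ppara Y^{\BZ} \in \MM^{\bp} \CC^{2\alpha - \ve}_{e(l+t)p(a+b)}$. Pairing with $X \in \MM^{\zeta} \CC^{\alpha - 1}_{p(b)}$ the sum of regularities is strictly greater than $2\alpha - 1$, and the resonant product estimate of Lemma \ref{lem:paraproduct_estimates} puts both pieces in $\MM^{\hb + \zeta} \CC^{2\alpha - 1}_{e(l+t)p(3a)}$ once the weights $p(a+b) = p(3a)$ and the blow-up exponents are composed; the choice $b = 2a$ is precisely what makes this weight bookkeeping close.

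The critical piece is $X \reso [u' \ppara X^{\BZ}]$, whose regularities sum exactly to $2\alpha - 1$. I first swap the parabolic paraproduct for the spatial one using the commutation estimate of Lemma \ref{lem:parabolic_paraproduct_estimates},
\[
\| u' \ppara X^{\BZ} - u' \para X^{\BZ} \|_{\CC^{2\alpha - \ve}_{e(l+t)p(a+b)}} \lesssim \| u' \|_{\LL^{\bp, \alpha - \ve}_{e(l+t)p(a)}} \| X^{\BZ} \|_{C\CC^{\alpha}_{p(b)}},
\]
which provides enough regularity margin to take the resonant product against $X$ directly. For the remaining main term $X \reso (u' \para X^{\BZ})$ I apply the commutator identity $(u' \para X^{\BZ}) \reso X = u'(X^{\BZ} \reso X) + C(u', X^{\BZ}, X)$ of Lemma \ref{lem:paraproduct_estimates}, whose hypotheses $(\alpha - \ve) + \alpha + (\alpha - 1) > 0$ and $\alpha + (\alpha - 1) \neq 0$ are guaranteed by Assumption \ref{assu:parameters_eqn}. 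This replaces the ill-posed resonant product by the product of $u'$ with the \emph{given} object $X^{\BZ} \reso X \in C \CC^{2\alpha - 1}_{p(b)}$ together with a well-controlled remainder of regularity $3\alpha - 1 - \ve$, both of which are in turn bounded by Lemma \ref{lem:paraproduct_estimates} with the prescribed blow-up inherited from $\myc$.

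The Lipschitz estimate follows by carrying out the same decomposition for $u_1$ and $u_2$ in parallel and using the bilinear expansions
\[
X_1 \reso \partial_x u_1 - X_2 \reso \partial_x u_2 = X_1 \reso \partial_x (u_1 - u_2) + (X_1 - X_2) \reso \partial_x u_2
\]
and, on the critical main term,
\[
u_1'\,(X_1^{\BZ} \reso X_1) - u_2'\,(X_2^{\BZ} \reso X_2) = (u_1' - u_2')(X_1^{\BZ} \reso X_1) + u_2' \bigl[(X_1^{\BZ} \reso X_1) - (X_2^{\BZ} \reso X_2)\bigr],
\]
with an analogous splitting of $C(u_i', X_i^{\BZ}, X_i)$ and of the $\ppara - \para$ commutator. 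Each difference is absorbed either by $\nnorm{u_1 : u_2}_{\mD^{0}_{T_r}}$ or by $\| \YY_1 - \YY_2 \|_{\myc}$, while the residual factors are bounded by $M$ and $\nnorm{u_i}_{\mD^{0}_{T_r}}$. The main obstacle is not any single estimate but the systematic bookkeeping ensuring that every polynomial weight ($p(a)$ from $u'$, $p(b) = p(2a)$ from the data) and every time blow-up ($\bp$ or $\hb$ from $u$, and $\zeta$ or $2\zeta$ from the singular objects of $\myc$) assembles consistently into the target $\MM^{\hb + \zeta} \CC^{2\alpha - 1}_{e(l+t)p(3a)}$; the inequality $\ve - 6a/\delta - 2\zeta > 0$ from Assumption \ref{assu:parameters_eqn} is precisely what makes the worst composition close.
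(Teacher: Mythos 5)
Your proposal is correct and follows essentially the same route as the paper: expand $\partial_x u$ via the paracontrolled structure, treat $\partial_x u^{\sharp}$ and $(\partial_x u')\ppara Y^{\BZ}$ as regular remainders (the paper simply groups them into one term $\widetilde u^{\sharp}$), and reduce the critical piece through the $\ppara$--$\para$ commutation of Lemma \ref{lem:parabolic_paraproduct_estimates} and the commutator $C$ of Lemma \ref{lem:paraproduct_estimates} to the a priori datum $\partial_x Y^{\BZ}\reso \partial_x Y$, with the Lipschitz bound by bilinearity. Apart from minor bookkeeping slips (e.g.\ $X$ carries weight $p(a)$ and no blow-up, not $p(b)$ with blow-up $\zeta$), the argument matches the paper's proof.
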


\begin{proof}
Let us prove the first estimate and assume $T_r = T$. We define 
\[\widetilde{u}^{\sharp} = \partial_x u^{\sharp}{+}\partial_x u' \ppara Y^{\scalebox{0.8}{\RS{r}}} \in \LL^{\hb, 2\alpha{-}\ve}_{e(l{+}t)p(2a)},\] then we can compute:
\[
	X \reso  \partial_x u = u' \para (X \reso \partial_x Y^{\scalebox{0.8}{\RS{r}}}) + C(u', \partial_x Y^{\scalebox{0.8}{\RS{r}}}, X)+ X \reso C_2(u', \partial_x Y^{\scalebox{0.8}{\RS{r}}}) + X \reso \widetilde{u}^{\sharp}. 
\]
with $C_2(u', \partial_x Y^{\scalebox{0.8}{\RS{r}}}) = u' \ppara Y^{\scalebox{0.8}{\RS{r}}} {-} u' \para Y^{\scalebox{0.8}{\RS{r}}}$. Now we can estimate one at the time all these terms. For the first one we have:
\[
\| u' \para (X \reso \partial_x Y^{\scalebox{0.8}{\RS{r}}}) \|_{\MM^{\hb{+}\zeta}\CC^{2\alpha{-}1}_{e(l+t)p(3a)}} \lesssim \| u' \|_{\MM^{\hb}\CC^{\alpha{-}\ve}_{e(l+t)p(a)}} \|X \reso \partial_x Y^{\scalebox{0.8}{\RS{r}}} \|_{\MM^{\zeta}\CC^{2\alpha{-}1}_{p(b)}}.
\]
Similarly we can treat the other terms, by applying the commutation results of Lemma \ref{lem:paraproduct_estimates} to the second term and Lemma \ref{lem:parabolic_paraproduct_estimates} to the third term, as well as the resonant product estimate from Lemma~\ref{lem:paraproduct_estimates} for the last term. The second estimate follows similarly.
\end{proof}

In view of the previous lemma, we can rigorously make sense of the equation under consideration.
\begin{definition}\label{def:abstract_solutions}
Under Assumption \ref{assu:parameters_eqn} a function $u \in \mD(\YY)$ is said to be a solution to be a (paracontrolled) solution to Equation \eqref{eqn:abstract} if $u' = F(\YY)(u)$ and if the equation is satisfied with the last product $ X \reso \partial_x u$ defined in the sense of Lemma \ref{lem:product_estimate}.
\end{definition}

Now we can prove the existence and uniqueness of solutions to the equation.

\begin{theorem}\label{thm:solution_abstract}
Make Assumption \ref{assu:parameters_eqn} and let $\YY \in \myc$. Then there exists a unique solution to Equation \eqref{eqn:abstract} in the sense of Definition \ref{def:abstract_solutions}, and there is $q \ge 0$ such that
\[
\nnorm{u} \lesssim e^{C T_h \big( 1 {+} \| \YY \|_{\myc} + \| \nu \|_{\mathcal{X}} \big)^q}(1{+} \|\YY\|_{\myc}+ \| \nu \|_{\mathcal{X}})^{p{+}1}( 1{+} \| u_0 \|_{\CC^{\beta}_{e(l)}} ).
\]
Moreover, the solution depends locally Lipschitz continuously on the parameters of the equation: for two solutions $u_i$, $i=1,2$, associated to $\YY_i$ and parameters $\nu_i$ we find that
\begin{align*}
	\nnorm{u_1 : u_2} \lesssim_{M, T_h} \|u^1_0 - u^2_0 \|_{\CC^{\beta}_{e(l)}} + \| \YY_1 - \YY_2 \|_{\myc} +\|\nu_1 {-} \nu_2 \|_{\mathcal{X}}.
\end{align*}
\end{theorem}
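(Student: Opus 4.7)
The plan is a classical Picard iteration on a short interval, combined with a time-iteration to reach $T_h$, in the spirit of~\cite{kpz, cannizzaro, multiplicative}. Given $u \in \mD^{T_\ell}_{T_r}(\YY, u_{T_\ell})$ on an interval $[T_\ell, T_r]$ with $\tau := T_r - T_\ell$ small, I define the image $\Psi(u) := \tilde u$ via the paracontrolled ansatz $\tilde u = \tilde u' \ppara Y^{\BZ} + \tilde u^\sharp$ with $\tilde u' := F(\YY)(u)$ and $\tilde u^\sharp$ solving
\[
\LLL \tilde u^\sharp = R(\YY, \nu)(u) + \bigl[ F(\YY)(u) \para X - \LLL(F(\YY)(u) \ppara Y^{\BZ}) \bigr] + X \reso \partial_x u,
\]
with initial datum $\tilde u^\sharp(T_\ell) = u_{T_\ell}^\sharp(T_\ell)$ and using $\LLL Y^{\BZ} = X$. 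The three contributions on the right-hand side are controlled respectively by Assumption~\ref{assu:parameters_eqn}(1), by the commutation estimate in Lemma~\ref{lem:parabolic_paraproduct_estimates} paired with Assumption~\ref{assu:parameters_eqn}(2), and by the paracontrolled product of Lemma~\ref{lem:product_estimate}. A fixed point of $\Psi$ is then a paracontrolled solution in the sense of Definition~\ref{def:abstract_solutions}.

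The quantitative heart of the argument is the weighted Schauder estimate of Proposition~\ref{prop:schauder_estimate}(2): since $\ve > 6a/\delta + 2\zeta$, the polynomial weight $p(3a)$ produced by Lemma~\ref{lem:product_estimate} can be traded for the time-increasing exponential weight $e(l+t)$ at the cost of $6a/\delta$ derivatives, placing $\tilde u^\sharp$ in $\LL^{\hb, 2\alpha+1-\ve}_{e(l+t)}([T_\ell, T_r])$ with norm bounded by $C(M) \bigl( \|u_{T_\ell}(T_\ell)\|_{\CC^\beta_{e(l+T_\ell)}} + \tau^\gamma \nnorm{u} \bigr)$. Interpolation via Lemma~\ref{lem:interpolation_schauder} then controls $\tilde u$ in $\LL^{\bp, \alpha+1-\ve}_{e(l+t)}$, and Assumption~\ref{assu:parameters_eqn}(2) closes the loop on $\tilde u'$. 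The small-time factor $\tau^\gamma$, coming from both Assumption~\ref{assu:parameters_eqn}(1) and from the interpolation, makes $\Psi$ a contraction for $\tau = \tau(M)$ small enough, yielding a unique local solution.

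To extend to $[0, T_h]$ I iterate, restarting at each time $k\tau(M)$, $k = 1, 2, \dots, \lceil T_h/\tau(M) \rceil$; the blow-up index $\hb$ built into $\tilde u^\sharp$ supplies an initial datum in $\CC^\beta$ at each restart. Each step multiplies the controlling norm by a factor at most $C(M)$, so after $N \asymp T_h/\tau(M)$ steps the bound reads $C(M)^N (1 + \|u_0\|_{\CC^\beta_{e(l)}})$; since $\tau(M)^{-1} \lesssim (1 + \|\YY\|_{\myc} + \|\nu\|_{\mathcal{X}})^{q'}$ for a suitable $q'$, this produces precisely the exponential estimate in the statement. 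The increasing weight $e(l+t)$ is crucial here: it lets me restart in the same weighted scale at each step without the exponential parameter drifting out of control. Locally Lipschitz dependence on the data follows by running the same estimates on the difference of two solutions, invoking the third inequality in each of Assumption~\ref{assu:parameters_eqn}(1)(2) and in Lemma~\ref{lem:product_estimate}, and iterating over the same partition of $[0, T_h]$.

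The main difficulty is the interplay between weights: the resonant product $X \reso \partial_x u$ unavoidably produces polynomial growth at infinity through $p(3a)$ (Lemma~\ref{lem:product_estimate}), whereas the contraction and iteration must take place in a purely exponential scale $e(l+t)$ whose parameter $l$ must not grow across iterations. Absorbing the polynomial loss into the exponential weight requires the Hairer--Labb\'e trick built into Proposition~\ref{prop:schauder_estimate}(2), whose cost of $6a/\delta$ derivatives is precisely why the numerical constraint $\ve > 6a/\delta + 2\zeta$ was imposed in Assumption~\ref{assu:parameters_eqn}. Without this matching of exponents, the fixed point would not close in a scale that is stable under iteration in time.
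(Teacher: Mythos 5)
Your construction is essentially the paper's: your map $\Psi$ coincides with the map $(\II,\II',\II^\sharp)$ used there (defining $\tilde u^\sharp$ through its equation and setting $\tilde u = \tilde u'\ppara Y^{\BZ}+\tilde u^\sharp$ is, by uniqueness for the heat equation with prescribed right-hand side and initial datum at $T_\ell$, the same as defining $\tilde u$ by the Duhamel formula and subtracting the paraproduct), and the quantitative ingredients are the same: Lemma \ref{lem:product_estimate} for the resonant product, the commutators of Lemma \ref{lem:parabolic_paraproduct_estimates} for the remainder equation, the weighted Schauder estimate of Proposition \ref{prop:schauder_estimate} with the $6a/\delta$ loss to trade $p(3a)$ for $e(l+t)$, interpolation from Lemma \ref{lem:interpolation_schauder} to generate the small time factors, and concatenation over intervals of length $\tau(M)$ with $\tau(M)^{-1}\lesssim(1+\|\YY\|_{\myc}+\|\nu\|_{\mathcal{X}})^{q'}$, which produces the exponential bound and the Lipschitz dependence.

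The one step that, as written, fails is the claim that the small factor $\tau^\gamma$ makes $\Psi$ itself a contraction on $\mD^{T_\ell}_{T_r}(\YY,u_{T_\ell})$. The norm there is the product norm over the three components, and the derivative component of your map is $\tilde u' = F(\YY)(u)$: Assumption \ref{assu:parameters_eqn}(2) only gives $\|F(\YY)(u_1)-F(\YY)(u_2)\|_{\LL^{\bp,\alpha-\ve}_{e(l+t)p(a)}} \lesssim (1+\|\YY\|_{\myc})^p\,\|u_1-u_2\|_{\LL^{\bp,\alpha+1-\ve}_{e(l+t)}}$ with no factor $(T_r-T_\ell)^\gamma$; since the Lipschitz constant $(1+\|\YY\|_{\myc})^p$ is in general larger than one, this component does not contract, however small $\tau$ is — the time smallness enters only through $V_{T_\ell}$, hence only in the $u$- and $u^\sharp$-components. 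The paper resolves exactly this point by proving that the \emph{second iterate} $(\II,\II',\II^\sharp)^2$ is a contraction: in the square, $F(\YY)$ is evaluated at the already-contracted $u$-component, so the small factor propagates to the derivative term. (Equivalently, one can run Banach's fixed point theorem in an equivalent norm in which the $u'$-component is rescaled by $(1+\|\YY\|_{\myc})^{-p}$, shrinking $\tau$ accordingly.) You need to add this step; with it, the remainder of your argument — a priori bound, concatenation in time, and the Lipschitz dependence via the third inequalities in Assumption \ref{assu:parameters_eqn} and Lemma \ref{lem:product_estimate} — goes through as in the paper.
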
 

\begin{proof}
Fix $0 \le T_{\ell} < T_r \le T_h$ and $u_{T_{\ell}} \in \mD^{0}_{T_{\ell}}(\YY)$. Let us define the following map on $\mathcal{D}^{T_{\ell}}_{T_r}(\YY, u_{T_{\ell}})$. For $t \ge T_{\ell}$ we write: 
\begin{align*}
	\II (u)(t) &= P_{t{-}T_{\ell}} u_0(T_{\ell}) + V_{T_{\ell}}\big( R(\YY, \nu)(u) +F(\YY)(u) \para X +   X \reso \partial_x u \big)(t), \\
	\II' (u)(t) & = F(\YY)(u)(t), \\
	\II^{\sharp}(u)(t) & = \II(u)(t) - \II'(u) \ppara Y^{\scalebox{0.8}{\RS{r}}}(t),
\end{align*}
and $\II(u) = u_{T_\ell}$ on $[0, T_{\ell}].$ By induction, we assume that if $T_{\ell} >0$, then $u_{T_{\ell}}$ is a solution to the equation on $[0, T_{\ell}]$: in particular we will use that $u'_{T_{\ell}} = F(u_{T_{\ell}})$.

For the sake of brevity we will write $\nnorm{\cdot}$ for the norm in $\mD^{T_{\ell}}_{T_r}(\YY, u_{T_{\ell}})$ and $\nnorm{u_{T_{\ell}}}$ for the norm of $u_{T_{\ell}}$ in $\mathcal{D}^{0}_{T_{\ell}}(\YY)$. We show that $(\II, \II^\prime, \II^\sharp)$ maps $\mD^{T_{\ell}}_{T_r}(\YY, u_{T_{\ell}})$ into itself, similar arguments then show that $(\II, \II^\prime, \II^\sharp)^2$ is a contraction on the same space (the presence of the square will guarantee that also the derivative term is contractive). We proceed one term at a time.

\textit{Step 1.} Let us start with $\II$, for which Assumption~\ref{assu:parameters_eqn} yields 
\begin{align*}
	\|\II &(u)  \|_{\LL^{\bp, \alpha {+}1{-}\ve}_{e(l+t)}([T_{\ell}, T_r])}  \lesssim \|P_{\cdot{-}T_{\ell}}u_{T_{\ell}}(T_{\ell})\|_{\LL^{\bp, \alpha{+}1{-}\ve}_{e(l{+}t)}([T_{\ell}, T_r])} \\
	& + (1{+} \|\YY \|_{\myc}{+} \|\nu\|_{\mX} )^{p}(T_r{-}T_{\ell})^{\gamma}\big(1{+}\| u\|_{\LL^{\bp, \alpha{+}1{-}\ve}_{e(l+t)}([T_{\ell}, T_r])}\big) \\
	&+ \|V_{T_{\ell}}(F(\YY)(u)\para X) \|_{\LL^{\bp, \alpha {+}1{-}\ve}_{e(l+t)}([T_{\ell}, T_r])}  + \| V_{T_{\ell}}( X \reso \partial_x u_1)\|_{\LL^{\bp, \alpha {+}1{-}\ve}_{e(l+t)}([T_{\ell}, T_r])}.
\end{align*}
Regarding the first term: By the first estimate of the Proposition \ref{prop:schauder_estimate} we can bound it with $\hbar(\bp, T_\ell, \nnorm{u_{T_\ell}}),$ 
with $\hbar$ satisfying the bound
\[\hbar(\gamma, T_{\ell},  \nnorm{u_{T_\ell}} )\lesssim_{T_h}  \| u_0\|_{\CC^{\beta}_{e(l)}} 1_{\{ T_\ell =0\}}+ (T_\ell)^{-\gamma} \nnorm{u_{T_\ell}} 1_{\{T_\ell >0 \}}.\]

Regarding the second term, we use the second estimate of the same proposition to get:
\begin{align*}
	\|V_{T_{\ell}}(&F(\YY) (u) \para X) \|_{\LL^{\bp, \alpha {+}1{-}\ve}_{e(l+t)}([T_{\ell}, T_r])} \lesssim (T_r{-}T_{\ell})^{\gamma_1}\| F(\YY)(u)\para X \|_{\MM^{\bp}\CC^{\alpha {-}1}_{e(l+t)p(2a)} ([T_{\ell}, T_r])} \\
	& \lesssim (1{+}\|\YY\|_{\myc})^{p{+}1}(T_r{-}T_{\ell})^{\gamma_1}\| u \|_{\LL^{\bp, \alpha{+}1{-}\ve}_{e(l{+}t)}([T_{\ell}, T_r])}
\end{align*}
where the term $(T_r{-}T_{\ell})^{\gamma_1}$ is a consequence of the last estimate from Lemma \ref{lem:interpolation_schauder} with $\gamma_1 = (\ve {-} 4a/\delta)/2 > 0$. The last line follows from the third condition on $F$.
A similar estimate holds for the ill-posed product:
\begin{align*}
\| V_{T_{\ell}}( X \reso & \ \partial_x u) \|_{\LL^{\hb, 2\alpha {+}1{-}\ve}_{e(l+t)}([T_{\ell}, T_r])} \lesssim (T_r{-}T_{\ell})^{\gamma_2} \| V_{T_{\ell}}( X \reso \partial_x u) \|_{\LL^{\hb, 2\alpha {+}1{-}6a/\delta {-}2\zeta {-}\lambda}_{e(l+t)}([T_{\ell}, T_r])} \\
& \lesssim (T_r{-}T_{\ell})^{\gamma_2} \| V_{T_{\ell}}( X \reso \partial_x u) \|_{\LL^{\hb{+}\zeta, 2\alpha {+}1{-}6a/\delta }_{e(l+t)}([T_{\ell}, T_r])}\\
& \lesssim (T_r{-}T_{\ell})^{\gamma_2}\|X \reso \partial_x u \|_{\MM^{\hb{+} \zeta}\CC^{2\alpha{-}1}_{e(l+t)p(3a)} ([T_{\ell}, T_r])} \lesssim (T_r{-}T_{\ell})^{\gamma_2} (1 {+} \| \YY \|_{\myc})^2 \nnorm{u}_{\mD^{0}_{T_r}} \\
& \lesssim  (T_r{-}T_{\ell})^{\gamma_2} (1 {+} \| \YY \|_{\myc})^2( \nnorm{u_{T_\ell}} + \nnorm{u}),
\end{align*}
 where in the first step we used the last estimate from Lemma \ref{lem:interpolation_schauder} and we defined $\gamma_2 = (\ve{-}6a/\delta{-}2\zeta{-}\lambda)/2$ and $\lambda \in (0,\ve{-}6a/\delta{-}2\zeta).$ We chose to subtract an additional (arbitrarily small) regularity $\lambda$ in order to apply the second estimate of Lemma \ref{lem:interpolation_schauder} in the second step and thus gain a factor $\zeta$ in the time-explosion.
Hence, we eventually estimate:
\begin{align*}
	\| V_{T_{\ell}}( X \reso  \ \partial_x u) \|_{\LL^{\beta^\prime, \alpha {+}1{-}\ve}_{e(l+t)}([T_{\ell}, T_r])} \lesssim \| V_{T_{\ell}}( X \reso  \ \partial_x u) \|_{\LL^{\hb, 2\alpha {+}1{-}\ve}_{e(l+t)}([T_{\ell}, T_r])}
\end{align*}

From these estimates it follows that $\II$ maps $\mD_{T_r}^{T_\ell}(\YY, u_{T_\ell})$ into $\LL^{\hb, \alpha{+}1{-}\ve}_{e(l+t)}([T_\ell, T_r])$. Moreover similar calculations, based on the Lipschitz assumptions on the coefficients, show that $\II$ is a contraction for fixed initial condition $u(T_\ell) = u_{T_\ell}(T_\ell)$, provided that $(T_r{-}T_{\ell}) \lesssim (1 {+} \| \YY \|_{\myc})^{-(p+1)/(\gamma \wedge \gamma_2)}$.

\textit{Step 2.} We consider the paracontrolled remainder term. Since $u_{T_\ell}$ is a solution to the equation on $[0, T_{\ell}]$ we find that:
\begin{align*}
u^{\sharp} = P_{\cdot{-}T_{\ell}} (u^{\sharp}(T_{\ell})) + V_{T_{\ell}}\big[ R(\YY, \nu)(u)) +   X \reso \partial_x u  + C_4 & (F(\YY)(u), \partial_x X) \\
 & + C_3(F(\YY)(u), Y^{\scalebox{0.8}{\RS{r}}}) \big],
\end{align*}
where $C_4(u', \partial_x X) = u' \ppara \partial_x X {-} u' \para \partial_x X$. Proceeding as before we can estimate:
\begin{align*}
\|u^{\sharp}&\|_{\LL^{\hb, 2\alpha{+}1{-}\ve}_{e(l+t)}([T_\ell, T_r)} \lesssim (1{+}\| \YY\| {+} \|\nu \|_{\mX})^{p+1} \big[ 1 + \hbar(\hb, T_\ell, \nnorm{u_{T_\ell}} ) + (T_\ell{-}T_r)^{\gamma}\| u\|_{\LL^{\bp, \alpha{+}1{-}\ve}_{e(l+t)}([T_{\ell}, T_r])} \\
& + (T_r{-}T_{\ell})^{\gamma_2} \nnorm{u} \big] +  (T_{\ell}{-}T_r)^{\gamma_2}\| V_{T_{\ell}}( C_4(F(\YY)(u), \partial_x X))\|_{\LL^{\bp, 2\alpha{+}1{-}6a/\delta}_{e(l+t)}([T_{\ell}, T_r])} \\
& +  (T_{\ell}{-}T_r)^{\gamma_2}\| V_{T_{\ell}}( C_3(F(\YY)(u), Y^{\scalebox{0.8}{\RS{r}}}))\|_{\LL^{\bp, 2\alpha{+}1{-}6a/\delta}_{e(l+t)}([T_{\ell}, T_r])}
\end{align*}
where we used the same estimate as before for the rest term $R$ and the ill-posed product. Through the bounds from Lemma \ref{lem:parabolic_paraproduct_estimates} for the commutators $C_3, C_4$ we can then estimate the last two terms in the sum via
\begin{align*}
 \|\YY \|_{\myc} \| F(\YY) (u) \|_{\LL^{\bp, \alpha}_{e(l+t)}([T_\ell, T_r])} \lesssim (1{+}\|\YY\|_{\myk})^{p+1} \nnorm{u}_{\mD^0_{T_r}}
\end{align*}
where in the last step we used the estimate on $F$ from Assumption \ref{assu:parameters_eqn}. Eventually we find the following bound:
\begin{align*}
	\|u^{\sharp}\|_{\LL^{\hb, 2\alpha {+} 1 {-}\ve}_{e(l+t)}([T_{\ell}, T_r])} \lesssim (1{+}\| \YY\|_{\myc}{+} \|\nu \|_{\mathcal{X}})^{p{+}1}(1 + \nnorm{u_{T_\ell}} + (T_r {-} T_{\ell})^{\gamma \wedge \gamma_2} \nnorm{u}).
\end{align*}

\textit{Step 3.} Now we can conclude that for some $q$ large enough and 
\[
T^* = (T_r{-}T_{\ell}) \lesssim (1 {+} \|\YY\|_{\myc}{+} \|\nu \|_{\mc X})^{-q}
\] 
the map $(\II, \II^\prime, \II^\sharp)^2$ is a contraction on $\mD^{T_{\ell}}_{T_r}(\YY, u_{T_\ell})$ and thus it has a unique fixed point. Indeed, due to the presence of the square, and our assumptions on $F$, the derivative $\II^\prime$ inherits the contractive property from $\II$.
Since the length $T^*$ of the interval $[T_{\ell}, T_r]$ could be chosen independently of $u_{T_\ell}$, we can iterate this procedure and concatenate the fixed points to get a solution on $[0, T_h].$
Then the exponential bound follows immediately by observing that we need to iterate approximately $T_h(1 {+} \|\YY\|_{\myc}{+} \|\nu \|_{\mc X})^{q}$ times the inequality
\begin{align*}
\nnorm{\II(u)}_{T_{\ell}, T_r} &\le \overline{C} (1 {+} \|\YY\|_{\myc}{+} \|\nu \|_{\mc X})^{p{+}1}(1 {+} \hbar(T_{\ell},  \nnorm{\II(u)}_{0, T_{\ell}}) ) \\
&  \le C(T^*) (1 {+} \|\YY\|_{\myc}{+} \|\nu \|_{\mc X})^{p{+}1}(1 {+} \nnorm{ \II(u)}_{0, T_{\ell}} ).
\end{align*}
where in the last step we used that $T_\ell \ge T^*$ so that we have a good bound on $\hbar.$
The local Lipschitz dependence on the parameters follows along the same lines. 
\end{proof}

In the remainder of the section we will apply this to several concrete linear equations.

\subsection{Rough Heat Equation}

In this section we show how to solve Equation \eqref{eqn:w^p}, which we recall here:
\begin{align*}
\LLL w^P = & \ \big[(X X^{\BB} {-} X \reso X^{\BB}) + \LLL(Y^{\BC} {+} Y^{\BD})  + X^{\BA} X^{\BB} + \hh(X^{\BB})^2\big]w^P \\
    & \ + [X {+} X^{\BA} {+} X^{\BB}]\partial_x w^P, \\
    w^P(0) =& \  w_0.
\end{align*}
This equation can be written in the form of Equation (\ref{eqn:abstract}) with 
\begin{align*}
	R(\YY)(u) = &  \big[X \para X^{\BB} {+} \LLL(Y^{\BC} {+} Y^{\BD}) {+} X^{\BA} X^{\BB} {+} \hh(X^{\BB})^2\big]u \\
	& + X \para \partial_x u + [X^{\BA} {+} X^{\BB}]\partial_x u , \\
	F(\YY)(u) = & X^{\BB} u + \partial_x u.
\end{align*}

Our aim is clearly to apply Theorem  \ref{thm:solution_abstract}: For this reason we have to check the requirements from Assumption \ref{assu:parameters_eqn}. The first step is counting the regularities. Taking away the time derivative (which for technical reasons we treat differently) we find that:
\begin{align*}
\|R(\YY)(u){-}\partial_t(Y^{\BC}{+}Y^{\BD})u \|_{\MM^{\bp} \CC^{2\alpha{-}1}_{e(l+t)p(2a)}([T_\ell, T_r])} \lesssim \|\YY \|_{\myk}^2 \| u\|_{\LL^{\bp, \alpha{+}1{-}\ve}_{e(l+t)}([T_\ell, T_r])}
\end{align*}
so that Proposition \ref{prop:schauder_estimate} applied to this term and Lemma \ref{lem:young_time_product} guarantee:
\[
\|V_{T_\ell}[R(\YY)(u){-}\partial_t(Y^{\BC}{+}Y^{\BD})u]\|_{\LL^{\bp, \nu}_{e(l+t)}([T_\ell, T_r])} \lesssim (1+\|\YY \|_{\myk})^2 \| u\|_{\LL^{\bp, \alpha{+}1{-}\ve}_{e(l+t)}([T_\ell, T_r])}
\]
for any $\nu < 2\alpha{+}1{-}4a/\delta.$ Thus the third and fourth estimates from Lemma \ref{lem:interpolation_schauder} then provide the bound:
\[
\|V_{T_\ell}[R(\YY)(u){-}\partial_t(Y^{\BC}{+}Y^{\BD})u]\|_{\LL^{\bp, 2\alpha{+}1{-}\ve}_{e(l+t)}([T_\ell, T_r])} \lesssim (T_{\ell}{-}T_r)^\gamma (1+\|\YY \|_{\myk})^2 \| u\|_{\LL^{\bp, \alpha{+}1{-}\ve}_{e(l+t)}([T_\ell, T_r])}
\]
for $\gamma = (\ve {-}4a/\delta)/2$. Similar estimates hold for the product $\partial_t(Y^{\BC}{+}Y^{\BD})u$ in view of Lemma \ref{lem:young_time_product}.

Let us pass to $F$. We immediately find:
\begin{align*}
\| F(\YY, u)\|_{\LL^{\bp, \alpha{-}\ve}_{e(l+t)p(a)}(T_\ell, T_r)} \lesssim (1{+}\| \YY \|_{\myc}) \| u \|_{\LL^{\bp, \alpha {+} 1 {-} \ve}_{e(l{+}t)(T_l, T_r)}},
\end{align*}

since by Lemma \ref{lem:dervtv_interp} the derivative $\partial_x u$ is controlled by
\[
	\| \partial_x u \|_{\LL^{\bp, \alpha{-}\ve}_{e(l+t)}(T_\ell, T_r)} \lesssim \| u \|_{\LL^{\bp, \alpha {+}1{-}\ve}_{e(l+t)}(T_\ell, T_r)}.
\]
The bounds for the differences follow from the bilinearity of $R$ and $F$. Hence applying Theorem \ref{thm:solution_abstract} guarantees the following result.

\begin{proposition}\label{prop:existence_rhe}
	For $l, \ve, \zeta, b, a, \beta, \beta^\prime, \hb$ as in the requirements of Assuption \ref{assu:parameters_eqn}, Equation \eqref{eqn:w^p} admits a unique paracontrolled solution with local Lipschitz dependence upon the parameters. That is, for initial conditions $w_0^1, w_0^2$ and extended data $\YY_1, \YY_2$ which satisfy the requirements of Assumption~\ref{assu:parameters_eqn}, there exist respectively two unique solutions $w_1^P, w_2^P$ to the RHE, that satisfy:
	\[
	\nnorm{w^{P}_1 : w^{P}_2} \lesssim_M \|w^1_0 - w^2_0 \|_{\CC^{\beta}_{e(l)}} + \| \YY_1 - \YY_2 \|_{\myc}.
	\]
Moreover we can bound the norm of the solution in terms of the extended data as follows:
\[
\nnorm{w^P} \lesssim e^{C T_h \big( 1 {+} \| \YY \|_{\myc} \big)^q}( 1{+} \| w_0 \|_{\CC^{\beta}_{e(l)}} )
\]
 for some $q \ge 0$ large enough. In particular, if $\zeta = 0$ these are the unique solutions to the RHE in the sense of Definition \ref{def:rhe_solutions}.
\end{proposition}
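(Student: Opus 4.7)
The plan is to realize equation \eqref{eqn:w^p} as a concrete instance of the abstract equation \eqref{eqn:abstract} and then invoke Theorem \ref{thm:solution_abstract} directly. The natural identification, as foreshadowed in the paragraphs preceding the proposition, is
\begin{align*}
R(\YY)(u) &= \big[X \para X^{\BB} + \LLL(Y^{\BC} + Y^{\BD}) + X^{\BA} X^{\BB} + \hh (X^{\BB})^2\big] u + X \para \partial_x u + (X^{\BA} + X^{\BB}) \partial_x u, \\
F(\YY)(u) &= X^{\BB}\, u + \partial_x u.
\end{align*}
With this choice the ill-posed resonant product $X \reso \partial_x u$ in \eqref{eqn:abstract} captures exactly the term of \eqref{eqn:w^p} that requires the paracontrolled structure, and the constraint $w' = F(\YY)(w^P)$ matches the derivative prescription of Definition \ref{def:rhe_solutions}.

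The bulk of the work is then verifying Assumption \ref{assu:parameters_eqn}. For $F$ this is short: the paraproduct bound of Lemma \ref{lem:paraproduct_estimates} controls $X^{\BB} u$ and Lemma \ref{lem:dervtv_interp} controls $\partial_x u$, and bilinearity in $(\YY, u)$ upgrades the resulting bounds to the required Lipschitz estimates. For $R$ the delicate point, which I would flag as the main obstacle, is the contribution $\LLL(Y^{\BC} + Y^{\BD})\, u$: it carries a distributional time derivative and therefore cannot be estimated pointwise in time. I would split $R$ into two pieces. The first piece, $R - \partial_t(Y^{\BC}+Y^{\BD})u$, has pointwise-in-time spatial regularity at least $2\alpha-1$ and polynomial growth at most $p(2a)$, so Proposition \ref{prop:schauder_estimate} places $V_{T_\ell}$ of it in $\LL^{\bp, \nu}_{e(l+t)}$ for any $\nu < 2\alpha + 1 - 4a/\delta$, and the last two estimates of Lemma \ref{lem:interpolation_schauder} upgrade this to the target space $\LL^{\bp, 2\alpha+1-\ve}_{e(l+t)}$ at the cost of a small factor $(T_r - T_\ell)^\gamma$ with $\gamma = (\ve - 4a/\delta)/2 > 0$. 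The second piece, $\partial_t(Y^{\BC}+Y^{\BD})u$, is precisely what Lemma \ref{lem:young_time_product} is designed for: applied after $V_{T_\ell}$ it yields a bound of the same form, again with a short-time factor, provided the standing constraints on $\alpha$ and $a/\delta$ hold. Joint Lipschitz dependence on $(\YY, u)$ follows in both cases from the bilinearity of each term.

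With Assumption \ref{assu:parameters_eqn} verified, Theorem \ref{thm:solution_abstract} produces the unique paracontrolled $w^P$, the exponential a priori bound in $\|\YY\|_{\myc}$, and the local Lipschitz dependence on $(w_0, \YY)$, which together constitute the main claims of the proposition. I expect the only genuinely non-routine point in carrying out this program to be keeping careful track of the interplay between the blow-up exponents $\hb, \bp$ at $t=0$ and the short-time gain needed to close the contraction step inside Theorem \ref{thm:solution_abstract}, since the $\LLL(Y^{\BC}+Y^{\BD})$ contribution introduces an additional layer of time regularity that must be reconciled with the weights $\MM^{\bp}$ appearing in Assumption \ref{assu:parameters_eqn}. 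For the final assertion, it suffices to observe that when $\zeta = 0$ the admissible range for $\ve$ in Assumption \ref{assu:parameters_eqn} contains the interval $(6a/\delta, 3\alpha - 1)$ appearing in Definition \ref{def:rhe_solutions}, so the paracontrolled structure produced by Theorem \ref{thm:solution_abstract} coincides with the one required there and $w^P$ is indeed a paracontrolled solution to the RHE in the sense of that definition.
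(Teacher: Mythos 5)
Your proposal is correct and follows essentially the same route as the paper: the identical identification of $R$ and $F$, the same splitting off of the $\partial_t(Y^{\BC}{+}Y^{\BD})u$ term handled via Lemma \ref{lem:young_time_product}, the same use of Proposition \ref{prop:schauder_estimate} and Lemma \ref{lem:interpolation_schauder} to gain the factor $(T_r{-}T_\ell)^{\gamma}$ with $\gamma = (\ve{-}4a/\delta)/2$, Lemma \ref{lem:dervtv_interp} for the derivative in $F$, bilinearity for the Lipschitz bounds, and finally Theorem \ref{thm:solution_abstract}. No gaps; the extra remarks on the $\zeta=0$ case are consistent with the paper's statement.
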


We can also solve a time-reversed version of the RHE, Equation \eqref{eqn:RHE_backwards_for_polymer}. In particular, we are also interested in uniform estimates over parabolic scaling of the equation. Consider some $\YY \in \myk$ with $\xi = \LLL Y$ and let us write $\ola{f}_{s,\lambda}(t,x) = f(T{-}s{-}\lambda^2 t, \lambda x),$  as well as $f_{s, \lambda}(t,x) = f(s{+}\lambda^2t, \lambda x)$. The aim is to solve the equation:

\[
(\partial_t {+}\hh \Delta)w = - \lambda^2 \ola{\xi}_{s,\lambda} \diamond w, \qquad w(\tau) = \mathfrak{w}_0
\]
for some $\tau \in[0, \lambda^{-2}(T{-}s)]$, where formally $\lambda^2 \ola{\xi}_{s,\lambda} \diamond w = \lambda^2 (\ola{\xi}_{s,\lambda}{-}c^{\BA})w$. If $\mathfrak{w}_0$ is of the form:
\[
\mathfrak{w}_0(\cdot) = e^{(\ola{Y}_{s,\lambda} {+} \ola{Y}^{\BA}_{s,\lambda} {+} \ola{Y}^{\BB}_{s, \lambda})(\tau, \cdot)}w_0(\cdot)
\]
with $w_0 \in \CC^{\beta}_{e(l)}$ for some $\beta \in (2\alpha{-}1, 2\alpha{+}1], l \in \RR$ we consider solutions $w$ of the form:
\[
w = e^{\ola{Y}_{s, \lambda} {+} \ola{Y}^{\BA}_{s, \lambda} {+} \ola{Y}^{\BB}_{s, \lambda}} w^P,
\]
with the time-reversed $w^P_{rev}(t,x) = w^P(\tau{-}t, x)$ solving the equation
\begin{align*}
(\partial_t {-} & \hh \Delta) w^P_{rev} =  -\big[(X_{\mu, \lambda} X_{\mu, \lambda}^{\BB} {-} X_{\mu, \lambda} \reso X_{\mu, \lambda}^{\BB}) + \LLL(Y_{\mu, \lambda}^{\BC} {+} Y_{\mu, \lambda}^{\BD}) \\
& \ + X_{\mu, \lambda}^{\BA} X_{\mu, \lambda}^{\BB} + \hh(X_{\mu, \lambda}^{\BB})^2\big]w^P_{rev} + [X_{\mu, \lambda} {+} X_{\mu, \lambda}^{\BA} {+} X_{\mu, \lambda}^{\BB}]\partial_x w^P_{rev}, \\
   & \ \  w^P_{rev}(0) = \  w_0.
\end{align*}
with $\mu = T{-}s{-}\lambda^2\tau$. This is exactly the same equation as for the paracontrolled term of the RHE, up to translations and scaling. 
Following Definition \ref{def:ykpz_space} and  Proposition \ref{prop:rescale_tranlate_external_data}, translating the enhanced data by a factor $\mu$ and rescaling by a factor $\lambda$ gives rise to a valid element $\YY_{\mu, \lambda}$of $\myc$ for $b = 2a$ and any $\zeta \in (1/2{-}\alpha, \alpha]$. Hence the previous equation admits a paracontrolled solution in the sense of Proposition \ref{prop:existence_rhe}. We collect this information in the following result.
 
\begin{corollary}\label{cor:backwards_RHE}
For any $w_0 \in \CC^{\beta}_{e(l)},$ for $\beta \in (2\alpha{-}1, 2\alpha{+}1], l \in \RR$ and $\lambda \in [0, 1], \ \tau \in [0, \lambda^{-2}(T{-}s)],$ and $\YY \in \myk$ there exists a unique paracontrolled solution $w^P_{rev}$ to the previous equation. In particular the following bound holds for some $\kappa, q \ge 0$ and any $\ve \in (6a/\delta{+}1{-}2\alpha, 3\alpha{-}1)$:
\begin{align*}
\| w^P \|_{\LL^{\bp, \alpha{+}1{-}\ve}_{e(\kappa)}([0, \tau])} \lesssim e^{C\tau \big( 1 {+} \| \YY_{T{-}s{-}\lambda^2\tau, \lambda} \|_{\myc} \big)^q}( 1{+} \| w_0 \|_{\CC^{\beta}_{e(l)}} ).
\end{align*}
Moreover the solution depends continuously on the parameters in the following sense: for two different enhanced data $\YY_i$ and initial conditions $w_0^i$ such that
\[
\| \YY_i \|_{\myk}, \|w_0^i\|_{\CC^{\beta}_{e(l)}} \le M
\]
we can estimate
\begin{align*}
\|w_1^p{-}w_2^p \|_{\LL^{\bp, \alpha{+}1{-}\ve}_{e(l+t)}([0, \tau])} \lesssim_M \|w_0^1{-}w_0^2 \|_{\CC^{\beta}_{e(l)}}+\|(\YY_1)_{T{-}s{-}\lambda^2\tau, \lambda}{-}(\YY_2)_{T{-}s{-}\lambda^2\tau, \lambda}\|_{\myc} 
\end{align*}
\end{corollary}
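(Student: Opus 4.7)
The strategy is to observe that the equation for $w^P_{rev}$ is, up to time reversal and parabolic rescaling of the data, literally the same equation as \eqref{eqn:w^p}, for which Proposition \ref{prop:existence_rhe} already provides a complete well-posedness theory. The core of the proof is therefore reduced to checking that the rescaled and translated enhanced data falls within the admissible class $\myc$ for some choice of parameters compatible with Assumption \ref{assu:parameters_eqn}, and then invoking Proposition \ref{prop:existence_rhe} verbatim.

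Concretely, the plan is the following. First, I would set $\mu = T-s-\lambda^2\tau$ and identify the equation satisfied by $w^P_{rev}$ as \eqref{eqn:w^p} driven by $\YY_{\mu,\lambda}$. Second, I would invoke Proposition \ref{prop:rescale_tranlate_external_data} to confirm that $\YY_{\mu,\lambda}$ belongs to $\myc$ with parameters $b = 2a$ and $\zeta \in (1/2-\alpha, \alpha]$, together with a bound on $\|\YY_{\mu,\lambda}\|_{\myc}$ that is uniform in $\lambda \in [0,1]$ and in the translation $\mu \in [0,T]$. Third, since the constraint $\ve > 6a/\delta + 2\zeta$ of Assumption \ref{assu:parameters_eqn} can be satisfied by choosing $\zeta$ arbitrarily close to $1/2-\alpha$ and then $\ve \in (6a/\delta + 1 - 2\alpha, 3\alpha - 1)$ (which is compatible with the table of parameters, since $a/\delta$ can be taken very small), Proposition \ref{prop:existence_rhe} applies and yields existence, uniqueness, the exponential a priori bound, and locally Lipschitz dependence on $(w_0, \YY_{\mu,\lambda})$ in the norm $\nnorm{\cdot}$. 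Translating the Lipschitz bound on $\YY_{\mu,\lambda}$ back to a bound on $\YY$ through Proposition \ref{prop:rescale_tranlate_external_data} finishes the continuous dependence statement.

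The only delicate point in this reduction is keeping track of the blow-up exponent $\bp$ when $\lambda$ is small: the time interval $[0,\tau]$ may be long (of order $\lambda^{-2}$), and one must verify that the exponential bound $\exp(C\tau(1+\|\YY_{\mu,\lambda}\|_{\myc})^q)$ from Proposition \ref{prop:existence_rhe} is the right form. This will follow automatically because the constants in Proposition \ref{prop:existence_rhe} depend only on the time horizon and on $\|\YY_{\mu,\lambda}\|_{\myc}$, both of which enter our statement explicitly. I expect no genuine obstacle; the work is bookkeeping to match parameters and an appeal to the scaling lemma for the enhanced data.
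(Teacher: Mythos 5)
Your proposal is correct and follows essentially the same route as the paper: the text preceding the corollary identifies the equation for $w^P_{rev}$ with \eqref{eqn:w^p} driven by the translated and rescaled data, uses Proposition \ref{prop:rescale_tranlate_external_data} to place $\YY_{\mu,\lambda}$ in $\myc$ with $b=2a$ and $\zeta\in(1/2{-}\alpha,\alpha]$, and then invokes Proposition \ref{prop:existence_rhe} (i.e.\ Theorem \ref{thm:solution_abstract}) for existence, uniqueness, the exponential bound, and the Lipschitz dependence. Your parameter bookkeeping ($\zeta$ close to $1/2{-}\alpha$ so that $\ve>6a/\delta+2\zeta$ matches the stated range $\ve\in(6a/\delta{+}1{-}2\alpha,3\alpha{-}1)$, uniformity in $\lambda\in(0,1]$ and in the translation) is exactly what the paper relies on.
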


\subsection{Sharp Equation}
Now we consider the ``Sharp" equation, that is Equation (\ref{eqn:h^sharp}):
\begin{equation}
	\begin{aligned}
		\LLL u  = Z(\YY, h^P, h') + X \reso \partial_x u, \ \ u(0) = u_0 \in \CC_{e(l)}^{\beta}.
	\end{aligned}
\end{equation}
We do not need to look for paracontrolled solutions for this equation: it falls in the range of Equation (\ref{eqn:abstract}) with $F = 0$, and therefore we expect that the solution has a trivial paracontrolled structure. For the set of parameters 
\[
\nu = (h^P, h') \in \mathcal{X} = \LL^{\bp, \alpha {+}1}_{e(l)} \times \LL^{\bp, \alpha}_{e(l)},
\]
(recall the definition of $\hb, \bp$ from Equation \eqref{eqn:def_beta_hat_prime_solution_theorem}) we define 
\[
	R(\YY, h^P, h')(u) = Z(\YY, h^P, h').
\]
We only need to check that $R$ satisfies the properties of Assumption \ref{assu:parameters_eqn}. Since $R$ is constant in $u$ this reduces to the first and third property. Due to the non-linearity in $h^P$ we work under the additional assumption that $\beta>0$. We see that:
\begin{align*}
V_{T_\ell}(Z(\YY, h^P, h')) = Y^{\BC}{+}Y^{\BD} {-} P_{\cdot{-}T_\ell}[(Y^{\BC}{+}Y^{\BD})(T_\ell)] {+} V_{T_\ell}(Q(\YY, h^P, h'))
\end{align*}
with $Q$ defined as
\begin{align*}
	Q(\YY, h^P, h') = & X \para X^{\BB} {+} X^{\BA}X^{\BB} {+} \hh(X^{\BB})^2  {+} \hh (\partial_x h^P)^2 +(X^{\BA} {+} X^{\BB})\partial_x h^P {+} X \para \partial_x h^P \\ 
	& +  X \reso \partial_x(h' \ppara Y^{\scalebox{0.8}{\RS{r}}}) +  \big[ h' \para \LLL(Y^{\scalebox{0.8}{\RS{r}}}){-} \LLL(h' \ppara Y^{\scalebox{0.8}{\RS{r}}})\big].
\end{align*}
Now define \begin{equation}\label{eqn:def_beta_sharp}\beta^\sharp = \bp \vee (1{-}\beta)\end{equation} and note that due to our assumptions on $\beta$ we have that $\beta^\sharp \in (0,1)$. In view of the regularity assumptions on the enhanced data (see Table \ref{table:kpz}) and the paraproduct estimates from Lemmata \ref{lem:paraproduct_estimates} and \ref{lem:parabolic_paraproduct_estimates} we see that:
\[
\| Q(\YY, h^P, h') \|_{\MM^{\beta^\sharp} \CC^{2\alpha{-}1}_{e(2l)}} \lesssim (1 {+} \|\YY\|_{\myk}{+} \| h^P\|_{\LL^{\bp, \alpha{+}1}_{e(l)}}+ \| h'\|_{\LL^{\bp, \alpha}_{e(l)}})^4
\]
so that an application of the Schauder estimates 	of Proposition \ref{prop:schauder_estimate} guarantees that:
\[
\|V_{T_\ell}(Z(\YY, h^P, h')) \|_{\LL^{\beta^\sharp, 2\alpha{+}1}_{e(2l)}([T_\ell, T_r])} \lesssim (1 {+} \|\YY\|_{\myk}{+} \| h^P\|_{\LL^{\bp, \alpha{+}1}_{e(l)}}+ \| h'\|_{\LL^{\bp, \alpha}_{e(l)}})^4.
\]
The local Lipschitz dependence on the parameters then follows similarly by multi-linearity.
Thus we can apply Theorem~\ref{thm:solution_abstract} and obtain the result below.

\begin{proposition}\label{prop:existence_sharp}
	For any $l \in \RR$ $u_0 \in \CC^{\beta}_{e(l)}$, for $\beta \in (0, 2\alpha{+}1]$, and $\YY \in \myk, (h^P, h') \in \mX $ there exists a $\kappa> l$ such that Equation \eqref{eqn:h^sharp} has a unique solution $h^{\sharp}$ in $\LL^{\beta^\sharp, 2\alpha{+}1}_{e(\kappa)}$, for $\beta^\sharp$ as in Equation \eqref{eqn:def_beta_sharp}. For two initial conditions $u_0^i$ and two extended data $\YY_i$ and parameters $h^P_i, h'_i$ where $i = 1,2$, which satisfy the requirements of Assumption \ref{assu:parameters_eqn}, there exist respectively unique solutions $h_1^{\sharp}, h_2^{\sharp}$ to the equation and they satisfy
	\[
	\| h_1^{\sharp} {-}  h_2^{\sharp}\|_{\LL^{\beta^\sharp, 2\alpha{+}1}_{e(\kappa)}} \lesssim_M \|u^1_0 - u^2_0 \|_{\CC^{\beta}_{e(l)}} + \| \YY_1 - \YY_2 \|_{\myk} + \|h'_1 {-} h'_2 \|_{\LL^{\bp, \alpha}_{e(l)}} + \| h^P_1 {-} h^P_2\|_{\LL^{\bp, \alpha{+}1}_{e(l)}}.
	\]
\end{proposition}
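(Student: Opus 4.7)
The plan is to identify the Sharp equation as a special case of the abstract linear paracontrolled equation \eqref{eqn:abstract} and then simply invoke Theorem \ref{thm:solution_abstract}. Since the forcing $Z(\YY, h^P, h')$ contains no $u$-dependence and there is no paraproduct $F(\YY)(u)\para X$ on the right-hand side, we take $F\equiv 0$ (so that the solution satisfies $u'=0$ and reduces to its ``sharp'' component $u=u^\sharp$) and set $R(\YY, \nu)(u) := Z(\YY, h^P, h')$ with the parameter $\nu=(h^P,h')$ living in $\mX = \LL^{\bp,\alpha+1}_{e(l)}\times\LL^{\bp,\alpha}_{e(l)}$. Because $R$ is constant in $u$, the Lipschitz-in-$u$ conditions of Assumption~\ref{assu:parameters_eqn}(1) are trivial, and only the first and third bounds (the size bound and the locally Lipschitz dependence on $\YY,\nu$) have content.

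To verify those, I would split $Z = \LLL(Y^{\BC}+Y^{\BD}) + Q(\YY,h^P,h')$ as in the preamble to the statement. The first piece integrates trivially: $V_{T_\ell}\LLL(Y^{\BC}+Y^{\BD}) = (Y^{\BC}+Y^{\BD})(\cdot) - P_{\cdot-T_\ell}(Y^{\BC}+Y^{\BD})(T_\ell)$, and since both $Y^{\BC}$ and $Y^{\BD}$ lie in $\LL^{2\alpha+1}_{p(a)}\subset \LL^{\beta^\sharp,2\alpha+1}_{e(2l)}$ by Table \ref{table:kpz}, this piece is directly in the desired space (with $\beta^\sharp\in(0,1)$ because $\beta>0$ ensures $\bp<1$ and $1-\beta<1$). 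For each summand of $Q$, one applies the paraproduct estimates of Lemma~\ref{lem:paraproduct_estimates} and the parabolic paraproduct/commutator estimates of Lemma~\ref{lem:parabolic_paraproduct_estimates}; a quick inspection of the regularity indices in Table~\ref{table:kpz} shows that every term lands in $\MM^{\beta^\sharp}\CC^{2\alpha-1}_{e(2l)}$ with a bound multilinear of total degree at most four, yielding the announced fourth-power estimate. The Schauder bound of Proposition~\ref{prop:schauder_estimate} then produces the required estimate in $\LL^{\beta^\sharp,2\alpha+1}_{e(2l)}([T_\ell,T_r])$, and multilinearity in $(\YY,h^P,h')$ gives the locally Lipschitz dependence on the parameters.

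The main obstacle, as the statement already flags, is the nonlinear term $\tfrac12(\partial_x h^P)^2$, which naively carries a blow-up of order $2\bp$ at $t=0$; since $\bp=(\alpha+1-\beta)/2\vee 0$ can exceed $1/2$ when $\beta$ is small, this could in principle overshoot the admissible range $\beta^\sharp<1$ for the Schauder estimate. This is exactly why we must assume $\beta>0$, and why one proves the estimate with a small sacrifice of spatial regularity: since we only need $(\partial_x h^P)^2\in\CC^{2\alpha-1}$ whereas $\partial_x h^P\in\CC^{\alpha}$ affords regularity up to $\alpha$, one can first trade part of the blow-up for regularity via Lemma~\ref{lem:interpolation_schauder} and reduce the effective blow-up of the quadratic term to $\beta^\sharp=\bp\vee(1-\beta)$.

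Once the two estimates of Assumption~\ref{assu:parameters_eqn}(1) are in place, Theorem~\ref{thm:solution_abstract} applies with $\ve>0$ small enough (and $\zeta=0$, $b=2a$), providing a unique solution $h^\sharp\in \LL^{\beta^\sharp,2\alpha+1-\ve}_{e(\kappa)}$ for some $\kappa>l$; since the $\ve$-loss is harmless here (the Schauder estimate actually gives the full regularity $2\alpha+1$), we obtain $h^\sharp\in\LL^{\beta^\sharp,2\alpha+1}_{e(\kappa)}$. The Lipschitz estimate for two sets of data $(u_0^i,\YY_i,h^P_i,h'_i)$ is the final claim of Theorem~\ref{thm:solution_abstract}, and completes the proof.
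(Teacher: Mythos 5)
Your proposal is correct and follows essentially the same route as the paper: cast the Sharp equation as the abstract equation \eqref{eqn:abstract} with $F\equiv 0$ and $R(\YY,\nu)(u)=Z(\YY,h^P,h')$ constant in $u$ with $\nu=(h^P,h')\in\mX$, verify only the nontrivial parts of Assumption \ref{assu:parameters_eqn} through the split $Z=\LLL(Y^{\BC}{+}Y^{\BD})+Q$, the paraproduct estimates and Proposition \ref{prop:schauder_estimate} (with $\beta^\sharp=\bp\vee(1{-}\beta)\in(0,1)$ thanks to $\beta>0$), invoke Theorem \ref{thm:solution_abstract}, and then apply the Schauder estimates once more to recover the full regularity $2\alpha{+}1$. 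The only cosmetic deviations (choosing $b=2a$ rather than $b=a$, and attributing the blow-up exponent $\beta^\sharp$ directly to the abstract theorem rather than to the final Schauder bootstrap) do not affect the argument.
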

\begin{proof}
	Theorem \ref{thm:solution_abstract} yields that the paracontrolled solution to Equation (\ref{eqn:h^sharp}), which according to the theorem has only regularity $\alpha{+}1{-}\ve$, has a vanishing derivative since $F = 0$. Moreover $\YY \in \myc$ for $b = a$ and $\zeta = 0$. Thus applying one last time the Schauder estimates to the solution $h^{\sharp}$ give us the bounds in $\LL^{\beta^\sharp, 2\alpha{+}1}_{e(\kappa)}.$
\end{proof}

\appendix
\addtocontents{toc}{\protect\setcounter{tocdepth}{-1}}

\section{Exponential and Logarithm on Weighted H\"older Spaces}

Here we discuss the regularity of the exponential and logarithmic maps on weighted H\"older spaces.

\begin{lemma}\label{lem:exp}
Consider any $\alpha \in (0, 2) \setminus \{1\}$ and $R, \hat{l} \ge 0.$ Then there exists an $l = l(R) \ge 0$ such that the exponential function $\exp$ maps
\[
	\exp : \bigg\{ f \in \CC^{\alpha}_{e(\hat{l})}(\RR^d) \text{ s.t. } \norm{f}_{\infty, p(\delta)} \le R \bigg\} \longrightarrow \CC^{\alpha}_{e(l)}(\RR^d).
\]
Moreover the exponential map is locally Lipschitz continuous, i.e. for $f,g$ in the set above such that 
\[
    \| f\|_{\alpha, e(\hat{l}) },  \| g\|_{\alpha, e(\hat{l})}  \le M
\]
we can estimate:
\[
    \| \exp(f) - \exp(g) \|_{\alpha, e(l)} \lesssim_{M,R} \| f - g \|_{\alpha, e(\hat{l})}.
\]
\end{lemma}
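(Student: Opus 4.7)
The plan is to use the intrinsic characterization of weighted Hölder--Besov spaces from Corollary \ref{cor:caracterisation_besov_holder}, which for $\alpha\in(0,2)\setminus\{1\}$ reduces the $\CC^\alpha_{e(l)}$-norm to the $L^\infty_{e(l)}$-norm of $\exp(f)$ and (when $\alpha>1$) of its derivative, plus the corresponding small-scale Hölder seminorms with increments of size at most one. The decisive observation is that the hypothesis $\|f\|_{\infty, p(\delta)} \le R$ provides the global pointwise bound
\[
|f(x)| \le R(1+|x|)^\delta \le R\cdot 2^\delta (1+|x|^\delta),
\]
so that $|\exp(f(x))| \le e^{R\, 2^\delta}\, e(R\, 2^\delta)(x)$ uniformly in $x$. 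For $l \ge R\cdot 2^\delta + \hat{l}$ this will immediately yield the $L^\infty_{e(l)}$-bound on $\exp(f)$.

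For the Hölder seminorm when $\alpha \in (0,1)$, the first step is to write, for $|x-y| \le 1$,
\[
\exp(f(x)) - \exp(f(y)) = (f(x)-f(y))\int_0^1 \exp(s f(x) + (1-s) f(y))\,ds.
\]
The integrand is controlled pointwise by $e^{R\,2^\delta}\, e(R\,2^\delta)(x)$, using the $\omega$-moderacy of $e(\cdot)$ to swap the weight at $y$ for the weight at $x$ on unit balls; combined with $|f(x)-f(y)|\le \|f\|_{\CC^\alpha_{e(\hat l)}}\, e(\hat l)(x)\, |x-y|^\alpha$, this yields the desired weighted Hölder estimate. For $\alpha \in (1,2)$, I would additionally apply the chain rule $\partial_x \exp(f) = f'\exp(f)$ and decompose the increment $\partial_x\exp(f)(x) - \partial_x\exp(f)(y)$ by the product rule, handling each piece with the bounds already established plus the $(\alpha{-}1)$-Hölder control on $f'$ provided by $\|f\|_{\CC^\alpha_{e(\hat l)}}$.

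For the local Lipschitz estimate the key identity is
\[
\exp(f) - \exp(g) = (f-g)\int_0^1 \exp(s f + (1-s) g)\,ds,
\]
valid pointwise. Since $\|s f + (1-s) g\|_{\infty, p(\delta)} \le R$ uniformly in $s\in[0,1]$, the argument above (with $\|f\|_{\CC^\alpha_{e(\hat l)}}$ replaced by $M$) shows that $\int_0^1 \exp(s f + (1-s) g)\,ds$ is bounded in $\CC^\alpha_{e(l_1)}$ for some $l_1$ depending only on $R, \hat l$ and $M$. Combined with the elementary product estimate $\|F\cdot G\|_{\CC^\alpha_{z_1 z_2}} \lesssim \|F\|_{\CC^\alpha_{z_1}} \|G\|_{\CC^\alpha_{z_2}}$, which again follows directly from Corollary \ref{cor:caracterisation_besov_holder}, this gives the required inequality $\|\exp(f) - \exp(g)\|_{\CC^\alpha_{e(l)}} \lesssim_{R, M} \|f-g\|_{\CC^\alpha_{e(\hat l)}}$ for $l$ sufficiently large.

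The main obstacle is keeping careful track of how the weight inflates at each step: multiplication of $\exp(f)$, which carries an $e(R\, 2^\delta)$ weight, with Hölder increments of $f$ or $f'$, which carry $e(\hat l)$ weights, forces us to choose $l$ strictly larger than $R\cdot 2^\delta + \hat l$. This is manageable by taking for instance $l := R\cdot 2^\delta + \hat l + 1$. The $\omega$-moderacy of exponential weights is essential for passing pointwise bounds between nearby points, which is precisely why the characterization of Corollary \ref{cor:caracterisation_besov_holder} is restricted to unit-scale Hölder increments.
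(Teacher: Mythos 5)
Your proposal is correct and follows essentially the same route as the paper: the pointwise bound $|f(x)|\le R(1+|x|)^{\delta}$ turning $\exp(f)$ into an $e(R')$-weighted bounded function, the classical characterization of Corollary \ref{cor:caracterisation_besov_holder} for the Hölder seminorms, the integral remainder (mean value) identity for $\exp(f)-\exp(g)$, and the chain rule $\partial_x e^{f}=e^{f}\partial_x f$ for $\alpha\in(1,2)$. The only cosmetic difference is that you package the local Lipschitz step through the product estimate $\norm{F\cdot G}_{\alpha, z_1 z_2}\lesssim \norm{F}_{\alpha,z_1}\norm{G}_{\alpha,z_2}$ applied to $(f-g)\int_0^1 \exp(sf+(1-s)g)\,ds$, whereas the paper estimates the weighted increments directly from the same identity; both are valid and carry the same weight bookkeeping.
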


\begin{proof}
	Due to our choice of $\alpha$ and the classical characterization of Besov spaces (see Corollary \ref{cor:classical_besov_characterisation}), we need to find a bound for the uniform norm of $\exp(f)$ and, if $\alpha > 1$, $\partial_x \exp(f)$ as well as for the $\alpha$ H\"older seminorm of $\exp(f)$ or $\partial_x \exp(f)$, according to whether $\alpha<1$ or $\alpha>1$ respectively. Regarding the first bound, since
	$
	    \norm{f}_{\infty, p(\delta) } \le R
	$
	it follows directly that
	\[
	    \exp(f(x) - R p(\delta)(x)) \le 1,
	\]
	implying that \[\norm{\exp(f)}_{\infty, e(R)} \le 1.\]
	Furthermore for any $\beta \in (0,1\wedge \alpha]$ we also have
	\[
	\sup_{|x - y| \le 1} \frac{|e^{f(x)} - e^{f(y)}|}{e(\tilde{l}+\hat{l})(x)|x-y|^{\beta}} \le  \sup_{|x - y| \le 1} \frac{e^{f(x) \vee f(y)}}{e(\tilde{l})(x)} \frac{|f(x) - f(y)|}{e(\hat{l})(x)|x-y|^{\beta}}\lesssim \norm{f}_{\beta, e(\hat{l})}
	\]
    whenever $\tilde{l} > R.$ Moreover $\exp$ is also locally Lipschitz continuous, since for any two functions $f$ and $g$ as in the statement of this lemma we can write:
    \[
        e^{f(u)} - e^{g(u)} = \int_0^1\exp \big(g(u) + t (f(u) {-} g(u)) \big) (f(u) {-} g(u)) dt
    \]
    and therefore, for an appropriate choice of $\tilde{l}$,
    \[
        \norm{\exp(f) - \exp(g)}_{\infty, e(\tilde{l} + \hat{l} )} \lesssim_M \| f - g \|_{\infty, e(\hat{l})}.
    \]
    The same integral remainder formula gives
    \begin{align*}
        |e^{f(x)} - e^{g(x)} - (e^{f(y)} - e^{g(y)})| \le & \bigg| \! \int_0^1 [e^{g(x) + u (f{-}g)(x)} - e^{g(y) +u(f{-}g)(y)}](f{-}g)(x) du \bigg| \\
        +&  \bigg| \! \int_0^1 e^{g(y) + u (f{-}g)(y)}[(f{-}g)(y) -  (f{-}g)(x)] du \bigg|,
    \end{align*}
    so that applying inequalities of the kind $|a \tilde{a} -b \tilde{b}| \le |a-b||\tilde{a}| + |\tilde{a} - \tilde{b}||b|$ along with an appropriate choice of $\tilde{l}$ leads to the bound: 
    \[
        \sup_{|x - y| \le 1} \frac{|e^{f(x)} - e^{g(x)} - (e^{f(y)} - e^{g(y)})| }{ e(\tilde{l}+\hat{l})(x)|x-y|^{\beta} } \lesssim_{M} \| f - g \|_{\beta, e(\hat{l})}.
    \]
Finally, if $\alpha \in (1,2),$ we can write $\partial_x e^f = e^f \partial_x f,$ so that via the previous calculations and through an application of paraproduct estimates we deduce that $\partial_x e^f $ lies in $\CC^{\alpha - 1}_{e(l)}$ for $l = \tilde{l}+ \hat{l},$ along with the local Lipschitz continuity.
\end{proof}

The same calculations also show that the result still holds if we introduce time dependence:
\begin{lemma}\label{lem:exp_2}
Consider any $\alpha \in (0, 2) \setminus \{ 1 \}, \gamma \in (0,1)$ and $R, \hat{l} \ge 0.$ Then there exists an $l = l(R) \ge 0$ depending on $R$ such that the exponential function maps:
\[
	\exp : \bigg\{ f \in \LL^{\gamma, \alpha}_{e(\hat{l})} \text{ s.t. } \sup_{0 \le t \le T} \norm{f(t)}_{\infty, p(\delta)} \le R \bigg\} \longrightarrow \LL^{\gamma, \alpha}_{e(l)}.
\]
Moreover this function is locally Lipschitz continuous i.e. for $f,g$ with 
\[
    \| f\|_{\LL^{\gamma, \alpha}_{e(\hat{l})}}, \| g \|_{\LL^{\gamma, \alpha}_{e(\hat{l})}} \le M
\]
we can estimate:
\[
    \| \exp(f) - \exp(g) \|_{\LL^{\gamma, \alpha}_{e(l)}} \lesssim_{M,R} \| f - g \|_{\LL^{\gamma, \alpha}_{e(\hat{l})}}.
\]
\end{lemma}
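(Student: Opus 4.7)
The plan is to mirror the proof of Lemma \ref{lem:exp} fiberwise in time and then check the parabolic time H\"older regularity separately. Fix $f \in \LL^{\gamma,\alpha}_{e(\hat l)}$ with $\|f\|_{\LL^{\gamma,\alpha}_{e(\hat l)}} \le M$ and $\sup_{0 \le t \le T} \|f(t)\|_{\infty, p(\delta)} \le R$, and choose $l = l(R) \ge 0$ (in particular $l > R$) large enough so that the pointwise bound $e^{R p(\delta)(x)}$ can be absorbed into $e(l)(x)$.

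First, for the spatial part $\|e^{f}\|_{\MM^{\gamma}\CC^{\alpha}_{e(l)}}$, I would apply Lemma \ref{lem:exp} at each time $t>0$: the hypothesis $\|f(t)\|_{\infty,p(\delta)} \le R$ puts us in the setting of that lemma and yields a bound of $\|e^{f(t)}\|_{\CC^{\alpha}_{e(l)}}$ in terms of $\|f(t)\|_{\CC^{\alpha}_{e(\hat l)}}$. Multiplying by $t^{\gamma}$ and using the $\MM^{\gamma}\CC^{\alpha}_{e(\hat l)}$-bound on $f$ gives the required uniform bound in $t$.

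Next, for the parabolic time H\"older seminorm of $t\mapsto t^{\gamma} e^{f(t)}$, I would use the decomposition
\[
t^{\gamma} e^{f(t)} - s^{\gamma} e^{f(s)} = (t^{\gamma} - s^{\gamma})\, e^{f(t)} + s^{\gamma}\bigl(e^{f(t)} - e^{f(s)}\bigr)
\]
together with the integral remainder
\[
e^{f(t,x)} - e^{f(s,x)} = \int_{0}^{1} e^{(1-u)f(s,x) + u f(t,x)}\, du \cdot \bigl(f(t,x) - f(s,x)\bigr).
\]
By convexity and the sublinear growth hypothesis, the integrand is pointwise bounded by $e^{R p(\delta)(x)}$, and this factor is absorbed into the weight $e(l)$. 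For the second contribution, I would rewrite $s^{\gamma}(f(t) - f(s)) = \bigl(t^{\gamma} f(t) - s^{\gamma} f(s)\bigr) - (t^{\gamma} - s^{\gamma}) f(t)$ and invoke the $C^{\alpha/2}L^{\infty}_{e(\hat l)}$-control on $t\mapsto t^{\gamma} f(t)$ from the norm of $\LL^{\gamma,\alpha}_{e(\hat l)}$, together with the sublinear bound on $f(t)$; for the first contribution I would use the sublinear bound directly. What remains is to estimate $|t^{\gamma} - s^{\gamma}|$ by a constant multiple of $|t-s|^{\alpha/2}$ on $[0,T]$, which follows from $|t^{\gamma} - s^{\gamma}| \le |t-s|^{\gamma}$ combined with the uniform bound $|t^{\gamma} - s^{\gamma}| \le T^{\gamma}$ and the finite time horizon.

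Local Lipschitz continuity is obtained in the same way, starting from the integral representation $e^{f(t)} - e^{g(t)} = \int_{0}^{1} e^{(1-u)g(t) + u f(t)}\, du \cdot (f(t) - g(t))$: the fiberwise spatial bound is a direct application of the local Lipschitz part of Lemma \ref{lem:exp}, and the time H\"older piece is controlled by the same decomposition as above, with $f - g$ playing the role of $f$. The main technical point is the bookkeeping in the parabolic H\"older estimate, in particular ensuring that every occurrence of the exponential prefactor from the sublinear growth bound is absorbed into the larger weight $e(l)$ and that the two differences $(t^{\gamma} - s^{\gamma})$ and $(t^{\gamma} f(t) - s^{\gamma} f(s))$ are combined in a way that produces precisely the exponent $\alpha/2$ in $|t-s|$.
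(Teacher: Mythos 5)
Your fiberwise strategy for the spatial part and the integral-remainder treatment of the Lipschitz estimate are fine and are exactly the kind of "same calculations" the paper alludes to (its own proof of this lemma is a one-line reference to Lemma \ref{lem:exp}). The gap is in the time-regularity step. You reduce everything to the elementary inequality $|t^{\gamma}-s^{\gamma}|\lesssim_T |t-s|^{\alpha/2}$, justified by $|t^{\gamma}-s^{\gamma}|\le|t-s|^{\gamma}$ plus boundedness of the time horizon. That inequality is only true when $\gamma\ge\alpha/2$: taking $s=t/2\to 0$ gives $|t^{\gamma}-s^{\gamma}|\asymp t^{\gamma}$ while $|t-s|^{\alpha/2}\asymp t^{\alpha/2}$, so the quotient blows up like $t^{\gamma-\alpha/2}$ whenever $\gamma<\alpha/2$. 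The lemma's hypotheses allow $\gamma<\alpha/2$ (e.g. $\gamma$ small, $\alpha$ close to $2$), and this is in fact the regime in which the lemma is used in the body of the paper, where the blow-up exponent $\beta^\prime=(\alpha+1-\beta)/2\vee 0$ is strictly smaller than half the spatial regularity $(\alpha+1)/2$. So the step as written fails precisely where it is needed.

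Note also where the difficulty concentrates: in your decomposition the term $s^{\gamma}\bigl(e^{f(t)}-e^{f(s)}\bigr)$ can be handled, because after rewriting via $t^{\gamma}f(t)-s^{\gamma}f(s)$ the norm of $\LL^{\gamma,\alpha}_{e(\hat l)}$ gives the $|t-s|^{\alpha/2}$ directly; but the term $(t^{\gamma}-s^{\gamma})\,e^{f(t)}$ has no compensating small-$t$ decay, since $\|e^{f(t)}\|_{\infty,e(l)}$ stays of order one as $t\to 0$ (whereas $t^{\gamma}f(t)$ itself is controlled by the norm). The paper's own technique for such terms elsewhere (cf. the proof of Lemma \ref{lem:log} and the splitting $t\le 2s$ versus $t>2s$ with the mean-value bound $t^{\gamma}-s^{\gamma}\le\gamma s^{\gamma-1}(t-s)$ in Appendix D) trades the missing Hölder exponent against negative powers of $s$ that are then absorbed by decay of the other factor; for the exponential no such decay is available, so your argument as it stands only proves the statement under the additional restriction $\gamma\ge\alpha/2$, and you would need either that restriction, an extra decay input on $\|e^{f(t)}\|_{\infty,e(l)}$ near $t=0$, or a different treatment of the time weight to cover the full range claimed in the lemma.
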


Now we pass to a dual statement, namely the continuity of the logarithm.

\begin{lemma}\label{lem:log}
For given $\alpha \in (0,2) \setminus \{ 1\}, \gamma \in (0, 1)$ and $ r, C, c, \hat{l} > 0$ there exists an $l = l(r) > 0$ depending on $r$ such that the logarithm maps 
\[
\log: \mathcal{A} = \Big\{ f \in \LL^{\gamma, \alpha}_{ e(\hat{l})} \ \big\vert \ c e^{{-}r|x|^\delta} \le f(t, x) \le C e^{r|x|^\delta} \Big\} \longrightarrow \LL^{\gamma, \alpha}_{ e(l)}.  
\]
Furthermore this map is locally Lipschitz continuous, i.e. for $f,g \in \mathcal{A}$ with 
\[
   \| f\|_{\LL^{\gamma, \alpha}_{e(\hat{l})} } , \  \| g \|_{\LL^{\gamma, \alpha}_{e(\hat{l})} } \le M
\]
we can estimate:
\[
    \| \log(f) - \log(g) \|_{\LL^{\gamma, \alpha}_{e(l)} } \lesssim_{M,r} \| f - g \|_{\LL^{\gamma, \alpha}_{ e(\hat{l})} }.
\]
\end{lemma}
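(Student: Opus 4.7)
The plan is to reduce the logarithm lemma to the exponential lemma by the same technique as in Lemma \ref{lem:exp_2}, with one crucial new input: the two-sided pointwise bounds on $f \in \mathcal{A}$ give explicit exponential control of $1/f$, which is what allows us to multiply through all the estimates at the price of enlarging the weight from $e(\hat l)$ to $e(l)$ for some $l = l(r)$.

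First I would treat the spatial regularity pointwise in time. The $L^\infty_{e(l)}$ bound of $\log f(t,\cdot)$ follows directly from $c e^{-r|x|^\delta} \le f(t,x) \le C e^{r|x|^\delta}$, which gives $|\log f(t,x)| \lesssim 1 + r |x|^\delta$, so any $l > 0$ works for the uniform part. For the H\"older seminorm with $\alpha \in (0,1)$ I would use the identity
\begin{equation*}
\log f(x) - \log f(y) = \int_0^1 \frac{f(x) - f(y)}{f(y) + s(f(x) - f(y))} \, ds,
\end{equation*}
and bound the denominator below by $f(x) \wedge f(y) \gtrsim c e^{-r(|x|^\delta \vee |y|^\delta)}$. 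Combined with the standing assumption $|x-y|\le 1$ in Corollary \ref{cor:caracterisation_besov_holder}, this produces an $e(r)$-weight that, together with the $e(\hat l)$-weight coming from the spatial H\"older bound on $f$, yields a weighted bound in $\CC^\alpha_{e(l)}$ for $l = \hat l + 2r + \eta$. For $\alpha \in (1,2)$ I would instead use $\partial_x \log f = (\partial_x f)/f$, so the spatial regularity of $\log f$ reduces to the Besov regularity of a product, handled by Lemmata \ref{lem:paraproduct_estimates} and \ref{lem:exp} applied to $1/f = \exp(-\log f)$ (using the bound on $\log f$ we just proved to put $-\log f$ in the regime of Lemma \ref{lem:exp}).

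Next I would handle the time H\"older norm. For $t \ge s$ one writes analogously
\begin{equation*}
\log f(t,x) - \log f(s,x) = \int_0^1 \frac{f(t,x) - f(s,x)}{f(s,x) + u (f(t,x) - f(s,x))} \, du,
\end{equation*}
so that bounding the denominator below by $c e^{-r|x|^\delta}$ and using the $C^{\alpha/2} L^\infty_{e(\hat l)}$ H\"older norm of $f$ gives the desired $C^{\alpha/2} L^\infty_{e(l)}$-norm of $\log f$, with the blow-up factor $t^\gamma$ inherited from the corresponding seminorm of $f$. The same bookkeeping handles the $\MM^\gamma \CC^\alpha_{e(l)}$ part by applying the spatial estimates above at each $t$ and multiplying by $t^\gamma$.

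Finally, for the local Lipschitz continuity I would use
\begin{equation*}
\log f - \log g = \int_0^1 \frac{f - g}{g + u(f - g)} \, du,
\end{equation*}
decomposing the H\"older and time increments as products of the form $(f-g) \cdot (1/(g + u(f-g)))$ and applying the product and commutator estimates from Lemma \ref{lem:paraproduct_estimates} together with the uniform lower bound $g + u(f-g) \ge c e^{-r|x|^\delta}$. The dependence on $M$ comes in through the $\LL^{\gamma,\alpha}_{e(\hat l)}$ norm of the denominator, and the factor $\|f-g\|_{\LL^{\gamma,\alpha}_{e(\hat l)}}$ is produced directly. The principal obstacle throughout is the simultaneous handling of the weight enlargement and the $t=0$ blow-up: one must be careful that the multiplicative factors of $e^{r|x|^\delta}$ appearing from $1/f$ combine cleanly with the $e(\hat l)$-weighted norms of $f$ and $f-g$ without spoiling the $t^\gamma$ explosion rate, which amounts to tracking that the product estimates in Lemma \ref{lem:paraproduct_estimates} add weights multiplicatively and preserve the blow-up index.
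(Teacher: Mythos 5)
Your overall strategy coincides with the paper's: control $1/f$ (and $1/(g+u(f-g))$, which satisfies the same two-sided bounds by convexity) by $e^{r|x|^\delta}$ using the pointwise bounds defining $\mathcal{A}$, write differences of logarithms via the mean value theorem / integral remainder, estimate the sup norm and the H\"older seminorm through the classical characterisation of Corollary \ref{cor:caracterisation_besov_holder}, and absorb the extra factor into the enlarged weight $e(l)$ with $l \gtrsim \hat l + r$, tracking the $t^{\gamma}$ blow-up (which in the paper enters through $t^{-\gamma}$ bounds on $\partial_x g(t)$ and the sup norm of the difference). The paper proves only the local Lipschitz estimate and recovers the mapping statement by taking $g \equiv 1$, whereas you do the mapping statement first; that difference is immaterial. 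Like the paper, you defer the time-regularity bookkeeping with a ``follows similarly'', which is acceptable at this level of detail.

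The one point you should repair is the case $\alpha \in (1,2)$. You propose to treat $\partial_x \log f = (\partial_x f)/f$ by applying Lemma \ref{lem:exp} to $-\log f$ ``using the bound on $\log f$ we just proved''. But the bound you have at that stage is only $\|\log f\|_{\infty, p(\delta)} \le R$, while Lemma \ref{lem:exp} also requires membership of $-\log f$ in a weighted H\"older space $\CC^{\alpha}_{e(\hat l)}$ --- and for $\alpha>1$ that is exactly the regularity of $\log f$ you are in the middle of proving, so as written the argument is circular. It can be fixed with one extra step: first apply the already-established case $\alpha' \in (0,1)$ (since $f \in \CC^{\alpha}_{e(\hat l)} \subset \CC^{\alpha'}_{e(\hat l)}$) to get $\log f \in \CC^{\alpha'}_{e(l')}$ for some $\alpha' \in [\alpha-1,1)$, then invoke Lemma \ref{lem:exp} at level $\alpha'$ to obtain $1/f \in \CC^{\alpha'}_{e(l'')}$, and finally use Lemma \ref{lem:paraproduct_estimates} (the resonant term is admissible because $\alpha' + \alpha - 1 > 0$) to place $(\partial_x f)/f$ in $\CC^{\alpha-1}$ with the product of the weights. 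Alternatively, and more in line with the paper, estimate $\partial_x f/f - \partial_x g/g$ directly by pointwise algebra of the form $|a\tilde a - b\tilde b| \le |a-b||\tilde a| + |\tilde a - \tilde b||b|$ together with the lower bounds on $f,g$, which avoids paraproducts altogether; this is where the paper picks up the $t^{-\gamma}\|f(t)-g(t)\|_{\infty,e(\hat l)}$ term that later recombines with the $t^{\gamma}$ prefactor of the $\MM^{\gamma}$ norm.
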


\begin{proof}
As before we use the classical definition of H\"older spaces and we only treat spatial regularity, as the time regularity follows similarly. We also just discuss the local Lipschitz continuity, since then the first statement follows by choosing $g=1$. For $f,g \in \mathcal{A}$ and for all $\xi \ge f(t,x) \wedge g(t,x)$ we find  
\[
    \frac{1}{\xi} \lesssim e(r)(x).
\] 
Thus by the mean value theorem and for $l \ge \hat{l} + r$: 
\[\norm{ \log (f(t)) {-} \log (g(t))}_{\infty, e(l)}  \lesssim \norm{f(t) {-} g(t)}_{\infty, e(\hat{l}) }\]
uniformly in $t \in [0,T]$. If $\alpha >1$ we can bound also the derivative in a similar way:
\begin{align*}
	\norm{ \frac{\partial_x f(t)}{f(t)} - \frac{ \partial_x g(t)}{g(t)}}_{\infty, e(l)}   & \le \norm{\bigg\vert \frac{1}{f(t)} \bigg\vert |\partial_x f(t) - \partial_x g(t)| + |\partial_x g(t)|\bigg\vert \frac{1}{f(t) g(t)} \bigg\vert |f(t) - g(t)|}_{\infty, e(l)} \\
	& \lesssim_M \| \partial_x f(t) {-}\partial_x g(t) \|_{\infty, e(\hat{l})} {+} t^{{-}\gamma}\| f(t) - g(t) \|_{\infty, e(\hat{l})}
\end{align*}
at the cost of taking a possibly larger $l$.

To treat the case $\alpha<1$ let $|x-y| \le 1$ and observe that
\[
\frac{f(y)}{f(x)}\vee \frac{g(y)}{g(x)} \lesssim_M e(\hat{l}+r)(x),
\]
so we can apply again the mean value theorem to the logarithm:
\begin{align*}
    & \frac{ \big| \log(f(t,x)) - \log(g(t,x)) - \big( \log(f(t,y)) - \log(g(t,y)) \big) \big| }{e(l)(x)\bv{x-y}^{\alpha}} =  \frac{\Big\vert \log \bp{\frac{f(x)}{f(y)}} - \log \bp{\frac{g(x)}{g(y)}} \Big\vert }{e(l)(x) \bv{x-y}^{\alpha}} \\
	& \ \ \ \lesssim \frac{e(\hat{l}+r)(x)}{e(l)(x)} \frac{| \bq{(f{-}g)(t,x) - (f{-}g)(t,y)}g(t,y) + (f{-}g)(t,y)(g(t,y) - g(t,x)) |}{ |f(t,y)g(t,y)|\bv{x-y}^{ \alpha }} \\
    & \ \ \ \ \ \ \ \  \lesssim_M  \norm{f(t)-g(t)}_{\alpha, e(\hat{l})} {+} t^{-\gamma} \norm{f(t)-g(t)}_{\infty, e(\hat{l})} ,
\end{align*}
once more for an appropriate choice of $l > \hat{l} + r$. Calculations similar to the one above show that also in the case $\alpha > 1$ we can find an $l$ such that
\[
    \bigg\|  \frac{\partial_x f}{f} - \frac{\partial_x g}{g} \bigg\|_{\alpha - 1, e(l)}  \lesssim_M  \| f(t){-}g(t)\|_{\alpha, \hat{l}} {+} t^{{-}\gamma}\|f(t){-}g(t)\|_{\infty, e(\hat{l})}.
\]
This guarantees that
\[ \sup_{t \in [0, T]} t^\gamma\|\log(f(t)){-} \log(g(t))\|_{\alpha, e(l)} \lesssim_M \|f{-}g\|_{\LL^{\gamma, \alpha}_{e(l)}}.\]
We can argue similarly for the time regularity and this concludes the proof.

\end{proof}

\section{Operations on the Extended Data}
Here we discuss some operations on the space of extended data, more precisely translation and parabolic scaling.
For this purpose we consider a slightly different space of external data. In contrast to the space $\mY_{kpz}$ we will not take into account the norm of $Y$, so that we can forget about its linear growth; we will also allow some of the terms in the extended data to have inhomogeneous initial conditions which arise from shifting the data in time; finally we consider a small time explosion for the resonant product $\partial_x Y^{\BZ} \reso X$: this also arises naturally when shifting the data.

\begin{definition}\label{def:singular-Y}
Consider $\zeta \in [0, 1), b\ge 0$ and a time horizon $T_h \ge 0$. Let us write $\mY_{kpz}^{\zeta, b}([0, T_h])$ for the closure in
\begin{align*}
	C\CC^{\alpha - 1}_{p(a)}([0, T_h])  \times \LL^{2 \alpha}_{p(a)}([0, T_h]) \times & \LL^{\alpha +1}_{p(a)}([0, T_h]) \times \\
	& \times \LL^{2\alpha +1}_{p(a)}([0, T_h]) \times \LL^{2\alpha +1}_{p(a)}([0, T_h]) \times \MM^{\zeta}\mathcal{C}^{2\alpha-1}_{p(b)}([0, T_h])
\end{align*}
of the map $\YY(\theta, c^{\BA}, c^{\BD})$ defined on $\smooth\times \RR \times \RR$ by:
\[
\YY(\theta, c^{\BA}, c^{\BD}) = \big( X, Y^{\BA}, Y^{\BB}, Y^{\BC}, Y^{\BD}, \partial_x Y^{\scalebox{0.8}{\RS{r}}} \reso X \big)
\]
where every tree $Y^{\bullet}$ solves the same PDE as in Table \ref{table:kpz} under the condition that 
\[
Y^{\BC}(0) =0 , \ \ Y^{\BD}(0) =0 , \ \ Y^{\scalebox{0.8}{\RS{r}}}(0) =0 
\]
implying that all other trees are allowed to have inhomogeneous initial conditions. To keep the notation simple we omit from writing the dependence on these inhomogeneous initial conditions. We will also sometimes omit the explicit dependence on $T_h$ and write $\mY_{kpz}^{\zeta, b}$.
\end{definition}

Next we will formulate translations and parabolic scaling operations on the extended data. As for the scaling, we will only zoom into small scales and therefore the scaling parameter $\lambda$ will be small: $\lambda \in (0,1]$. Thus, for any $\tau \in [0,T)$ we write $\theta_{\tau, \lambda}(t,x) = \theta(\tau {+} \lambda^2 t, \lambda x).$ We change the time horizon accordingly to $T_{\tau,\lambda} = \lambda^{{-}2}(T {-} \tau)$.
The question we want to answer is whether for given $\YY(\theta^n) \in \myi$ converging to $\YY$ in $\mY_{kpz}$ we can show also the convergence of $\YY(\theta^n_{\tau, \lambda})$ to some $\YY_{\tau, \lambda}$: this is the content of the following result. 

\begin{proposition}\label{prop:rescale_tranlate_external_data}
For any $\tau, \lambda$ as above and for every $\YY$ in $\mY_{kpz}$ and $\zeta \in (1/2 - \alpha, \alpha]$ there exists a $\YY_{\tau, \lambda}$ in $\mY_{kpz}^{\zeta, 2a}([0, T_{\tau, \lambda}])$ such that whenever $\theta^n \in \smooth$ is such that $\YY(\theta^n, c^{\BA}_n, c^{\BD}_n)$ converges to $\YY  \text{ in } \mY_{kpz}$, then
\[
  \YY(\lambda^2\theta^n_{\tau, \lambda}, \lambda^2c^{\BA}_n, \lambda^2 c^{\BD}_n ) \to \YY_{\tau, \lambda} \text{ in }\mY_{kpz}^{\zeta, 2a}([0, T_{\tau,\lambda}]),
\]
where for a given smooth noise $\theta \in \smooth$ we define $\YY(\lambda^2 \theta_{\tau, \lambda})$ by \[Y( \lambda^2 \theta_{\tau, \lambda})(t,x) = Y(\theta)(\tau {+}\lambda^2 t, \lambda x)\] and similarly for the elements $Y^{\BA}(\theta_{\tau, \lambda}), Y^{\BB}(\theta_{\tau. \lambda})$. The elements 
\[Y^{\BC}(\theta_{\tau, \lambda}),  \ \ Y^{\BD}(\theta_{\tau, \lambda}), \ \ Y^{\BZ}(\theta_{\tau, \lambda})\]
are defined respectively as the solution to
\begin{align*}
\LLL  Y^{\BC}(\theta_{\tau, \lambda}) &= \partial_x Y^{\BB}(\theta_{\tau, \lambda}) \partial_x Y(\theta_{\tau, \lambda}),  & Y^{\BC}(\theta_{\tau, \lambda})(0) = 0, \\
\LLL  Y^{\BD}(\theta_{\tau, \lambda}) &= \partial_x Y^{\BA}(\theta_{\tau, \lambda}) \partial_x Y^{\BA}(\theta_{\tau, \lambda}), & Y^{\BD}(\theta_{\tau, \lambda})(0) = 0, \\
\LLL  Y^{\BZ}(\theta_{\tau, \lambda}) &= \partial_x Y(\theta_{\tau, \lambda}), & Y^{\BZ}(\theta_{\tau, \lambda})(0) = 0.
\end{align*}
Furthermore we have the estimate:
\begin{equation*}
\sup_{\lambda \in (0,1]} \sup_{\tau \in [0,T)} \| \YY_{\tau, \lambda}\|_{\mY_{kpz}^{\zeta, 2a}([0, T_{\tau,\lambda}])} \lesssim_{\zeta} \| \YY \|_{\mY_{kpz}}.
\end{equation*}
\end{proposition}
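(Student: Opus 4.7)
The plan is to verify convergence of each component of the extended data under the parabolic rescaling and translation $(t,x) \mapsto (\tau + \lambda^2 t, \lambda x)$, exploiting the covariance of $\LLL$. Writing $R_\lambda f(x) := f(\lambda x)$, the key scaling identities are $P_t R_\lambda = R_\lambda P_{t\lambda^2}$, the isometry $\|R_\lambda f\|_{\CC^{s}} = \lambda^{s}\|f\|_{\CC^{s}}$, and the elementary inequality $p(a)(\lambda x) \le p(a)(x)$ for $\lambda \in (0,1]$. For the trees $Y, Y^{\BA}, Y^{\BB}$ one simply sets $Y^{\bullet}(\lambda^2\theta_{\tau,\lambda}^n)(t,x) := Y^{\bullet}(\theta^n)(\tau + \lambda^2 t, \lambda x)$: a direct computation shows that these solve the correct rescaled PDEs with renormalization constants $\lambda^2 c^{\BA}_n$ and $\lambda^2 c^{\BD}_n$, and their norms in the parabolic H\"older--Besov spaces on $[0, T_{\tau,\lambda}]$ are bounded by the corresponding norms on $[\tau, T]$ uniformly in $(\tau, \lambda)$.

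For $Y^{\BC}, Y^{\BD}, Y^{\BZ}$, which by convention vanish at $t=0$, the naive rescaling fails this requirement since $Y^{\bullet}(\theta^n)(\tau, \cdot) \ne 0$. Imposing zero initial datum in the PDE and invoking linearity yields
\[
Y^{\BZ}(\lambda^2\theta_{\tau,\lambda}^n)(t,x) = \lambda^{-1}\Bigl[ Y^{\BZ}(\theta^n)(\tau+\lambda^2 t, \lambda x) - P_t\bigl[Y^{\BZ}(\theta^n)(\tau, \lambda \cdot)\bigr](x) \Bigr],
\]
and analogous identities for $Y^{\BC}, Y^{\BD}$. Both summands admit uniform bounds: the first by the norm of $Y^{\bullet}(\theta^n)$ over $[\tau, T]$; the second by Schauder estimates on the harmonic extension of the initial slice, which lives in $\CC^{\alpha+1}_{p(a)}$ (or the regularity appropriate to each tree), combined with the commutation $P_t R_\lambda = R_\lambda P_{t\lambda^2}$.

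The main obstacle is the asymmetric resonant product $\partial_x Y^{\BZ}(\lambda^2\theta_{\tau,\lambda}^n) \reso X(\lambda^2\theta_{\tau,\lambda}^n)$. Using the decomposition of the previous step together with the (approximate) covariance of $\reso$ under $R_\lambda$ up to harmless dyadic boundary terms, one obtains
\begin{align*}
\partial_x Y^{\BZ}(\lambda^2 \theta_{\tau,\lambda}^n) \reso X(\lambda^2 \theta_{\tau,\lambda}^n)(t,x) = {} & \lambda R_\lambda\bigl[\partial_x Y^{\BZ}(\theta^n) \reso X(\theta^n)\bigr](\tau + \lambda^2 t, x) \\
& - \lambda R_\lambda\bigl[\partial_x P_{t\lambda^2}\{Y^{\BZ}(\theta^n)(\tau, \cdot)\} \reso X(\theta^n)(\tau+\lambda^2 t, \cdot)\bigr](x).
\end{align*}
The first summand is controlled by the hypothesis that $\partial_x Y^{\BZ} \reso X \in C\CC^{2\alpha-1}_{p(a)}$ belongs to $\mY_{kpz}$, with no time blow-up. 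The second summand is delicate: the sum of regularities $\alpha + (\alpha-1) = 2\alpha - 1$ is negative, so the resonant product estimate does not apply directly. One recovers the missing regularity from the Schauder bound
\[
\|\partial_x P_{t\lambda^2} f\|_{\CC^{\alpha-1+2\zeta}_{p(a)}} \lesssim (t\lambda^2)^{-\zeta}\|f\|_{\CC^{\alpha+1}_{p(a)}},
\]
valid for $\zeta \ge 0$; for $\zeta > 1/2 - \alpha$ the sum of regularities becomes positive and the resonant product is well defined in $\CC^{2\alpha-1}_{p(2a)}$ with a $(t\lambda^2)^{-\zeta}$ explosion. Combining the prefactor $\lambda$, the dilation factor $\lambda^{2\alpha-1}$ coming from $R_\lambda$ acting on the negative-regularity space $\CC^{2\alpha-1}$, and the Schauder factor $\lambda^{-2\zeta}$, one obtains net scaling $\lambda^{2(\alpha-\zeta)}$, which is bounded uniformly in $\lambda \in (0,1]$ precisely when $\zeta \le \alpha$. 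This accounts simultaneously for the admissible range $\zeta \in (1/2-\alpha, \alpha]$, for the blow-up exponent $\zeta$, and for the appearance of the stronger polynomial weight $p(2a)$ from the product of two $p(a)$-weighted factors.

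Putting these pieces together yields the uniform bound $\|\YY_{\tau,\lambda}^n\|_{\mY_{kpz}^{\zeta, 2a}([0,T_{\tau,\lambda}])} \lesssim_\zeta \|\YY^n\|_{\mY_{kpz}}$, and the existence of $\YY_{\tau,\lambda}$ together with the convergence $\YY(\lambda^2\theta_{\tau,\lambda}^n, \lambda^2 c^{\BA}_n, \lambda^2 c^{\BD}_n) \to \YY_{\tau,\lambda}$ then follow by componentwise passage to the limit $n \to \infty$ using this uniform control.
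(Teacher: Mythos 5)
Your route is the same as the paper's: rescale and translate the simple trees directly (noting $p(a)(\lambda x)\le p(a)(x)$ for $\lambda\le 1$), represent the trees that must vanish at $t=0$ as $Y^{\BZ}_{\tau,\lambda}(t)=\Lambda_{\lambda}\big(\tfrac1\lambda\,[\,Y^{\BZ}(\tau{+}\lambda^2 t)-P_{\lambda^2 t}Y^{\BZ}(\tau)\,]\big)$ (analogously for $Y^{\BC},Y^{\BD}$), and split the asymmetric resonant product into the a priori given $\partial_x Y^{\BZ}\reso \partial_x Y$ evaluated at $\tau{+}\lambda^2t$ plus the term $P_{\lambda^2 t}\partial_x Y^{\BZ}(\tau)\reso \partial_x Y(\tau{+}\lambda^2 t)$, the latter handled by semigroup smoothing at the price of a $t^{-\zeta}$ blow-up; your $\lambda$-bookkeeping $\lambda\cdot\lambda^{2\alpha-1}\cdot\lambda^{-2\zeta}$ and the resulting condition $\zeta\le\alpha$ are exactly the paper's.

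Two points need repair before this is a proof. First, your displayed Schauder estimate is off by one order of regularity: from $Y^{\BZ}(\tau,\cdot)\in\CC^{\alpha+1}_{p(a)}$ one gets $\|\partial_x P_{\lambda^2 t}Y^{\BZ}(\tau)\|_{\CC^{\alpha+2\zeta}_{p(a)}}\lesssim(\lambda^2 t)^{-\zeta}\|Y^{\BZ}(\tau)\|_{\CC^{\alpha+1}_{p(a)}}$, not regularity $\alpha-1+2\zeta$. With the exponent you wrote, pairing with $\partial_x Y\in\CC^{\alpha-1}_{p(a)}$ would require $\zeta>1-\alpha>1/2$, which is outside the admissible range; with $\alpha+2\zeta$ the sum of regularities is $2\alpha+2\zeta-1>0$ precisely for $\zeta>1/2-\alpha$, which is the threshold you then assert, so this is presumably a slip, but as written the step fails. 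Second, the dilation/resonant-product commutator cannot be dismissed as ``harmless dyadic boundary terms'': the regularities of the two factors sum to $2\alpha-1<0$, so no paraproduct estimate bounds $\Lambda_{\lambda}f\reso\Lambda_{\lambda}g-\Lambda_{\lambda}(f\reso g)$ termwise, and one has to invoke the dedicated dilation--commutator lemma (Lemma~B.1 of \cite{singular_GIP}, in its weighted form, valid here because $\lambda\le1$), which is what gives the bound $\lambda\,\|\mathrm{Comm}\|_{\CC^{2\alpha-1}_{p(2a)}}\lesssim\lambda^{2\alpha}\|\partial_x Y^{\BZ}_{\tau}(\lambda^2 t)\|_{\CC^{\alpha}_{p(a)}}\|\partial_x Y_{\tau}(\lambda^2 t)\|_{\CC^{\alpha-1}_{p(a)}}$ used in the paper; the companion scaling estimate $\|\Lambda_{\lambda}h\|_{\CC^{2\alpha-1}_{p(2a)}}\lesssim\lambda^{2\alpha-1}\|h\|_{\CC^{2\alpha-1}_{p(2a)}}$ likewise needs the weighted version of the GIP scaling lemma rather than an ``isometry''. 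With these two corrections your argument coincides with the paper's proof.
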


\begin{proof}

We concentrate on the proof of the uniform bound. The convergence result then follows from the fact that the rescaling operator is linear. Let us write $Y^{\bullet}_{\tau, \lambda}$ for $Y^{\bullet}(\theta_{\tau, \lambda})$ defined as above. Note that 
\[Y^{\BC}_{\tau, \lambda}(t) = \Lambda_{\lambda} \big[ Y^{\BC}(\tau{+}\lambda^2 t) {-} P_{\lambda^2 t}Y^{\BC}(\tau) \big] \]
and similarly for $Y^{\BD},$ where $\Lambda_{\lambda}$ is the spatial rescaling operator with $\Lambda_\lambda f(x) = f(\lambda x)$. We can find the required bounds for $X_{\tau, \lambda}$ and for the tree terms $Y^{\bullet}_{\tau, \lambda}$ in view of \cite[Lemma A.4]{singular_GIP}. While the results from \cite{singular_GIP} do not treat weighted spaces, they hold nonetheless in the weighted setting. To see this one has to take care of the effect of rescaling, and it is only because we are zooming in ($\lambda \le 1$) that the scaling does not affect the weight. The most complicated object that we have to consider is the ill-posed product
\[
\sup_{\lambda \in (0,1]} \sup_{\tau \in [0,T]}\sup_{t \in [0, T_{\tau, \lambda}]} t^{\zeta}\| \partial_x Y^{\scalebox{0.8}{\RS{r}}}_{\tau, \lambda}(t) \reso\partial_x Y_{\tau, \lambda}(t) \|_{\CC^{2\alpha{-}1}_{p(2a)}}.
\]
First, we treat the rescaling parameter $\lambda.$ Note that $Y^{\scalebox{0.8}{\RS{r}}}_{\tau, \lambda}(t) = \Lambda_{\lambda}
\Big(\frac{1}{\lambda}Y^{\scalebox{0.8}{\RS{r}}}_{\tau}(\lambda^2 t) \Big),$ with the convention $Y^{\scalebox{0.8}{\RS{r}}}_{\tau} = Y^{\scalebox{0.8}{\RS{r}}}_{\tau, 1}$. Hence, we can write
\begin{align*}
\partial_x Y^{\scalebox{0.8}{\RS{r}}}_{\tau, \lambda}(t)& \reso \partial_x Y_{\tau, \lambda}(t) = \lambda \big[ \rel \partial_x Y^{\scalebox{0.8}{\RS{r}}}_{\tau}(\lambda^2 t) \reso \rel  \partial_x Y_{\tau}(\lambda^2t) \big] \\
& =\lambda \rel \big[ \partial_x Y^{\scalebox{0.8}{\RS{r}}}_{\tau}(\lambda^2 t) \reso  \partial_x Y_{\tau}(\lambda^2t) \big] + \lambda \text{ Commutator },
\end{align*}
where ``$\text{Commutator}$'' is defined implicitly through the formula. An application of \cite[Lemma B1]{singular_GIP} (taking into account the remark about the weights from above) tells us that:
\[
\lambda \| \text{ Commutator } \|_{\CC^{2\alpha{-}1}_{p(2a)}} \lesssim \lambda^{2\alpha}\| \partial_x Y^{\scalebox{0.8}{\RS{r}}}_{\tau}(\lambda^2 t)\|_{\CC^{\alpha}_{p(a)}} \| \partial_x Y_{\tau}(\lambda^2 t)\|_{\CC^{\alpha{-}1}_{p(a)}}
\]
and an application of \cite[Lemma A4]{singular_GIP} tells us that:
\[
\lambda \| \rel \big[ \partial_x Y^{\scalebox{0.8}{\RS{r}}}_{\tau}(\lambda^2 t) \reso  \partial_x Y_{\tau}(\lambda^2t) \big] \|_{\CC^{2\alpha{-}1}_{p(2a)}} \lesssim \lambda^{2\alpha}\| \partial_x Y^{\scalebox{0.8}{\RS{r}}}_{\tau}(\lambda^2 t) \reso  \partial_x Y_{\tau}(\lambda^2t)\|_{\CC^{2\alpha{-}1}_{p(2a)}}.
\]
Now we can estimate the norm of the ill-posed product uniformly by:
\begin{align*}
\lambda^{2\alpha}t^{\zeta}\| \partial_x Y^{\scalebox{0.8}{\RS{r}}}_{\tau}(\lambda^2 t) \reso  \partial_x Y_{\tau}(\lambda^2t)\|_{\CC^{2\alpha{-}1}_{p(2a)}} \lesssim \lambda^{2\alpha{-2\zeta}}(\lambda^2t)^{\zeta}\| \partial_x Y^{\scalebox{0.8}{\RS{r}}}_{\tau}(\lambda^2 t) \reso  \partial_x Y_{\tau}(\lambda^2t)\|_{\CC^{2\alpha{-}1}_{p(2a)}} 
\end{align*}
and since $\zeta \le \alpha$, this can be bounded uniformly over $\lambda$ and $t$ by the quantity:
\[
\sup_{\tau \in [0,T]}\sup_{t \in [0, T{-}\tau]} t^{\zeta}\| \partial_x Y^{\scalebox{0.8}{\RS{r}}}_{\tau}(t) \reso \partial_x Y_{\tau}(t) \|_{\CC^{2\alpha{-}1}_{p(2a)}} + \| \partial_x Y^{\scalebox{0.8}{\RS{r}}}_{\tau}(t)\|_{\CC^{\alpha}_{p(a)}} \| \partial_x Y_{\tau}(t)\|_{\CC^{\alpha{-}1}_{p(a)}}.
\]
It is easy to estimate the last term uniformly over $t$ and $\tau.$ Let us consider the first term. Here we have to take into account that $ Y^{\scalebox{0.8}{\RS{r}}}_{\tau}(t) =  Y^{\scalebox{0.8}{\RS{r}}}(\tau+t)-P_t Y^{\BZ}(\tau)$, where we recall that $P_t$ indicates convolution with the heat kernel and that it commutes with derivatives. Since we have no a priori estimates for $P_t \partial_x Y^{\BZ}(\tau) \reso \partial_x Y_{\tau}(t)$ we need to apply the usual estimates for the resonant product. For that purpose note that \[\|P_t \partial_x Y^{\scalebox{0.8}{\RS{r}}}(\tau)\|_{\CC^{2\zeta{+}\alpha}_{p(a)}} \lesssim t^{{-}\zeta}\| \partial_x Y^{\scalebox{0.8}{\RS{r}}}(\tau)\|_{\CC^{\alpha}_{p(a)}}\] and since $2\zeta{+}2\alpha{-}1>0$ we can bound the norm of the ill-posed product by
\begin{align*}
\| \partial_x Y^{\scalebox{0.8}{\RS{r}}}_{\tau} & (t) \reso  \partial_x Y(\tau{+}t) \|_{\CC^{2\alpha{-}1}_{p(2a)}} \\
& \lesssim \|P_t \partial_x Y^{\scalebox{0.8}{\RS{r}}}(\tau )\|_{\CC^{2\zeta{+}\alpha}_{p(a)}} \| \partial_x Y (t{+}\tau)\|_{C\CC^{\alpha{-}1}_{p(a)}} + \|  \partial_x Y^{\scalebox{0.8}{\RS{r}}} \reso \partial_x Y (t{+}\tau)\|_{C\CC^{\alpha{-}1}_{p(a)}} \lesssim  t^{{-}\zeta}.
\end{align*}
Now all the required properties follow promptly.
\end{proof}

\section{Asymmetric Approximation of a Resonant Product}

Next we prove a result which is a slightly asymmetric version of the computations in \cite[Section 9.5]{kpz}. Indeed we show convergence of $X^{\BZ, n} \reso X$ to $X^{\BZ} \reso X$, that is we only regularize one of the two factors.

\begin{lemma}\label{lem:renormalisation_for_measure}
Let $\xi$ be a white noise on $[0,T]\times \RR.$ Consider the sequence $(\xi^n, Y^n_0, c_n^{\BA}, c_n^{\BD})$ as in Theorem \ref{thm:renormalisation}. Then for any $a >0$ and $\alpha < 1/2$ the resonant product
\[
\partial_x Y^{\BZ, n} \reso \partial_x Y \to \partial_x Y^{\BZ} \reso \partial_x Y \text{ in }  L^p(\Omega; C\CC^{2\alpha{-}1}_{p(a)})
\]
for some $p = p(\alpha, a) \in [1, {+}\infty)$.
\end{lemma}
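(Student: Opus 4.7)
The plan is to adapt the Wiener chaos arguments of \cite[Section~9.5]{kpz} to this asymmetric setting. Both $X^{\BZ,n}$ and $X$ sit in the first Wiener chaos of $\xi$, so their pointwise product decomposes into a zeroth chaos part (the covariance) and a second chaos part (the Wick product). The driving observation is that the zeroth chaos component vanishes identically, so no renormalization constant is needed, and the whole analysis reduces to controlling the second chaos.

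To see the vanishing, one computes the covariance of the Littlewood--Paley blocks explicitly in Fourier variables. Using the stationary spectral representation $\hat X(t,\eta) = i\eta\int_{-\infty}^t e^{-(t-s)\eta^2/2}\hat\xi(s,\eta)\,ds$ and its analogue for $\hat X^{\BZ,n}$ with an extra factor $\varphi(n^{-1}\eta)$, together with $\mathbb{E}[\hat\xi(r_1,\eta_1)\hat\xi(r_2,\eta_2)]=2\pi\delta(r_1-r_2)\delta(\eta_1+\eta_2)$, one finds that $\mathbb{E}[\Delta_i X^{\BZ,n}(t,x)\Delta_j X(t,x)]$ is a constant multiple of
\[
\int \rho_i(\eta)\rho_j(-\eta)\varphi(n^{-1}\eta)\,\frac{1}{\eta}\,d\eta.
\]
Because $\rho_i$, $\rho_j$, and $\varphi$ are even (the standard radially symmetric Littlewood--Paley partition and the even mollifier of Theorem~\ref{thm:renormalisation}), the integrand is odd in $\eta$, hence the integral is zero. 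Consequently $\Delta_k(X^{\BZ,n}\reso X)$ lies entirely in the second Wiener chaos for every $n$.

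Given this, the proof proceeds in the usual way. By Nelson's hypercontractivity, $L^p(\Omega)$ moments on the second chaos are controlled by the $L^2(\Omega)$ moment, so it is enough to show
\[
\mathbb{E}\bigl[|\Delta_k((X^{\BZ,n_1}-X^{\BZ,n_2})\reso X)(t,x)|^2\bigr]\lesssim (\min(n_1,n_2))^{-2\kappa}\,2^{-2k(2\alpha-1-\kappa)}
\]
for some small $\kappa>0$. Using the product formula for Wick squares of first chaos Gaussians (together with the vanishing covariance), the left hand side is bounded by $\sum_{i\sim j\sim k} \mathbb{E}[|\Delta_i(X^{\BZ,n_1}-X^{\BZ,n_2})|^2]\cdot\mathbb{E}[|\Delta_j X|^2]$. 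Standard second moment estimates give $\mathbb{E}[|\Delta_j X(t,\cdot)|^2]\lesssim 2^{-2j(\alpha-1)}$ and, exploiting that $1-\varphi(n^{-1}\eta)=O((n^{-1}|\eta|)^\kappa)$ near the origin, $\mathbb{E}[|\Delta_i(X^{\BZ,n_1}-X^{\BZ,n_2})|^2]\lesssim (\min(n_1,n_2)^{-1}2^i)^{2\kappa}\,2^{-2i\alpha}$. Combining yields convergence in $L^2(\Omega;\mathcal{C}^{2\alpha-1-\kappa})$ with polynomial rate in $n$, and hypercontractivity upgrades this to $L^p(\Omega)$. The polynomial weight $p(a)$ is introduced in the spatial variable by controlling the algebraic decay of the covariance kernels away from the diagonal (this is the place where $a>0$ can be taken arbitrarily small by choosing $p$ large), and $C$-in-time regularity follows from analogous second moment estimates of time increments together with Kolmogorov's continuity criterion.

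The main obstacle is the parity cancellation in the very first step: it is what removes the need for a renormalization constant, and it hinges on the symmetry of both $\varphi$ and the Littlewood--Paley bumps. In contrast, the symmetric resonant product $X^{\BZ,n}\reso X^n$ would pick up a non-zero deterministic contribution of order $\int \varphi(n^{-1}\eta)^2 \eta^{-1}\,d\eta$ which still vanishes by the same parity argument but more generally could diverge for other mollifications; checking that the cancellation is preserved uniformly in the frequency localization $(i,j)$ is the delicate point. The remainder of the proof is a routine adaptation of the covariance estimates already carried out in \cite[Section~9.5]{kpz}, with the only new input being that we pair an approximated factor $X^{\BZ,n}$ with a non-approximated $X$, which if anything simplifies the book-keeping since no joint limit over two mollification parameters is needed.
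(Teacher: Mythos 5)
Your chaos-decomposition strategy and the parity argument for the vanishing of the zeroth chaos are sound and essentially coincide with the paper's proof (there the cancellation is phrased as antisymmetry of the kernel under $k_1\to-k_1$, using $H_r(-k)=-H_r(k)$ and the evenness of $\varphi$). The genuine gap is in your key second-moment estimate. First, the frequency range is wrong: a resonant block $\Delta_i F\,\Delta_j G$ with $|i-j|\le 1$ has Fourier support in a \emph{ball} of radius $\sim 2^i$, not an annulus, so $\Delta_k$ of the resonant product receives contributions from \emph{all} $i\gtrsim k$, not only $i\sim k$. Second, and more seriously, over the correct range the bound by a product of block second moments cannot work here: since $\EE[|\Delta_i(X^{\BZ,n_1}-X^{\BZ,n_2})|^2]\lesssim 2^{-i}$ (regularity $1/2^-$) and $\EE[|\Delta_i X|^2]\sim 2^{i}$ (regularity $-1/2^-$), the summands are of order one for all $2^i\gtrsim \min(n_1,n_2)$, so
\[
\sum_{i\gtrsim k}\EE\big[|\Delta_i(X^{\BZ,n_1}-X^{\BZ,n_2})|^2\big]\,\EE\big[|\Delta_i X|^2\big]=+\infty .
\]
This is exactly the statement that $2\alpha-1<0$ makes the resonant product ill-posed at the level of naive size estimates; the lemma cannot be reduced to them.

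What is missing is the gain of a factor $\sim 2^{k-i}$ coming from the constraint that the \emph{output} frequency $k_1+k_2$ lies in the annulus of size $2^k$: only a fraction $2^{k-i}$ of the spectral mass of $\Delta_i(X^{\BZ,n_1}-X^{\BZ,n_2})\,\Delta_i X$ lands at output scale $2^k$. To capture it you must compute the $L^2$ norm of the second-chaos kernel of $\Delta_k(\cdot)$ itself, as the paper does:
\[
\EE\big[|\Delta_k(X\reso X^{\BZ}-X\reso X^{\BZ,n})|^2\big]\lesssim\int dk_{[12]}\,dk_2\ \varrho_{k}(k_{[12]})\,\psi_0^2\,(1-\varphi(n^{-1}k_2))^2\,k_2^{-2}\lesssim 2^{k}\!\!\sum_{j\gtrsim k\vee cn}\!\!2^{-j}\lesssim 2^{k-(k\vee cn)},
\]
where the $k_2^{-2}$ comes from the time integral of the $X^{\BZ}$ kernel and the factor $2^{k}$ from integrating $k_{[12]}$ over the output annulus; this is the step your outline skips. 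With this estimate in hand your remaining steps (hypercontractivity, then weights) are fine, though note that the paper handles the weight not via off-diagonal covariance decay but simply by integrating the spatially uniform moment bound against $p(a)^{-p}$ with $pa>1$ in $B^{-\kappa}_{p,p}(p(a))$ and concluding by Besov embedding.
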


\begin{proof}
Following the notation of \cite[Section 9]{kpz} we have a representation of $X \reso X^{\BZ,n}$ via a product of Wiener-It\^o integrals as
\[
X \reso X^{\BZ,n}(t,x) =  \int\limits_{(\RR \times [0, T])^2} G^{X \reso X^{\BZ,n}}(t,x, \eta_{12} ) W(d \eta_1)W(d \eta_2)
\]
with kernel
\[
G^{X\reso X^{\BZ,n}}(t,x, \eta_{12}) = e^{i k_{[12]}x} \psi_0 (k_1, k_2) H_{t-{s_1}}(k_1) \varphi(n^{-1} k_2)\int\limits_{\RR} d\sigma H_{t{-}\sigma}(k_2) H_{\sigma{-}s_2}(k_2).
\] 
Decomposing the above integral into different Wiener-It\^o chaoses and setting $\eta_1 = (s_1, k_1)$ and $\eta_{-1} = (s_1, -k_1)$, the contribution to the chaos of order zero is given by:
\begin{align*}
	\int_{\RR \times \RR} d\eta_{1}G^{X\reso X^{\BZ,n}}& (t,x, \eta_{1(-1)}) \\
	& =  \int_{\RR \times \RR} d\eta_{1} H_{t-{s_1}}(k_1) \varphi(- n^{-1} k_1) \int\limits_{\RR} d\sigma H_{t{-}\sigma}(-k_1) H_{\sigma{-}s_1}(-k_1) = 0,
\end{align*}
where we used that $H_r (-k_1) = - H_r(k_1)$ and $\varphi(-n^{-1} k_1) = \varphi(n^{-1} k_1)$ to see that the integrand is antisymmetric under the change of variables $k_1 \to -k_1$ and therefore its integral must vanish.  Hence we have to consider only the second Wiener-It\^o chaos. To show convergence of the sequence we first compute  for $q \ge {-}1$:
\begin{align*}
	\EE \big[|\Delta_q (X\reso X^{\BZ} {-} &  X \reso X^{\BZ,n}) |^2(t,x) \big] \lesssim \int\limits_{(\RR\times [0, T])^2} d\eta_{12} |\varrho_{q}(k_{[12]}) G^{X\reso X^{\BZ,n}}(t,x, \eta_{1 2})|^2 \\
	& = \int dk_{12} ds_1 ds_2 \ \varrho_{q}(k_{[12]}) \psi_0(k_1, k_2)^2 |H_{t-{s_1}}|^2(k_1) \cdot \\
	& \qquad \qquad \qquad \cdot  (1{-}\varphi(n^{-1} k_2))^2 \Big\vert \int\limits_{\RR} d\sigma H_{t{-}\sigma}(k_2) H_{\sigma{-}s_2}(k_2) \Big\vert^2 \\
	& \lesssim \int dk_{12} \ \varrho_{q}(k_{[12]}) \psi_0(k_1, k_2)^2 (1{-}\varphi(n^{-1} k_2))^2  k_2^{-2} 
\end{align*}
where $k_{[12]} = k_1{+}k_2, \ dk_{12} = dk_1 dk_2$, and to obtain the inequality in the third line we have estimated $\int_0^t ds_1 \ |H_{t{-}s_1}|^2(k_1) \lesssim 1$ uniformly over $k_1$ and
\begin{align*}
\int_0^t ds_2 \ \Big\vert & \int\limits_{\RR} d\sigma H_{t{-}\sigma}(k_2) H_{\sigma{-}s_2}(k_2) \Big\vert^2 = \int_0^t ds \ (k_2^2 s)^2 e^{-2s k_2^2} \lesssim k_2^{-2}.
\end{align*}
In particular we see that the for some annulus $\mA$ such that $supp(\varrho_j) \subset \mA_j = 2^j \mA$ we have that $k_{[12]} \in \mA_q$ and $\psi_0(k_1, k_2) >0$ implies $k_2 \in \mA_j$ for $j \gtrsim q$ so that we can estimate the integral via
\begin{align*}
\int dk_{[12]} dk_2 \ \varrho_{q}(k_{[12]}) \psi_0(k_{[12]}{-}k_2, k_2)^2 (1{-}\varphi(n^{-1} k_2))^2  k_2^{-2} \lesssim 2^{q} \sum_{j \gtrsim q \vee (c n)}  2^{-{j}} \lesssim 2^{q{-}(q\vee c n)},
\end{align*}
for some $c > 0.$

At this point it is possible to conclude, since for $p$ such that $pa > 1$ and any $\delta >0$:
\begin{align*}
	\EE \big[ \| X \reso X^{\BZ,n}& - X \reso X^{\BZ}\|^p_{B^{-\kappa}_{p,p}(p(a))} \big] = \sum_{q} 2^{{-}\kappa q p} \int_{\RR} \frac{ \EE \big[ |\Delta_q  (X\reso (X^{\BZ} {-}X^{\BZ,n}) ) |^2 (t,x) \big]^{p/2}}{1{+}|x|^{ap}} \lesssim \delta
\end{align*}
where the last estimate holds if we choose $q_0$ such that $\sum_{q \ge q_0}2^{-{\kappa}q p} \le \delta$ and $n$ such that $2^{(q_0 {-} cn)p/2} \le \delta$ in order to split up the last sum at the $q_0$-th term and to obtain two small terms. The time regularity follows similarly, see also Section~9.5 of~\cite{kpz}. By Besov embedding we find the required convergence.
\end{proof}

\section{Schauder Estimates}

In this section we review classical Schauder theory for space-time distributions. Such results are well-known in literature: we adapt them in order to deal with time-dependent weights and blow-ups at time $t = 0.$ The method of proof we use is essentially the same of \cite[Theorem 1]{gubinelli_tindel}, which is based on the construction of the Young integral. Such integral is the content of the next lemma.

\begin{lemma}\label{lem:young_integration_with_time_blowup}
Let $T_h \ge 0, \ \beta \in [0,1), \ \alpha, \gamma \in (0,2)$ such that $\alpha{+}\gamma > 2.$ Then for $f \in \LL^{\beta, \alpha}_{z_1}([0, T_h])$ and $h \in \LL^{\gamma}_{z_2}([0, T_h]),$ where $z_i : \RR_{\ge 0} \to \weights$ are point-wise increasing, it is possible to define the Young integral:
$$
I_t = \int_0^t f(r)dh(r) 
$$
such that $t \mapsto t^{\beta}I_t$ lies in $C^{\gamma/2} L^{\infty}_{z_1 z_2}([0, T_h])$. This map is bilinear and satisfies the bound:
$$
\| t \mapsto t^{\beta}I_t \|_{C^{\gamma/2} L^{\infty}_{z_1 z_2}([0, T_h])} \lesssim_{T_h} \|f\|_{\LL^{\beta, \alpha}_{z_1}([0, T_h])} \| h\|_{\LL^{\gamma}_{z_2}([0, T_h])}.
$$
For $f,h \in C^{\infty}_b([0, T_h] \times \RR; \RR)$, $I$ is the unique map that satisfies $\partial_t I_t = f \partial_t h, \ I_0 = 0$.
\end{lemma}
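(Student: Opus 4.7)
My plan is to construct $I_t$ via Gubinelli's sewing lemma, adapted to handle the time-singularity of $f$ at $0$ and the time-dependent weights. Since $z_1, z_2$ are $\omega$-moderate and pointwise increasing in time, the weighted $L^\infty$ estimate localizes at each $x \in \RR$, so it suffices to derive all bounds pointwise and then take the supremum weighted by $z_1(t)z_2(t)$.

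The natural germ is $J_{s,t}(x) = f(s,x)\bigl(h(t,x) - h(s,x)\bigr)$, whose coboundary satisfies $\delta J_{s,u,t}(x) = -(f(u,x) - f(s,x))(h(t,x) - h(u,x))$ for $0 < s \le u \le t$. The key splitting is
\[
f(u) - f(s) = s^{-\beta}\bigl(u^\beta f(u) - s^\beta f(s)\bigr) + \bigl(u^{-\beta} - s^{-\beta}\bigr) u^\beta f(u),
\]
where the first piece is controlled by $s^{-\beta}|u-s|^{\alpha/2}$ via the $\alpha/2$-Hölder regularity of $u \mapsto u^\beta f(u)$ in $L^\infty_{z_1(u)}$, while the second is bounded using the smoothness of $r \mapsto r^{-\beta}$. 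Combined with the $\gamma/2$-Hölder bound on $h$, the standard sewing lemma applied on any subinterval $[s,t] \subset (0, T_h]$ produces a bilinear Young integral $I_{s,t}$ with additivity $I_{s_2,t} - I_{s_1,t} = I_{s_2,s_1}$ and explicit pointwise bounds on $I_{s,t} - J_{s,t}$ that grow like a power of $s^{-1}$ as $s \to 0$.

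To extend the construction to $s = 0$, I would define $\phi(t) := t^\beta I_t$ with the convention $\phi(0) = 0$, and work from the decomposition
\[
\phi(t) - \phi(s) = t^\beta \int_s^t f\,dh + (t^\beta - s^\beta) I_s, \qquad 0 < s \le t.
\]
Both terms are estimated by combining the sewing bound with the mean-value estimate $|t^\beta - s^\beta| \lesssim_{T_h} s^{\beta - 1}|t-s|$ in the regime $t \le 2s$, splitting $[s,t]$ into $O(\log_2(t/s))$ dyadic subintervals for the complementary regime, and exploiting that the increments across consecutive dyadic subintervals telescope. After some algebra, one obtains $|\phi(t) - \phi(s)| \lesssim_{T_h} |t-s|^{\gamma/2} \|f\|_{\LL^{\beta,\alpha}_{z_1}} \|h\|_{\LL^{\gamma}_{z_2}}$ uniformly in $0 < s \le t \le T_h$, which extends to $s = 0$ by continuity with the desired boundary value. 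Bilinearity is inherited from the germ and the sewing map, and uniqueness for smooth $f, h$ is automatic since in that case the Young construction must agree with the unique classical solution of $\partial_t I = f \partial_t h$ with $I_0 = 0$.

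The main obstacle is the interplay between the time singularity $s^{-\beta}$ and the Hölder scale $|t-s|^{\gamma/2}$: a naive telescoping of the sewing bound on $I_{s_{n+1},s_n}$ along a dyadic sequence $s_n = t\,2^{-n}$ fails to be absolutely summable in the regime $\beta \ge \gamma/2$. Overcoming this requires using the joint $\alpha/2$-Hölder regularity of $u^\beta f(u)$ to exhibit a cancellation between the two terms in the decomposition of $\phi(t) - \phi(s)$, rather than bounding each separately. This finer analysis is the content of Lemma~\ref{lem:young_heat_kernel_convolution} in the appendix, which handles the slightly more general setting involving a heat-kernel convolution and yields the present statement as a corollary.
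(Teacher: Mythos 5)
Your setup — the germ $J_{s,t}=f(s)(h(t)-h(s))$, the splitting of increments of $f$ through $\ol{f}(t)=t^{\beta}f(t)$, the decomposition $\phi(t)-\phi(s)=t^{\beta}\int_s^t f\,dh+(t^{\beta}-s^{\beta})I_s$ with the case distinction $t\le 2s$ versus $t>2s$ — is essentially the paper's argument (the paper uses dyadic Riemann sums with right base-points rather than an abstract sewing lemma, which is a cosmetic difference). The genuine gap is that you never carry out the key quantitative step, namely the uniform bound on the $\gamma/2$-H\"older seminorm of $t\mapsto t^{\beta}I_t$ down to $t=0$: instead you declare that this ``finer analysis is the content of Lemma~\ref{lem:young_heat_kernel_convolution}'' and that the present statement follows as a corollary. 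That is backwards and circular: in the paper, Lemma~\ref{lem:young_heat_kernel_convolution} is proved \emph{using} the present lemma (its proof starts by taking $X_t=\int_0^t f_s\,dh_s$ to be the Young integral constructed here, and then convolves its increments with the heat semigroup). Moreover, even ignoring the logical order, Lemma~\ref{lem:young_heat_kernel_convolution} only controls the smoothed object $V(f,h)(t)=\int_0^t P_{t-s}f_s\partial_s h_s\,ds$ in $\LL^{\beta,\eta}$ with a regularity loss $\eta<\gamma-2a/\delta$; there is no way to read off from it the unsmoothed integral $\int_0^t f\,dh$ together with the claimed $C^{\gamma/2}L^{\infty}_{z_1z_2}$ bound. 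So the hard part of the proof is simply missing.

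Your diagnosis of the difficulty is also inaccurate: no cancellation between $t^{\beta}\int_s^t f\,dh$ and $(t^{\beta}-s^{\beta})I_s$ is needed, and the regime $\beta\ge\gamma/2$ causes no breakdown. The paper bounds the two terms separately. The mechanisms are: (i) the geometric factor $2^{-n\varrho}$ with $\varrho=(\alpha+\gamma)/2-1>0$ from the H\"older regularities of $\ol{f}$ and $h$; (ii) the dyadic sums of the blow-up are controlled by $\sum_k (t^n_k)^{-\beta}2^{-n}\lesssim\int_0^t s^{-\beta}\,ds\lesssim t^{1-\beta}$, which only uses $\beta<1$; (iii) for $t\le 2s$ one uses $|t^{\beta}-s^{\beta}|\lesssim (t-s)s^{\beta-1}$ together with the a priori bound $s^{\beta}|I_s|\lesssim s^{\gamma/2}$ (so that $(t-s)s^{\gamma/2-1}\le(t-s)^{\gamma/2}$ since $t-s\le s$), while for $t>2s$ one absorbs $|t-s|^{-\gamma/2}\lesssim t^{-\gamma/2}$ into each term individually. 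Writing out these estimates (as the paper does for the differences $I^{n+1}-I^{n}$ of the dyadic approximations, split according to $|t-s|>2^{-n}$ or $\le 2^{-n}$) is exactly what your proposal omits, so as it stands the proof is incomplete at its central point.
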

\begin{proof}
Following the classical construction via the sewing lemma (cf. \cite[Lemma 4.2]{frizRP}) we can build the integral $\int_s^t f(r) dh(r)$ for any $t \ge s>0.$ We repeat the construction in order to get a tight control on the singularity in zero and the weights involved.
We will prove the result for time independent weights. The general case then follows from the identity: $\smallint_0^t f(s) dh(s) = \smallint_0^t f(s\wedge t)dg(s).$
Define for $n \ge 0$ and $t^n_k = k/2^n$
$$
I^n_t = \sum_{k = 0}^{{+}\infty} f(t^n_{k+1} )(h(t^n_{k+1} \wedge t) {-} h(t^n_k \wedge t)).
$$
unlike the more traditional integration scheme we choose a right base-point to remove some tedious, but only technical difficulties when dealing with time blow ups. We want to estimate the following quantity
$$
\sum_{n \ge 0} \sup_{0 \le s \le t \le T} \frac{\| t^{\beta}(I^{n+1}_t{-}I^{n}_t)-s^{\beta}(I^{n+1}_s{-}I^{n}_s) \|_{L^{\infty}_{z_1z_2}}}{|t{-}s|^{\gamma/2}}.
$$
We will treat only the case $\beta >0$, since $\beta = 0$ follows similarly. We fix $n$ and estimate one of the terms above. We will divide the estimate in two parts.

\textit{Step 1.} First we look on large scales, that is $|t{-}s| > 2^{-n}$. To lighten the notation we write $g_{u,v} = g(u) {-} g(v)$. We also write $t_n \ (\text{ resp. } t^n)$ for the nearest left (resp. right) dyadic point to $t$: 
$$k_n(t) = \arg \min_{k | k \le 2^n t} |t{-}t^n_k|, \ \ t_n = k_n/2^n, \ \ t^n = t_n {+}1/2^n. $$  We start by considering $t \le 2s.$ in this case we will estimate the terms $t^\beta (I^{n+1}_{s,t}{-}I^{n}_{s,t})$ and $(t^{\beta}-s^{\beta})(I^{n+1}_s{-}I^{n}_s)$ separately. Let us start with the first one. Since $|t{-}s|> 2^{-n}$ we have in particular that $s^n < t$ and a close inspection of the sums reveals that:
\begin{align*}
	I^n_{t,s}{-}I^{n+1}_{t,s} = f_{s^n, s^{n+1}} h_{s^{n+1}, s} + \sum_{k = k_n(s)+1}^{k_n(t)-1} f_{t^{n+1}_{2k+2}, t^{n+1}_{2k+1}} &  h_{t^{n+1}_{2k+1}, t^{n+1}_{2k}} \\
	& +  f_{t^{n}, t_{n+1}} h_{t_{n+1}, t_n} + f_{t^n, t^{n+1}} h_{t, t_{n+1}}. 
\end{align*}
Now if we call $\ol{f}(t) = t^{\beta}f(t)$ we can write an increment of $f$ as:
$$
f_{u, v} = u^{-\beta}\ol{f}_{u,v} + (u^{-\beta}{-} v^{- \beta})\ol{f}(v).
$$
Note that $\ol{f}$ is $\alpha/2-$H\"older continuous so that substituting this formula we find:
\begin{align*}
\|I^n_{t,s}{-} & I^{n+1}_{t,s}\|_{L^{\infty}_{z_1z_2}} \lesssim \|f\|_{\LL^{\beta, \alpha}_{z_1}} \| h\|_{\LL^{\gamma}_{z_2}} \cdot \\
& \cdot \bigg\{ \frac{1}{2^{n(\alpha+\gamma)/2}}\bigg[ (s^{n+1})^{-\beta} + \sum_{k = k_n(s)+1}^{k_n(t)-1} (t^{n+1}_{2k+1})^{-\beta} + (t_{n+1})^{-\beta} \bigg] \\
& + \frac{1}{2^{n(\gamma/2+1)}}\bigg[ (s^{n+1})^{-\beta-1+\alpha/2} + \sum_{k = k_n(s)+1}^{k_n(t)-1} (t^{n+1}_{2k+1})^{-\beta-1+\alpha/2} + (t_{n+1})^{-\beta-1+\alpha/2} \bigg] \bigg\} \\
	\lesssim & \|f\|_{\LL^{\beta, \alpha}_{z_1}} \| h\|_{\LL^{\gamma}_{z_2}} \bigg[ \sum_{k = k_{n+1}(s)+1}^{k_{n+1}(t)} (t^{n+1}_{k})^{-\beta}\frac{1}{2^{(n+1)(\alpha+\gamma)/2}} \bigg] \\
	\lesssim & \frac{1}{2^{n\varrho}}\|f\|_{\LL^{\beta, \alpha}_{z_1}} \| h\|_{\LL^{\gamma}_{z_2}} \frac{1}{2^{(n+1)(1-\beta)}}\big[ (k_{n+1}(t){+}1)^{1-\beta} - (k_{n+1}(s){+}1)^{1-\beta} \big]
\end{align*}
where for the first inequality we have used that for $u \ge v$ we can estimate $v^{- \beta}{-} u^{-\beta} \le (u{-}v) v^{-\beta{-}1}$ and for the second inequality we have used that since $\alpha/2 \le 1,$ we can estimate $(k/2^n)^{-1+\alpha/2} \le (1/2^n)^{-1+\alpha/2}.$ In the last line, since $\beta \in [0, 1)$, we estimated $\sum_a^b k^{-\beta} \le 2^{\beta}\smallint_a^{b+1} x^{-\beta}dx$ for $a \ge 1$ and we set $\varrho = (\alpha{+}\beta)/2{-}1>0$. Now using the assumptions on $t$ and $s$ we can estimate the last quantity as follows:
\begin{align*}
\frac{t^{\beta} \| I^n_{t,s}{-}I^{n+1}_{t,s} \|_{L^{\infty}_{z_1z_2}}}{|t{-}s|^{\gamma/2}} & \lesssim 2^{-n\varrho} t^{\beta} \frac{((t^{n+1})^{1-\beta}{-}(s^{n+1})^{1-\beta})}{(t{-}s)^{\gamma/2}} \le 2^{-n\varrho}\frac{(s^{n+1})^{-\beta}(t^{n+1}{-}s^{n+1})}{(t{-}s)^{\gamma/2}} \\
& \le 2^{-n\varrho}t^{\beta}\frac{t^{\beta}}{s^{\beta}}\big( |t{-}s|^{1-\gamma/2} + 2^{-n-1}|t-s|^{-\gamma/2}\big) \lesssim 2^{-n\varrho}. 
\end{align*}
now we treat the term $(t^{\beta}{-}s^{\beta})(I^{n+1}_{s}-I^{n}_s).$ Here we only need to adapt the previous calculations:
\begin{align}\label{eqn:technical_appendix_schuader_estimate}
\frac{\| (t^{\beta}{-}s^{\beta})(I^{n+1}_{s}-I^{n}_s) \|_{L^{\infty}_{z_1z_2}}}{|t{-}s|^{\gamma/2}} & \lesssim (t{-}s)^{1-\gamma/2}s^{\beta-1}\|I^{n+1}_{s}-I^{n}_s \|_{L^{\infty}_{z_1z_2}} \\
& \lesssim 2^{-n\varrho}s^{\beta-1}((s^{n+1})^{1-\beta}{-} 2^{-(n+1)(1-\beta)}) \lesssim 2^{-n\varrho} \nonumber
\end{align}
where in the last line we have estimated $(s^{n+1})^{1-\beta} \le s^{1-\beta} + 2^{-(n+1)(1-\beta)}$ together with the fact that $s \ge 2^{-(n+1)}$ since $|t-s|> 2^{-n}$ and $t\le 2s.$

We now consider $t \ge 2s.$ This time we will estimate the two terms $t^{\beta}I_t$ and $s^{\beta}I_s$ separately. Indeed in this case we find $t-s \ge t/2$ and we can estimate:
$$
\frac{\| t^{\beta}(I^{n+1}_t{-}I^{n}_t)-s^{\beta}(I^{n+1}_s{-}I^{n}_s) \|_{L^{\infty}_{z_1z_2}}}{|t{-}s|^{\gamma/2}} \lesssim t^{-\gamma/2+\beta}\|I^{n+1}_t{-}I^{n}_t\|_{L^{\infty}_{z_1z_2}} {+} s^{-\gamma/2{+}\beta}\| I^{n+1}_s{-}I^{n}_s \|_{L^{\infty}_{z_1z_2}},
$$
so that similar calculations to the ones from Equation \eqref{eqn:technical_appendix_schuader_estimate} pull thorough.

\textit{Step 2.} Now we have to consider the small-scales case with respect to $n$. If $|t-s| \le 2^{-n}$ the increment $I^n_{t,s}{-}I^{n+1}_{t,s}$ can assume only one of the following three forms according to the position of $s, t$ w.r.t to dyadic points, assuming $s,t$ are not dyadic themselves (i.e. of the form $k/2^{n+1}$):
$$
f_{t^n, t^{n+1}}h_{t,s} \text{ if } s^{n+1} > t, \ \ f_{t^n, s^{n+1}} h_{t,s} \text{ if } s^n > t, s^{n+1} < t, \ \ f_{t^n, t^{n+1}}h_{t, s^n} \text{ if } s^{n+1}<t. 
$$
and these formulas can be extended continuously in the case that one or both of the points $s,t$ is dyadic. Here the difficulty is a formality, namely that in our notation the points $t^n, t^{n+1}$ do not depend continuously on $t$. By continuity if is sufficient to control the H\"older seminorm for non-dyadic points. We show how to estimate the second term. The others follow similarly. As in the previous discussion we have:
\begin{align*}
t^{\beta}|f_{t^n, s^{n+1}} h_{t,s}| & \le t^{\beta}\big( (s^{n+1})^{-\beta}2^{-n\alpha/2}  + (s^{n+1})^{-\beta-1+\alpha/2}2^{-n} \big)|t{-}s|^{\gamma/2} \lesssim 2^{-n\alpha/2} |t{-}s|^{\gamma/2}.
\end{align*}
where in the last step we used that $s^{n+1} = t_{n+1}$ and $s^{n+1} \ge 2^{-(n+1)}$ together with the estimate $t^{\beta} \le (t_{n+1})^{\beta} + 2^{-n\beta}$.
The result regarding smooth functions follows from the properties of the Riemann integral.
\end{proof}

We also need the following reformulation of \cite[Lemma A.8]{singular_GIP}.

\begin{lemma}\label{lem:small_technical_schauder_estimate}
For $\alpha \in \RR, \delta \ge 0, t >0$ it is possible to estimate
$$
\|(Id {-} P_t)u \|_{\CC^{\alpha}_z} \lesssim t^{\delta/2} \| u \|_{\CC^{\alpha {+} \delta}_z}.
$$
\end{lemma}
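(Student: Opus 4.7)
The plan is to perform a Littlewood--Paley decomposition and to split frequencies according to whether they are above or below the natural heat scale $t^{-1/2}$. Since both $P_t$ and $\Delta_j$ are Fourier multipliers they commute, so
\[
	\|(Id - P_t) u\|_{\CC^\alpha_z} = \sup_{j \ge -1} 2^{\alpha j} \| (Id - P_t) \Delta_j u \|_{\infty, z},
\]
and for each fixed $j$ it suffices to bound the block $(Id - P_t) \Delta_j u$.

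For high frequencies, say those with $2^{2j} t \ge 1$, I would simply use that $P_t$ is bounded on $L^{\infty}_z$ uniformly over $t$ in a bounded interval. This uses $z \in \weights$ and the fact that the Gaussian decay of the heat kernel dominates $e^{\lambda \omega(\cdot)}$ for $\omega(x) = |x|^{\delta}$ with $\delta < 1$, as in the computation following Definition~\ref{def:weights}. Then
\[
	2^{\alpha j} \| (Id - P_t) \Delta_j u \|_{\infty, z} \lesssim 2^{\alpha j} \| \Delta_j u \|_{\infty, z} \lesssim 2^{-\delta j} \| u \|_{\CC^{\alpha + \delta}_z} \lesssim t^{\delta/2} \| u \|_{\CC^{\alpha + \delta}_z},
\]
using $2^{-j} \le t^{1/2}$ in the last step.

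For low frequencies $2^{2j} t < 1$, I would use the representation
\[
	(Id - P_t) \Delta_j u = -\int_0^t \hh \Delta P_s \Delta_j u \, ds,
\]
together with the weighted Bernstein inequality (Proposition~\ref{prop:bernstein_inequality}) applied to $P_s \Delta_j u$, whose Fourier support is contained in $2^j \mathcal{A}$. The mild subtlety is that Bernstein requires the scaling parameter to be bounded away from zero; since we work on dyadic blocks $j \ge -1$ this is automatic. Combining this with the boundedness of $P_s$ on $L^{\infty}_z$ uniformly in $s \in [0,t]$ yields $\| \Delta P_s \Delta_j u \|_{\infty, z} \lesssim 2^{2j} \| \Delta_j u \|_{\infty, z}$, and therefore
\[
	2^{\alpha j}\| (Id - P_t) \Delta_j u \|_{\infty, z} \lesssim t \cdot 2^{(2 - \delta) j} \| u \|_{\CC^{\alpha + \delta}_z} \lesssim t^{\delta/2} \| u \|_{\CC^{\alpha + \delta}_z},
\]
where the last inequality uses $2^{j} < t^{-1/2}$ and $\delta < 2$ (the case $\delta \ge 2$ being trivial since we may replace $\delta$ by any smaller value at the cost of a constant).

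The main technical point is the uniform $L^{\infty}_z$ boundedness of $P_s$ for $s$ in a bounded interval, together with the weighted Bernstein estimate. Once these are in place the dyadic summation is routine, and balancing the two regimes at $2^j \simeq t^{-1/2}$ gives the claimed $t^{\delta/2}$ rate.
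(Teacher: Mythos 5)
Your main argument is correct on the range where the statement is actually true, and it takes a genuinely more self-contained route than the paper. The paper's proof is a one-liner: it invokes the weighted analogue of \cite[Lemma~A.8]{singular_GIP} on each Littlewood--Paley block, $\|(Id-P_t)\Delta_q u\|_{L^\infty_z}\lesssim t^{\delta/2}\|\Delta_q u\|_{\CC^{\delta}_z}\lesssim t^{\delta/2}2^{\delta q}\sum_{j\sim q}\|\Delta_j u\|_{L^\infty_z}$, then multiplies by $2^{\alpha q}$ and takes the supremum over $q$. You instead re-derive precisely this block estimate from scratch: commuting $P_t$ with $\Delta_j$, splitting at the heat scale $2^j\simeq t^{-1/2}$, using the uniform $L^\infty_z$-boundedness of $P_s$ on bounded time intervals (this is where $z\in\weights$ and the sub-exponentiality of $\omega$ enter, exactly as in the convolution computation after Definition~\ref{def:weights}) for high frequencies, and the representation $(Id-P_t)\Delta_j u=-\int_0^t\hh\Delta P_s\Delta_j u\,ds$ together with the weighted Bernstein inequality (Proposition~\ref{prop:bernstein_inequality}, applicable since $2^j\ge 2^{-1}$ is bounded away from zero) for low frequencies. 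What this buys is independence from an external weighted version of the GIP lemma, at the cost of a somewhat longer argument; the balancing at $2^j\simeq t^{-1/2}$ gives the same $t^{\delta/2}$ rate either way.

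One caveat: your parenthetical dismissal of the case $\delta\ge 2$ is not right. Replacing $\delta$ by some $\delta'<2$ gives $t^{\delta'/2}\|u\|_{\CC^{\alpha+\delta'}_z}\le t^{\delta'/2}\|u\|_{\CC^{\alpha+\delta}_z}$, and for small $t$ the factor $t^{\delta'/2}$ is \emph{larger} than $t^{\delta/2}$, so this does not recover the claimed rate. In fact the estimate is false for $\delta>2$: for a smooth bounded nonharmonic $u$ one has $(Id-P_t)u=-\tfrac{t}{2}\Delta u+o(t)$, so the left-hand side is of order $t$ and cannot be $O(t^{\delta/2})$ when $\delta/2>1$. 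The lemma should be read with the implicit restriction to exponents in $[0,2]$ (which is how the paper uses it, always with small exponents, and is also the range of the cited GIP estimate); your main argument covers exactly this range, including the endpoint $\delta=2$, where the low-frequency bound reduces to $t\,2^{(2-\delta)j}=t=t^{\delta/2}$.
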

\begin{proof}
For any $q \ge{-1}$ it follows from \cite[Lemma A.8]{singular_GIP}:
$$
	\|(Id {-} P_t)\Delta_q u \|_{L^{\infty}_z} \lesssim  t^{\delta/2} \| \Delta_q u\|_{\CC^{\delta}_z} \lesssim t^{\delta/2} 2^{\delta q} \sum_{j \sim q} \|\Delta_j u \|_{L^{\infty}_z}.
$$
The result then follows by multiplying with $2^{q\alpha}$ and summing over $q$.
\end{proof}

Finally we can prove Schauder estimates for time-dependent distributions.

\begin{lemma}\label{lem:young_heat_kernel_convolution}
For any $\beta \in [0,1), \ \alpha, \gamma \in (0,2)$ such that $\alpha{+}\gamma>2$ and $0\le \eta<\gamma {-}2a/\delta$ and $0\le T_\ell \le T_r \le T_h$ there exists a continuous bilinear map
$$
V_{T_\ell}: \LL^{\beta, \alpha}_{e(l+t)}([T_{\ell}, T_r]) \times \LL^{\gamma}_{p(a)}([T_{\ell}, T_r]) \to \LL^{\beta, \eta}_{e(l+t)}([T_{\ell}, T_r]), 
$$
such that
$$
\| V(f, h)\|_{\LL^{\beta, \eta}_{e(l+t)}([T_{\ell}, T_r])} \lesssim_{T_h} \| f\|_{\LL^{\beta, \alpha}_{e(l+t)}([T_{\ell}, T_r])} \| h\|_{\LL^{\gamma}_{p(a)}([T_{\ell}, T_r])}.
$$
Similarly, under the same assumption on $\beta, \alpha, \gamma,$ for given $z_i : \RR_{\ge 0} \to \weights$ point-wise increasing, $i = 1, 2$, we can build a map between the spaces
$$
V:\LL^{\beta, \alpha}_{z_1}([0, T_h]) \times \LL^{\gamma}_{z_2}([0, T_h]) \to \LL^{\beta, \eta}_{z_1 z_2}([0, T_h]),
$$
such that
$$
\| V(f, h)\|_{\LL^{\beta, \eta}_{z_1 z_2}([0, T_h])} \lesssim_{T_h} \| f\|_{\LL^{\beta, \alpha}_{z_1}([0, T_h])} \| h\|_{\LL^{\gamma}_{z_2}([0, T_h])}
$$
for any $0 \le \eta < \gamma$. For $f, h \in C^{\infty}_b([0, T] \times \RR; \RR)$ the map $V_{T_\ell}$ is the convolution with the heat kernel:
$$
V_{T_\ell}(f, h)(t) = \int_{T_\ell}^t P(s{-}T_\ell)f_s\partial_s h_s ds,
$$
with the silent identification of $V_0$ with $V.$
\end{lemma}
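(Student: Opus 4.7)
The plan is to adapt the sewing scheme of Lemma~\ref{lem:young_integration_with_time_blowup} to the heat kernel setting. Fix $t \in (T_\ell, T_r]$ and, writing $t^{n,\wedge}_{k} = t^n_k \wedge t$, define the dyadic approximations
\begin{equation*}
V^n_t \;=\; \sum_{k} P_{t - t^{n,\wedge}_{k+1}} \, f(t^n_{k+1}) \, \big(h(t^n_{k+1}\wedge t) - h(t^n_k \wedge t)\big),
\end{equation*}
with right base-point as in the proof of Lemma~\ref{lem:young_integration_with_time_blowup}. Increments $V^{n+1}_t - V^n_t$ decompose, exactly as in that lemma, into three boundary contributions plus a telescoping bulk sum over the dyadic intervals $[t^{n+1}_{2k}, t^{n+1}_{2k+2}]$; each bulk term has the form $P_{t-u}\,f_{u,v}\,h_{u',v'}$ with $|u-v|,|u'-v'| \lesssim 2^{-n}$. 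The regularisation $P_{t-u}$ is then what converts the polynomial weight on $h$ into an exponential weight in the output, at the cost of $2a/\delta$ regularity, as already exploited in Proposition~\ref{prop:schauder_estimate}.

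Concretely, for each bulk term I would use Bernstein's inequality (Proposition~\ref{prop:bernstein_inequality}) to write, for any $\eta < \gamma - 2a/\delta$ and some $\kappa > 0$,
\begin{equation*}
\|P_{t-u}(f_{u,v} h_{u',v'})\|_{\mathcal{C}^{\eta}_{e(l+t)}}
\;\lesssim\; (t-u)^{-\kappa}\,\|f_{u,v}\|_{\infty,e(l+u)}\,\|h_{u',v'}\|_{\infty,p(a)},
\end{equation*}
with $\kappa = (\eta - \gamma + 2a/\delta + \text{loss})/2$ chosen just small enough for summability along the dyadic grid. Inserting the H\"older bounds $\|f_{u,v}\|_{\infty,e(l+u)} \lesssim u^{-\beta} 2^{-n\alpha/2} \|f\|_{\mathcal{L}^{\beta,\alpha}_{e(l+t)}}$ and $\|h_{u',v'}\|_{\infty, p(a)} \lesssim 2^{-n\gamma/2}\|h\|_{\mathcal{L}^{\gamma}_{p(a)}}$, and summing over $k$ with the same left/right split (large versus small scales w.r.t.\ $2^{-n}$) as in the proof of Lemma~\ref{lem:young_integration_with_time_blowup}, yields geometric decay in $n$ since $\alpha + \gamma > 2$. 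The boundary terms, the $(t^\beta - s^\beta)$ pieces, and the estimate for $t\ge 2s$ go through verbatim after replacing absolute values by $\mathcal{C}^{\eta}_{e(l+t)}$ norms. Passing to the limit gives $V_{T_\ell}(f,h) \in \mathcal{M}^{\beta}\mathcal{C}^{\eta}_{e(l+t)}$ with the claimed bound.

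For the H\"older-in-time part of the $\mathcal{L}^{\beta,\eta}$ norm, I would compare $V(f,h)(t)$ and $V(f,h)(t')$ by writing, for $s < s' \le t \le t'$, the increment as a sum of two Young-type integrals: the first on $[s',t]$, which is bounded by the same dyadic scheme applied on a short interval of length $|t'-t|$, and the second of the form $(P_{t'-t} - \mathrm{Id})V(f,h)(t)$, which is controlled by Lemma~\ref{lem:small_technical_schauder_estimate} as $\lesssim |t'-t|^{\eta/2}\|V(f,h)(t)\|_{\mathcal{C}^{\eta}_{e(l+t)}}$ using the spatial bound just obtained. The polynomial-weight-only case is strictly easier: the heat kernel is weight-preserving (no $2a/\delta$ loss), so the same sewing argument gives $\eta < \gamma$ without further cost. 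That the limit agrees with the classical Riemann integral on smooth $f,h$ follows since smooth approximants converge in the relevant norms and both sides are continuous functionals.

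The main obstacle is the triple bookkeeping between (i) the initial blow-up weight $t^{-\beta}$ appearing in every increment of $f$, (ii) the heat-kernel conversion of the polynomial weight of $h$ into the exponential weight of the output (which forces the loss $2a/\delta$ in the final regularity), and (iii) the need to get the sum over dyadic scales to converge uniformly in $t$ including at $t \to T_\ell$. All three issues are of the same nature as in Lemma~\ref{lem:young_integration_with_time_blowup} and Proposition~\ref{prop:schauder_estimate}, but they must be handled simultaneously, and the Bernstein exponent $\kappa$ must be chosen so that the geometric series retains room $\alpha + \gamma - 2 > 0$ after absorbing both the weight loss and the arbitrarily small regularity gap $\gamma - 2a/\delta - \eta$.
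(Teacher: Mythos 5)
Your overall plan (a sewing/dyadic scheme with the heat kernel inserted, plus the weighted smoothing that converts $p(a)$ into $e(l+t)$ at the cost of $2a/\delta$) is close in spirit to the paper, but the key quantitative step is misidentified, and as written the argument would not close. The paper proceeds in two stages: it first builds the Young integral $X_t=\int_0^t f\,dh$ via Lemma~\ref{lem:young_integration_with_time_blowup} (this is where $\alpha+\gamma>2$ is used, once and for all), and then sews $V^n_t=\sum_k P(t-t^n_k)X_{t^n_{k+1},t^n_k}$, where the gain at each dyadic refinement comes from the commuting of one heat-kernel step, i.e.\ from $(P(2^{-(n+1)})-\mathrm{Id})P(t-\cdot)$ acting against the $\gamma/2$-H\"older bound on $t^\beta X_t$; this requires an auxiliary parameter $\kappa$ with $\gamma+\kappa>2$ and the constraint $(\eta+\kappa+\ve)/2+a/\delta<1$, which is exactly where $\eta<\gamma-2a/\delta$ comes from. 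In your merged one-step scheme the refinement of $V^{n+1}_t-V^n_t$ does \emph{not} reduce to bulk terms of the form $P_{t-u}f_{u,v}h_{u',v'}$ only: writing out the telescoping on a dyadic interval you get, besides the term with both increments, a term of the form $\big(P(2^{-(n+1)})-\mathrm{Id}\big)P(t-u)\,f(v)\,h_{u',v'}$, in which only the $h$-increment is small (of size $2^{-n\gamma/2}$) and the extra decay must be extracted from the heat-kernel difference. These are precisely the binding terms; if one only had your type of bulk term, the condition $\alpha+\gamma>2$ together with $\eta/2+a/\delta<1$ would (incorrectly) yield regularity up to $2-2a/\delta$ rather than $\gamma-2a/\delta$, which shows the accounting cannot be right. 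Relatedly, your exponent choice $\kappa=(\eta-\gamma+2a/\delta+\text{loss})/2$ is negative in the relevant regime $\eta<\gamma-2a/\delta$, so it cannot play the role of a blow-up rate, and the estimate you invoke is not Bernstein's inequality (Proposition~\ref{prop:bernstein_inequality}) but the weighted heat-semigroup smoothing of Proposition~\ref{prop:schauder_estimate} (together with Lemma~\ref{lem:small_technical_schauder_estimate} for the $(P(\delta)-\mathrm{Id})$ factor).

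To repair the argument you either have to carry out the combined sewing honestly, treating the heat-kernel-difference terms with $\kappa+\gamma>2$ and $(\eta+\kappa+\ve)/2+a/\delta<1$ alongside the double-increment terms, while keeping track of the $u^{-\beta}$ blow-up in every $f$-increment and the right base-point convention near $t=0$; or, more simply, follow the paper's two-step route, first invoking Lemma~\ref{lem:young_integration_with_time_blowup} to get $t\mapsto t^\beta X_t\in C^{\gamma/2}L^\infty_{z_1z_2}$ and then sewing $P(t-\cdot)$ against increments of $X$. The time-regularity part of your sketch (splitting into $V_t-P(t-s)V_s$, $(P(t-s)-\mathrm{Id})V_s$ via Lemma~\ref{lem:small_technical_schauder_estimate}, and the $(t^\beta-s^\beta)V_s$ piece, with the separate treatment of $t\ge 2s$) and the remark that the purely polynomial/time-independent weight case loses no $2a/\delta$ are both in line with the paper and fine as stated.
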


\begin{proof}
We prove only the result regarding exponential weights, since the second one follows via the same calculations and is simpler. In addition, for clarity and without loss of generality we assume $[T_\ell, T_r] = [0, T].$ Let us fix $f \in \LL^{\beta, \alpha}_{e(l+t)}$ and $h \in \LL^{\gamma}_{p(a)}$ and let $X$ be the Young integral from Lemma \ref{lem:young_integration_with_time_blowup}: $X_t = \smallint_0^t f_s dh_s$. We approximate the convolution with the heat kernel $V(f, h)$ in the following way:
$$
V^n_t = \sum_{k = 3}^{\lfloor 2^n t \rfloor -1} P(t{-}t^n_k)X_{t^n_{k+1}, t^{n}_k}.
$$
This approximation has the advantage of simplifying our calculations, but the disadvantage of not being continuous in time.
We want to show that $V^n_t$ converges to some $V_t$ which lies in $\LL^{\beta, \eta}_{e(l+t)}$ for $\eta < \gamma{-}2a/\delta$. For this reason fix $\kappa, \ve \ge 0$ small at will such that $\gamma{+}\kappa > 2$ and $\zeta = (\eta{+}\kappa{+}\ve)/2 {+} a/\delta < 1.$ We divide the proof in two steps, estimating the spatial and the temporal regularity differently.
For $t_n, t^n, t^n_k, k_n(t)$ we use the same definition as in the proof of Lemma \ref{lem:young_integration_with_time_blowup}.

\textit{Step 1.} We show that for fixed $t$ the sequence $V^n_t$ converges in $\CC^{\eta}_{e(l+t)}$. As in the previous proof we show that:
$$
\sum_{n \ge 0} t^{\beta}\|V^{n+1}_t {-} V^n_t \|_{\CC^{\eta}_{e(l+t)}} \lesssim \| f\|_{\LL^{\beta, \alpha}_{e(l+t)}} \|h \|_{\LL^{\gamma}_{p(a)}}
$$
thus deducing the existence of a limit $V_t$ with $\sup_{t \in [0, T]} t^{\beta}\|V_t \|_{\CC^{\eta}_{e(l+t)}} \lesssim \| f\|_{\LL^{\beta, \alpha}_{e(l+t)}} \|h\|_{\LL^{\gamma}_{p(a)}}$. We can write the difference of the increments as:
\begin{align*}
V^n_t {-} V^{n+1}_t = & \sum_{k = 3}^{\lfloor 2^nt \rfloor -1}(P(t^{n+1}_{2k+1}{-}t^{n+1}_{2k}){-}Id) P(t{-}t^{n+1}_{2k+1}) X_{t^{n+1}_{2k+2}, t^{n+1}_{2k+1}} \\
& - P(t{-}t_n) X_{t_{n+1}, t_n}  1_{\{ k_{n+1}(t) > 5\}} - P(t{-} t^{n+1}_3) X_{t^{n+1}_4, t^{n+1}_3} 1_{\{ k_{n+1}(t) = 5 \}}
\end{align*}
Now we can estimate
\begin{align*}
	\| V^n_t {-} V^{n+1}_t\|_{\CC^{\eta}_{e(l+t)}} & \lesssim  \sum_{k = 3}^{\lfloor 2^nt \rfloor -1}\frac{1}{2^{(n+1)\kappa/2}}\|P(t{-}t^{n+1}_{2k+1})(X_{t^{n+1}_{2k+2}, t^{n+1}_{2k+1}})\|_{\CC^{\eta+\kappa}_{e(l+t)}} \\
	& + \frac{\| X_{t_{n+1}, t_n}\|_{\CC^{-\ve}_{e(l+t)}}}{|t{-} t_n|^{(\eta{+}\ve)/2}}1_{ \{ k_{n+1}(t) > 5 \}} + \frac{\| X_{t^{n+1}_{4}, t^{n+1}_3}\|_{\CC^{-\ve}_{e(l+t)}}}{|t{-} t^{n+1}_3|^{(\eta{+}\ve)/2}}1_{ \{ k_{n+1}(t) = 5 \}}  \\
	& \lesssim \sum_{k = 3}^{\lfloor 2^nt \rfloor -1}\frac{ \|X_{t^{n+1}_{2k+2}, t^{n+1}_{2k+1}}\|_{\CC^{-\ve}_{p(a)e(l+t)}}}{2^{(n+1)\kappa/2} | t{-} t^n_k|^{(\eta+\kappa+\ve)/2 +a/\delta}} \\
	& + \frac{ \| X_{t_{n+1}, t_n}\|_{\CC^{-\ve}_{p(a)e(l+t)}}}{|t{-} t_n|^{(\eta{+}\ve)/2+a/\delta}} 1_{ \{ k_{n+1}(t) > 5 \}}  + \frac{\| X_{t^{n+1}_{4}, t^{n+1}_3}\|_{\CC^{-\ve}_{p(a)e(l+t)}}}{|t{-} t^{n+1}_3|^{(\eta{+}\ve)/2 + a/ \delta}}1_{ \{ k_{n+1}(t) = 5 \}}
\end{align*}
where we have applied the first Schauder estimate from Proposition \ref{prop:schauder_estimate} and the bound $\|f\|_{\CC^{\nu}_{p(a)e(l+t)}} \lesssim |t{-}s|^{-a/\delta}\|f\|_{\CC^{\nu}_{e(l+s)}}$, for $t\ge s$. Now we have to estimate the norm of the increment. Here the time explosions come into play: we write $X_{u, v} = u^{-\beta}\ol{X}_{u,v} + (u^{-\beta}{-} v^{- \beta})\ol{X}(v)$, with $\ol{X}(t) = t^{\beta}X(t) \in C^{\gamma/2}L^{\infty}_{p(a)e(l+t)}$ according to the result of Lemma \ref{lem:young_integration_with_time_blowup}. Since $v^{- \beta}{-} u^{-\beta} \le (u{-}v) v^{-\beta{-}1}$ we can estimate
\begin{align*}
\|X_{t^{n+1}_{2k+2}, t^{n+1}_{2k+1}}\|_{\CC^{-\ve}_{p(a)e(l+t)}} \lesssim ( t^{n+1}_{2k+1})^{-\beta}2^{-(n+1)\gamma/2} \| f\|_{\LL^{\beta, \alpha}_{e(l+t)}} \| h \|_{\LL^{\gamma}_{p(a)}}.
\end{align*}
At this point we can conclude, since:
\begin{align*}
	\| V^n_t {-} & V^{n+1}_t\|_{\CC^{\eta}_{e(l+t)}} \lesssim \| f\|_{\LL^{\beta, \alpha}_{e(l+t)}} \|h \|_{\LL^{\gamma}_{p(a)}} \bigg( 2^{-n\varrho} \sum_{k = 1}^{\lfloor 2^nt \rfloor -1} 2^{-n} |t{-} t^n_k|^{-\zeta}(t^{n+1}_{2k+1})^{-\beta} \\
	& + \frac{(t_n)^{-\beta}2^{-(n+1)\gamma/2} }{|t{-} t_n|^{(\eta{+}\ve)/2+a/\delta}}1_{\{ k_{n+1}(t) >5, t_n < t_{n+1} \}} + \frac{(t^{n+1}_3)^{-\beta} 2^{-(n+1)\gamma/2}}{|t{-} t^{n+1}_3|^{(\eta{+}\ve)/2+a/\delta}} 1_{\{ k_{n+1}(t) = 5 \}}\bigg)
\end{align*}
with $\varrho = (\kappa {+} \gamma)/2{-}1.$ Now the sum can be dominated by an integral:
\begin{align*}
\sum_{k = 3}^{\lfloor 2^nt \rfloor -1} 2^{-n} |t{-} t^n_k|^{-\zeta}(t^{n+1}_{2k+1})^{-\beta} \lesssim \int_{3 2^{-n}}^{\lfloor 2^n t \rfloor} |t{-}s|^{-\zeta}s^{-\beta} ds \lesssim t^{1-\zeta-\beta} \lesssim t^{-\beta}
\end{align*}
where we used that $2^{-n}(t^{n+1}_{2k+1})^{-\beta} \le 2^{-n}(t^{n}_{k})^{-\beta} \le 2^\beta \smallint_{t^n_k}^{t^n_{k+1}} s^{-\beta} ds$ as well as the fact that $\zeta < 1.$ Finally we bound the two rest terms. Under the condition $t_n < t_{n+1}$ we can estimate
\begin{align*}
 \frac{(t_n)^{-\beta}2^{-(n+1)\gamma/2} }{|t{-} t_n|^{(\eta{+}\ve)/2+a/\delta}} \lesssim (t_n)^{-\beta}2^{-(n+1)(\gamma-\eta-\ve)/2- a/\delta}  \lesssim t^{-\beta} 2^{-n\varrho}2^{n(\zeta-1)}
\end{align*}
since $(t_n/t)^{-\beta} \lesssim (1{-} 1/t2^{n})^{-\beta} \lesssim 2^{\beta}$, because $k_{n+1}(t) > 5$ and thus $t \ge 2^{-n+2}$. Similarly we can treat the last term: 
$$\frac{(t^{n+1}_3)^{-\beta} 2^{-(n+1)\gamma/2}}{|t{-} t^{n+1}_3|^{(\eta{+}\ve)/2+a/\delta}} \lesssim t^{-\beta} 2^{ -n\varrho} 2^{n(\zeta-1)}.$$
Since $\varrho > 0$ and $\zeta < 1$ this allows to deduce the spatial regularity.

\textit{Step 2.} Now we address the temporal regularity. Our aim is to estimate:
$$
\sup_{0 \le s \le t \le T} \frac{ \| t^\beta V_t {-} s^\beta V_s\|_{L^{\infty}_{e(l+t)}}}{|t{-}s|^{\eta/2}} \lesssim \| f\|_{\LL^{\beta, \alpha}_{e(l+t)}} \| h\|_{\LL^{\gamma}_{p(a)}}.
$$
For simplicity, as there is no difference w.r.t. the previous case we omit taking care of the norm of the functions. Let us first consider $t \ge s$ such that $t \le 2s.$ Here we rewrite the above quantity as:
$$
t^\beta V_t {-} s^\beta V_s = t^{\beta} (V_{t}{-} P(t{-}s) V_s) + t^{\beta}(P(t{-}s){-} Id)V_s - (t^{\beta}{-}s^{\beta})V_s.
$$
And we estimate all three these terms separately. The first one can be written as the limit:
$$
V_{t}{-} P(t{-}s) V_s = \lim_n W^n_{t,s}, \ \ W^n_{t,s} = \sum_{\lfloor s2^n \rfloor{+}1}^{\lfloor t2^n \rfloor{-}1} P(t{-} t^n_k)X_{t^{n}_{k+1}, t^n_k}
$$
since the rest terms vanish in the limit. At this point we estimate $W$ as before:
\begin{align*}
\|W^{n}_{t,s} {-} W^{n+1}_{t,s} \|_{L^{\infty}_{e(l+t)}} & \le \sum_{\lfloor s2^n \rfloor{+}1}^{\lfloor t 2^n \rfloor{-}1} \|(P( t^{n+1}_{2k+1} {-} t^{n+1}_{2k}){-} Id)P(t {-} t^{n+1}_{2k+1})X_{t^{n+1}_{2k+2}, t^{n+1}_{2k+1}}\|_{L^{\infty}_{e(l+t)}} \\
& + \| (P(t{-}s^{n+1})X_{s^n, s^{n+1}} + P(t{-}t_n)X_{t_{n+1}, t_n})1_{|t{-}s|\ge 2^{-n}}\|_{L^{\infty}_{e(l+t)}} \\
& \lesssim \sum_{\lfloor s2^n \rfloor{+}1}^{\lfloor t2^n \rfloor{-}1} \frac{(t^{n+1}_{2k+1})^{-\beta}}{2^{(n+1)(\kappa + \gamma)/2}|t{-}t^n_k|^{\kappa/2+a/\delta}} + \frac{(s^{n+1})^{-\beta}}{2^{n(\gamma/2-a/\delta)}}1_{|t{-}s|\ge 2^{-n}}\\
& \lesssim 2^{-n\widetilde{\varrho}}\bigg( \int_s^t u^{-\beta}(t{-}u)^{-\kappa/2-a/\delta} du + t^{-\beta}|t{-}s|^{\eta/2}\bigg) \lesssim 2^{-n\widetilde{\varrho}}|t{-}s|^{\eta/2}t^{-\beta}
\end{align*}
with $\widetilde{\varrho} = [(\gamma{-}\eta)/2{-}a/\delta]\wedge[(\kappa{+}\gamma)/2{-}1]$, where we used the fact that $\|f\|_{\CC^0_z} \lesssim \|f\|_{L^{\infty}_z}$ and since $t \le 2s$ and $1{-}\kappa/2{-}a/\delta>\eta/2$ we have estimated the integral by:
\begin{align*}
\int_s^t u^{-\beta}(t{-}u)^{-\kappa/2-a/\delta} du \lesssim t^{-\beta-\kappa/2-a/\delta+1}\int_{s/t}^1 (1{-}u)^{-\kappa/2-a/\delta} du \lesssim t^{-\beta}(t{-}s)^{1-\kappa/2-a/\delta}.
\end{align*}

As for the second term since $t \le 2s$ and via Lemma \ref{lem:small_technical_schauder_estimate} and the results of the first step we estimate 
$$
\|t^{\beta}(P(t{-}s){-} Id)V_s\|_{L^{\infty}_{e(l+t)}} \lesssim |t{-}s|^{\eta/2}.
$$
Finally, for the third term we estimate:
$$
\|(t^{\beta}{-}s^{\beta})V_s\|_{L^{\infty}_{e(l+t)}} \lesssim (t{-}s)^{1{-}\mu}s^{\beta{-}1+\mu}\|V_s\|_{L^{\infty}_{e(l+t)}}
$$
for any $\mu \in (0,1).$ For this purpose we follow the calculations in the first step. Indeed
\begin{align*}
\| V^n_t {-} V^{n+1}_t\|_{L^{\infty}_{e(l+t)}} \lesssim &  2^{-n\varrho} \int_0^t |t{-}s|^{-\ve/2-a/\delta}s^{-\beta} ds \lesssim t^{1-\beta-\ve/2-a/\delta}
\end{align*}
so that the result follows from the previous estimate by choosing $\mu = \ve/2 {-} a/\delta,$ since $1{-}\mu \ge \eta/2.$ If we suppose that $t> 2s$ we can estimate:
$$
\frac{\|t^{\beta}V_t {-}s^{\beta}V_s\|_{L^{\infty}_{(e(l+t)}}}{|t{-}s|^{\eta/2}} \le \sup_{t} t^{\beta-\eta/2}\| V_t \|_{L^{\infty}_{e(l+t)}}
$$
and this quantity can be bounded via the arguments we just used. This concludes the proof. The result regarding smooth functions follows again via Riemann integration.
\end{proof}

\bibliographystyle{alpha}
\bibliography{kpz1}

\end{document}